\newcounter{mt}
\newtheorem{MainTheorem}[mt]{Theorem}
\newtheorem{Proposition}{Proposition}[section]
\newtheorem{Definition}[Proposition]{Definition}
\newtheorem{Lemma}[Proposition]{Lemma}
\newtheorem{Theorem}[Proposition]{Theorem}
\newtheorem{Corollary}[Proposition]{Corollary}
\newtheorem{Remark}[Proposition]{Remark}
\DeclareMathOperator{\Val}{Val}
\DeclareMathOperator{\nc}{nc}
\DeclareMathOperator{\Gr}{Gr}
\DeclareMathOperator{\Res}{Res}
\DeclareMathOperator{\glob}{glob}
\DeclareMathOperator{\sgn}{sgn}
\DeclareMathOperator{\id}{Id}
\renewcommand{\Re}{\mathrm {Re}}
\newcommand{\p}{\mathbb{P}}
\DeclareMathOperator{\vol}{vol}
\DeclareMathOperator{\Dens}{Dens}
\DeclareMathOperator{\Span}{Span}
\DeclareMathOperator{\Image}{Im}
\DeclareMathOperator{\Supp}{Supp}
\DeclareMathOperator{\Hom}{Hom}
\DeclareMathOperator{\Sym}{Sym}
\DeclareMathOperator{\Ker}{Ker}
\DeclareMathOperator{\WF}{WF}
\DeclareMathOperator{\LC}{LC}
\DeclareMathOperator{\OO}{O}
\DeclareMathOperator{\sign}{sign}
\DeclareMathOperator{\diag}{Diag}
\newcommand{\R}{\mathbb{R}}
\newcommand{\C}{\mathbb{C}}
\renewcommand{\i}{\mathbf{i}}
\def\moverlay{\mathpalette\mov@rlay}
\def\mov@rlay#1#2{\leavevmode\vtop{%
		\baselineskip\z@skip \lineskiplimit-\maxdimen
		\ialign{\hfil$\m@th#1##$\hfil\cr#2\crcr}}}
\newcommand{\charfusion}[3][\mathord]{
	#1{\ifx#1\mathop\vphantom{#2}\fi
		\mathpalette\mov@rlay{#2\cr#3}
	}
	\ifx#1\mathop\expandafter\displaylimits\fi}
\newcommand{\cupdot}{\charfusion[\mathbin]{\cup}{\cdot}}
\newcommand{\largewedge}{\mbox{\Large $\wedge$}}
\def\note#1{\ifvmode\leavevmode\fi\vadjust{\vbox to0pt{\vss
 \hbox to 0pt{\hskip\hsize\hskip1em
\vbox{\hsize3.5cm\small\raggedright\pretolerance10000
 \noindent #1\hfill}\hss}\vbox to8pt{\vfil}\vss}}}
\begin{document}

\title{Curvature measures of pseudo-Riemannian manifolds}

\author{Andreas Bernig}
\author{Dmitry Faifman}
\author{Gil Solanes}

\email{bernig@math.uni-frankfurt.de}
\email{faifmand@tauex.tau.ac.il} 
\email{solanes@mat.uab.cat}
\address{Institut f\"ur Mathematik, Goethe-Universit\"at Frankfurt,
Robert-Mayer-Str. 10, 60629 Frankfurt, Germany}
\address{School of Mathematical Sciences, Tel Aviv University, Tel Aviv 6997801, Israel}
\address{Departament de Matem\`atiques, Universitat Aut\`onoma de Barcelona, 08193 Bellaterra, Spain, and 
Centre de Recerca Matem\`atica, Campus de Bellaterra, 08193 Bellaterra, Spain}


\begin{abstract}
The Weyl principle is extended from the Riemannian to the pseudo-Riemannian setting, and subsequently to manifolds equipped with generic symmetric $(0,2)$-tensors. More precisely, we construct a family of generalized curvature measures attached to such manifolds, extending the Riemannian Lipschitz-Killing curvature measures introduced by Federer. We then show that they behave naturally under isometric immersions, in particular they do not depend on the ambient signature. Consequently, we extend Theorema Egregium to surfaces equipped with a generic metric of changing signature, and more generally, establish the existence as distributions of intrinsically defined Lipschitz-Killing curvatures for such manifolds of arbitrary dimension. This includes in particular the scalar curvature and the Chern-Gauss-Bonnet integrand. Finally, we deduce a Chern-Gauss-Bonnet theorem for pseudo-Riemannian manifolds with generic boundary.
\end{abstract}

\thanks{{\it MSC classification}:  53C65, 
53C50 
\\ A.B. was supported
 by DFG grant BE 2484/5-2\\
 D.F. was partially supported by an NSERC Discovery Grant and ISF Grant 1750/20.
 \\
 G.S. was supported by the Serra H\'unter Programme and FEDER/MICINN grants MTM2015-66165-P, IEDI-2015-00634 and PGC2018-095998-B-I00}
\maketitle


\section{Introduction}

\subsection{The Weyl principle in the Riemannian case}

The Steiner formula from 1840 states that the volume of an $r$-tube around a compact convex body in Euclidean space is a polynomial in $r$. The (suitably normalized) coefficients are called intrinsic volumes. Their importance in convex and integral geometry stems from Hadwiger's theorem which characterizes the intrinsic volumes as the only (up to linear combinations) rigid motion invariant and continuous valuations on the space of compact convex bodies. 

Hermann Weyl proved in 1939 a version of Steiner's tube formula for compact submanifolds in Euclidean space \cite{weyl_tubes}. In this case, the volume of the $r$-tube is still a polynomial \emph{for small enough $r$}. The so-called \emph{Weyl principle} is the striking insight that the intrinsic volumes are expressible in terms of the inner (Riemannian) metric of the submanifold. More precisely, they can be written as integrals of certain polynomials in the curvature tensor called \emph{Lipschitz-Killing curvatures}. Intrinsic volumes are among the most fundamental Riemannian invariants, and include the Riemannian volume, the total scalar curvature, and the Euler characteristic. Allendoerfer-Weil \cite{allendoefer_weil} used Weyl's principle to give an extrinsic proof of the Chern-Gauss-Bonnet theorem; an intrinsic proof was later given by Chern \cite{chern44}.

Later, Federer unified Steiner's and Weyl's results by introducing intrinsic volumes for compact sets of positive reach, which include convex bodies and submanifolds of Euclidean space. Moreover, he showed that intrinsic volumes admit local versions, called curvature measures, which apply to regions of compact sets \cite{federer59}.

Using Nash's embedding theorem, it follows from Weyl's principle that one can associate to each Riemannian manifold $(M,g)$ a canonical family of curvature measures $\Lambda_k^M$ known as the Lipschitz-Killing curvature measures, which behave naturally under isometric immersions. More precisely, if $(M,g) \looparrowright (N,h)$ is an isometric immersion, then $\Lambda_k^N|_M=\Lambda_k^M$. Moreover, this property characterizes the Lipschitz-Killing curvature measures (up to linear combinations).  

We mention some more recent developments related to Weyl's principle. Alesker has developed a far-reaching theory of \emph{smooth valuations on manifolds} \cite{alesker_val_man1, alesker_val_man2, alesker_survey07, alesker_val_man4, alesker_bernig, alesker_val_man3}, which includes intrinsic volumes as fundamental examples. A prominent result in this theory is the existence of a natural product of such valuations. The space spanned by the intrinsic volumes is closed under Alesker's product of smooth valuations, and is called the \emph{Lipschitz-Killing algebra of $M$}. Any isometric immersion of Riemannian manifolds induces an algebra morphism of the corresponding Lipschitz-Killing algebras. Stated otherwise, there is a functor from the category of Riemannian manifolds and isometric immersions to the category of algebras. 

Based on the observation from \cite{bernig_fu_solanes} that curvature measures form a module over the algebra of smooth valuations, and using Cartan's calculus, Fu-Wannerer \cite{fu_wannerer} construct a module of Riemannian curvature measures over the Lipschitz-Killing algebra. They show that the structure constants in the module product are independent of the Riemannian metric. This allows a surprising transfer of Crofton-style integral-geometric formulas from easy Riemannian manifolds (e.g. round spheres) to more complicated ones (e.g. complex space forms). 

Let us complete this introduction on Weyl's principle by some applications and further developments. Donnelly \cite{donnelly} has shown that the intrinsic volumes of compact Riemannian manifolds are invariants of the spectrum of the Laplacian acting on differential forms. This shows that they are not only intrinsic invariants, but spectral invariants.  

In \cite{bernig_fu_solanes} the local kinematic formulas for isometry invariant curvature measures on complex space forms were determined, compare also \cite{bernig_fu_hig, bernig_fu_solanes_proceedings}. Tube formulas are special cases, with applications to Chern classes of complex analytic submanifolds in complex space forms \cite[Section 3]{bernig_fu_solanes}, \cite{gray_book}. The Fu-Wannerer transfer principle above may be used to simplify the determination of the kinematic formulas \cite{fu_wannerer}. 

Some conjectures on the behaviour of Lipschitz-Killing curvatures under Gromov-Hausdorff convergence with potential applications to the theory of Alexandrov spaces with curvature bounded from below are contained in the recent paper \cite{alesker_alexandrov_spaces}. Let us also mention \cite{faifman_contact}, where analogues of the Lipschitz-Killing curvature measures that are natural under embeddings have been constructed for contact manifolds.

In this work we address the natural problem of extending the Weyl principle to the pseudo-Riemannian setting. 

\subsection{Overview of the main results} 

By a pseudo-Riemannian manifold we understand a pair $(M,g)$, where $M$ is a smooth manifold and $g\in\Gamma^\infty(M, \Sym^2T^*M)$ a field of quadratic forms which is everywhere non-degenerate. We will often write $M^{p,q}$ to denote a pseudo-Riemannian manifold of signature $(p,q)$. The Lipschitz-Killing curvatures of a pseudo-Riemannian manifold are defined as in the Riemannian case, but the extension to submanifolds presents fundamental difficulties.

To date, only flat pseudo-Riemannian space has been considered from this perspective, where a Hadwiger-type classification and some Crofton-type formulas have been obtained \cite{alesker_faifman, bernig_faifman_opq, faifman_crofton}.

Although a version of Nash's embedding theorem in the pseudo-Riemannian case is available (see Theorem \ref{thm_nash}), a direct approach via tube formulas as in the Riemannian/Euclidean case is too restrictive (but compare \cite{willison09}). The reason is that without some rather strong assumptions, the tubes will not be compact. On the other hand, a natural substitute for the intrinsic volumes is provided by the isometry invariant generalized valuations on pseudo-Euclidean spaces from the recent work \cite{bernig_faifman_opq}. A local version of them - isometry invariant generalized curvature measures on pseudo-Euclidean spaces - will be constructed in this paper. 

Our main result is that these generalized curvature measures can be extended to pseudo-Riemannian manifolds and that they behave naturally with respect to isometric immersions (of arbitrary signatures). We show furthermore that those curvature measures naturally extend to generic symmetric $(0,2)$-tensor fields $g$ whose signature need not be constant.

 Such metrics have been studied by a few authors \cite{aguirre_fernandez_lafuente, honda_saji_teramoto,kriele_kossowski_a,kossowski97, kriele_kossowski_b,Pelletier,steller}, and appear quite naturally in certain settings. For instance, every compact hypersurface in Minkowski space $\R^{n,1}$ must be of changing signature.  A very general construction, with ties to affine differential geometry, optimization, K\"ahler manifolds and mirror symmetry, is that of \emph{Hessian-type metrics} \cite{hitchin, totaro}. 

More broadly, symmetric $(0,2)$-tensors without signature constraints appear in various settings, e.g. as the Ricci curvature tensor of a torsion-free affine connection preserving a volume form \cite{kobayashi}, such as a symplectic connection.
  In physics, signature changing metrics are in the heart of the \emph{no boundary proposal} of Hartle and Hawking \cite{hartle_hawking}, and more generally appear in cosmology \cite{sakharov} and quantum gravity \cite{white_weinfurtner_visser}, as well as in optical metamaterials \cite{smolyaninov_narimanov}.

At the same time, mathematical results in the changing signature setting are of quite limited scope, often exclusive to two dimensions, and imposing a long list of restrictions on the metric; results such as Gauss-Bonnet theorem are only applicable under further, non-generic restrictions. 

Our approach through valuation theory allows us to work in general dimension, with one simple generic restriction on the metric, as follows. 

\begin{Definition}
A metric of changing signature $g$ on a smooth manifold $X$ is \emph{LC (light-cone)-regular} if $0$ is a regular value of $g\in C^\infty(TX\setminus\underline{0})$.
\end{Definition} 
For some examples of LC-regular metrics, see section \ref{sec:lc_regular}.

Let us first state some corollaries of our results to metrics of changing signature. The first is Gauss' Theorema Egregium for LC-regular manifolds. For simplicity, let us state it for a surface embedded in the four-dimensional standard flat space $\R^{2,2}=(\R^4, Q=x_1^2+x_2^2-x_3^2-x_4^2)$. 
	
There is a natural complex valued distribution (generalized measure) on the associated oriented projectivization $S_Q^3=\mathbb P_+(\R^{2,2})$, denoted $\lambda_0\in\mathcal M^{-\infty}(S_Q^3, \mathbb C)$, which is $\OO(2,2)$-invariant. Identifying $S_Q^3$ with the Euclidean sphere $S^3$, 
\begin{displaymath}
\lambda_0=-\frac{1}{2\pi^2}\left(Q(x)^{-2}+\pi\i\delta_0'(Q(x))\right)\vol_{S^3}.
\end{displaymath}
One can show that $\Re\lambda_0$, $\Image\lambda_0$ span the space of such invariant distributions.  

\begin{MainTheorem}[Theorema Egregium] \label{thm:egregium}
Let $e:(X^2,g)\hookrightarrow \R^{2,2}$ be an isometric embedding of an LC-regular surface. Let $NX\subset \R^{2,2}\times S_Q^3$ be the normal bundle, $\pi:NX\to X$ and $\nu:NX\to S_Q^3$ the natural projections. 

Define the \emph{Gaussian curvature distribution} by $\kappa^X_0=\pi_*(\nu^*\lambda_0)\in\mathcal M^{-\infty}(X,\C)$.
Then $\kappa_0^X$ depends only on $(X,g)$, and restricts to $\frac{\i^q}{2\pi}KdA$ at the open subset of signature $(2-q,q)$, where $K$ is the sectional curvature and $dA$ the area element. 
\end{MainTheorem}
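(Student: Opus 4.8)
The plan is to realize $\kappa_0^X$ as the value on the embedded surface of the degree-zero $\OO(2,2)$-invariant generalized Lipschitz--Killing curvature measure attached to the flat space $\R^{2,2}$, and then to invoke the pseudo-Riemannian Weyl principle and the naturality under isometric immersions established in the body of the paper. Indeed, for a submanifold the normal bundle plays the role of the normal cycle, and $\pi_*(\nu^*\lambda_0)$ is by design the local degree-zero curvature measure of $e(X)\subset\R^{2,2}$; once one knows this measure is well-defined, its intrinsicality and local form should follow from the general theory.

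The first point to settle is that $\nu^*\lambda_0$ exists as a distribution on $NX$ and that $\pi$ restricted to its support is proper, so that the pushforward lands in $\mathcal M^{-\infty}(X)$. The distribution $\lambda_0$ is smooth away from the light cone $\{Q=0\}\subset S_Q^3$ (a Clifford-type torus under the identification $S_Q^3\cong S^3$) and has wavefront set conormal to it. A pair $(x,\xi)\in NX$ is mapped into this locus precisely when $\xi$ is a null normal direction at $x$, equivalently when $g=e^*(Q|_{TX})$ degenerates at $x$; I would check that LC-regularity of $(X,g)$ is exactly the statement that $\nu$ is transverse along $NX$ to $\{Q=0\}$. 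H\"ormander's pullback theorem then yields $\nu^*\lambda_0$ with controlled wavefront set, and properness of $\pi$ is automatic since its fibers are compact circles. This is the codimension-two, two-dimensional case of the existence statement proved in the main body.

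For the intrinsicality (the Theorema Egregium assertion), the identification above gives $\kappa_0^X=\Lambda_0^{\R^{2,2}}(e(X),\cdot)$; by the naturality of the pseudo-Riemannian curvature measures under isometric immersions of arbitrary signature, this equals the intrinsic curvature measure of $(X,g)$, computed via any pseudo-Riemannian Nash embedding (Theorem \ref{thm_nash}) and independent of it, so $\kappa_0^X$ depends only on $(X,g)$. For the local formula, restrict to the open set $U_q\subset X$ of constant signature $(2-q,q)$: there $g$ is a genuine pseudo-Riemannian metric, the normal directions avoid $\{Q=0\}$ when $q\in\{0,2\}$ and meet it transversally in four points when $q=1$, and $\kappa_0^X|_{U_q}$ is the pseudo-Riemannian degree-zero Lipschitz--Killing curvature measure of $(U_q,g)$. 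As a second-order local invariant density on a surface that vanishes on flat metrics, it must be a constant multiple of $K\,dA$; evaluating it on a model pseudo-Riemannian surface (e.g. a pseudo-sphere), consistently with the classical Riemannian normalization giving $\tfrac{1}{2\pi}$ at $q=0$, pins the constant down as $\tfrac{\i^q}{2\pi}$.

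The step I expect to be the crux is the first one: matching LC-regularity with the transversality needed for $\nu^*\lambda_0$ to exist and have a wavefront set small enough for $\pi_*$ to apply, and, in the Lorentzian region, verifying that after fiber integration over the normal circle the principal-value part $Q(x)^{-2}$ and the distributional part $\pi\i\,\delta_0'(Q(x))$ of $\lambda_0$ combine into the single clean term $\tfrac{\i}{2\pi}K\,dA$ with no residual singular contribution along the boundary $\partial U_1$. Everything else is a specialization of the general construction and its naturality to $\dim X=2$ and ambient space $\R^{2,2}$.
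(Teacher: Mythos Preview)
Your proposal is correct and follows essentially the same route as the paper: Theorem~\ref{thm:egregium} is derived as the two-dimensional instance of the general Weyl principle (Theorems~\ref{mainthm_weyl} and~\ref{main:lc_regular}), with well-definedness following from the equivalence of LC-regularity and LC-transversality (Proposition~\ref{prop:LC_regular_is_intrinsic}) together with the wave-front-set analysis of Sections~\ref{sec:lc_regular}--\ref{sec_construction}, and $\kappa_0^X=\pi_*(\nu^*\lambda_0)$ being exactly the interior term $q_*\theta^*\omega$ of the restricted curvature measure (since $\kappa_0^{\R^{2,2}}=0$). The only minor divergence is in pinning down the constant $\tfrac{\i^q}{2\pi}$: the paper reads it off directly from Definition~\ref{def_lk_curvatures} and Remark~\ref{rm:kappas}, whereas your invariant-theory-plus-model-evaluation argument would require a separate model computation for each $q\in\{0,1,2\}$, not just the Riemannian normalization.
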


A similar statement holds in arbitrary dimensions and ambient signature. The LC-regularity condition guarantees that $\kappa^X_0$ is well-defined. If a local isometric embedding as a hypersurface is available, $\kappa_0^X$ can be defined as (twice) the pull-back by the Gauss map of $\lambda_0\in \mathcal M^{-\infty}(\mathbb P_+(\R^{p,q}),\C)^{\OO(p,q)}$, as in the classical setting.
	
The next result generalizes Theorem \ref{thm:egregium}, asserting the extendability of the Lipschitz-Killing curvatures to LC-regular manifolds as follows.
	
Consider the category $\mathbf{LCMet}$, whose objects are LC-regular manifolds without boundary, and morphisms are open isometric inclusions; and the category $\mathbf{GMsr}$ of pairs $(X,\mu)$, where $X$ is a manifold without boundary and $\mu\in\mathcal  M^{-\infty}(X,\C)$ a complex valued distribution, and whose morphisms are open inclusions $j:(X,\mu)\hookrightarrow (Y,\nu)$ such that $j^*\nu=\mu$.
	In the language of Atiyah-Bott-Patodi \cite{atiyah_bott_patodi}, a distribution-valued invariant of LC-regular metrics is any covariant functor $\Omega: \mathbf{LCMet}\to \mathbf{GMsr}$, such that $\Omega(X,g)=(X,\Omega^{X,g})$.
	
\begin{MainTheorem}[Lipschitz-Killing curvatures of LC-regular manifolds]\label{thm:distribution_curvatures}
There exist for all $k\geq 0$ natural distribution-valued $k$-homogeneous invariants $\kappa_k: \mathbf{LCMet}\to \mathbf{GMsr}$ such that, whenever $g$ is non-degenerate of signature $(p,q)$,
\begin{displaymath}
\kappa_k^{X,g}=\i^q \cdot \mathrm{LK}_k\cdot \vol_g,
\end{displaymath}
where $\mathrm{LK}_k$ is the classical $k$-th Lipschitz-Killing curvature.
\end{MainTheorem}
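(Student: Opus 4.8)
\emph{Overall strategy and first step.} The plan is to deduce Theorem~\ref{thm:distribution_curvatures} from the pseudo-Riemannian Weyl principle and its extension to generic symmetric $(0,2)$-tensors established earlier in the paper; the construction of $\kappa_k^{X,g}$ is purely local on $X$, and LC-regularity enters precisely to legitimize the distributional pullback and pushforward operations along the light cone $\{g=0\}\subset TX\setminus\underline 0$. First I would use the pseudo-Riemannian Nash embedding theorem (Theorem~\ref{thm_nash}) to cover $X$ by neighborhoods $U$ each admitting an isometric embedding $e\colon(U,g)\hookrightarrow\R^{p',q'}$ into a flat pseudo-Euclidean space of sufficiently large signature. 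The form induced on a submanifold of $\R^{p',q'}$ is simply the restriction of the ambient quadratic form, hence degenerates exactly along tangent vectors that become null for the ambient form; LC-regularity of $g$ translates into $e(U)$ meeting the ambient light cone transversally, which is the genericity I will exploit below.

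\emph{Construction by pullback of the invariant curvature measure.} On $\R^{p',q'}$ the paper provides $\OO(p',q')$-invariant generalized curvature measures, represented by invariant distributions $\lambda_k\in\mathcal M^{-\infty}(\mathbb P_+(\R^{p',q'}),\C)$ (for $k=0$ and $p'+q'=4$ this is the $\lambda_0$ displayed above), whose wavefront set lies over the light-cone hypersurface $\{Q=0\}$ and is carried by its conormal. Given $e$, one forms the normal bundle $NU\subset\R^{p',q'}\times\mathbb P_+(\R^{p',q'})$ with its projections $\pi\colon NU\to U$ and $\nu\colon NU\to\mathbb P_+(\R^{p',q'})$, and sets
\begin{displaymath}
\kappa_k^{U,g}:=\pi_*\bigl(\nu^*\lambda_k\bigr).
\end{displaymath}
The hard part will be to verify that LC-regularity makes $\nu$ transverse, in H\"ormander's sense, to $\WF(\lambda_k)$, so that $\nu^*\lambda_k$ is a well-defined distribution on $NU$ with controlled wavefront set; that LC-regularity simultaneously forces $NU$ to be a smooth manifold and $\pi$ to be proper even where $g$ degenerates and the normal fiber changes type; and that the fiber integration $\pi_*$ then produces a genuine element of $\mathcal M^{-\infty}(U,\C)$. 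Equivalently, this is the step where one must push the naturality of the pseudo-Riemannian curvature measures past the degenerate locus, and I expect it to be the main obstacle.

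\emph{Independence of the embedding, globalization, functoriality.} By the naturality theorem — in its form for isometric immersions of pseudo-Riemannian and, more generally, changing-signature manifolds, namely $\Lambda_k^N|_M=\Lambda_k^M$ — the distribution $\kappa_k^{U,g}$ is independent of the chosen embedding: any two embeddings of a common neighborhood are compared by composing each with a further isometric embedding into a larger pseudo-Euclidean space, and naturality identifies the two outputs. Hence the locally defined distributions glue to a global $\kappa_k^{X,g}\in\mathcal M^{-\infty}(X,\C)$; for an open isometric inclusion $j\colon(X,g)\hookrightarrow(Y,h)$ locality gives $j^*\kappa_k^{Y,h}=\kappa_k^{X,g}$, so $\kappa_k$ is the required covariant functor $\mathbf{LCMet}\to\mathbf{GMsr}$. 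Homogeneity of degree $k$ is inherited from the $k$-homogeneity of $\lambda_k$ under $g\mapsto t^2g$.

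\emph{Value on the non-degenerate locus and the constant.} On the open set where $g$ is non-degenerate of signature $(p,q)$, the embedding of the first step is a genuine pseudo-Riemannian isometric embedding, $NU$ is an honest pseudo-sphere bundle disjoint from the light cone, and there $\lambda_k$ agrees, up to a universal normalization constant, with the classical Lipschitz--Killing Weyl form; integrating it over the normal fiber via $\pi_*$ and invoking the Weyl principle recovers the classical integrand $\mathrm{LK}_k$ times a volume form. It then remains only to pin down the constant: analytically continuing the Weyl tube coefficients from the positive-definite case to signature $(p,q)$ replaces $\sqrt{\det g}$ by $\sqrt{|\det g|}$ and hence inserts one factor of $\i$ per negative direction, which yields $\kappa_k^{X,g}=\i^q\cdot\mathrm{LK}_k\cdot\vol_g$ on that locus, as claimed.
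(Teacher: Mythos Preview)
Your approach is correct and essentially identical to the paper's: $\kappa_k^{X,g}$ is the interior term of the generalized curvature measure $\Lambda_k^X:=e^*\Lambda_k^M$ (Definition~\ref{def:LC_regular_LK}), which is well-defined and independent of the embedding by the Weyl principle together with the wave-front analysis of Section~\ref{sec:lc_regular} (Proposition~\ref{prop:LC_regular_LK}), and whose value on the non-degenerate locus is read off from Definition~\ref{def_lk_curvatures} and Remark~\ref{rm:kappas}. Two small corrections: the local embedding of a possibly degenerate $(U,g)$ into a pseudo-Euclidean space uses Lemma~\ref{lemma:baby_nash} rather than Theorem~\ref{thm_nash} (Nash requires non-degenerate metrics), and for $k>0$ the boundary form $\lambda_k$ is a generalized $m$-form on the full cosphere bundle $\mathbb P_M$ rather than a measure on $\mathbb P_+(\R^{p',q'})$ --- though your formula $\pi_*\nu^*\lambda_k$ is precisely the paper's interior-term restriction $q_*\theta^*\lambda_k$ of~\eqref{eq:restriction_curvature_measure}, so the construction goes through unchanged.
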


In particular  $\kappa_{0}^{X,g}$ (resp. $\kappa_{\dim X-2}^{X,g}$) is a multiple of the Chern-Gauss-Bonnet integrand, (resp. the scalar curvature) when $(X,g)$ is pseudo-Riemannian.
Naturally, if $X$ is a surface then $\kappa_0^{X,g}$ coincides with the Gaussian curvature from Theorem \ref{thm:egregium}. 
Note that on compact LC-regular manifolds, the above distributions can be integrated to give global Lipschitz-Killing invariants.

The last corollary of our main result is a Chern-Gauss-Bonnet theorem, which extends the Chern-Avez theorem on closed pseudo-Riemannian manifolds. We present here two notable special cases: closed manifolds of changing signature, and pseudo-Riemannian manifolds with generic boundary.

Pseudo-Riemannian manifolds with boundary have been previously considered, see \cite{birman_nomizu,gilkey_park15, honda_saji_teramoto}. Typically, non-generic restrictions are imposed on the boundary, e.g. the metric induced on the boundary is assumed non-degenerate, or its Gaussian curvature bounded. For instance, a generic smooth domain in $\R^{p,q}$, such as the Euclidean ball, violates both restrictions.

\begin{MainTheorem}[Chern-Gauss-Bonnet]\label{thm:CGB}\mbox{ }
\\\emph{i)} If $(X, g)$ is a closed LC-regular manifold, then
		\begin{displaymath}
		\chi(X)=\int_X \kappa_0^{X,g}.
		\end{displaymath}
\\\emph{ii)} If $(X, g)$ is a compact pseudo-Riemannian manifold with boundary, and $(\partial X, g)$ is LC-regular, then 
		\begin{displaymath}
		\chi(X)=\int_X \kappa_0^{X,g} + \int_{\partial X} {\lambda}^{\partial X,g},
		\end{displaymath}
		where ${\lambda}^{\partial X,g}\in\mathcal M^{-\infty}(\partial X, \mathbb C)$ generalizes geodesic curvature (see Section \ref{sec:gauss_bonnet}).
\end{MainTheorem}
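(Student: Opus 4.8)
The plan is to deduce Theorem \ref{thm:CGB} from Theorem \ref{thm:distribution_curvatures} by a limiting/approximation argument combined with the classical Chern-Gauss-Bonnet and Chern-Avez theorems. The starting point is the observation that $\kappa_0^{X,g}$ is a \emph{natural} distribution-valued invariant: on the open set $U_q\subset X$ where $g$ has signature $(p,q)$ (with $p+q=\dim X$), it restricts to the smooth density $\i^q\,\mathrm{LK}_0\cdot\vol_g$, i.e. $\i^q$ times the Chern-Gauss-Bonnet integrand of the pseudo-Riemannian metric $g|_{U_q}$. By the classical Chern-Avez theorem, if one could integrate the Chern-Gauss-Bonnet integrand over all of $X$ and the ``boundary contributions'' from the light cone $\Sigma=\{g\ \mathrm{degenerate}\}$ vanished, one would get $\chi(X)$. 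The content of part (1) is precisely that, \emph{as a distribution}, $\kappa_0^{X,g}$ has no singular mass concentrated on $\Sigma$ beyond what the naturality forces, and that the alternating signs $\i^q$ conspire correctly.

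For part (1), here is how I would carry it out. First, I would use the embedding realization of $\kappa_0^{X,g}$: by Theorem \ref{thm:distribution_curvatures} (and the hypersurface remark following Theorem \ref{thm:egregium}), near any point of $X$ we may write $\kappa_0^{X,g}=\pi_*(\nu^*\lambda_0)$ for a local isometric embedding into a flat pseudo-Euclidean space, with $\lambda_0$ the $\OO(p,q)$-invariant distribution on the oriented projective quadric. The key structural fact I would prove is a \emph{Gauss-Bonnet formula for the generalized curvature measure}: the globalization $\int_X\kappa_0^{X,g}$ equals the evaluation of the corresponding generalized valuation (the Euler-characteristic valuation) on $X$, which by the work on $\OO(p,q)$-invariant generalized valuations \cite{bernig_faifman_opq} is the topological Euler characteristic. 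Concretely, one approximates the LC-regular metric $g$ by a family $g_\epsilon$ — for instance by perturbing $g$ to be non-degenerate of a fixed signature on a neighborhood of $\Sigma$, or by the standard trick of writing $g$ in Morse-type normal form transverse to $\Sigma$ and replacing the vanishing eigenvalue by $\pm\epsilon$. On the region away from $\Sigma$ the integrand converges, and the LC-regularity (transversality of $g$ to $0$ in $T X\setminus\underline 0$) is exactly what guarantees that the singular integral $\int_{U_q}\mathrm{LK}_0\cdot\vol_g$ converges conditionally and that the partial contributions across $\Sigma$ match those produced by the distributional pairing of $\lambda_0$ with the Gauss map — this is where the combination $Q^{-2}+\pi\i\,\delta_0'(Q)$ in the definition of $\lambda_0$, and the corresponding residue at the light cone, does the bookkeeping. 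Assembling these local computations with a partition of unity and invoking the classical Chern-Gauss-Bonnet on each piece (with the signs $\i^q$ absorbing the Lorentzian-vs-Riemannian sign discrepancies in the Pfaffian) yields $\int_X\kappa_0^{X,g}=\chi(X)$.

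For part (2), the manifold $X$ is pseudo-Riemannian (so $g$ is non-degenerate on the interior) but $g|_{\partial X}$ is merely LC-regular, so the boundary is a hypersurface along which the induced metric changes signature. I would run the doubling/collar argument: take the double $\widehat X=X\cup_{\partial X}X$, which carries a metric that is non-degenerate on each copy of the interior and LC-regular near the glued boundary (the second fundamental form and the signature change make it LC-regular there, generically). Applying part (1) to $\widehat X$ gives $\chi(\widehat X)=\int_{\widehat X}\kappa_0^{\widehat X,g}=2\int_X\kappa_0^{X,g}+(\text{mass of }\kappa_0^{\widehat X,g}\text{ on }\partial X)$, and since $\chi(\widehat X)=2\chi(X)-\chi(\partial X)$, the boundary term is forced to equal $2\int_X\kappa_0^{X,g}-2\chi(X)+\chi(\partial X)$; comparing with part (1) applied to an ambient-space tube computation identifies the distributional boundary mass with $-2\int_{\partial X}\lambda^{\partial X,g}$ up to the Euler characteristic of $\partial X$, which one cleans up by a separate intrinsic computation of $\lambda^{\partial X,g}$ as the pushforward of the relevant boundary curvature measure (generalizing the geodesic-curvature term in the classical Gauss-Bonnet). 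The explicit formula for $\lambda^{\partial X,g}$ is deferred to Section \ref{sec:gauss_bonnet}, so here I would only need that it is the natural boundary partner of $\kappa_0$ in the sense of the curvature-measure Gauss-Bonnet on manifolds with boundary.

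The main obstacle I anticipate is the convergence and matching across the light cone $\Sigma$: the Chern-Gauss-Bonnet integrand $\mathrm{LK}_0\cdot\vol_g$ blows up like $\dist(\cdot,\Sigma)^{-2}$ (consistent with the $Q^{-2}$ in $\lambda_0$), so $\int_X\kappa_0^{X,g}$ is only a conditionally convergent / principal-value integral, and one must show rigorously that the distributional definition $\pi_*\nu^*\lambda_0$ extracts exactly the same finite value — i.e. that no extra Dirac-type mass on $\Sigma$ is hiding in the pushforward. This requires careful local analysis in Morse-type coordinates for $g$ transverse to $\Sigma$ (where LC-regularity gives the model $g = \sum \pm dx_i^2 \pm t\,dt^2$-type form), computing both sides, and checking that the imaginary part $\pi\i\,\delta_0'(Q)$ contributes the jump in $\i^q$ across $\Sigma$ while the real part $Q^{-2}$ contributes the principal value. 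The rest — partition of unity, doubling, and invoking classical Chern-Avez — is routine once this local analytic lemma is in place.
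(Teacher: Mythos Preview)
Your proposal takes a fundamentally harder route than the paper, and the obstacle you flag at the end is real enough that the sketch as written is not a proof.

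The paper's argument is almost entirely soft. Recall that $\mu_0^X=\glob(\Lambda_0^X)$ is a generalized valuation, and that for $X=\R^{p,q}$ the space $\Val_0^{-\infty}(\R^{p,q})$ is one-dimensional, spanned by $\chi$. Hence $\mu_0^{\R^{p,q}}=c(p,q)\,\chi$ for some constant. The Weyl principle (Theorem~\ref{mainthm_weyl}), \emph{already established}, forces $c(p,q)$ to be independent of signature, and one checks $c(1,0)=1$ directly. For an arbitrary LC-regular $(X,g)$, take an isometric embedding $e:X\hookrightarrow\R^{p,q}$ and apply the Weyl principle again: $\mu_0^X=e^*\mu_0^{\R^{p,q}}=e^*\chi=\chi$. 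Part~(1) is the special case where $X$ is closed, so $\chi(X)=\mu_0^X(X)=\int_X\kappa_0^{X,g}$. Part~(2) is then immediate: view $X$ as an LC-transversal domain in a slight pseudo-Riemannian extension $\tilde X$, and evaluate $\chi(X)=\Lambda_0^{\tilde X}(X,X)=\int_X\kappa_0+\int_{\partial X}\nu^*\lambda_0$. No approximation, no local Morse analysis, no doubling.

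You actually state the key structural fact correctly in one sentence (``the globalization\ldots equals the evaluation of the corresponding generalized valuation\ldots which\ldots is the topological Euler characteristic''), but then you propose to \emph{prove} it by approximating $g$ by non-degenerate metrics $g_\epsilon$ and controlling the limit across $\Sigma$. That is backwards: the Weyl principle and the one-dimensionality of $\Val_0^{-\infty}$ give the identity $\mu_0=\chi$ for free, with no analysis near $\Sigma$ required. Your approximation route would need exactly the delicate matching you worry about---showing that the principal-value regularization implicit in $Q^{-2}$ and the jump encoded in $\delta_0'(Q)$ agree with what the $g_\epsilon$ limit produces---and you have not supplied that lemma. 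The doubling argument for part~(2) is likewise unnecessary and introduces its own issues (the doubled metric need not be smooth or LC-regular along the seam without further work).
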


Theorems \ref{thm:egregium}, \ref{thm:distribution_curvatures} and \ref{thm:CGB} are corollaries of the Weyl principle for pseudo-Riemannian manifolds and its extension to LC-regular manifolds, which we establish in the paper. To state these results, let us introduce some further notation. Let $\mathbf{\Psi Met}$ 
denote the category of pseudo-Riemannian manifolds with isometric immersions. Define  $\mathbf {GVal}$ to be the category where the objects are pairs $(M, \mu)$, with $M$ a smooth manifold, and $\mu \in \mathcal V^{-\infty}(M, \mathbb C)$ (the space of generalized valuations, see Subsection \ref{subsec_valuations}). The morphisms $e:(M, \mu_M)\to (N,\mu_N)$ are immersions $e:M \looparrowright N$ such that $e^*\mu_N$ is well-defined, and $\mu_M=e^*\mu_N$. 

Similarly, let $\mathbf{GCrv}$ be the category where the objects are pairs $(M, \Phi)$, with $M$ a smooth manifold, and $\Phi \in \mathcal C^{-\infty}(M, \mathbb C)$ (the space of generalized curvature measures, see Subsection \ref{subsec_curv_meas}). The morphisms $e:(M, \Phi_M)\to (N,\Phi_N)$ are immersions $e:M \looparrowright N$ such that $e^*\Phi_N$ is well-defined, and $\Phi_M=e^*\Phi_N$. The globalization gives rise to a functor $\glob:\mathbf {GCrv}\to \mathbf {GVal}$.

A generalized valuation valued invariant of pseudo-Riemannian metrics is a covariant functor $\mathbf{\Psi Met}\to  \mathbf {GVal}$ intertwining the forgetful functor to the category of smooth manifolds. Generalized curvature measure valued invariants are defined similarly. 

\begin{MainTheorem}[Weyl principle in the pseudo-Riemannian category] \label{mainthm_weyl}
There are generalized valuations (resp. curvature measures) valued invariants of pseudo-Riemannian metrics $\mu_k: M\mapsto \mu_k^M\in\mathcal V^{-\infty}(M,\C)$, resp. $\Lambda_k :M \mapsto \Lambda_k^M\in\mathcal C^{-\infty}(M,\C)$, such that $\mu_k=\glob \circ \Lambda_k$, for all integer $k\geq 0$.
Any generalized valuation valued invariant $\mu$ is given by a unique infinite linear combination $\mu=\sum_{k=0}^\infty a_k \mu_k+\sum_{k=1}^\infty b_k\bar \mu_k$.
\end{MainTheorem}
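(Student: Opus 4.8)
The plan is to build the invariants $\Lambda_k^M$ and $\mu_k^M$ by pulling back the isometry-invariant generalized curvature measures and valuations on pseudo-Euclidean space (constructed earlier in the paper, extending \cite{bernig_faifman_opq}) along a pseudo-Riemannian embedding, and then to prove independence of the embedding by an intrinsic formula for their wavefront sets and symbols. First I would invoke the pseudo-Riemannian Nash theorem (Theorem \ref{thm_nash}) to embed $(M,g)\hookrightarrow \R^{P,Q}$ isometrically for suitable $(P,Q)$. On $\R^{P,Q}$ we have the $\OO(P,Q)$-invariant generalized curvature measures $\Lambda_k^{\R^{P,Q}}\in\mathcal C^{-\infty}(\R^{P,Q})$; I would pull these back to the normal cycle (or conormal bundle) of $M$. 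The first technical point is to check that this pullback is well-defined, i.e. that the wavefront set of $\Lambda_k^{\R^{P,Q}}$ is transverse to the submanifold $M$ (equivalently to the relevant incidence variety in the cosphere bundle): this is exactly what lets the generalized-valuation/curvature-measure restriction operation apply, and it should follow from the explicit description of the singularities of $\Lambda_k^{\R^{P,Q}}$ along the light cone together with the non-degeneracy of $g$ on $TM$.

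The heart of the argument is showing that $\Lambda_k^M := e^*\Lambda_k^{\R^{P,Q}}$ is independent of the isometric embedding $e$, and more precisely that it behaves naturally under \emph{any} isometric immersion, so that it defines a functor on $\mathbf{\Psi Met}$. The key step is a Weyl-type tube/transgression computation performed \emph{microlocally}: away from the light cone — i.e. on the open dense set where the second fundamental form interacts with a non-null normal direction in the classical way — the pulled-back curvature measure is smooth and is computed by the classical Weyl formula, hence equals $\i^q\mathrm{LK}_k\cdot\vol_g$ by the same Gauss-equation manipulation as in the Riemannian case (the factor $\i^q$ bookkeeping the signature, as in Theorems \ref{thm:egregium}, \ref{thm:distribution_curvatures}). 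The content is then to control the singular part supported over the light cone: here I would argue that the pullback operation commutes with restriction along a further isometric immersion $N\hookrightarrow \R^{P',Q'}$ (functoriality of restriction of generalized curvature measures, using that two embeddings of $M$ into pseudo-Euclidean spaces can be connected through a common larger pseudo-Euclidean space and an isometry, by the pseudo-Riemannian analogue of the standard Nash-uniqueness trick), which forces the singular part to also be intrinsic. Globalizing via the functor $\glob:\mathbf{GCrv}\to\mathbf{GVal}$ gives the $\mu_k^M$ and the identity $\mu_k=\glob\circ\Lambda_k$.

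For the final uniqueness/spanning assertion, I would argue as follows. Let $\mu$ be any generalized-valuation-valued invariant. Restricting attention to the flat model $\R^{p,q}$ and using $\OO(p,q)$-equivariance, $\mu^{\R^{p,q}}$ must be an $\OO(p,q)$-invariant generalized valuation; by the Hadwiger-type classification on pseudo-Euclidean space from \cite{bernig_faifman_opq} (recalled earlier), the space of such is spanned by the $\mu_k$ and their conjugates $\bar\mu_k$, giving the expansion $\mu^{\R^{p,q}}=\sum a_k\mu_k+\sum b_k\bar\mu_k$ with coefficients a priori depending on $(p,q)$. Naturality under the inclusions $\R^{p,q}\hookrightarrow\R^{p',q'}$ and $\R^{p,q}\hookrightarrow\R^{p,q+1}$ etc., together with the known restriction behaviour of the $\mu_k$ (they restrict to the $\mu_k$ of the smaller space, being the defining property), pins the coefficients down to be independent of $(p,q)$: comparing homogeneity degrees isolates each $a_k,b_k$ and shows they are forced to agree across signatures. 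Finally, since any $(M,g)$ is locally isometric to an open subset of some $\R^{p,q}$ only on the non-degenerate locus — but pseudo-Riemannian $M$ \emph{is} everywhere non-degenerate and any two invariants agreeing on all flat charts after pullback agree on $M$ by the naturality of pullback and a partition-of-unity/locality argument — we conclude $\mu^M=\sum a_k\mu_k^M+\sum b_k\bar\mu_k^M$ globally, with uniqueness of the coefficients inherited from the flat case. Convergence of the infinite sum is not an issue pointwise in $k$ since on any fixed $M$ of dimension $n$ only finitely many $\mu_k$ are nonzero; the statement is to be read as a formal/stable identity over all dimensions, or equivalently $b_k=a_k=0$ for $k$ large once $M$ is fixed.

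The step I expect to be the main obstacle is the microlocal control of the pullback near the light cone: verifying the transversality of wavefront sets that makes $e^*\Lambda_k^{\R^{P,Q}}$ well-defined, and then proving that the resulting singular contribution over $\{g|_{TM} \text{ null directions}\}$ is genuinely intrinsic rather than embedding-dependent. This is precisely where the LC-regularity hypothesis does its work in the extension to changing-signature metrics, and where the pseudo-Riemannian case departs qualitatively from Riemannian Weyl theory, so the argument cannot be a formal transcription and will require the explicit singularity structure of the invariant curvature measures on $\R^{p,q}$ established earlier in the paper.
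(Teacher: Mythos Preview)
Your overall architecture is reasonable but differs from the paper's, and there are two genuine gaps.

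\textbf{Structural difference.} The paper does \emph{not} define $\Lambda_k^M$ by pulling back from flat space. It first constructs $\Lambda_k^M$ intrinsically on every pseudo-Riemannian $M$ (Section~\ref{sec_construction}) using moving frames: the forms $\phi_{k,r}^\epsilon$ are built on the unit-norm bundles $S^\pm M$, patched across the light cone via the auxiliary compatible Riemannian metric $P$ and the $\sigma$-homogeneity machinery, yielding explicit generalized forms $\phi_{k,r}^i$ and hence $\Lambda_k^M$. Only \emph{after} this intrinsic construction does the paper prove (Theorem~\ref{thm:weyl}) that $e^*\Lambda_k^N=\Lambda_k^M$ for isometric immersions. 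Your proposal reverses this: define $\Lambda_k^M:=e^*\Lambda_k^{\R^{P,Q}}$ and argue independence of $e$. That is in principle viable, but it does not avoid the hard step; it merely relocates it to proving $\Lambda_k^{\R^{P,Q}}|_{\R^{p,q}}=\Lambda_k^{\R^{p,q}}$ for flat-into-flat embeddings.

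\textbf{First gap: the restriction computation.} You write that functoriality of restriction plus a common ambient space ``forces the singular part to also be intrinsic''. It does not: functoriality of the pull-back operation on generalized curvature measures is automatic, but it tells you nothing about \emph{which} curvature measure you land on. What is needed is the explicit identity $e^*C_{k,0}^i\equiv(-1)^{(q-q')i+\lfloor(q-q')/2\rfloor}\sum_\nu 4^{-\nu}\binom{k/2+\nu}{\nu}C_{k+2\nu,\nu}^{i+q-q'}$ (Propositions~\ref{prop_res_boundary}, \ref{prop_res_interior}), and this is a hard computation: it uses the indefinite-signature Weyl lemma (Lemma~\ref{lemma_weyl}), the Gauss equations across the light cone, and crucially the two-variable distributional integral $J_{m,a}(\sigma,\rho;\chi_i^{-(m+2)/2})$ of Proposition~\ref{prop:J_integral_2d}, whose exact recursive form is what makes the coefficients assemble into $\Lambda_k$. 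Your description (``microlocal control of the pullback near the light cone'') identifies the wavefront-transversality prerequisite correctly but skips the substance; the transversality only guarantees the restriction \emph{exists}, not what it equals.

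\textbf{Second gap: the uniqueness step on curved $M$.} You write that ``any $(M,g)$ is locally isometric to an open subset of some $\R^{p,q}$'' and then propose a partition-of-unity argument. This is false: a pseudo-Riemannian manifold is locally flat only if its curvature vanishes. The paper's argument is both correct and shorter: given a generalized-valuation-valued invariant $\psi$, restrict to $\R^{p,q}$; by \cite{bernig_faifman_opq} and Corollary~\ref{cor:LK_valuations} write $\psi^{\R^{p,q}}=\sum a_k\mu_k+\sum b_k\bar\mu_k$; show the coefficients are signature-independent by embedding $\R^{p,q}$ and $\R^{p',q'}$ into a common $\R^{p'',q''}$; finally, for arbitrary $M$ use Nash (Theorem~\ref{thm_nash}) and the \emph{assumed functoriality of $\psi$ itself} to get $\psi^M=e^*\psi^{\R^{P,Q}}=\sum a_k e^*\mu_k^{\R^{P,Q}}+\ldots=\sum a_k\mu_k^M+\ldots$. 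No local flatness is invoked.
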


Let us comment on some aspects of this theorem.

The uniqueness statement holds also for curvature measures, see \cite{part2}, except the last sum starts at $k=0$. This is because $\mu_0$ is the real-valued Euler characteristic, while $\Lambda_0$ has a non-zero imaginary part. 

To recover the full array of invariants, we are forced to allow some singularities, hence the generalized, rather than smooth, curvature measures and valuations. These singularities pose the largest technical challenge that we have to overcome.

The functor $\Lambda_k$ is called \emph{the $k$-th Lipschitz-Killing curvature measure}, and $\mu_k$ is called \emph{the $k$-th intrinsic volume}. Note that both are complex valued, which simplifies the formulations of many statements.   
As $\Lambda_k$ and $\mu_k$ are generalized, they can only be applied or restricted to sets which are in general position with respect to the null-directions of the metric. We call such sets \emph{LC(light-cone)- transversal sets}, see Definition \ref{def:LC_transversality} for a precise description. 

At first glance, the pseudo-Riemannian Weyl principle seems straightforward to anticipate, given the Riemannian picture, at least up to signs and constants. However, a closer inspection reveals the results to be the outcome of an array of cancellations and coincidences that is far richer than that encountered by H. Weyl in \cite{weyl_tubes}.  One remarkable coincidence is the existence of a distinguished basis of Lipschitz-Killing curvatures, that restrict independently of the signature of the ambient metric. For another, the integral \eqref{eq:integral_definition} computed in Section \ref{sec_computation_integrals} that appears out of the geometry, must have the various parameters perfectly tuned to yield the recursive relation \eqref{eq:integral}, which is at the heart of the proof of the Weyl principle.
	
One advantage of the language of valuation theory is that it allows easy transition between manifolds of different dimension through restriction. This suggests a simultaneous treatment of all signatures, switching between different signatures using isometric immersions. However, while the Riemannian Weyl principle applies to arbitrary submanifolds, a typical submanifold in the pseudo-Riemannian setting inherits a symmetric field of $(0,2)$-tensors of non-constant signature. Thus it is natural to expand the class of admissible tensors to metrics of changing signature. Crucially, the LC-transversality of a submanifold turns out to coincide with the intrinsic property of \emph{LC-regularity} of the induced metric.

\begin{MainTheorem}[Weyl principle for LC-regular manifolds]
\label{main:lc_regular}
$(X,g)$ is LC-regular if and only if  $e(X)$ is LC-transversal for some (equivalently, any) isometric embedding $e: X\hookrightarrow M^{p,q}$.
The restriction $\Lambda_k^X:=e^*\Lambda_k^M$ is independent of $e$. 
\end{MainTheorem}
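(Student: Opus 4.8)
The plan is to prove the three assertions in turn: (i) the equivalence of LC-regularity of $(X,g)$ with LC-transversality of an isometric embedding $e(X)\subset M^{p,q}$; (ii) that this holds for some embedding iff it holds for any; (iii) that $e^*\Lambda_k^M$ is well-defined and independent of $e$. For (i), I would work at a point $x\in X$ and analyze the geometry of the light cone. The induced metric is $g = e^*h$, so the null set $\{v\in T_xX : g_x(v,v)=0\}$ is precisely $T_xX \cap C_x$, where $C_x\subset T_xM$ is the ambient light cone. LC-regularity says $0$ is a regular value of $g: TX\setminus\underline 0 \to \R$, i.e. $d(g_x(v,v))\neq 0$ at every null $v$; unwinding, $dg_x(v,\cdot) = 2g_x(v,\cdot)$ is a nonzero covector, equivalently $v$ is not in the radical of $g_x$ restricted to... more precisely it means the null cone of $g_x$ is a smooth hypersurface in $T_xX\setminus 0$. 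On the ambient side, LC-transversality of $e(X)$ (Definition \ref{def:LC_transversality}) should amount to a transversality statement between the submanifold's conormal/tangent data and the characteristic directions of the relevant distributions on the sphere bundle governing where $\Lambda_k^M$ can be pulled back. The key computation is a linear-algebra lemma: given a nondegenerate quadratic form $Q$ on $T_xM$ and a subspace $W = T_xX$, the restriction $Q|_W$ has $0$ as a regular value on $W\setminus 0$ precisely when $W$ meets the cone $\{Q=0\}$ transversally in the appropriate sense, which in turn is exactly the condition placed on $e(X)$ at $x$ by LC-transversality. I expect this to reduce to checking that the ``bad'' directions for pulling back $\Lambda_k^M$ are exactly the conormals to $C$, intersected against $N^*X$.

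For (ii), independence of the embedding, the natural tool is Nash's pseudo-Riemannian embedding theorem (Theorem \ref{thm_nash}): any two isometric embeddings $e_1:X\hookrightarrow M^{p_1,q_1}$ and $e_2:X\hookrightarrow M^{p_2,q_2}$ can be related through a common refinement. Concretely, if $\iota: M^{p,q}\hookrightarrow M^{p',q'}$ is a further isometric immersion, then $\iota\circ e$ is LC-transversal iff $e$ is — this is where Theorem E (the naturality/functoriality already established for $\Lambda_k$ in $\mathbf{GCrv}$) does the work, since being a morphism in $\mathbf{GCrv}$ is exactly the well-definedness of the pullback. Then, given $e_1, e_2$ with possibly different ambients, embed both ambients isometrically into a large enough flat $\R^{P,Q}$ (Nash), and compare the two resulting embeddings of $X$ into $\R^{P,Q}$; since LC-transversality there is governed by the intrinsic condition from part (i) (which makes no reference to the ambient), both hold or both fail simultaneously. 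This simultaneously gives (ii) and reduces (iii) to the flat-ambient case.

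For (iii), that $\Lambda_k^X := e^*\Lambda_k^M$ is independent of $e$: having reduced to two embeddings $e_1, e_2: X\hookrightarrow \R^{P,Q}$ (after enlarging), I would invoke the pseudo-Riemannian analogue of the rigidity argument from the Riemannian Weyl principle. The Lipschitz-Killing curvature measures $\Lambda_k^{\R^{P,Q}}$ are $\OO(P,Q)$-invariant and, by the main theorem, their pullbacks depend only on the second fundamental form data through the universal polynomial identities in the curvature tensor; the Gauss equation expresses the intrinsic curvature of $(X,g)$ in terms of the second fundamental form, and the classical Weyl computation (now carried out in the pseudo-Riemannian setting in Section \ref{sec_computation_integrals}) shows that the pullback of $\Lambda_k$ depends only on the intrinsic curvature, not on the embedding. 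The one subtlety beyond the Riemannian case is that these are \emph{generalized} curvature measures, so all pullbacks must be justified as operations on distributions — this is exactly where LC-transversality, now known to be intrinsic, guarantees the wavefront-set conditions needed for $e^*$ to be defined, uniformly for all valid $e$.

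The main obstacle I anticipate is part (i): pinning down precisely why the intrinsic first-order condition ``$0$ is a regular value of $g$ on $TX\setminus\underline 0$'' matches, point by point, the extrinsic LC-transversality condition, which is phrased in terms of the sphere bundle and the wavefront set of $\lambda_0$ (or of the forms defining $\Lambda_k^M$). This requires understanding exactly which geometric configurations of $(N^*_xX, C_x)$ cause the pullback to fail, and showing the failure locus is cut out by $\det$ of the restricted form vanishing to first order — i.e. a clean transversality-of-cones statement. Everything downstream (parts (ii), (iii)) is then a bootstrapping argument via Nash embedding and the already-established functoriality of $\Lambda_k$.
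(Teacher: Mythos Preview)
Your proposal has a real gap in part (i), and a subtler but still genuine gap in part (iii).

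\textbf{Part (i).} You have misidentified what LC-transversality is. Definition~\ref{def:LC_transversality} says that a submanifold $X\subset M$ is LC-transversal when the \emph{normal} bundle $N^QX$ meets the light cone transversally in $\mathbb P_+(TM)$, i.e.\ when $Q$ restricted to $N^QX\setminus\underline 0$ has $0$ as a regular value. Your proposed linear-algebra lemma takes $W=T_xX$ and asks whether $Q|_W$ has $0$ as a regular value---but that is the \emph{tangential} condition, not LC-transversality. These conditions are not the same: the relation between $TX$ and $N^QX$ depends on the embedding and on the ambient signature. The paper's route is to first prove that LC-transversality is stable under nested isometric embeddings (Proposition~\ref{prop:LC_regularity_transitivity}), which gives (i)$\Leftrightarrow$(ii) without touching LC-regularity; and then, for the equivalence with LC-regularity, to use a specially chosen embedding $X^n\hookrightarrow M^{n,n}$ (Lemma~\ref{lemma:baby_nash}) together with a locally defined fiberwise anti-isometry $B:TX\to N^QX$ satisfying $B^*Q=-g$ (Corollary~\ref{cor:local_identification_tangent_normal}, built from Proposition~\ref{prop:linear_smooth_normal_isometry}). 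Through $B$, regularity of $0$ for $g$ on $TX$ becomes exactly regularity of $0$ for $Q$ on $N^QX$. A direct argument for arbitrary signature is plausible---the radical of $Q|_{N^Q_xX}$ is $T_xX\cap N^Q_xX=\Ker(g_x)$, so the ``bad'' null vectors on both sides coincide---but you would still have to compare the two first-order regularity conditions along different subbundles, and your current write-up does not do this.

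\textbf{Part (iii).} After reducing to two embeddings $e_1,e_2:X\hookrightarrow\R^{P,Q}$, you invoke ``the intrinsic curvature of $(X,g)$'' via the Gauss equation to conclude $e_1^*\Lambda_k=e_2^*\Lambda_k$. But $(X,g)$ is only LC-regular: at degenerate points of $g$ there is no Levi-Civita connection and no intrinsic curvature tensor, so the Weyl computation cannot be carried out on $X$ directly. The paper avoids this entirely (Proposition~\ref{prop:LC_regular_LK}): given $e_j:X\hookrightarrow M_j$, use Lemma~\ref{lemma:two_manifolds_simultaneous_embedding} to produce a common pseudo-Riemannian ambient $N$ with isometric embeddings $f_j:M_j\hookrightarrow N$ agreeing on $X$, and then apply the already-established pseudo-Riemannian Weyl principle (Theorem~\ref{thm:weyl}) to the $f_j$, which are maps between genuinely pseudo-Riemannian manifolds. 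This gives $e_j^*\Lambda_k^{M_j}=e_j^*f_j^*\Lambda_k^N=(f_j e_j)^*\Lambda_k^N$, and the two right-hand sides coincide since $f_1e_1=f_2e_2$. The intrinsicality is thus inherited from the pseudo-Riemannian case without ever computing curvature on $X$.

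Your part (ii) is fine and matches the paper's strategy.
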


\begin{Proposition}[Basic properties  of $\Lambda_k,\mu_k$] \label{prop_basic_properties} \mbox{}
\\\emph{i)} $\Lambda_k$ extends the Riemannian Federer curvature measures; $\mu_k$ extends the intrinsic volumes.
\\\emph{ii)} $\mu_k^{X,g}$ and $\Lambda_k^{X, g}$ depend continuously on $g$ in the $C^\infty$ topology. 
\\\emph{iii)} Homogeneity:
\begin{displaymath}
\Lambda_k^{X,\lambda g}  =\left\{\begin{array}{cc} \sqrt{\lambda}^k \Lambda_k^{X,g}, & \lambda>0\\ \sqrt{\lambda}^k \overline{\Lambda}_k^{X,g}, & \lambda<0\end{array}\right. \qquad \mu_k^{X,\lambda g}  =\left\{\begin{array}{cc}  \sqrt{\lambda}^k \mu_k^{X,g}, & \lambda>0\\ \sqrt{\lambda}^k \overline{\mu}_k^{X,g}, & \lambda<0 \end{array}\right.
\end{displaymath}
\\\emph{iv)} $\Lambda_{\dim X}^{X,g}=\i^{q_x}\vol_{X,g}$, where $\i=\sqrt{-1}$ and $q_x$ is the negative index at $T_xX$.
\\\emph{v)} $\mu_0(X, g)=\chi$.
\end{Proposition}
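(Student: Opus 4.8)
The plan is to reduce each of the five assertions to a corresponding statement about the $\OO(p,q)$-invariant (generalized) curvature measures and valuations on the model space $\R^{p,q}$, and then propagate by naturality. Given $(X,g)$, I would fix an isometric embedding $e\colon X\hookrightarrow\R^{p,q}$ using the pseudo-Riemannian Nash theorem (Theorem \ref{thm_nash}); by Theorem \ref{main:lc_regular} its image is LC-transversal exactly when $g$ is LC-regular, so Theorems \ref{mainthm_weyl} and \ref{main:lc_regular} yield $\Lambda_k^{X,g}=e^*\Lambda_k^{\R^{p,q}}$ and $\mu_k^{X,g}=e^*\mu_k^{\R^{p,q}}$.

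For item (1), if $g$ is positive definite one may take $q=0$, so that $e\colon X\hookrightarrow\R^{N}$ is a Euclidean embedding (classical Nash) with automatically LC-transversal image; it then suffices to check that the $\OO(N)$-invariant $\Lambda_k^{\R^{N,0}}$ and $\mu_k^{\R^{N,0}}$ constructed here agree, under the normalization $\i^{0}=1$, with Federer's Lipschitz-Killing curvature measures and the intrinsic volumes \cite{federer59} --- which should be immediate from the construction, the positive-definite case of the model curvature measures being the classical Euclidean normal cycle --- after which naturality transports the identification to all Riemannian $(X,g)$. Items (4) and (5) concern the two extreme degrees of the model. For $k=\dim X$ the top Lipschitz-Killing form is the volume form, so the only content is the normalization $\i^{q_x}$, which is built into the construction of $\Lambda_k$ on $\R^{p,q}$; this is the case $k=\dim X$, $\mathrm{LK}_{\dim X}=1$, of the formula in Theorem \ref{thm:distribution_curvatures} when $g$ is non-degenerate, and both sides vanish with the volume density where $g$ degenerates. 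For $k=0$ one uses $\mu_0=\glob\circ\Lambda_0$ together with the fact that $\mu_0$ is, by construction, the Euler characteristic on $\R^{p,q}$ (cf.\ \cite{bernig_faifman_opq}); hence $\mu_0^{X,g}=e^*\chi=\chi$, and its value on $X$ is $\chi(X)$ --- in particular the nonzero imaginary part of $\Lambda_0$ globalizes to $0$.

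For item (3), I would first prove the homogeneity on the open locus where $g$ has constant signature $(p,q)$, where $\Lambda_k^{X,g}=\i^{q}\,\mathrm{LK}_k\cdot\vol_g$. For $\lambda>0$ the signature, hence $\i^q$, is unchanged, and $\mathrm{LK}_k\cdot\vol_g$ scales classically by $|\lambda|^{k/2}=\sqrt{\lambda}^k$, giving the first case. For $\lambda<0$ the signature becomes $(q,p)$ and $\mathrm{LK}_k$ acquires a sign $(-1)^{(\dim X-k)/2}$ (all terms vanishing unless $\dim X-k$ is even), and the elementary identity $\i^{\dim X-k}(-1)^{(\dim X-k)/2}=1$ reorganizes the powers of $\i$ into exactly $\sqrt{\lambda}^k\,\overline{\Lambda}_k^{X,g}$. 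The identity on the dense constant-signature locus then extends to all LC-regular $g$ by continuity --- item (2) --- or by the uniqueness part of Theorem \ref{mainthm_weyl}; alternatively the dilation argument may be run directly on $\R^{p,q}$.

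Finally, item (2), which I expect to be the genuine obstacle. Away from metrics of changing signature, continuity of $g\mapsto\Lambda_k^{X,g}$ and $g\mapsto\mu_k^{X,g}$ in the $C^\infty$ topology is formal: locally these are given by a universal curvature expression (equivalently, by a pull-back along $e$ of the fixed model object), and pull-back, push-forward and pairing are continuous operations. The real difficulty is uniformity near an LC-regular metric of \emph{non-constant} signature, where $\Lambda_k^{X,g}$ is a genuinely singular distribution: one must show that LC-regularity is an open condition in the $C^\infty$ topology, and that on a $C^\infty$-small neighbourhood of such a $g$ the wave front sets of the $\Lambda_k^{X,g}$ remain in a fixed closed cone, so that pairing against admissible test densities stays continuous. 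I expect this to follow from the quantitative estimates already underlying the construction of $\Lambda_k$ on LC-regular manifolds, but it is the one step carrying analytic rather than formal content.
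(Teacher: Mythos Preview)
Your overall architecture (reduce to $\R^{p,q}$ and propagate by the Weyl principle) matches the paper's, but the argument for item (iii) has a genuine gap. You write $\Lambda_k^{X,g}=\i^{q}\,\mathrm{LK}_k\cdot\vol_g$ on the constant-signature locus and compute homogeneity from that. This identifies only the \emph{interior} form $\kappa_k$ of $\Lambda_k=[\kappa_k,\lambda_k]$; the boundary form $\lambda_k\in\Omega^{m}_{-\infty}(\mathbb P_M)$ is nonzero and carries the essential content (on $\R^{p,q}$ the curvature vanishes, so $\kappa_k=0$ for $k<\dim$ while $\lambda_k\neq 0$). Your alternative ``run directly on $\R^{p,q}$'' therefore cannot proceed via $\mathrm{LK}_k\cdot\vol$ either. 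The paper instead checks directly how $\phi_{k,r}$ and hence the generalized forms $\phi_{k,r}^i$ transform under $Q\mapsto -Q$ (Proposition~\ref{prop_scaling}): one has $\widehat\phi_{k,r}=(-1)^{m-k+r}\phi_{k,r}$, and tracking this through Definition~\ref{def_phi_kri} yields the sign/conjugation pattern for $C_{k,r}^i$ needed to assemble $\widehat\Lambda_k=\i^k\overline{\Lambda_k}$ via \eqref{eq:C_to_Lambda}. Note also that ``agreement on a dense open set extends by continuity'' is not valid for generalized curvature measures, so the non-degenerate-locus route would not close even if the formula there were correct.

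A smaller point on item (iv): on a pseudo-Riemannian $M$ the identity $\Lambda_{\dim M}^M=\i^q\vol_Q$ is indeed immediate from the construction ($\lambda_{\dim M}=0$, $\kappa_{\dim M}=\i^q\vol_Q$). But for LC-regular $X$ one has $\Lambda_{\dim X}^X:=e^*\Lambda_{\dim X}^M$ with $\dim X<\dim M$, and the restriction of $\lambda_{\dim X}^M$ could in principle contribute a singular part supported on $X\setminus X_{\mathrm{ND}}$; saying ``both sides vanish with the volume density'' does not exclude this. The paper's Proposition~\ref{prop:top_degree} sidesteps this by pairing against a dense family of Crofton-type test valuations and invoking the continuity of the Klain section of $\mu_n^{\R^{p,q}}$ from \cite{bernig_faifman_opq}. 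Similarly, for item (v) the statement $\mu_0^{\R^{p,q}}=\chi$ is not ``by construction'': the paper uses $\dim\Val_0^{-\infty}=1$ to get $\mu_0^{\R^{p,q}}=c(p,q)\chi$, the Weyl principle to make $c(p,q)$ independent of signature, and then checks $c(1,0)=1$ (Theorem~\ref{thm:Lambda_0_glob}). Your sketches for (i) and (ii) are in line with the paper.
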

The distribution-valued curvatures of Theorem \ref{thm:distribution_curvatures} are simply the interior terms of $\Lambda_k$.
The Chern-Gauss-Bonnet Theorem \ref{thm:CGB} follows from the last property. For a sharper continuity statement, see Corollary \ref{cor:LC_continuity}.

\subsection{Plan of the paper}
In Section \ref{sec:background} we recall the theory of valuations and curvature measures on manifolds, and introduce the central notion of generalized curvature measures. In Section \ref{sec_computation_integrals} we introduce a convenient language to treat homogeneous distributions, and compute a distributional integral that plays a key role in the proof of Theorem \ref{mainthm_weyl}.
In Section \ref{sec:lc_regular} we study LC-regular manifolds and LC-transversal submanifolds of pseudo-Riemannian manifolds, and show that the two notions are equivalent. 
In Section \ref{sec_construction} we construct the pseudo-Riemannian Lipschitz-Killing curvature measures. In Section \ref{sec_restriction} we compute the restriction of those curvature measures to pseudo-Riemannian submanifolds, establishing the existence part of Theorem \ref{mainthm_weyl}, which then combines with results of Section \ref{sec:lc_regular} to prove Theorem \ref{main:lc_regular}. In Section \ref{sec:properties}, some basic properties of the Lipschitz-Killing curvature measures are established, and the uniqueness part of Theorem \ref{mainthm_weyl} is proved. Finally, in Section \ref{sec_gauss_bonnet} we generalize the Chern-Gauss-Bonnet theorem using the Weyl principle.

\subsection{Acknowledgements}
We would like to thank Bo'az Klartag for his insightful input on distributional integrals, and the referee for numerous valuable comments. This work was partially done  during D.F.'s term at the University of Toronto as Coxeter Assistant Professor, as well as a CRM-ISM postdoctoral fellow in Montreal, which we gratefully acknowledge.   
\section{Background. Generalized valuations and curvature measures}\label{sec:background}

\subsection{Some terminology}

Let $M$ be a smooth manifold.  We will always assume that $M$ is oriented and connected, although all statements can be adjusted to the general case. All manifolds are without boundary, unless indicated otherwise.

A \emph{pseudo-Riemannian metric} on $M$ is a smooth field $Q$ of non-degenerate quadratic forms. Since $M$ is connected, the signature of these quadratic forms is constant and will be denoted by $(p,q)$. The notation $M^{p,q}$ will mean that $M$ is equipped with some pseudo-Riemannian metric of that signature. The simplest pseudo-Riemannian manifold is $\R^{p,q}$, which is $\R^{p+q}$ endowed with the flat metric $Q=dx_1^2+\cdots+dx_p^2-dx_{p+1}^2-\cdots-dx_{p+q}^2$. We will refer to $Q$ and the usual Euclidean inner product $P$ of $\R^{p+q}$ as the standard bilinear forms of $\R^{p,q}$.

We will make frequent use of the following version of Nash's embedding theorem.  
\begin{Theorem}[Pseudo-Riemannian Nash embedding theorem, {\cite{clarke70}}] \label{thm_nash}
 Any pseudo-Riemannian manifold $M^{p,q}$ admits an isometric embedding $e:M\hookrightarrow \R^{p',q'}$.
\end{Theorem}

A \emph{generalized measure} on $M$ is an element of $\mathcal M^{-\infty}(M):=C^\infty_c(M)^*$, while a \emph{generalized function} is an element of $C^{-\infty}(M):= \mathcal{M}^\infty_c(M)^*$ (where $C_c^\infty$ and $\mathcal M_c^\infty$ are the spaces of complex valued compactly supported smooth functions, resp. measures). Put differently, a generalized measure is a generalized section of the complex line bundle $\Dens(TM)$, where $\Dens(V)$ is the space of complex valued Lebesgue measures on a real vector space $V$.


A \emph{generalized $k$-form}, or $(n-k)$-current, is an element of $\Omega^k_{-\infty}(M):=\Omega_c^{n-k}(M)^*$, which again we take complex valued. Note that $\Omega^k(M) \hookrightarrow \Omega^k_{-\infty}(M)$. The exterior differential extends as $d:\Omega^k_{-\infty}(M) \to \Omega^{k+1}_{-\infty}(M)$. When $\partial M\neq\emptyset$, such currents are called \emph{supported}.  Write $\Omega(M)=\bigoplus_{k=0}^n \Omega^k(M)$, $\Omega_{-\infty}(M)=\bigoplus_{k=0}^n \Omega_{-\infty}^k(M)$.


The \emph{wave front set} of a generalized $k$-form $\omega$ is a closed conical subset $\WF(\omega) \subset T^*M \setminus \underline{0}$. If $\WF(\omega)=\emptyset$, then $\omega$ is a smooth form. The space of all generalized $k$-forms $\omega$ with $\WF(\omega) \subset \Gamma$ for some fixed closed conical set $\Gamma \subset T^*M \setminus \underline{0}$ is denoted by $\Omega_{-\infty,\Gamma}^k(M)$. We refer to \cite{hoermander_pde1} for the definition and more details.

Our main results do not depend on a particular choice of topology. To get good continuity properties for the various curvature measures that we construct, we use the \emph{normal topology} \cite{brouder_dang_helein, dabrowski_brouder} on $\Omega_{-\infty,\Gamma}^k(M)$ instead of the more common H\"ormander topology, as it renders the operations of pull-back and push-forward continuous, rather than merely sequentially continuous. The compactly supported smooth $k$-forms are sequentially dense in both topologies. For $\Gamma=T^*M\setminus\underline 0$, the normal topology on $\Omega_{-\infty,\Gamma}^k(M)=\Omega_{-\infty}^k(M)$ is the strong dual topology.

The \emph{wedge product} admits a partial extension to generalized forms. More precisely, let $a:T^*M \to T^*M, (x,\xi) \mapsto (x,-\xi)$ be the antipodal map. If $\Gamma_1 \cap a\Gamma_2=\emptyset$, then the wedge product extends as a jointly sequentially continuous and hypocontinuous, hence separately continuous bilinear map
$
\Omega^{k_1}_{-\infty,\Gamma_1}(M) \times \Omega^{k_2}_{-\infty,\Gamma_2}(M) \to \Omega^{k_1+k_2}_{-\infty}(M).
$
Recall that \emph{hypocontinuity} means that for every neighborhood $W$ of $0$ in $\Omega^{k_1+k_2}_{-\infty}(M)$ and every bounded subset $B$ of $\Omega^{k_2}_{-\infty,\Gamma_2}(M)$, there exists a neighborhood $U$ of $0$ in $\Omega^{k_1}_{-\infty,\Gamma_1}(M)$ such that the image of $U \times B$ is contained in $W$; and similarly with the roles of the factors interchanged.

Given a vector space $V$, its \emph{oriented projectivization} is ${\mathbb P_+(V)=(V\setminus \{0\})/\R_+}$. More generally, a vector bundle $E\to M$ defines the fiber bundle $\mathbb P_+(E)\to M$.
The \emph{cosphere bundle} of $M$ is the sphere bundle $\p_M= \mathbb P_+(T^*M)$. Its total space is a $(2n-1)$-dimensional manifold which carries a canonic contact structure.

A form $\omega\in\Omega(\mathbb P_M)$ is \emph{vertical} if it vanishes on the contact hyperplanes. A generalized form $\eta\in \Omega_{-\infty}(\mathbb P_M)$ is \emph{vertical} if $\langle \eta, \omega\rangle=0$ for any vertical  $\omega\in \Omega_c(\mathbb P_M)$.
If $e: X \looparrowright M$ is an immersion, then 
\begin{displaymath}
N^*X:=\{(e(x),[\xi]) \in \p_M: x \in X,de|_x^*(\xi)=0\}
\end{displaymath}
is called the \emph{conormal bundle} of $X$ in $M$. 
We will use the same notation for its lift to $T^*M$ (with or without the zero section) when no confusion can arise. When $M$ has to be specified, we write $N^*X=T^*_XM$.

\subsection{Valuations}
\label{subsec_valuations}

A valuation on an $n$-dimensional vector space $V$ is a finitely additive functional $\mu$ on the space of convex bodies. Finite additivity means that 
\begin{displaymath}
\mu(K \cap L)+\mu(K \cup L)=\mu(K)+\mu(L),
\end{displaymath}
whenever $K,L,K \cup L$ are compact convex sets. Continuity of valuations is with respect to the Hausdorff distance on the space of convex bodies. Examples of valuations are the Euler characteristic ($\mu(K)=1$), the Lebesgue measure, the intrinsic volumes or more generally arbitrary mixed volumes.

The space $\Val$ of translation invariant continuous valuations is a Banach space on which the general group $\mathrm{GL}(V)$ acts. The subspace of $k$-homogeneous valuations of parity $\epsilon \in\{\pm\}$ is denoted by 
\begin{displaymath}
\Val_k^\epsilon=\{\mu \in \Val| \mu(tK)=t^k\mu(K), t>0, \mu(-K)=\epsilon \mu(K)\}.
\end{displaymath}
Alesker's irreducibility theorem states that the $\mathrm{GL}(V)$-modules $\Val_k^\epsilon$ are irreducible.

The dense subspace of smooth vectors is denoted by $\Val^{\infty}$ and its elements are called \emph{smooth translation invariant valuations}.   

Alesker proved that a valuation $\mu$ is smooth if and only if there are translation invariant differential forms $\phi \in \Omega^n(V), \omega \in \Omega^{n-1}(\p_V)$ such that 
\begin{equation} \label{eq_def_smoothness}
\mu(K)=\int_K \phi+\int_{\nc(K)} \omega.
\end{equation}
Here $\nc(K)$ is the conormal cycle of $K$ in the cosphere bundle $\p_V$ of $V$.

 This observation can be utilized to extend valuation theory to general smooth manifolds, resulting in a deep theory developed by Alesker, Fu and others. In this theory, convex bodies are replaced by some family of test bodies, such as the family  $\mathcal P(M)$ of compact differentiable polyhedra. A smooth valuation on $M$ is a functional $\mu: \mathcal P(M)\to \C$ given by some forms $\phi \in \Omega^n(M)$ and $\omega \in \Omega^{n-1}(\p_M)$, 
\begin{equation} \label{eq_def_smoothness_manifold}
\mu(A)=\int_A \phi+\int_{\nc(A)} \omega.
\end{equation}
Here $\nc(A)$ is the conormal cycle of $A \in \mathcal{P}(M)$, which is a Legendrian cycle in the cosphere bundle $\p_M$ of $M$.  We will also write $\mu=[[\phi,\omega]]$. The space of complex valued smooth valuations on $M$ is denoted by $\mathcal{V}^\infty(M)$. It admits a natural filtration
\begin{equation} \label{eq_filtration_vals}
\mathcal V^\infty(M) = \mathcal W_0^\infty(M) \supset \mathcal W_1^\infty(M) \supset \cdots \supset \mathcal W_{\dim M}^\infty(M) = \mathcal M^\infty(M),
\end{equation}
and the \emph{Euler-Verdier involution} given by $\sigma([[\phi,\omega]])=(-1)^n[[\phi,a^*\omega]]$ where $a\colon \p_M\to\p_M$ is the fiberwise antipodal map.

The differential forms defining $\mu$ are not unique. By \cite{bernig_broecker07}, one has $[[\phi,\omega]]=0$ if and only if $D\omega+\pi^*\phi=0,\pi_*\omega=0$. Here $\pi:\p_M \to M$ is the projection and $\pi^*$ (resp. $\pi_*$) denotes the pull-back (resp. push-forward) of differential forms; and $D$ is the Rumin differential operator \cite{bernig_broecker07,rumin94}. 

Alesker and Fu \cite{alesker_val_man3} (see also \cite{alesker_bernig} and \cite{fu_alesker_product}) have introduced a product structure on the space $\mathcal{V}^{\infty}(M)$ of smooth valuations on a manifold $M$, which is compatible with the filtration \eqref{eq_filtration_vals}. It has led to several deep applications in the integral geometry of isotropic spaces \cite{alesker_intgeo, alesker_bernig_convolution, bernig_fu_solanes,solanes, solanes_wannerer, wannerer_angular}. The product satisfies a version of Poincar\'e duality, which gives rise to the notion of \emph{generalized valuations} on a manifold. Generalized valuations appear quite naturally in Hadwiger-type theorems for non-compact groups such as the Lorentz group \cite{alesker_faifman, bernig_faifman_opq, faifman_contact}.

The space $\mathcal{V}_c^\infty(M)$ of compactly supported smooth valuations on $M$ has a natural topology, and  the space of generalized valuations on $M$ is defined as $\mathcal{V}^{-\infty}(M):=\mathcal{V}^\infty_c(M)^*$. A generalized valuation $\psi$ can be described by two generalized forms $\zeta\in C^{-\infty}(M), \tau\in\Omega_{-\infty}^n(\mathbb P_M)$ such that 
\begin{displaymath}
\langle \psi, [[\phi,\omega]]\rangle=\langle \zeta,\phi\rangle+\langle \tau,\omega\rangle,
\end{displaymath}
where $\phi \in \Omega^n_c(M), \omega \in \Omega^{n-1}_c(\p_M)$.
The form $\tau$ must be closed and vertical, and $\pi_*\tau=d \zeta$. Any such pair defines a generalized valuation, denoted $[(\zeta,\tau)]\in \mathcal V^{-\infty}(M)$, and this correspondence is one-to-one. 
There is a natural inclusion of the (not necessarily compact) differentiable polyhedra in $\mathcal V^{-\infty}(M)$, $A \mapsto \chi_A:= [(\mathbbm 1_A,  [[N^*A]]\hspace{1pt})]$. 

By \cite{alesker_bernig}, $[[\phi,\omega]]\in\mathcal V^\infty(M)$ can be identified with the generalized valuation 
\begin{equation}\label{eq:valuation_current}
[(\zeta,\tau)]=[(\pi_*\omega,a^*(D\omega+\pi^*\phi))].
\end{equation} 
Moreover, $[(\zeta,\tau)]\in\mathcal V^{-\infty}(M)$ is smooth if and only if both $\zeta$ and $\tau$ are smooth.

The \emph{wave front set} of a generalized valuation $\psi$ is the pair $(\WF(\zeta),\WF(\tau))\subset  (T^*M\setminus \underline{0}) \times ( T^*\p_M \setminus \underline{0})$. Given closed conical sets $\Lambda \subset T^*M \setminus \underline{0}, \Gamma \subset T^*\p_M \setminus \underline{0}$, the space of generalized valuations with $\WF(\zeta) \subset \Lambda, \WF(\tau) \subset \Gamma$ is denoted by $\mathcal{V}^{-\infty}_{\Lambda,\Gamma}(M)$. It is equipped with the normal topology, inherited from the corresponding spaces of distributions. Under the additional assumption 
$\pi^*\Lambda \subset \Gamma, \ \pi^*\pi_*\Gamma \subset \Gamma$, the space $\mathcal{V}^\infty(M)$ is sequentially dense in $\mathcal{V}^{-\infty}_{\Lambda,\Gamma}(M)$ \cite{alesker_bernig_convolution}. Here 
\begin{align}
\pi_*\Gamma & =\{(x,\tau) \in T^*M\setminus \underline{0}: \exists \xi \in \mathbb P_M|_x, \quad d\pi|_{(x,[\xi])}^*\tau \in \Gamma\}, \label{eq_push_forward_wf}\\
\pi^*\Lambda & =\{(x,[\xi],\eta) \in T^*\p_M \setminus \underline{0}: \exists \tau \in \Lambda_x, \eta=d\pi|_{(x,[\xi])}^*\tau \}.  \label{eq_pull_back_wf}
\end{align} 

The filtration \eqref{eq_filtration_vals} can be extended to a natural filtration of $\mathcal{V}^{-\infty}(M)$:
\begin{displaymath} 
\mathcal{V}^{-\infty}(M) = \mathcal W_0^{-\infty}(M) \supset\mathcal W_1^{-\infty}(M) \supset \cdots \supset \mathcal W_{\dim M}^{-\infty}(M)=\mathcal M^{-\infty}(M).
\end{displaymath}

The Euler-Verdier involution extends  to $\mathcal V^{-\infty}(M)$  
 by $\sigma[(\zeta,\tau)]=[(\zeta,(-1)^n a^*\tau)]$.

\subsection{Curvature measures}
\label{subsec_curv_meas}


Let $M$ be a smooth manifold of dimension $n$. A functional $\Phi:\mathcal P(M)\to \mathcal M(M)$ is called a \emph{smooth curvature measure} if for all Borel subsets $B\subset M$
\begin{displaymath}
\Phi(A,B)=\int_{A \cap B} \phi+\int_{\nc(A) \cap \pi^{-1}B} \omega,\qquad A\in\mathcal P(M).
\end{displaymath}
We will also write $\Phi=[\phi,\omega]$. The space of smooth curvature measures is denoted by $\mathcal{C}(M)$. Fundamental examples of smooth curvature measures on $\R^n$ are Federer's curvature measures \cite{federer59, zaehle86}, which are moreover rigid motion covariant.

As observed in \cite{solanes_wannerer}, the natural filtration on the space of $(n-1)$-forms on the sphere bundle $\p_M \to M$ induces a filtration 
\begin{displaymath}
\mathcal{C}(M)=\mathcal{C}_0(M) \supset \mathcal{C}_1(M) \supset \ldots \supset \mathcal{C}_n(M)=\mathcal M^\infty(M), 
\end{displaymath}
where the last identification maps $\mu \in \mathcal M^\infty(M)$ to $\Phi(A,B)=\mu(A \cap B)$.

Given a smooth function $f \in C^\infty(M)$, we define a smooth valuation $\Phi_f$ by 
\begin{displaymath}
\Phi_f(A):=\int_M f d\Phi(A,\cdot). 
\end{displaymath} 
In particular, if $f \equiv 1$, we get the \emph{globalization map} 
\begin{displaymath}
\glob:\mathcal{C}(M) \to \mathcal{V}^\infty(M),\quad \Phi \mapsto \Phi_1.
\end{displaymath}

Next we introduce the new notion of generalized curvature measure by replacing the test bodies $\mathcal P(M)$ with $\mathcal V^\infty_c(M)$, as in the definition of generalized valuations.

\begin{Definition}
The space $\mathcal{C}^{-\infty}(M)$ of \emph{generalized curvature measures} consists of functionals $\Phi:\mathcal V^\infty_c(M)\to \mathcal M^{-\infty}(M)$ such that there are $ \phi \in \Omega^n_{-\infty}(M), \omega \in \Omega^{n-1}_{-\infty}(\p_M)$ with 
\begin{displaymath}
\Phi([(\zeta,\tau)],f)=\langle \phi, f\cdot \zeta\rangle+ \langle \omega, \pi^*f\cdot \tau\rangle 
\end{displaymath}
for all $[(\zeta,\tau)] \in \mathcal V^\infty_c(M),f\in C^\infty_c(M)$.
\end{Definition}
We write $\Phi=[\phi,\omega]$. Note that $\Phi=0$ if  $\phi=0$ and $\omega=\eta+d\eta'$ with $\eta, \eta'$ vertical. 

The natural filtration on the space of generalized $(n-1)$-forms on the sphere bundle $\p_M \to M$ induces a filtration
\begin{displaymath}
\mathcal{C}^{-\infty}(M)=\mathcal{C}^{-\infty}_0(M) \supset \mathcal{C}_1^{-\infty}(M) \supset \cdots \supset \mathcal{C}^{-\infty}_n(M)=\mathcal M^{-\infty}(M).
\end{displaymath}

\begin{Definition}
For $\Phi \in \mathcal{C}^{-\infty}(M)$ and $f \in C^\infty(M)$, we define $\Phi_f \in \mathcal{V}^{-\infty}(M)$ by 
\begin{displaymath}
\Phi_f(\mu):=\Phi(\mu,f), \quad  \mu \in \mathcal{V}^\infty_c(M).
\end{displaymath}
In particular, we obtain a \emph{globalization map} 
\begin{displaymath}
\glob:\mathcal{C}^{-\infty}(M) \to \mathcal{V}^{-\infty}(M),\qquad  \Phi \mapsto \Phi_1,
\end{displaymath}
and we write $\glob[\phi,\omega]=[[\phi,\omega]]$.
\end{Definition}

In terms of generalized forms,  
\begin{displaymath}
[\phi,\omega]_f=[(f \cdot \pi_*\omega, a^*(D(\pi^*f \cdot \omega)+\pi^*(f\phi)))].
\end{displaymath}

Here the various operations (Rumin differential, push-forward and pull-back) are extended from smooth forms to generalized forms by duality in the usual way.

\begin{Proposition}
The globalization map $\glob:\mathcal{C}^{-\infty}(M) \to \mathcal{V}^{-\infty}(M)$ is surjective.
\end{Proposition}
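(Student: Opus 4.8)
The plan is to mimic, in the generalized (distributional) setting, the proof that the smooth globalization map $\glob:\mathcal C(M)\to\mathcal V^\infty(M)$ is surjective, upgrading each ingredient by duality. Concretely, I want to produce for a given $\psi\in\mathcal V^{-\infty}(M)$ a generalized curvature measure $\Phi=[\phi,\omega]$ with $\glob\Phi=\psi$. Write $\psi=[(\zeta,\tau)]$ with $\zeta\in C^{-\infty}(M)$, $\tau\in\Omega^n_{-\infty}(\mathbb P_M)$ closed and vertical, satisfying $\pi_*\tau=d\zeta$. The obvious candidate is to take $\omega$ to be any generalized $(n-1)$-form on $\mathbb P_M$ with $\pi_*\omega=\zeta$ and $D\omega = a^*\tau$ (up to the appropriate sign/orientation convention matching \eqref{eq:valuation_current}), together with $\phi=0$; one then checks against the formula $[\phi,\omega]_f=[(f\cdot\pi_*\omega, a^*(D(\pi^*f\cdot\omega)+\pi^*(f\phi)))]$ that $\glob[\,0,\omega\,]=[(\pi_*\omega, a^*D\omega)]=[(\zeta,\tau)]$.

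First I would recall why this works in the smooth case: surjectivity of $\glob$ on $\mathcal C(M)\to\mathcal V^\infty(M)$ is already stated in the excerpt, and its proof amounts to the fact that every smooth valuation is of the form $[[\phi,\omega]]$, hence equals $\glob[\phi,\omega]$ for the curvature measure $[\phi,\omega]$. So the real content is a density/duality argument: I would take the surjective continuous linear map $\glob:\mathcal C(M)\to\mathcal V^\infty(M)$, restrict to compactly supported objects, and dualize. The transpose of a surjection with good topological properties (continuous, with a continuous or at least reasonably behaved section, or a surjection between suitable nuclear/Fréchet-type spaces) is injective with weak-$*$-closed image; but here I want the transpose direction to go the other way — I am given $\psi\in\mathcal V^{-\infty}(M)=\mathcal V^\infty_c(M)^*$ and must hit it by some $\Phi\in\mathcal C^{-\infty}(M)=$ (functionals on $\mathcal V^\infty_c(M)$ of the prescribed form). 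So the cleaner route is the explicit one above: solve $\pi_*\omega=\zeta$ and $D\omega=a^*\tau$ for a generalized form $\omega$.

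Thus the key steps, in order: (1) unwind the definitions so that $\glob\Phi=\psi$ becomes the pair of equations $\pi_*\omega=\zeta$, $a^*D\omega=\tau$ (equivalently $D\omega=a^*\tau$), with $\phi$ free — and observe $\phi$ can be taken $0$; (2) solve $\pi_*\omega_0=\zeta$: since $\pi:\mathbb P_M\to M$ is a proper submersion with fiber $S^{n-1}$, push-forward of forms $\pi_*:\Omega^{n-1}_{-\infty}(\mathbb P_M)\to C^{-\infty}(M)=\Omega^0_{-\infty}(M)$ is surjective (dualize the injective pullback $\pi^*$ on compactly supported top-degree densities, or just pick a fiberwise volume form $\rho$ with $\pi_*\rho=1$ and set $\omega_0=\pi^*\zeta\wedge\rho$, where the wedge is legitimate because $\pi^*\zeta$ and the smooth form $\rho$ have disjoint wave front conditions); (3) correct $\omega_0$ to fix the Rumin equation: the residual form $\tau-a^*D\omega_0$ is closed and vertical, and the structure theory of the Rumin complex on the contact manifold $\mathbb P_M$ — exactly the $[[\phi,\omega]]=0$ criterion $D\omega+\pi^*\phi=0$, $\pi_*\omega=0$ quoted from \cite{bernig_broecker07} — together with exactness of $D$ in the relevant degree on closed vertical forms, lets me solve $D\eta=$ (that residual), after adjusting by a $\pi^*$ of something to keep $\pi_*$ unchanged; take $\omega=\omega_0+\eta$; (4) verify all wedge products, push-forwards and $D$ applications are legitimate on generalized forms with the stated wave front sets, invoking the hypocontinuity and wave-front calculus recalled in Section~\ref{sec:background}, and conclude $\Phi=[0,\omega]\in\mathcal C^{-\infty}(M)$ with $\glob\Phi=\psi$.

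The main obstacle I expect is step (3): solving the Rumin-type equation $D\eta=\xi$ for a \emph{closed vertical} generalized top-form $\xi$ on $\mathbb P_M$, together with controlling $\pi_*\eta$ so as not to spoil step (2). In the smooth category this is precisely the image-characterization of $\glob$ via the exact sequence underlying the description of $\mathcal V^\infty(M)$, and one wants the generalized version by duality; the delicate point is that duality turns surjectivity of a map into injectivity of its transpose, so to get \emph{surjectivity} of the dual map I need the original map to be \emph{injective} with closed image (equivalently, a topological embedding) on the right spaces — which forces me to work with the quotient of forms by the relation $\{D\omega+\pi^*\phi=0,\ \pi_*\omega=0\}$, track the wave front constraints through this quotient, and confirm the relevant spaces are tame enough (nuclear Fréchet / $LF$, say) for the closed-range statement to hold. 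Once that functional-analytic point is secured, assembling $\Phi=[0,\omega]$ and checking $\glob\Phi=\psi$ is routine.
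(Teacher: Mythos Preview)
Your overall plan---find generalized forms $(\phi,\omega)$ with $D\omega+\pi^*\phi=a^*\tau$ and $\pi_*\omega=\zeta$---is the right one, but the assertion in step~(1) that ``$\phi$ can be taken $0$'' is wrong in general, and this is exactly where step~(3) breaks. The Rumin complex on $\mathbb P_M$ computes the de Rham cohomology, so ``exactness of $D$ in the relevant degree on closed vertical forms'' fails whenever $H^n(\mathbb P_M)\neq 0$. Concretely, the residual $\xi=a^*\tau-D\omega_0$ is closed and vertical, but its class $[\xi]\in H^n(\mathbb P_M)$ need not vanish; for instance when $M$ is a torus, the Euler class of $\mathbb P_M$ is zero, the map $\pi^*:H^n(M)\to H^n(\mathbb P_M)$ is injective, and $[\tau]$ may lie in its image without being zero. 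In that case no $\eta$ with $D\eta=\xi$ exists. Your vague ``adjusting by a $\pi^*$ of something'' is exactly the missing $\phi$: one needs a nonzero $\phi\in\Omega^n_{-\infty}(M)$ to absorb this cohomological obstruction, and you give no mechanism for producing it.

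The paper's proof handles this directly. Using that generalized and smooth de Rham cohomologies coincide, it invokes the Gysin sequence $H^n(M)\stackrel{\pi^*}{\to}H^n(\mathbb P_M)\stackrel{\pi_*}{\to}H^1(M)$ together with $\pi_*[\tau]=[d\zeta]=0$ to conclude that $[\tau]$ lies in the image of $\pi^*$; hence $\tau=d\omega+\pi^*\phi$ for some generalized $\omega,\phi$, and verticality of $\tau$ then forces $d\omega=D\omega$. This only yields $\pi_*\omega=\zeta+\lambda$ for some constant $\lambda$, which is corrected by subtracting $\lambda$ times Chern's pair $(\phi_c,\omega_c)$ satisfying $D\omega_c+\pi^*\phi_c=0$, $\pi_*\omega_c=1$. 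Your step~(2), constructing $\omega_0$ with $\pi_*\omega_0=\zeta$ via a fiberwise volume form, is a legitimate alternative to this last correction, but it does not touch the obstruction in step~(3).
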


\proof
It is well-known \cite[eq. 6.38]{melrose93} that for every smooth manifold $X$, the inclusion $\Omega(X)\hookrightarrow\Omega_{-\infty}(X)$ induces an isomorphism between the corresponding de Rham cohomologies of $X$.

Let a generalized valuation be given by a pair of generalized forms $\zeta \in C^{-\infty}(M)$, $\tau \in \Omega^n_{-\infty}(\p_M)$, with $\tau$ vertical and closed and $\pi_*\tau=d\zeta$. Let $[\tau]\in H^n(\p_M)$ be the cohomology class of $\tau$. A part of the exact Gysin sequence \cite[Proposition 4.13]{bott_tu} is 
\begin{displaymath}
H^n(M) \stackrel{\pi^*}{\to} H^n(\p_M) \stackrel{\pi_*}{\to} H^1(M).
\end{displaymath} 

By our assumptions we have $\pi_*[\tau]=[\pi_*\tau]=[d\zeta]=0$, hence there exists some class $[\phi] \in H^n(M)$ with $\pi^*[\phi]=[\tau]$, which means $\tau=d\omega+\pi^*\phi$ for some $\omega \in \Omega^{n-1}_{-\infty}(\p_M), \phi \in \Omega^n_{-\infty}(M)$. Since $\tau$ is vertical, we have $d\omega=\tau-\pi^*\phi=D\omega$. Moreover, $\pi_*\pi^*\phi=0$, hence $d\pi_*\omega=\pi_*\tau=d\zeta\Rightarrow \pi_*\omega=\zeta+\lambda$ for some $\lambda \in \R$. 

Let $(\phi_c,\omega_c)$ be a pair of smooth forms with $D\omega_c+\pi^*\phi_c=0, \pi_*\omega_c=1$. Such forms were constructed by Chern \cite{chern44}. Then $D(\omega-\lambda \omega_c)+\pi^*(\phi-\lambda \phi_c)=\tau$ and $\pi_*(\omega-\lambda \omega_c)=\zeta$, hence the generalized curvature measure $[(\phi-\lambda \phi_c,\omega-\lambda \omega_c)]$ maps under $\glob$ to the given generalized valuation.  
\endproof

Fix a pair of closed cones $\Lambda\subset T^*M\setminus \underline{0}$, $\Gamma\subset T^*\mathbb P_M \setminus \underline{0}$. Define $\mathcal C_{\Lambda,\Gamma}^{-\infty}(M)$ as the space of curvature measures that can be represented by a pair $(\phi,\omega)\in \Omega^n_{-\infty,\Lambda}(M)\times \Omega^{n-1}_{-\infty,\Gamma}(\mathbb P_M)$. As differential operators do not increase the wave front set, the globalization map acts from $\mathcal C_{\Lambda,\Gamma}^{-\infty}(M)$ to $\mathcal V_{\pi_*\Gamma,-\Gamma\cup\pi^*\Lambda}^{-\infty}(M)$. We topologize $\mathcal C_{\Lambda,\Gamma}^{-\infty}(M)$ by quotienting the normal topology on $ \Omega^n_{-\infty,\Lambda}(M)\times \Omega^{n-1}_{-\infty,\Gamma}(\mathbb P_M)$.

Next, we find a subset of $\mathcal P(M)$ on which generalized curvature measures can be evaluated. Any $A\in\mathcal P(M)$ admits a canonical decomposition as a finite union of smooth, locally closed submanifolds called the smooth strata of $A$.
\begin{Proposition}\label{prop:curvature_measures_at_polyhedra}
Let $ \Lambda\subset T^*M \setminus \underline 0$ and $\Gamma \subset T^*\mathbb P_M \setminus \underline 0$ be closed conical sets. Assume $A \in \mathcal P(M)$ is a differentiable polyhedron, such that each smooth stratum $ A_i$ of $A$ has $N^*A_i \cap  \Lambda=\emptyset$, and each smooth stratum $Z_j$ of $\nc(A)$ satisfies $N^*Z_j \cap  \Gamma=\emptyset$. Then the evaluation map $\mathrm{ev}_A:\mathcal C^\infty(M)\to \mathcal M(M)$, $\Phi\mapsto \Phi(A, \bullet)$ extends to a continuous map  $\mathrm{ev}_A:\mathcal C_{\Lambda,\Gamma}^{-\infty}(M)\to \mathcal M^{-\infty}(M)$.
\end{Proposition}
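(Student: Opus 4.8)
\textbf{Proof strategy for Proposition \ref{prop:curvature_measures_at_polyhedra}.}

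The plan is to recall that a smooth curvature measure $\Phi = [\phi,\omega]$ acts on $A \in \mathcal P(M)$ by the two integrals $\int_{A \cap B}\phi + \int_{\nc(A)\cap \pi^{-1}B}\omega$, and to reinterpret each integral as a pairing of a generalized form against a current of integration. Concretely, $B \mapsto \int_{A\cap B}\phi$ is the measure $\mathbbm 1_A \cdot \pi_*^\phi$ where one views $\phi$ integrated along $A$; more usefully, $\mathrm{ev}_A(\Phi)$ is obtained by pairing $\phi$ with the current $[[A]]$ of integration over (the top stratum of) $A$, and pairing $\omega$ with the current $[[\nc(A)]]$ of integration over the conormal cycle, then pushing the latter forward under $\pi: \p_M \to M$. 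So the first step is to write, for $f \in C_c^\infty(M)$,
\begin{displaymath}
\mathrm{ev}_A(\Phi)(f) = \langle [[A]], f\cdot \phi\rangle + \langle [[\nc(A)]], \pi^*f\cdot \omega\rangle,
\end{displaymath}
which makes sense verbatim for smooth $\phi,\omega$ and which we must now extend to $(\phi,\omega) \in \Omega^n_{-\infty,\Lambda}(M)\times \Omega^{n-1}_{-\infty,\Gamma}(\p_M)$.

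The second step is the wave front analysis. The current $[[A]]$ is a sum of currents of integration over the smooth strata $A_i$, and the wave front set of such a current of integration over a submanifold is contained in its conormal bundle $N^*A_i$; likewise $\WF([[\nc(A)]]) \subset \bigcup_j N^*Z_j$ over the strata $Z_j$ of $\nc(A)$. (For the lower-dimensional strata one uses that the closure contributes boundary terms whose wave fronts still lie in the relevant conormals, or one argues stratum by stratum using that $A$ is a \emph{differentiable} polyhedron so the strata are embedded submanifolds.) The product of a generalized form with wave front in $\Lambda$ against a current with wave front in $N^*A_i$ is defined by Hörmander's criterion provided $\Lambda \cap a(N^*A_i) = \emptyset$; but $N^*A_i$ is already a linear subbundle, hence $a$-invariant, so the hypothesis $N^*A_i \cap \Lambda = \emptyset$ is exactly what is needed. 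The same remark applies to $N^*Z_j$ and $\Gamma$ for the second term. Thus $f\cdot\phi$ pairs with $[[A]]$ and $\pi^*f\cdot\omega$ pairs with $[[\nc(A)]]$, and the displayed formula defines a distribution $\mathrm{ev}_A(\Phi) \in \mathcal M^{-\infty}(M)$.

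The third step is well-definedness and continuity. Well-definedness means independence of the representing pair: if $[\phi,\omega] = 0$ then $\phi = 0$ and $\omega = \eta + d\eta'$ with $\eta,\eta'$ vertical; since $[[\nc(A)]]$ is a Legendrian cycle, it is closed and annihilates vertical forms, so $\langle [[\nc(A)]], \pi^*f\cdot(\eta + d\eta')\rangle$ vanishes after distributing $d$ (here one uses $d(\pi^*f\cdot \eta') = \pi^*df\wedge\eta' + \pi^*f\cdot d\eta'$ and that both $\eta'$ and $d\eta' = \tau - \pi^*\phi$-type terms pair trivially against the Legendrian cycle in the relevant degrees — this is the standard computation behind the relation $\Phi = 0 \Leftrightarrow \phi = 0,\ \omega = \eta + d\eta'$ vertical). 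Continuity is then immediate: the map $(\phi,\omega) \mapsto \mathrm{ev}_A(\Phi)(f)$ is, for each $f$, a pairing of the fixed currents $[[A]], [[\nc(A)]]$ against $(f\phi, \pi^*f\cdot\omega)$, and pairing against a fixed compactly supported distribution with wave front disjoint from $a(\WF(\cdot))$ is continuous in the normal topology on $\Omega^n_{-\infty,\Lambda}(M)\times\Omega^{n-1}_{-\infty,\Gamma}(\p_M)$; since $\mathrm{ev}_A$ factors through the quotient defining $\mathcal C_{\Lambda,\Gamma}^{-\infty}(M)$, it descends to a continuous map on that space, agreeing with the classical evaluation on $\mathcal C^\infty(M)$ (which is sequentially dense). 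The main obstacle is the second step: controlling the wave front set of the current $[[\nc(A)]]$ of the conormal cycle — a singular Legendrian cycle, not a smooth submanifold — in terms of the conormals of its strata, and checking that the lower-dimensional strata genuinely contribute nothing outside $\bigcup_j N^*Z_j$; this is where the hypothesis that $A$ is a differentiable polyhedron (so $\nc(A)$ is a locally finite union of embedded pieces with controlled tangent behaviour) is essential.
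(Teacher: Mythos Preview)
Your proposal is correct and follows the same approach as the paper: express $\mathrm{ev}_A(\Phi)(f)$ as the sum of pairings $\langle \phi, f\cdot [[A]]\rangle + \langle \omega, \pi^*f\cdot [[\nc(A)]]\rangle$, bound the wave front sets of these integration currents by the unions $\bigcup_i N^*A_i$ and $\bigcup_j N^*Z_j$ of conormals of the strata, and invoke H\"ormander's criterion together with hypocontinuity of the pairing. The paper's proof is a three-line version of exactly this and does not spell out your third step (independence of the representing pair $(\phi,\omega)$), which you handle via the Legendrian-cycle property of $[[\nc(A)]]$; that extra care is reasonable, though note that the paper only records the \emph{sufficient} direction $\phi=0,\ \omega=\eta+d\eta'$ vertical $\Rightarrow [\phi,\omega]=0$, so your use of the converse deserves a word of justification.
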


\proof
Take $f\in C_c^\infty(M)$. Then $\WF(f\cdot [[A]])\subset  \cup_i N^*A_i$, and $\WF(\pi^*f\cdot [[\nc(A)]]) \subset \cup_j N^*Z_j$.
Take $\Phi\in \mathcal C^{-\infty}(M)$ represented by the forms $(\phi,\omega)$ with wave fronts in $(\Lambda, \Gamma)$, respectively. Now
\begin{displaymath}
\langle  \mathrm{ev}_A(\Phi), f\rangle:=\langle \phi, f\cdot [[A]]\rangle+ \langle \omega, \pi^*f\cdot [[\nc(A)]]\rangle 
\end{displaymath} 
is the sought after extension.
\endproof

\subsection{Restriction of generalized curvature measures}\label{sec:restriction_of_curvature_measures}

Given an immersion $e\colon X\looparrowright Y$, it is possible to pull-back smooth valuations and curvature measures through $e$. This operation is called \emph{restriction}, and can be extended to generalized valuations and curvature measures if certain conditions are satisfied.

If $e:X\hookrightarrow Y$ is an embedding, the operation of restriction of smooth valuations $e^*:\mathcal V^\infty(Y)\to \mathcal V^\infty(X)$ is characterized by $e^*\psi(A)=\psi(e(A))$. The restriction of curvature measures is analogous: $e^*\Phi(A, B)=\Phi(e(A), \tilde B)$ where $\tilde B\subset Y$ is any Borel subset such that $\tilde B\cap e(A)=B$.
 	
Let us recall the construction in \cite{alesker_intgeo}, compare also \cite{fu_wannerer}. Assume first $e:X\hookrightarrow Y$. Consider the natural maps $q: \mathbb P_+(N^*X)\to X$, $\theta: \mathbb P_+(N^*X)\to \mathbb P_Y$, $\beta:\widetilde{ \mathbb P_Y\times_Y X}\to \mathbb P_X$, $\alpha:\widetilde{ \mathbb P_Y\times_Y X}\to \mathbb P_Y$. Here $q$ is the projection, $\theta$ the fiberwise inclusion, $\widetilde{ \mathbb P_Y\times_Y X}$ the oriented blow-up along $N^*X$, $\beta$ is the smooth map extending the projection $\mathbb P_Y\times_Y X \setminus N^*X\to \mathbb P_X$, and $\alpha$ the composition of the blow-up map with the natural inclusion $ \mathbb P_Y\times_Y X\to \mathbb P_Y$. Let 
\begin{equation}\label{eq:restriction_curvature_measure}
\phi'=q_*\theta^*\omega, \qquad \omega'=\beta_*\alpha^*\omega.
\end{equation}
Then, the restrictions of the smooth curvature measure $[\phi,\omega]$ and its globalization $[[\phi,\omega]]$ are given by 
\begin{align}
 e^*[\phi,\omega]&=[\phi',\omega'],\label{eq:res_curv}\\
 e^*[[\phi,\omega]]&=[[\phi',\omega']].\label{eq:res_val}
\end{align}
Since the construction is local, it immediately extends to immersions $e:X\looparrowright Y$. 

Formula \eqref{eq:res_val} can be used to extend the pull-back to a continuous map
\begin{displaymath}
e^*:\mathcal{V}_{\Lambda,\Gamma}^{-\infty}(Y) \to \mathcal{V}^{-\infty}(X).
\end{displaymath}
provided the transversality conditions
\begin{equation} \label{eq_wf_conditions_pull_back}
\Lambda \cap T^*_XY =\emptyset, \quad \Gamma \cap T^*_{X \times_Y\p_Y} \p_Y=\emptyset, \quad \Gamma \cap T^*_{\p_*(T^*_XY)} \p_Y=\emptyset
\end{equation}   
are satisfied, see \cite{alesker_intgeo} for details.

The case of generalized curvature measures is similar and reads as follows. Note the minor difference in the assumptions, which is due to the fact that $[\phi,\omega]\in\mathcal C^{-\infty}_{\WF(\phi),\WF(\omega)}$, while $[[\phi,\omega]]\in\mathcal V^{-\infty}_{\WF(\zeta),\WF(\tau)}$ with $\zeta,\tau$ given by \eqref{eq:valuation_current}.

\begin{Proposition}\label{prop:restricting_curvature_measures}
 Let $e\colon X \looparrowright Y$ be an immersion of manifolds. Assume it is transversal to $(\emptyset, \Gamma)$ in the sense of \eqref{eq_wf_conditions_pull_back}, and $\Lambda\subset T^*Y \setminus \underline{0}$ is arbitrary. The restriction map $e^*\colon \mathcal C(Y)\rightarrow \mathcal C(X)$ then extends to a continuous map 
\begin{displaymath}
 e^*\colon \mathcal C_{ \Lambda,\Gamma}^{-\infty}(Y)\rightarrow \mathcal C^{-\infty}(X),
\end{displaymath}given by \eqref{eq:res_curv}.
It satisfies $e^*(\Phi_f)=(e^*\Phi)_{e^*f}$ for all $f \in C^\infty(Y)$, in particular $\glob \circ e^*=e^* \circ \glob$.
\end{Proposition}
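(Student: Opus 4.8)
\textbf{Proof proposal for Proposition \ref{prop:restricting_curvature_measures}.}

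The plan is to reduce everything to the already-established continuity of the pull-back on the level of generalized forms, and to the known smooth-case formulas \eqref{eq:res_curv}--\eqref{eq:res_val}. Since the construction \eqref{eq:restriction_curvature_measure} is local, I would first pass to charts and assume $e\colon X\hookrightarrow Y$ is an embedding, so that the maps $q,\theta,\beta,\alpha$ are honestly defined. The defining data of $e^*[\phi,\omega]=[\phi',\omega']$ involve only $\omega$ (not $\phi$), via the compositions of pull-backs $\theta^*,\alpha^*$ and push-forwards $q_*,\beta_*$. The first step is therefore to check that, under the transversality hypothesis $\Gamma\cap T^*_{X\times_Y\p_Y}\p_Y=\emptyset$ and $\Gamma\cap T^*_{\p_+(T^*_XY)}\p_Y=\emptyset$ (the two relevant conditions from \eqref{eq_wf_conditions_pull_back} with $\Lambda=\emptyset$), the operations $\theta^*$ and $\alpha^*$ extend continuously to generalized forms with wave front in $\Gamma$ — this is the standard pull-back theorem of H\"ormander, which requires precisely that the conormal of the map misses $\Gamma$ — and that the ensuing push-forwards $q_*,\beta_*$ are continuous on generalized forms (push-forward along a proper submersion is always defined and continuous in the normal topology). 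Composing, $\omega\mapsto(\phi',\omega')=(q_*\theta^*\omega,\beta_*\alpha^*\omega)$ is a continuous linear map $\Omega^{n-1}_{-\infty,\Gamma}(\p_Y)\to\Omega^{\dim X}_{-\infty}(X)\times\Omega^{\dim X-1}_{-\infty}(\p_X)$, so it descends to a continuous map $\mathcal C^{-\infty}_{\Lambda,\Gamma}(Y)\to\mathcal C^{-\infty}(X)$ after quotienting by the relations $[\phi,\omega]=0$ — and here one uses that $\phi$ plays no role, so the hypothesis on $\Lambda$ may indeed be vacuous.

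The second step is to verify that the extended map is well-defined on equivalence classes, i.e. that if $(\phi,\omega)$ represents the zero generalized curvature measure then $(\phi',\omega')$ does too. By the remark after the definition of $\mathcal C^{-\infty}(M)$, $[\phi,\omega]=0$ iff $\phi=0$ and $\omega=\eta+d\eta'$ with $\eta,\eta'$ vertical; since verticality and the property of being a differential of a vertical form are preserved by $\theta^*,\alpha^*$ (fiberwise inclusions along the contact structure) and by the fiber push-forwards $q_*,\beta_*$ — these are exactly the compatibilities already used in \cite{alesker_intgeo} in the smooth case — we get $\omega'=\eta_1+d\eta_1'$ with $\eta_1,\eta_1'$ vertical and $\phi'=0$ (the latter because $\phi'=q_*\theta^*\omega$ with $\omega$ vertical pushes to zero along the fiber), hence $[\phi',\omega']=0$. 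Since smooth forms are sequentially dense in $\Omega^{n-1}_{-\infty,\Gamma}(\p_Y)$, the extension agrees with the classical restriction on $\mathcal C(Y)$ and is the unique continuous such extension.

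The third step is the compatibility with globalization and with multiplication by functions. For $f\in C^\infty(Y)$ one has, on smooth curvature measures, the identity $e^*(\Phi_f)=(e^*\Phi)_{e^*f}$; this is a purely formal consequence of the explicit formula $[\phi,\omega]_f=[(f\pi_*\omega,a^*(D(\pi^*f\,\omega)+\pi^*(f\phi)))]$ together with the naturality of $\pi_*,\pi^*,D$ and the commutation $e^*\circ(\text{restriction data})$. Both sides are continuous in $\Phi\in\mathcal C^{-\infty}_{\Lambda,\Gamma}(Y)$ in the normal topology (using again density of smooth forms and sequential continuity of all operations involved), so the identity propagates from the smooth case to the generalized case. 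Taking $f\equiv 1$ gives $\glob\circ e^*=e^*\circ\glob$. The main obstacle is the first step: one must confirm that the transversality conditions \eqref{eq_wf_conditions_pull_back} (with $\Lambda=\emptyset$) are exactly the H\"ormander conditions guaranteeing that $\theta^*$ and $\alpha^*$ — the latter being a pull-back along a composition involving the oriented blow-up, whose conormal geometry along $N^*X$ is the delicate point — extend to $\Omega_{-\infty,\Gamma}$ and that the resulting wave front of $\omega'$ is controlled well enough for $q_*,\beta_*$ to be applied; this blow-up analysis is where the conditions $\Gamma\cap T^*_{X\times_Y\p_Y}\p_Y=\emptyset$ and $\Gamma\cap T^*_{\p_+(T^*_XY)}\p_Y=\emptyset$ are consumed, and it is carried out in detail in \cite{alesker_intgeo}; I would cite that argument rather than redo it.
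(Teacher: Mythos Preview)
Your overall architecture is right --- define $(\phi',\omega')$ by the form-level operations, check these are continuous on $\Omega_{-\infty,\Gamma}$ by citing the wave-front analysis in \cite{alesker_intgeo}, and then pass to the quotient. But your Step~2 and the paper's argument diverge in a way that matters.

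You claim that $[\phi,\omega]=0$ \emph{iff} $\phi=0$ and $\omega=\eta+d\eta'$ with $\eta,\eta'$ vertical. The paper only states the ``if'' direction, and you do not justify the converse for generalized forms. Without it, showing that the operations $q_*\theta^*$, $\beta_*\alpha^*$ preserve the subspace $\{\phi=0,\ \omega=\eta+d\eta'\}$ does not establish that the full kernel is sent to the kernel. Even granting the ``iff'', your verification that verticality and ``$d$ of vertical'' are preserved is deferred to \cite{alesker_intgeo}; that reference treats the smooth restriction of valuations, and the analogous statements for the blow-up push-forward $\beta_*$ (where the fibers are closed balls, so Stokes produces a boundary term on the exceptional divisor that one must check vanishes) need to be argued.

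The paper sidesteps both issues by reversing your Steps~2 and~3. It first proves, directly at the level of forms and by continuity from the smooth case, that $q_*\theta^*(\pi^*f\cdot\omega)=e^*f\cdot\phi'$ and $\beta_*\alpha^*(\pi^*f\cdot\omega)=\pi^*e^*f\cdot\omega'$, and that $\glob[\phi',\omega']=e^*\glob[\phi,\omega]$. These combine to give $[\phi',\omega']_{e^*f}=e^*([\phi,\omega]_f)$ for every $f\in C^\infty(Y)$. Now well-definedness falls out for free: if $[\phi,\omega]=0$ then $[\phi,\omega]_f=0$ for all $f$, hence $[\phi',\omega']_{g}=0$ for every $g$ in the image of $e^*$, which is all of $C^\infty(X)$ locally; and a generalized curvature measure vanishes precisely when all its $f$-localizations vanish (this is immediate from the definition, no kernel characterization needed). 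So the compatibility identity you postpone to Step~3 is in fact the engine of the whole proof, and proving it first makes Step~2 unnecessary.
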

 
\proof
Given $\Phi=[\phi,\omega]\in\mathcal C_{\Lambda,\Gamma}^{-\infty}(Y)$, the generalized forms $\phi'=q_*\theta^*\omega$ and $\omega'=\beta_*\alpha^*\omega$ are well-defined by standard wave front set considerations as in \cite[Claim 3.5.4]{alesker_intgeo}. We need to check that $[\phi',\omega']=0$ if $\Phi=0$.  By continuity one has $q_*\theta^*(\pi^*f\cdot\omega)=e^*f\cdot\phi'$ and $\beta_*\alpha^*(\pi^*f\cdot \omega)=\pi^*e^*f\cdot\omega'$ for $f\in C^\infty(Y)$. Likewise $\glob[\phi',\omega']=e^*\glob[\phi,\omega]$ by \cite[eq.~(3.5.4)]{alesker_intgeo}.
Hence $[\phi',\omega']_{e^*f}=e^*([\phi,\omega]_f)=0$ for all smooth $f$, which implies $[\phi',\omega']=0$. We may thus put $e^*\Phi:=[\phi',\omega']$.
 \endproof

Given a pair $(\phi,\omega)$ of generalized forms, we will denote by $e^*(\phi,\omega)=(\phi',\omega')$ \emph{the restriction in the sense of curvature measures}, where $\phi',\omega'$ are given by \eqref{eq:restriction_curvature_measure}.

\section{Homogeneous distributions}
\label{sec_computation_integrals}

In this section we recall some homogeneous distributions on the real line that will be used throughout the paper, and introduce some useful terminology for working with generalized differential forms that are homogeneous in a certain sense. In the last subsection, we compute the integral of a distribution-valued functional that will be in the center of our proof of the Weyl principle. 
 By $B(x,y)$ we denote the Euler Beta function.
 We introduce the notation 
\begin{equation} \label{eq:sine_integral}
S(a,b):=\int_{0}^{\frac \pi 2}\sin^a t\cos^{b} tdt= \frac12 B\left(\frac {a+1}2, \frac{b+1}2\right).
\end{equation}
By $\omega_k$ we denote the volume of the $k$-dimensional Euclidean unit ball.

\subsection{Homogeneous generalized functions on the real line}

We refer to \cite{gelfand_shilov} and \cite[Section 3.2]{hoermander_pde1} for the material in this subsection. 

Recall that the Dirac function $\delta_0$ satisfies 
\begin{displaymath}
\langle \delta_0^{(j)},f(x)dx\rangle =(-1)^jf^{(j)}(0). 
\end{displaymath}

Given $x \in \R$, we write $x=x_+-x_-$ with $x_+=\max(0,x), x_-=-\min(0,x)$. Let $s \in \C$ with $\mathrm{Re} s>-1$. Then the function $x_+^s$ is locally integrable and defines a generalized function on $\R$. It is well-known that $x_+^s$ extends to a meromorphic family of generalized functions with simple poles at $s=-1,-2,\ldots$. Explicitly, if $-k-1<\mathrm{Re}s<-k, k \in \mathbb N$ and $\phi \in C^\infty_c(\R)$, then 

\begin{equation}\label{eq:homogeneous_distribution_def}
\langle x_{\pm}^s,\phi(x)dx \rangle=\int_0^{\infty}x^s \left(\phi( \pm x)-\sum_{i=0}^{k-1} (\pm1)^i\frac{\phi^{(i)}(0)}{i!}x^i\right) dx,
\end{equation}
and 
\begin{equation} \label{eq_residue_x+}
\Res_{s=-k} x_{\pm}^s=(\mp 1)^{k-1}\frac{\delta_0^{(k-1)}}{(k-1)!}.
\end{equation}

Similarly, the locally integrable functions given  by $|x|^s:=x_+^s+x_-^s, \sign(x)|x|^s:=x_+^s-x_-^s$ for $\mathrm{Re}s>-1$, extend to meromorphic families of generalized functions. The poles and residues are given by
 \begin{align} 
\Res_{s=-2k+1} |x|^s & =2\frac{\delta_0^{(2k-2)}}{(2k-2)!}, \qquad
\Res_{s=-2k} \sign(x) |x|^s & =-2\frac{\delta_0^{(2k-1)}}{(2k-1)!}\label{eq:res_both}.
\end{align}


We introduce some further notation that will be convenient in our computations.
\begin{align*}
 x^k & := \begin{cases}
 |x|^k & k \text{ is even},\\
 \sign(x)|x|^k & k \text{ is odd}.
 \end{cases}
 \end{align*}
 
Define the following $s$-homogeneous generalized functions for half-integer $s<0$.
\begin{align}
\chi_0^s(x) & :=\begin{cases} 
x_+^s & \text{ if } s\in (2\mathbb Z+1)/2, \\
x^s &  \text{ if } s\in \mathbb Z, \label{eq:chi_0}
\end{cases} \\
\chi_1^s(x) & :=\begin{cases} 
(-1)^{s+\frac12} x_-^s& \text{ if } s\in (2\mathbb Z+1)/2, \\
 (-1)^{s+1} \frac{\pi}{(-s-1)!}\delta_0^{(-s-1)}(x) &  \text{ if } s\in \mathbb Z. \label{eq:chi_1}
\end{cases}
\end{align}

The index $i$ of $\chi_i^s\in\{\chi_0^s,\chi_1^s\}$ is understood modulo $2$. We note the identities 
\begin{equation}
\chi_i^{s+1}(-1)=-\chi_i^s(-1),\quad \chi_i^s(1)=\delta_{0,i},\quad  \chi_i^{s}(-1)=(-1)^{\lfloor s+\frac12\rfloor}\delta_{2s,i},
\end{equation}
where $\delta_{i,j}$ is the Kronecker delta.

\subsection{Homogeneity w.r.t. a function}
\label{subsec_homogeneous_forms_manifolds}

Fix $\sigma\in C^\infty(X)$ on an $m$-dimensional manifold $X$, and assume the level set $X_0=\sigma^{-1}(0)$ contains no critical points of $\sigma$.

\begin{Definition}\label{def:sigma_homogeneous}
A smooth form $\omega\in\Omega(X\setminus X_0)$ is \emph{homogeneous w.r.t. $\sigma$} if $\omega=f(\sigma)\omega'$ with $\omega'\in \Omega(X)$ smooth, and $f(t)=(\sign t)^{\epsilon}|t|^s$ for some $\epsilon\in\{0,1\}$, $s\in\mathbb C$. 

For $\delta\in \R/2\mathbb Z$, let $\mathcal H_\delta(X,\sigma)$ be the set of homogeneous w.r.t. $\sigma$ forms $\omega \in \Omega(X\setminus X_0)$ as above such that $s+\epsilon\equiv \delta \mod 2$. We say $\omega$ has \emph{($\sigma$-)degree} $\deg_\sigma \omega=\delta$.
\end{Definition}

We proceed to establish some basic properties of this class of forms.

\begin{Lemma}\label{lem:ignore_outskirts}
Let $U\subset X$ be open, and $X_0\subset U$.  If $\omega\in\Omega(X\setminus X_0)$ and $\omega|_{U \setminus X_0}\in \mathcal H_\delta(U,\sigma)$, then $\omega\in \mathcal H_\delta(X,\sigma)$.
\end{Lemma}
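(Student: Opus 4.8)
\textbf{Proof plan for Lemma \ref{lem:ignore_outskirts}.}

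The point is a purely local-to-global patching argument: being $\sigma$-homogeneous is witnessed by a global factorization $\omega = f(\sigma)\omega'$ with $\omega'$ smooth \emph{on all of $X$}, so from the hypothesis we only get such a factorization over $U$ and we must extend the smooth factor $\omega'$ across $X \setminus U$, where $\omega$ itself is already smooth (since $X_0 \subset U$, hence $X \setminus U \subset X \setminus X_0$). First I would fix the data coming from the hypothesis: on $U \setminus X_0$ we have $\omega = f(\sigma)\,\omega'_U$ with $\omega'_U \in \Omega(U)$ smooth and $f$ homogeneous of degree $s$ and pure parity, with $s + \epsilon_f \equiv \delta \bmod 2$. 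The function $f$ is already a globally defined (on $\R \setminus \{0\}$) homogeneous function, so only $\omega'_U$ needs to be globalized.

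The key step is to build the extension using a partition of unity subordinate to the open cover $\{U, X \setminus X_0\}$ of $X$. Choose $\rho_1, \rho_2 \in C^\infty(X)$ with $\rho_1 + \rho_2 \equiv 1$, $\spt \rho_1 \subset U$, $\spt \rho_2 \subset X \setminus X_0$. On $X \setminus X_0$ the form $\omega$ is smooth and nowhere involves a singularity of $f$, so $\tilde\omega := \frac{1}{f(\sigma)}\,\omega$ is a well-defined smooth form on $X \setminus X_0$ (note $f(\sigma)$ is smooth and nonvanishing there, since $\sigma \neq 0$ and $f$ is homogeneous hence nonvanishing away from $0$ — if $f$ vanished somewhere on $\R \setminus \{0\}$ it would vanish identically by homogeneity, contradicting $\omega \not\equiv 0$ on $U \setminus X_0$; the trivial case $\omega = 0$ is immediate). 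On $U \setminus X_0$ this $\tilde\omega$ agrees with $\omega'_U$, which extends smoothly across $X_0 \cap U$. Therefore $\omega' := \rho_1 \omega'_U + \rho_2 \tilde\omega$, interpreted as $\rho_1 \omega'_U$ on $U$ and $\rho_2 \tilde\omega$ on $X \setminus X_0$ and glued on the overlap (where both summands use forms that extend smoothly), is a well-defined smooth form on all of $X$. A direct check on $X \setminus X_0$ gives $f(\sigma)\,\omega' = f(\sigma)(\rho_1 \omega'_U + \rho_2 \tilde\omega) = \rho_1 f(\sigma)\omega'_U + \rho_2 \omega = \rho_1 \omega + \rho_2 \omega = \omega$, using $f(\sigma)\omega'_U = \omega$ on $U \setminus X_0$ and $f(\sigma)\tilde\omega = \omega$ on $X \setminus X_0$. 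Hence $\omega = f(\sigma)\omega'$ globally with $\omega' \in \Omega(X)$, so $\omega \in \mathcal H_\delta(X, \sigma)$, the degree and parity of $f$ being unchanged.

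The only mild subtlety — and the step I expect to require the most care in writing — is the well-definedness of $\omega'$ on the overlap $U \cap (X \setminus X_0) = U \setminus X_0$: one must confirm that $\omega'_U$ (defined on $U$) and $\tilde\omega$ (defined on $X \setminus X_0$) genuinely agree there, which is exactly the content of the hypothesis $\omega|_{U \setminus X_0} = f(\sigma)\omega'_U$, and that the support conditions on $\rho_1, \rho_2$ make each summand extend by zero to a global smooth form. Once that bookkeeping is in place the identity $f(\sigma)\omega' = \omega$ and the parity/degree preservation are immediate. No further ideas are needed; this is essentially the standard observation that a property defined by a smooth factorization, when the remaining factor is already smooth off the singular locus, is local around that locus.
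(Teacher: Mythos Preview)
Your proof is correct and follows the same idea as the paper's: define $\omega' := f(\sigma)^{-1}\omega$ on $X\setminus X_0$, observe it coincides with $\omega'_U$ on $U\setminus X_0$, and conclude it extends smoothly across $X_0$. The partition of unity is unnecessary, since $\tilde\omega$ and $\omega'_U$ already agree on the overlap $U\setminus X_0$ and hence glue directly by the sheaf property of smooth forms; the paper's one-line proof simply omits that redundancy.
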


\proof 
Put $\omega|_{U \setminus X_0}=f(\sigma)\omega'_U$. Then $\omega':=f(\sigma)^{-1}\omega$ extends $\omega'_U$ smoothly to $X$.
\endproof

\begin{Proposition}\label{prop:sigma} 
The space $\mathcal H_\delta(X,\sigma)$ is linear and closed under exterior derivative. The wedge product on forms defines a product 
\begin{displaymath}
\wedge: \mathcal H_{\delta_1}(X,\sigma)\otimes \mathcal H_{\delta_2}(X,\sigma)\to \mathcal H_{\delta_1+\delta_2}(X,\sigma).\end{displaymath}
\end{Proposition}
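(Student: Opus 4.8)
The plan is to verify the three assertions of Proposition~\ref{prop:sigma} directly from Definition~\ref{def:sigma_homogeneous}, reducing everything to the elementary fact that the functions $f\in C^\infty(\R\setminus\{0\})$ which are homogeneous of some degree $s$ and of a definite parity $\epsilon_f\in\{0,1\}$ form a set closed under multiplication, differentiation (in the appropriate bookkeeping), and which interacts well with the chain rule through $\sigma$. I would organize the argument around a single normalization: using Lemma~\ref{lem:ignore_outskirts} and the hypothesis that $0$ is a regular value of $\sigma$, I may work locally near $X_0$, and away from $X_0$ any homogeneous $f$ of degree $s$ and parity $\epsilon_f$ can be written as $f(\sigma)=\sigma^s$ on $\{\sigma>0\}$ and $f(\sigma)=(-1)^{\epsilon_f}(-\sigma)^s$ on $\{\sigma<0\}$; more efficiently I can just keep track of the pair $(s,\epsilon_f)$ and the invariant $\delta\equiv s+\epsilon_f \bmod 2$.

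\emph{Linearity.} Given $\omega_j=f_j(\sigma)\omega'_j\in\mathcal H_\delta(X,\sigma)$, $j=1,2$, with the same $\sigma$-degree $\delta$, the degrees $s_1,s_2$ and parities $\epsilon_{f_1},\epsilon_{f_2}$ satisfy $s_1+\epsilon_{f_1}\equiv s_2+\epsilon_{f_2}\equiv\delta$. If $s_1=s_2$ and $\epsilon_{f_1}=\epsilon_{f_2}$ one simply writes $\omega_1+\omega_2=f_1(\sigma)(\omega_1'+c\,\omega_2')$ when $f_1,f_2$ differ by a constant, but in general $f_1$ and $f_2$ need not be proportional, so I would instead observe that the set of \emph{finite sums} $\sum f_i(\sigma)\omega_i'$ with each summand in $\mathcal H_\delta$ is the linear span, and show it already consists of forms of that shape: because $s_1+\epsilon_{f_1}$ and $s_2+\epsilon_{f_2}$ are congruent mod~$2$, we have $s_1-s_2\in\mathbb Z$ and $s_1-s_2\equiv\epsilon_{f_1}-\epsilon_{f_2}\bmod 2$, so the quotient $f_1/f_2$ is a homogeneous function of integer degree $s_1-s_2$ with parity $\epsilon_{f_1}-\epsilon_{f_2}$, i.e.\ (away from $0$) a polynomial multiple of a monomial — in fact $f_1(t)/f_2(t)=c\,t^{\,k}$ up to a sign convention absorbed by $t^k$ in the sense of \eqref{eq:chi_0}'s notation — hence $\omega_1+\omega_2=f_2(\sigma)\big(\sigma^{\,k}\omega_1'+\omega_2'\big)$ with $\sigma^k\omega_1'+\omega_2'$ smooth on all of $X$. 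This handles linearity.

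\emph{Products.} For $\omega_j=f_j(\sigma)\omega'_j\in\mathcal H_{\delta_j}(X,\sigma)$, the wedge $\omega_1\wedge\omega_2=f_1(\sigma)f_2(\sigma)\,(\omega_1'\wedge\omega_2')$; the product $f_1f_2$ is homogeneous of degree $s_1+s_2$ and of parity $\epsilon_{f_1}+\epsilon_{f_2}\bmod 2$, and $\omega_1'\wedge\omega_2'$ is smooth on $X$, so $\omega_1\wedge\omega_2\in\mathcal H_{\delta_1+\delta_2}(X,\sigma)$ since $(s_1+s_2)+(\epsilon_{f_1}+\epsilon_{f_2})\equiv\delta_1+\delta_2$.

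\emph{Closedness under $d$.} Write $\omega=f(\sigma)\omega'$ with $\omega'$ smooth and $f$ homogeneous of degree $s$, parity $\epsilon_f$. Then $d\omega=f'(\sigma)\,d\sigma\wedge\omega'+f(\sigma)\,d\omega'$. The second term is $f(\sigma)\cdot(d\omega')$, manifestly in $\mathcal H_\delta$. For the first term, $f'$ is homogeneous of degree $s-1$ and of parity $\epsilon_f+1\bmod 2$, so $f'(\sigma)\,d\sigma\wedge\omega'=f'(\sigma)\cdot(d\sigma\wedge\omega')$ lies in $\mathcal H_{\delta'}$ with $\delta'\equiv(s-1)+(\epsilon_f+1)=s+\epsilon_f\equiv\delta$. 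Both terms therefore lie in $\mathcal H_\delta(X,\sigma)$, and by the linearity just established so does their sum $d\omega$.

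\emph{Main obstacle.} The only genuinely nontrivial point is the linearity argument: one must be careful that $\mathcal H_\delta(X,\sigma)$ is defined as a \emph{set} of forms of a special shape, not a priori a linear subspace, so adding two such forms with different homogeneity degrees requires the mod-$2$ compatibility of the $\sigma$-degrees to force the ratio $f_1/f_2$ to extend (after multiplication by the correct power of $\sigma$) across $X_0$; this is where the regular-value hypothesis on $\sigma$ and Lemma~\ref{lem:ignore_outskirts} do the real work, letting one absorb the resulting $\sigma^k$ factor into a globally smooth form. Everything else is a routine bookkeeping of degrees and parities via the chain rule and the Leibniz rule.
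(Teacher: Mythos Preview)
Your approach is essentially the same as the paper's. Two minor remarks: in the linearity argument you must say without loss of generality that $s_1\geq s_2$, so that $k=s_1-s_2\geq 0$ and $\sigma^{k}\omega_1'$ is genuinely smooth across $X_0$ (as written, $k$ could be negative and $\sigma^{k}\omega_1'+\omega_2'$ would be singular along $X_0$); and neither the regular-value hypothesis on $\sigma$ nor Lemma~\ref{lem:ignore_outskirts} is actually needed here --- once $k\geq 0$, the factor $\sigma^{k}$ is smooth on all of $X$ simply because $\sigma$ is smooth.
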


\proof 
Let us check linearity. Take $\omega_j=f_j(\sigma)\omega'_j$, $j=1,2$ as in Definition \ref{def:sigma_homogeneous}, such that $s_1+\epsilon_{1}\equiv s_2+\epsilon_{2}\equiv \delta \mod 2$. We may assume $s_1\leq s_2$. Then $f_2(\sigma)= \sigma^p f_1(\sigma)$ $p=s_2-s_1 \in \mathbb N \cup\{0\}$, and we conclude that $\omega_1+\omega_2=f_1(\sigma)(\omega_1'+\sigma^p\omega_2')\in\mathcal H_\delta(X,\sigma)$.

Let us check that $d(\mathcal H_\delta(X,\sigma))\subset \mathcal H_\delta(X,\sigma)$. Put $\omega = f(\sigma)\omega'$ as before. We get $d\omega=df\wedge\omega'+fd\omega'$. Now if $f=|\sigma|^s$ then $df=s|\sigma|^{s-1}\sign\sigma d\sigma$, and similarly for the odd case, implying the statement. The last assertion, too, is straightforward.
\endproof

\begin{Definition}\label{def:meromorphic_families}
	Given $\omega=f(\sigma)\omega'\in\mathcal H_\delta(X,\sigma)$ as in Definition \ref{def:sigma_homogeneous}, and a meromorphic in $s\in \C$ family of $s$-homogeneous functions $\chi^s\in C^{-\infty}(\R)$, we define a meromorphic family of generalized forms by setting $\chi^s\cdot\omega:=(\chi^s\cdot f)(\sigma)\omega'$, where $\chi^s \cdot f\in C^{-\infty}(\R)$ is the homogeneous function defined for large $\mathrm{Re}(s)$ by continuity, and then extended meromorphically to $s\in \C$.
\end{Definition}
In particular, the wave front set \begin{equation}\label{eq:WF_sigma}\WF(\chi^s\cdot\omega)\subset \sigma^*\WF(\chi^s\cdot f)\subset \sigma^*N^*\{0\}=N^*X_0.\end{equation}

\begin{Proposition}\label{prop:independence_meromorphic}
Let  $\omega \in \mathcal H_\delta(X,\sigma)$. The following families are analytic at $s=0$.
\begin{itemize}
\item $\sigma_\pm^s\cdot \omega$, if either $\delta\notin \mathbb Z/2\mathbb Z$, or if $\omega$ extends as an $L^1_{\mathrm{loc}}$ form to $X$.
\item $\sign(\sigma)^\delta|\sigma|^s\cdot\omega$, if $\delta \in\mathbb Z/2\mathbb Z$. 
\end{itemize}
\end{Proposition}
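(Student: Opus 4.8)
The plan is to reduce everything to a one-dimensional statement on the real line via the defining submersion $\sigma$, and then simply locate the poles of the resulting standard homogeneous families using the residue formulas of the previous subsection. First I would localize: on $X\setminus X_0$ both $\chi^s$ and $f$ are smooth, so there $\chi^s\cdot\omega$ restricts to a family of smooth forms depending \emph{holomorphically} on $s$, with no singularity at $s=0$. Using a partition of unity subordinate to a cover of $X$ by $X\setminus X_0$ together with open sets on which $\sigma$ is a submersion onto an interval $I\ni 0$ (submersivity being an open condition around $X_0$), it therefore suffices, by Lemma \ref{lem:ignore_outskirts}, to test analyticity at $s=0$ against forms supported in such a submersion chart.

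On such a chart $\chi^s\cdot\omega=(\chi^s\cdot f)(\sigma)\wedge\omega'$ with $\omega'$ smooth and $f$ homogeneous of some degree $s_0$ and parity $\epsilon_f$, with $s_0+\epsilon_f\equiv\delta\bmod 2$. A direct inspection shows that, up to a nonzero scalar, $\chi^s\cdot f$ equals the standard family $x_\pm^{\,s+s_0}$ in the case $\chi^s=\sigma_\pm^s$, and $\sign(x)^{\,s_0}\,|x|^{\,s+s_0}$ in the case $\chi^s=\sign(\sigma)^\delta|\sigma|^s$ (the parity of the $\sign$ factor being $\delta+\epsilon_f\equiv s_0$). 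Since pull-back under the submersion $\sigma$ and wedging with the fixed smooth form $\omega'$ are continuous operations, they carry a meromorphic family of distributions to a meromorphic family, turn a (at most) simple pole into a (at most) simple pole, and send the residue at $s=0$ to the pulled-back, $\omega'$-wedged residue. Hence the whole question reduces to: where do the poles of these standard families (shifted by $s_0$) sit, and does the residue at $s=0$ survive $\sigma^*$ and $\wedge\,\omega'$.

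For $\sign(\sigma)^\delta|\sigma|^s\cdot\omega$ one reads off from \eqref{eq:res_odd} and \eqref{eq:res_even} that the poles of $\sign(x)^{s_0}|x|^{\,s+s_0}$ occur only where $s+s_0$ is a negative integer of parity $s_0+1$, i.e.\ only at \emph{odd} integer values of $s$; in particular $s=0$ is never a pole, which settles the second bullet with no further input. For $\sigma_\pm^s\cdot\omega$, by \eqref{eq_residue_x+} the family $x_\pm^{\,s+s_0}$ has simple poles exactly at $s=-s_0-1,-s_0-2,\dots$. If $\delta\notin\mathbb Z/2\mathbb Z$ then $s_0+\epsilon_f\notin\mathbb Z$, so $s_0\notin\mathbb Z$ and $s=0$ is not among these poles. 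If instead $\omega$ extends as an $L^1$ form, then either $s_0$ is not a negative integer, in which case again $s=0$ is not a pole, or $s_0=-k$ with $k\in\mathbb N$, in which case $s=0$ is a simple pole whose residue is a nonzero multiple of $\sigma^*\!\bigl(\delta_0^{(k-1)}\bigr)\wedge\omega'$. In the chosen adapted coordinates with $\sigma$ a coordinate function, $L^1$-integrability of $f(\sigma)\omega'$ forces every coefficient of $\omega'$ to be divisible by $\sigma^{k}$ (since $|\sigma|^{-k}a(\sigma,y)$ is locally integrable in $\sigma$ only when $a$ vanishes to order $\ge k$), so the $(k-1)$-jet of $\omega'$ in the normal direction vanishes along $X_0$ and $\sigma^*\!\bigl(\delta_0^{(k-1)}\bigr)\wedge\omega'=0$. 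Thus the residue vanishes and, the pole being simple, the family is analytic at $s=0$.

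The only step requiring genuine care, as opposed to parity bookkeeping and quoting \eqref{eq_residue_x+}, \eqref{eq:res_odd}, \eqref{eq:res_even}, is the last one: making precise that the $L^1$ hypothesis is exactly what annihilates the residue, i.e.\ that $L^1$-integrability of $\omega$ near $X_0$ is equivalent to the vanishing of enough normal derivatives of $\omega'$ along $X_0$ to kill $\sigma^*\!\bigl(\delta_0^{(k-1)}\bigr)\wedge\omega'$. I would carry this out in coordinates adapted to the submersion $\sigma$ (so that $\sigma^*\delta_0^{(k-1)}$ becomes an explicit combination of normal derivatives of the integration current over $X_0$), which I expect to be the main — but still routine — obstacle.
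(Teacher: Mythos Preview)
Your argument is correct and follows the same route the paper has in mind: reduce via $\sigma$ to the standard one-variable homogeneous families and read off the pole locations from \eqref{eq_residue_x+}, \eqref{eq:res_odd}, \eqref{eq:res_even}. The paper's own proof is the single sentence ``This follows from the classification of homogeneous distributions,'' so your write-up is simply an explicit unpacking of that line.

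One remark: your treatment of the $L^1$ alternative in the first bullet actually goes beyond what the paper writes down. The applications in the paper (Definition~\ref{def_phi_kri}) only invoke the first bullet when $\delta\notin\mathbb Z/2\mathbb Z$, so the $L^1$ clause is never exercised there; the one-line proof tacitly relies on this. Your observation that $L^1$-ness of $|\sigma|^{-k}\omega'$ forces the normal $(k{-}1)$-jet of $\omega'$ to vanish along $X_0$, hence kills the residue $\sigma^*\delta_0^{(k-1)}\wedge\omega'$, is the honest content of that clause and is correctly argued. The only point to phrase a bit more carefully is the passage from joint local $L^1$-integrability to the vanishing of $\partial_\sigma^{\,j}a_I|_{X_0}$ for $j<k$: since the $a_I$ are smooth, if some $\partial_\sigma^{\,k-1}a_I(0,y_0)\neq 0$ then it is nonzero on an open set of $y$, and Fubini gives a divergent $\int|\sigma|^{-1}$ factor; this is exactly what you sketch, but it is worth saying once rather than leaving it as ``routine.''
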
 

\proof
This follows from the classification of homogeneous distributions. 
\endproof

\begin{Lemma}\label{lem:sigma_pull_back}
Let $\pi:Y\to X$ be a surjective submersion, and $\omega\in\Omega(X\setminus X_0)$. Assume $\pi^*\omega\in\Omega(Y\setminus \pi^{-1}(X_0))$ satisfies $\pi^*\omega\in\mathcal H_\delta(Y, \pi^*\sigma)$. Then $\omega\in\mathcal H_\delta(X,\sigma)$. 
\end{Lemma}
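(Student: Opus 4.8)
\textbf{Plan of proof for Lemma \ref{lem:sigma_pull_back}.}
The statement is a descent property: $\sigma$-homogeneity of $\omega$ can be detected after pulling back along a surjective submersion. The plan is to unwind the hypothesis $\pi^*\omega \in \mathcal H_\delta(Y,\pi^*\sigma)$ into the concrete shape provided by Definition \ref{def:sigma_homogeneous}, namely $\pi^*\omega = f(\pi^*\sigma)\,\eta$ with $\eta \in \Omega(Y)$ smooth and $f \in C^\infty(\R \setminus \{0\})$ homogeneous of some degree $s$ with fixed parity $\epsilon_f$, and $s + \epsilon_f \equiv \delta \bmod 2$. Since $f \ne 0$ away from $0$ and $\pi^*\sigma$ is nonvanishing on $Y \setminus \pi^{-1}(X_0)$, I can form $\eta = f(\pi^*\sigma)^{-1}\pi^*\omega$ on $Y \setminus \pi^{-1}(X_0)$; the content of the lemma is that the smooth form $\omega' := f(\sigma)^{-1}\omega$ on $X \setminus X_0$, which manifestly satisfies $\pi^*\omega' = \eta$, actually extends smoothly across $X_0$.

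\textbf{Key steps.} First I would note that $\pi$ restricts to a surjective submersion $Y \setminus \pi^{-1}(X_0) \to X \setminus X_0$, so on that open set the identity $\pi^*\omega' = \eta$ holds and $\eta$ is smooth there by hypothesis; this is automatic and carries no information about $X_0$. The real point is local near a point $x_0 \in X_0$: pick any $y_0 \in \pi^{-1}(x_0)$ (using surjectivity of $\pi$), and choose submersion charts in which $\pi$ becomes the standard projection $(u,v) \mapsto u$ from a neighborhood of $y_0$ to a neighborhood of $x_0$. Then $\pi^*\sigma$ depends only on $u$ and equals $\sigma \circ \pi$; and $\eta$, being smooth near $y_0$, has all partial derivatives bounded near $y_0$, in particular its restriction to the slice $\{v = v_0\}$ extends smoothly. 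Writing $\omega'$ in the $u$-coordinates, its coefficients are precisely the restrictions to that slice of the corresponding coefficients of $\eta$ (because $\pi$ is just projection and $\pi^*$ merely substitutes); hence each coefficient of $\omega'$ extends smoothly across $\{u \in X_0\}$. Thus $\omega'$ extends to a smooth form on a neighborhood of $x_0$ in $X$, and since $x_0 \in X_0$ was arbitrary and the extension is clearly unique (it is determined on the dense open set $X \setminus X_0$), these local extensions patch to a global smooth $\omega' \in \Omega(X)$. Finally, $\omega = f(\sigma)\omega'$ with the same $f$ of degree $s$ and parity $\epsilon_f$, so $\omega \in \mathcal H_\delta(X,\sigma)$ by definition, with $\deg_\sigma\omega = \delta$. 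One may alternatively phrase the patching argument via Lemma \ref{lem:ignore_outskirts}: it suffices to produce the smooth extension of $\omega'$ on some open $U \supset X_0$, which is exactly what the chart argument gives.

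\textbf{Main obstacle.} The only subtlety is the smoothness of the extension across $X_0$, i.e. making sure that ``$\pi^*\omega'$ extends smoothly over $\pi^{-1}(X_0)$'' genuinely implies ``$\omega'$ extends smoothly over $X_0$'' — a priori the extension of $\pi^*\omega'$ could fail to be constant along the fibers near $X_0$, which would obstruct descent. This is handled by observing that $\omega'$ is already defined and smooth on $X \setminus X_0$ with $\pi^*\omega' = \eta$ there, so by continuity $\eta$ is automatically $\pi$-basic (constant along fibers) on all of the chart, hence descends; equivalently, in the submersion chart the coefficients of $\eta$ are independent of the fiber coordinate $v$ on the dense set and therefore everywhere. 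Once this is pinned down the rest is the routine bookkeeping of degrees and parities, which I would not spell out in detail.
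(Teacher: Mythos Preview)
Your proposal is correct and follows essentially the same approach as the paper: define $\omega'_X := f(\sigma)^{-1}\omega$ on $X\setminus X_0$, observe that $\pi^*\omega'_X$ agrees with the smooth form $\eta=\omega'_Y$ on $Y\setminus\pi^{-1}(X_0)$, and then use a local section of the submersion to descend smoothness across $X_0$. The paper simply phrases your slice argument as $\omega'_X|_U = s^*\pi^*\omega'_X|_U = s^*\omega'_Y|_U$ for a local right inverse $s:U\to Y$, which is exactly your restriction to $\{v=v_0\}$ in submersion coordinates; your discussion of $\eta$ being $\pi$-basic is correct but not actually needed, since uniqueness of smooth extension off a dense open set already forces all choices of section to agree.
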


\proof
Write $\pi^*\omega=f(\pi^*\sigma) \omega'_Y$ as in Definition \ref{def:sigma_homogeneous}. It follows that 
$\omega_Y'\in\Omega(Y)$ equals $\pi^*(f(\sigma)^{-1}\omega)$ over $Y\setminus \pi^{-1}(X_0)$. Setting $\omega'_X:=f(\sigma)^{-1}\omega\in\Omega(X\setminus X_0)$, we can write $\pi^*\omega_X'=\omega_Y'$. It follows that $\omega'_X$ extends smoothly to $X$. Indeed, let $s:U\to Y$ be any smooth locally defined right inverse to $\pi$. Then $\omega'_X|_U=s^*\pi^*\omega'_X|_U=s^*\omega'_Y|_U$. As $\omega=f(\sigma)\omega'_X$, we are done.
\endproof

\subsection{A two-variable distributional integral }
The Fourier transform is given by
\begin{displaymath}
\mathcal F f(\xi)=\int_{\R^n} e^{-\i \langle \xi,x\rangle} f(x)\ dx, \quad\xi \in \R^n.
\end{displaymath}
%
%
%
The Fourier transform of homogeneous functions can be found in \cite{gelfand_shilov}. Note the opposite sign in the definition of $\mathcal F$. One computes that for half-integers $s <0$,
\begin{equation} \label{eq_fourier_chi}
\mathcal F(\chi_i^s)(\xi)=\alpha_i^s \xi_+^{-s-1}+\overline{ \alpha_i^s} \xi_-^{-s-1},\quad\mbox{  where}\quad \alpha_0^s = \frac{\Gamma(-s)}{\pi} e^{\i \pi \frac{s}{2}},\quad \alpha_1^s=\i \alpha_0^s.
\end{equation}

\begin{Definition}
For $\chi \in C^{-\infty}(\R)$, denote $$\chi(\sigma\cos^2t+\rho\sin^2t):=\pi_t^*\chi\in C^{-\infty}(\R^2),$$ where the projection $\pi_t:\R^2\to\R$ is given by $(\sigma,\rho)\mapsto \sigma\cos^2t+\rho\sin^2t$. 

For integers $m\geq a\geq 0$, define
$J_{m,a}(\sigma,\rho;\chi)\in C^{-\infty}(\R^2)$ by
\begin{equation}\label{eq:integral_definition}
J_{m,a}(\sigma,\rho;\chi):=\int_0^{\frac \pi 2}\chi(\sigma\cos^2t+\rho\sin^2t)\sin^a t\cos^{m-a}t dt. 
\end{equation}
\end{Definition} 

\begin{Proposition} \label{prop:J_integral_2d} Let $m\geq a\geq0$ be integers, and $i\in\mathbb Z_2$. It holds that
\begin{equation}
J_{m,a}(\sigma,\rho;\chi_i^{-\frac{m+2}{2}})= 
\label{eq:integral}
S(a,m-a) \sum_{j\in\mathbb Z_2}(-1)^{(i+1)j}\chi_{i+j}^{-\frac{m-a+1}{2}}(\sigma) \chi_j^{-\frac{a+1}{2}}(\rho). 
\end{equation}
In particular, $J_{m,a}(\sigma, 1;\chi_i^{-\frac{m+2}{2}})=S(a, m-a)\chi_i^{-\frac{m+1-a}{2}}(\sigma)$.
\end{Proposition}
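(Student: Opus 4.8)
\textbf{Proof plan for Proposition \ref{prop:J_integral_2d}.}

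The plan is to reduce the two-variable identity \eqref{eq:integral} to a one-variable statement via the Fourier transform, where homogeneous distributions behave like genuine functions and the integral $J_{m,a}$ turns into a Beta-integral. Concretely, I would first observe that for $\mathrm{Re}\,s$ large (so that $\chi^s$ is an honest continuous function and the integral converges absolutely and depends holomorphically on $s$), both sides of \eqref{eq:integral} are real-analytic in $(\sigma,\rho)$ away from the relevant light cones, and both extend to meromorphic families of distributions on $\R^2$; hence it suffices to verify the identity for $\chi=\chi_i^s$ with $\mathrm{Re}\,s\to -\infty$ along the appropriate half-integer lattice, or equivalently to prove it as an identity of meromorphic families and specialize at $s=-\tfrac{m+2}{2}$. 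To handle the dependence on $(\sigma,\rho)$, I would apply the partial Fourier transform $\mathcal F_{\sigma\to x}\otimes\mathcal F_{\rho\to y}$ and use \eqref{eq_fourier_direct_product} together with \eqref{eq_fourier_chi}, which turns $\chi_i^s(\sigma\cos^2 t+\rho\sin^2 t)$ into (a multiple of) $\delta(x\sin^2 t - y\cos^2 t)$ times a homogeneous function of $x$ along the diagonal ray — more precisely, writing $\chi_i^s = \mathcal F^{-1}(\alpha_i^s \,(\cdot)_+^{-s-1}+\overline{\alpha_i^s}\,(\cdot)_-^{-s-1})$ and using $\mathcal F(g(\lambda\,\cdot))(\xi)=|\lambda|^{-1}\mathcal Fg(\xi/\lambda)$ (equation \eqref{eq_fourier_lineartransform}) fiberwise in $t$.

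The key computational step is then: on the Fourier side, $J_{m,a}$ becomes an integral over $t\in(0,\tfrac\pi2)$ of $\sin^a t\cos^{m-a}t$ times a distribution supported on the line $\{x\cos^2 t = y\sin^2 t\}$ in the $(x,y)$-plane; integrating out $t$ against this delta-type constraint, the substitution $u=\tan^2 t$ (equivalently recognizing the $S(a,b)=\tfrac12 B(\tfrac{a+1}{2},\tfrac{b+1}{2})$ integral \eqref{eq:sine_integral}) produces exactly the Beta-function constant $S(a,m-a)$ in front, while the homogeneity degrees bookkeep to give a product of two homogeneous distributions in $x$ and $y$ separately, of degrees $-\tfrac{m-a+1}{2}$ and $-\tfrac{a+1}{2}$ after subtracting $1$ for the Fourier duality in each variable. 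One then inverts the Fourier transform using \eqref{eq_fourier_chi} once more to identify these factors as $\chi_{\bullet}^{-\frac{m-a+1}{2}}(\sigma)$ and $\chi_{\bullet}^{-\frac{a+1}{2}}(\rho)$; the precise index $i+j$ versus $j$ and the sign $(-1)^{(i+1)j}$ come out by carefully tracking the phases $\alpha_i^s, \overline{\alpha_i^s}$ and the four sign-combinations coming from the $\pm$ decomposition of each factor, using the algebraic relations $\alpha_1^s = \i\alpha_0^s$ and $\alpha_0^s\overline{\alpha_0^s} = \Gamma(-s)^2/\pi^2$, matched against the identities $\chi_i^s(1)=\delta_{0,i}$, $\chi_i^s(-1)=(-1)^{\lfloor s+\frac12\rfloor}\delta_{2s,i}$ recorded just before Section \ref{subsec_homogeneous_forms_manifolds}. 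The specialization to the stated line $s=-\tfrac{m+2}{2}$ is dictated precisely by the requirement that the left-hand homogeneity degree $s$ and the two right-hand degrees $-\tfrac{m-a+1}{2}, -\tfrac{a+1}{2}$ be consistent with the Fourier shift, i.e. $s-\tfrac12(\text{shift by }1\text{ in each variable}) $ forces $s = -\tfrac{m+2}{2}$; this is the only value for which the delta-constraint integral closes up homogeneously.

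The main obstacle I anticipate is \emph{not} the analytic continuation (which is routine given the meromorphy recalled in the excerpt) but the bookkeeping of the discrete data: matching the parity index $i\in\mathbb Z_2$ and the sign $(-1)^{(i+1)j}$ correctly through the Fourier transform, since $\chi_i^s$ is defined by a case split between half-integer and integer $s$ (equations \eqref{eq:chi_0}, \eqref{eq:chi_1}), and at the special value $s=-\tfrac{m+2}{2}$ the behavior switches depending on the parity of $m$. I would therefore organize the final verification into the two cases $m$ even (so $s\in\mathbb Z$, and $\chi_i^s$ involves $\delta$-derivatives on one side) and $m$ odd (so $s\in(2\mathbb Z+1)/2$, and $\chi_i^s$ is a one-sided power), checking in each the four index combinations $(i,j)\in\mathbb Z_2^2$ against the residue formulas \eqref{eq_residue_x+}, \eqref{eq:res_odd}, \eqref{eq:res_even} to pin down the constants; the ``in particular'' statement then follows by setting $\rho=1$ and using $\chi_j^{-\frac{a+1}{2}}(1)=\delta_{0,j}$, which collapses the sum over $j\in\mathbb Z_2$ to the single term $j=0$.
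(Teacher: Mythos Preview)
Your plan is essentially the paper's proof: apply the two-variable Fourier transform, observe that $\mathcal F\bigl(\chi(\sigma\cos^2 t+\rho\sin^2 t)\bigr)$ is $\mathcal F\chi$ on the ray $\{u\sin^2 t=v\cos^2 t\}$ times a delta transverse to it, integrate out $t$ by the change of variables $(t,z)\mapsto(u,v)=(-z\cos^2 t,-z\sin^2 t)$, and then match against the Fourier transform of the right-hand side using \eqref{eq_fourier_chi}. Two small points where the paper is more efficient than your outline: (i) there is no need for analytic continuation in $s$ or for the ``$\mathrm{Re}\,s$ large'' detour---the paper works directly at $s=-\tfrac{m+2}{2}$, and indeed $\chi_i^s$ as defined in \eqref{eq:chi_0}--\eqref{eq:chi_1} only makes sense for half-integer $s<0$, so embedding in a meromorphic family would require redefining the objects; (ii) the sign/index bookkeeping you flag as the main obstacle is handled in the paper in one stroke, without splitting on the parity of $m$ or on $(i,j)$, simply by writing $\mathcal F\chi_i^s=\alpha_i^s\xi_+^{-s-1}+\overline{\alpha_i^s}\xi_-^{-s-1}$ with $\alpha_1^s=\i\alpha_0^s$ and observing that the cross terms $u_+^{\bullet}v_-^{\bullet}$ are killed automatically because the support after the $(t,z)$-integration lies in $[0,\infty)^2\cup(-\infty,0]^2$.
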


\proof
Let $f\in \mathcal S'(\R)$. Using the change of variables $x= \sigma\cos^2 t+ \rho\sin^2 t,y=\rho-\sigma$  we find that
\begin{displaymath}
\mathcal F (f(\sigma \cos ^2 t+\rho \sin^2 t))(u,v)=2\pi \mathcal Ff (u+v)\delta_0(- u\sin^2 t + v\cos^2 t).
\end{displaymath}

It follows that
\begin{displaymath}
\mathcal F (J_{m,a}(\sigma,\rho;\chi_i^{-\frac{m+2}{2}}))(u,v)=2\pi \mathcal F\chi_i^{-1-\frac m 2}(u+v) \int_0^{\frac\pi 2} \delta_0(- u\sin^2 t + v\cos^2 t ) \sin^a t\cos^{m-a}tdt.
\end{displaymath} 

For a test function $\psi\in C^\infty_c(\R^2)$ we have 
\begin{displaymath}
\left\langle  \delta_0(-\sin^2 t u +\cos^2 t v ), \psi(u,v)\right\rangle = \int_\R \psi(-z\cos^2t , -z\sin^2t )dz.
\end{displaymath}

We then verify that
\begin{multline*}
\int_{[0,\pi/2]\times \R} \psi(-z\cos^2t , -z\sin^2t ) \sin^a t\cos^{m-a}t dtdz
\\=\frac12\int_{[0,\infty)^2\cup (-\infty,0]^2}\psi(u,v)\left(\frac{v}{u+v}\right)^{\frac a 2}\left(\frac{u}{u+v}\right)^{\frac {m-a} 2}\frac{1}{\sqrt{uv}}dudv.
\end{multline*}

We now compute that 
\begin{align*}
&\mathcal F (J_{m,a}(\sigma,\rho;\chi_i^{-1-\frac{m}{2}}))(u,v)\\
&= \pi \mathcal{F} \chi_i^{-1-\frac{m}{2}}(u+v) \left(\frac{v}{u+v}\right)^{\frac a 2}\left(\frac{u}{u+v}\right)^{\frac {m-a} 2}\frac{1}{\sqrt{uv}}\mathbbm 1_{[0,\infty)^2\cup (-\infty,0]^2}(u,v)
\\&= \pi\frac{\mathcal{F} \chi_i^{-1-\frac{m}{2}}(u+v)}{|u+v|^{\frac m 2}} \left(u_+^{\frac {m-a-1} 2} v_+^{\frac {a-1} 2}+u_-^{\frac {m-a-1} 2} v_-^{\frac {a-1} 2}\right)
\\& = \pi\alpha_i^{-1-\frac{m}{2}}u_+^{\frac{m-a-1}{2}} v_+^{\frac{a-1}{2}}+\pi\overline{ \alpha_i^{-1-\frac{m}{2}}}u_-^{\frac{m-a-1}{2}} v_-^{\frac{a-1}{2}},
\end{align*}
which by \eqref{eq_fourier_chi} is the Fourier transform of the right hand side of \eqref{eq:integral}.
\endproof


\section{Light-cone regularity and light-cone transversality}\label{sec:lc_regular}

We consider manifolds equipped with a metric of changing signature. We define the intrinsic notion of \emph{LC-regularity}, and the extrinsic notion of \emph{LC-transversality} for such manifolds. The main result of the section amounts to the equivalence of the two notions. This equivalence can be seen as a necessary differential-topological precursor to Gauss' Theorema Egregium for such manifolds.

\subsection{Generic metrics of changing signature.}
By \emph{a metric of changing signature} $g$ on a manifold $X$ we understand any smooth symmetric $(0,2)$-tensor $g$. It defines a natural non-negative absolutely continuous measure, denoted $\vol_{X,g}$. Define the light-cone $\LC_X\subset \mathbb P_+(TX)$ as the set of null directions. Let $X_{\mathrm{ND}}\subset X$ denote the non-degenerate points of $g$.

\begin{Definition}\label{def:LC_regularity}
	We call $(X,g)$ \emph{LC-regular} if $0$ is a regular value of $g\in C^\infty(TX\setminus\underline{0})$.
\end{Definition}
In particular, $\LC_X\subset \mathbb P_+(TX)$ is then a smooth hypersurface. Below are some examples of LC-regular metrics in $\R^2$, with the non-degenerate signatures indicated.

\begin{tikzpicture}

\draw[black, thin] (-0.9,1) -- (0.9,1);
\draw[gray, thin, dashed] (0, 0.1) -- (0,1.9);
\node[] at (0,1.5) {\footnotesize $(2,0)$};
\node[] at (0,0.5) {\footnotesize $(1,1)$};
\node[label] at (0,-0.2){\scriptsize $ dx^2+y\cdot dy^2$};

\draw[step=1cm,black, thin] (3-0.9,0.1) grid (3.9,1.9);
\node[] at (3.5,0.5) {\footnotesize $(1,1)$};
\node[] at (3.5,1.5) {\footnotesize $(2,0)$};
\node[] at (2.5,1.5) {\footnotesize $(1,1)$};
\node[] at (2.5,0.5) {\footnotesize $(0,2)$};
\node[label] at (3,-0.2){\scriptsize $x\cdot dx^2+y\cdot dy^2$};

\draw[black, thin] (5.1,1) -- (6.9,1);
\draw[gray, thin, dashed] (6, 0.1) -- (6,1.9);
\node[] at (6,1.5) {\footnotesize $(2,0)$};
\node[] at (6,0.5) {\footnotesize $(0,2)$};
\node[label] at (6,-0.2){\scriptsize $y\cdot dx^2+ y\cdot dy^2$};

\draw[gray, thin, dashed] (8.6,1) -- (10.4,1);
\draw[gray, thin, dashed] (9.5, 0.1) -- (9.5,1.9);
\node at (9.5,1) [circle,fill,inner sep=0.8pt]{};
\node[] at (9,0.7) {\footnotesize $(1,1)$};
\node[label] at (9.5,-0.2){\scriptsize $ x\cdot dx^2+2y\cdot dx dy- x\cdot dy^2$};

\end{tikzpicture}

Note in particular that  $\det (g_{ij})$ need not be Morse at its zeros.

\begin{Lemma}
 Take $p\in (X,g)$ and $v \in \Ker(g_p)$, and let $V$ be any vector field near $p$ extending $v$. Then $d_pg(v):=d_p(g(V))\in T_p^*X$ is independent of $V$.
\end{Lemma}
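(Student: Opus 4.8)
The plan is to show that $d_p(g(V)) \in T_p^*X$ depends only on $v = V(p)$ and not on the chosen extension $V$. Since the map $V \mapsto d_p(g(V))$ is linear, it suffices to prove that if $W$ is a vector field near $p$ with $W(p) = 0$, then $d_p(g(W)) = 0$ — here using crucially that $v \in \Ker(g_p)$.

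First I would work in local coordinates $x_1, \dots, x_n$ centered at $p$, writing $g = \sum_{i,j} g_{ij}\, dx_i\, dx_j$ with $g_{ij} = g_{ji} \in C^\infty$, and a vector field $V = \sum_k V^k \partial_k$ with $V^k \in C^\infty$. Then $g(V)$ is the $1$-form (more precisely the function on $TX$, but evaluated along $V$ it is the $1$-form) $\sum_{j}\bigl(\sum_i g_{ij} V^i\bigr) dx_j$, so its differential at $p$ is
\begin{displaymath}
d_p(g(V)) = \sum_j \Bigl( \sum_i \partial_k(g_{ij} V^i)\big|_p \Bigr) dx_k \otimes dx_j\Big|_{\text{symmetrized along } dx_j},
\end{displaymath}
or more cleanly: the coefficient of $dx_k$ in $d_p(g(V))$ is $\sum_{i,j}\bigl(\partial_k g_{ij}(p)\, V^i(p) + g_{ij}(p)\, \partial_k V^i(p)\bigr) (dx_j$-component is fixed by $v$'s pairing$)$. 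Let me restate the computation properly: if we think of $g(V) \in \Omega^1$, then for a coordinate vector $\partial_k$,
\begin{displaymath}
\bigl(d_p(g(V))\bigr)(\partial_k, \partial_j) = \partial_k\big|_p\Bigl(\sum_i g_{ij} V^i\Bigr) = \sum_i \bigl(\partial_k g_{ij}\big|_p\bigr) V^i(p) + \sum_i g_{ij}(p)\, \partial_k V^i\big|_p .
\end{displaymath}
The first term depends only on $V^i(p)$, i.e. only on $v$. The second term is $\sum_i g_{ij}(p)\, \partial_k V^i(p)$; summing against the $j$-index pairs the matrix $(g_{ij}(p))$ against the vector with components $\partial_k V^i(p)$, and since $v \in \Ker(g_p)$ this is the pairing $g_p(v, \cdot)$ applied to... wait — one must be careful about which index is contracted.

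So the key algebraic point: the ambiguous term, when we compare two extensions $V, V'$ with $V(p) = V'(p) = v$, is $\sum_i g_{ij}(p)\bigl(\partial_k V^i(p) - \partial_k (V')^i(p)\bigr)$. Writing $W = V - V'$, this is $\sum_i g_{ij}(p)\, \partial_k W^i(p)$ where $W(p) = 0$. For each fixed $k$, the quantity $\sum_i g_{ij}(p)\, \partial_k W^i(p) = g_p\bigl(\partial_k W|_p,\ \partial_j\bigr)$ where $\partial_k W|_p := \sum_i \partial_k W^i(p)\, \partial_i$. Hmm, this is not obviously zero. The correct resolution is the classical identity: for a $1$-form-valued object, $d_p(g(V))$ is intrinsic once we note $g(V) = \iota_V g$ where we regard $g$ as a symmetric $2$-tensor; then $\iota_W g$ for $W$ vanishing at $p$ is a $1$-form vanishing at $p$ up to... no. The honest fix is that we should use $v \in \Ker(g_p)$ as follows: differentiate the contraction in the symmetric slot. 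Write $g(V)_j = \sum_i g_{ij} V^i$; then the difference $d_p(g(W))_{k,j} = \sum_i (\partial_k g_{ij})(p) W^i(p) + \sum_i g_{ij}(p) \partial_k W^i(p) = \sum_i g_{ij}(p)\, \partial_k W^i(p)$. This vanishes for all $k, j$ iff for each $k$ the vector $\sum_i \partial_k W^i(p)\partial_i$ lies in $\Ker g_p$ — which is false in general. The actual point I'm missing is that $d_p(g(V))$ should be interpreted with $V$ plugged into the \emph{first} slot and the result being a genuine element of $T_p^*X$: since $g(V) = g(V,\cdot)$ and we want its differential, we get $d_p(g(V,\cdot))(w) = (\nabla_w g)(v, \cdot) + g(\nabla_w V, \cdot)$ for any connection $\nabla$; the second term $g_p(\nabla_w V, \cdot)$ vanishes not because $\nabla_w V$ is special but — and here is where $v \in \Ker g_p$ enters — because actually the \emph{intrinsic} statement uses that changing $V$ changes $\nabla_w V$ arbitrarily, so $g_p(\nabla_w V,\cdot)$ is \emph{not} generally zero; hence the claim must be that we interpret $g(V)$ as the \emph{function on $TX$} pulled back by $V$, and the differential of a function that vanishes on $\LC$-cone...

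Given this subtlety, the step I expect to be the main obstacle is precisely pinning down the correct interpretation: the cleanest route is to observe that $q: TX \setminus \underline 0 \to \R$, $q(u) = g(u,u)$ (or the bilinear version) and that for $v \in \Ker(g_p)$, $v$ lies on $\LC$ and $d_v q = 2 g_p(v, \cdot) \circ (d\pi) = 0$ on horizontal directions — no. I will instead follow the author's implicit intent: $g(V)$ is the $1$-form $\iota_V g$; two extensions differ by $W$ with $W(p)=0$, so $W = \sum_k x_k W_k$ locally with $W_k$ smooth vector fields; then $\iota_W g = \sum_k x_k\, \iota_{W_k} g$, and $d_p(\iota_W g) = \sum_k dx_k \otimes (\iota_{W_k} g)(p)$ evaluated as a bilinear form, whose value on $(\partial_k, \partial_j)$ is $(\iota_{W_k}g)(p)(\partial_j) = g_p(W_k(p), \partial_j)$. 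For this to be a well-defined element of $T^*_pX$ (a $1$-form, not a $2$-tensor) we actually symmetrize, and the obstruction $g_p(W_k(p), \partial_j)$ must vanish — which forces the conclusion that the correct reading is: $d_pg(v)$ is the $1$-form whose value on $w \in T_pX$ is $\frac{d}{dt}\big|_0 g_{\gamma(t)}(\tilde v(t), \tilde v(t))$ for a curve $\gamma$ with $\gamma'(0) = w$ and $\tilde v$ extending $v$; then independence of $\tilde v$ follows because $\frac{d}{dt}\big|_0 g(\tilde v, \tilde v) = (d_w g)(v,v) + 2 g_p(v, \tilde v'(0)) = (d_w g)(v,v) + 0$ since $v \in \Ker g_p$. \textbf{This} is the argument: I would define $d_pg(v)(w) := w\bigl(g(V,V)\bigr)|_p$ for $V$ extending $v$ (or equivalently $\tfrac12$ of it if matching the $\iota_V g$ normalization via polarization consistency), expand using Leibniz, and kill the extension-dependent term $2g_p(v, \cdot)$ using $v \in \Ker(g_p)$; the main obstacle is just reconciling this with the "$d_p(g(V))$" notation, which I'd note equals this via $g(V,V) = \langle g(V), V\rangle$ and $d_p\langle g(V), V\rangle = \langle d_p(g(V)), v\rangle + \langle g_p(v), \cdot\rangle = \langle d_p(g(V)), v\rangle$.
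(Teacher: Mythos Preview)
Your final argument is correct and is exactly the paper's proof: interpret $g(V)$ as the \emph{function} $x\mapsto g_x(V(x),V(x))$, so that $d_p(g(V))\in T_p^*X$; then for any $w\in T_pX$ one has $w(g(V,V))=(L_wg)(v,v)+2g_p(v,L_wV|_p)$, and the second term vanishes because $v\in\Ker(g_p)$, leaving an expression depending only on $v$.

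The bulk of your proposal, however, is spent on a misreading. You first take $g(V)$ to mean the $1$-form $\iota_Vg$, which forces $d_p(g(V))$ to be a $2$-tensor rather than an element of $T_p^*X$, and you correctly observe that independence of $V$ would then fail (the ambiguous term $g_p(\partial_kW|_p,\partial_j)$ is not zero in general). This is not a gap in the argument but a gap in parsing the statement: the paper uses $g\in C^\infty(TX\setminus\underline 0)$ throughout this section, so $g(V)$ is the pullback of that function along the section $V$, i.e.\ the quadratic value $g(V,V)$. Once that is clear, your eventual one-line Leibniz computation is all that is needed, and the detours through $\iota_Vg$, connections $\nabla$, and the decomposition $W=\sum_k x_kW_k$ can be discarded entirely.
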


\proof
 Choose a coordinate chart $U=\R^n$ near $p$, and $w\in T_pX$. We have \[L_w(g(V))=L_w \langle gV, V\rangle=\langle (L_wg)V, V\rangle +2\langle gV, L_wV\rangle=\langle (L_wg)v, v\rangle. \qedhere\] 
\endproof

\begin{Lemma}\label{lem:reduce_to_kernel}
$(X,g)$ is LC-regular if and only if $d_pg(v)\neq 0$ for any $v\in \Ker(g_p)\setminus 0$.
\end{Lemma}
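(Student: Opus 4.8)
The statement is purely pointwise, so fix $p \in X$ and work in a coordinate chart. The map whose regularity we must understand is $g \colon TX \setminus \underline{0} \to \R$, $(x,v) \mapsto g_x(v,v)$. Since $g$ is homogeneous of degree $2$ in $v$, it is convenient to observe that $0$ is a regular value of $g$ on $TX \setminus \underline 0$ if and only if, at every point $(p,v)$ with $v \neq 0$ and $g_p(v,v) = 0$, the differential $d_{(p,v)}g$ is nonzero. I would split this differential into its ``horizontal'' part (the variation in the base point $x$, with $v$ held fixed) and its ``vertical'' part (the variation in $v$, with $x$ held fixed). The vertical derivative of $v \mapsto g_p(v,v)$ at $v$ is the linear functional $w \mapsto 2 g_p(v,w)$; this is nonzero precisely when $v \notin \Ker(g_p)$. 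The horizontal derivative at $(p,v)$ is exactly the functional $d_p(g(V))$ appearing in the preceding lemma, where $V$ is any local extension of $v$ — and that lemma shows it is well-defined (independent of $V$), and moreover its proof shows it equals $w \mapsto \langle (L_w g) v, v\rangle$, depending on $v$ only.

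\textbf{Key steps.} First, reduce LC-regularity to the pointwise condition: $d_{(p,v)}g \neq 0$ for all $(p,v)$ with $v \neq 0$, $g_p(v,v) = 0$. Second, for such a point with $v \notin \Ker(g_p)$, note the vertical part $w \mapsto 2g_p(v,w)$ is already nonzero, so $d_{(p,v)}g \neq 0$ automatically; hence these points never obstruct regularity. Third, for $v \in \Ker(g_p) \setminus 0$ (note $g_p(v,v) = 0$ automatically, so these are genuinely on the light cone), the vertical part vanishes identically, so $d_{(p,v)}g = d_p g(v)$ (the horizontal part only), and regularity at $(p,v)$ is equivalent to $d_p g(v) \neq 0$. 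Combining: $0$ is a regular value iff $d_p g(v) \neq 0$ for every $p$ and every $v \in \Ker(g_p) \setminus 0$, which is the claim.

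\textbf{Where to be careful / main obstacle.} The one genuinely substantive point is the clean separation of $d_{(p,v)}g$ into horizontal and vertical components and the identification of the vertical component as $w \mapsto 2g_p(v,w)$. Strictly, $TX$ has no canonical splitting into horizontal and vertical directions, but at a point $(p,v)$ one only needs: (i) the vertical subspace $T_v(T_pX) \cong T_pX$ is canonical, and the restriction of $d_{(p,v)}g$ to it is $w \mapsto 2g_p(v,w)$, which is computed in the fixed fibre and needs no connection; and (ii) if this vertical restriction is already nonzero we are done, while if it vanishes then $v \in \Ker(g_p)$ and $d_{(p,v)}g$ factors through the base, where it is computed by choosing \emph{any} local section $V$ with $V(p) = v$ — the value being $d_p(g \circ V)|_p$, which the previous lemma guarantees is independent of the choice. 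So the argument is: $d_{(p,v)}g = 0 \iff$ (vertical part $= 0$ and horizontal part $= 0$) $\iff$ ($v \in \Ker g_p$ and $d_p g(v) = 0$), and then quantifying over the light cone gives the equivalence. I would present this in a few lines, in a coordinate chart $U = \R^n$ as in the lemma above, writing $g(x,v) = \sum g_{ij}(x) v_i v_j$ and differentiating directly, which makes the horizontal/vertical split manifest and avoids any abstract connection machinery.
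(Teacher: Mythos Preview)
Your proposal is correct and follows essentially the same route as the paper's proof: first dispose of null vectors $v\notin\Ker(g_p)$ by exhibiting a nonzero vertical derivative $w\mapsto 2g_p(v,w)$, then observe that for $v\in\Ker(g_p)$ the vertical part vanishes and the full differential $d_{(p,v)}g$ factors through the base as $d_pg(v)$. The paper phrases the second step as $d_{p,v}g(W)=d_pg(v)(d\pi(W))$ for $W\in T_{p,v}TX$, which is exactly your ``horizontal part only'' observation; your coordinate-chart remark is a fine way to make this concrete.
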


\proof
If $v\in T_pX\setminus \Ker(g_p)$ and $g(v)=0$, then there exists some $w \in T_pX \subset T_{(p,v)}TX$ with $g_p(v,w)\neq0$. It follows that $dg|_{(p,v)}(w)=2g_p(v,w) \neq 0$. 

Thus $(X,g)$ is LC-regular if and only if $d_{p,v}g\neq 0$ for all $v\in\Ker (g)\setminus \{0\}$. 
For $v\in \Ker(g)$, $W\in T_{p,v} TX$ and $w=d\pi(W)\in T_pX$, it clearly holds that $d_{p,v}g(W)=d_pg(v)(w)$.
\endproof

\begin{Proposition} \label{prop_lc_regularity_pseudo_riemann}
	Any pseudo-Riemannian manifold $(X,g)$ is LC-regular. Conversely, the non-degenerate subset $X_{\mathrm{ND}}$ of an LC-regular $(X,g)$ is open and dense.
\end{Proposition}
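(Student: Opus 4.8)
The first assertion is immediate from Lemma \ref{lem:reduce_to_kernel}: if $g$ is a pseudo-Riemannian metric then $\Ker(g_p)=0$ at every point $p$, so the condition ``$d_pg(v)\neq 0$ for all $v\in\Ker(g_p)\setminus 0$'' is vacuously satisfied, and $(X,g)$ is LC-regular.

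For the converse I would argue as follows. Openness of $X_{\mathrm{ND}}$ is clear, since non-degeneracy of $g_p$ is the condition $\det(g_{ij})(p)\neq 0$ in any chart, which is an open condition. The heart of the matter is density, equivalently that the degenerate locus $Z:=X\setminus X_{\mathrm{ND}}=\{\,p : \det(g_{ij})(p)=0\,\}$ has empty interior. Suppose for contradiction that $Z$ contains a nonempty open set $U$. Fix $p\in U$; since $g_p$ is degenerate there is some $v\in\Ker(g_p)\setminus 0$. By LC-regularity and Lemma \ref{lem:reduce_to_kernel}, $d_pg(v)\neq 0$ in $T_p^*X$. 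I would now derive a contradiction by producing, from this nonvanishing differential, a nearby point of $U$ at which $g$ becomes non-degenerate: intuitively, moving $p$ in the direction dual to $d_pg(v)$ pushes the null vector $v$ ``out of the kernel'', so $g$ cannot stay degenerate on a whole neighborhood.

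To make the last step precise, I would work in a chart and consider the smooth function $F:TX\setminus\underline 0\to\R$, $F(x,w)=g_x(w,w)$, together with the ``restricted'' null-set near $(p,v)$. By the regular value hypothesis, $\LC_X=\mathbb P_+(F^{-1}(0))$ is a smooth hypersurface of $\mathbb P_+(TX)$ near $[v]$; equivalently $F^{-1}(0)$ is a smooth hypersurface of $TX\setminus\underline 0$ near $(p,v)$. Because $d_pg(v)\neq 0$, the projection $\pi:F^{-1}(0)\to X$ is a submersion at $(p,v)$ (this is exactly the content of the computation $d_{(p,v)}F(W)=d_pg(v)(d\pi\,W) + 2g_p(v,\cdot)$ used in Lemma \ref{lem:reduce_to_kernel}, combined with $d_pg(v)\neq 0$), so its image contains a neighborhood of $p$; moreover one can choose a smooth local section $x\mapsto (x,v(x))$ of null directions with $v(p)=v$. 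Now I claim that for $x$ in a punctured neighborhood of $p$ along a suitable curve, $\Ker(g_x)=0$. Indeed, parametrize a curve $x(\tau)$ with $\dot x(0)=w$ where $d_pg(v)(w)\neq 0$; if $g_{x(\tau)}$ were degenerate for all small $\tau$ with kernel vector $u(\tau)\to v$, then $\tau\mapsto g_{x(\tau)}(u(\tau))\equiv 0$, and differentiating at $\tau=0$ gives (by the Lemma preceding Lemma \ref{lem:reduce_to_kernel}, which shows the $u$-variation term drops out since $v\in\Ker g_p$) $d_pg(v)(w)=0$, a contradiction. Hence $g$ is non-degenerate somewhere in every neighborhood of $p$, so $U\not\subset Z$, and $Z$ has empty interior; equivalently $X_{\mathrm{ND}}$ is dense.

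The main obstacle is the density argument, specifically handling the possibility that the kernel is higher-dimensional or that the degenerate locus is wild. The trick above reduces everything to a one-parameter family by fixing a single null vector $v$ and a single transverse direction $w$ with $d_pg(v)(w)\neq 0$, and then the vanishing of $\frac{d}{d\tau}\big|_0 g_{x(\tau)}(u(\tau))$ — which holds identically if degeneracy persisted — contradicts $d_pg(v)(w)\neq 0$ regardless of how $u(\tau)$ is chosen, precisely because the correction term $2\langle g_{x(\tau)}u, \dot u\rangle$ vanishes at $\tau=0$ thanks to $u(0)=v\in\Ker(g_p)$. I would make sure the selection $u(\tau)$ (a kernel vector depending measurably, or after shrinking continuously, on $\tau$) is legitimate: if $g_{x(\tau)}$ is degenerate one may pick $u(\tau)$ a unit vector in $\Ker(g_{x(\tau)})$, and by compactness of the unit sphere extract a sequence $\tau_n\to 0$ with $u(\tau_n)\to u_0\in\Ker(g_p)$; replacing $v$ by $u_0$ and using LC-regularity again ($d_pg(u_0)\neq 0$) together with a difference-quotient estimate $0 = g_{x(\tau_n)}(u(\tau_n)) - g_p(u_0) = \tau_n\,d_pg(u_0)(w) + o(\tau_n)$ yields $d_pg(u_0)(w)=0$, the desired contradiction once $w$ is chosen with $d_pg(u_0)(w)\neq 0$. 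This completes the proof.
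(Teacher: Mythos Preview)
Your treatment of the first assertion is correct and identical to the paper's.

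For density, however, your sequence argument has a genuine gap. You fix $v\in\Ker(g_p)$, choose $w$ with $d_pg(v)(w)\neq 0$, run the curve $x(\tau)$ in direction $w$, and extract a subsequential limit $u_0\in\Ker(g_p)$ of kernel vectors. You then conclude $d_pg(u_0)(w)=0$ and declare this a contradiction ``once $w$ is chosen with $d_pg(u_0)(w)\neq 0$''. But $w$ was already chosen, and $u_0$ depends on that choice; when $\dim\Ker(g_p)>1$ there is no reason $u_0$ should equal $v$, so $d_pg(u_0)(w)$ may well vanish. Restarting with $u_0$ in place of $v$ and a new direction $w'$ produces a new limit $u_0'$, and the process need not terminate. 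A secondary issue is that your asserted estimate $g_{x(\tau_n)}(u(\tau_n))-g_p(u_0)=\tau_n\,d_pg(u_0)(w)+o(\tau_n)$ is not justified: writing $\delta_n=u(\tau_n)-u_0$, the second-order term $g_p(\delta_n,\delta_n)$ is only $O(|\delta_n|^2)$, and compactness gives $\delta_n\to 0$ with no rate.

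The paper closes this gap by first reducing to a point where the kernel dimension is \emph{minimal} on the alleged degenerate open set: it takes the greatest $k$ with $\dim\Ker g\geq k$ throughout, and works at a point where equality holds. There the kernel dimension is locally constant, so $\Ker g$ is a smooth rank-$k$ subbundle and $v$ extends to a genuine smooth section $v(\tau)$ of kernel vectors along any curve. Your \emph{first} argument (differentiate $g_{x(\tau)}(v(\tau))\equiv 0$ at $\tau=0$) then goes through verbatim, with $u_0=v$ forced and the $o(\tau_n)$ issue absent. The paper phrases this via an explicit principal-minor computation, but the underlying mechanism is exactly the one you sketched; what you are missing is the preliminary reduction to constant kernel rank.
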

	
\proof
The first statement is immediate from Lemma \ref{lem:reduce_to_kernel}.	
	
For the second, assume that $(X,g)$ is LC-regular. The set where $g$ is pseudo-Riemannian is clearly open. If it is not dense, we may replace $X$ with a degenerate neighborhood. We thus have $X=\R^n$, and $g=\sum g_{ij}dx_idx_j$, $\det (g_{ij})=0$.
	
Find the greatest $k\geq 1$ such that $\dim \Ker g \geq k$ in $\R^n$. We may further assume that $\dim \Ker g|_0=k$, and $\Ker g|_0=\Span\{e_{n-k+1},\dots, e_n\}$. It then holds that $g_{ij}(0)=g_{ji}(0)=0$ for $1\leq i\leq n$, $n-k+1\leq j\leq n$. Let $\gamma(t)$ be any curve through $0$. Denote by $M_j(t)$ the principal $j$-minor of $g|_{\gamma(t)}$ with the last $(n-j)$ rows and columns deleted.  Writing $\det M_{n-k+1}(t)$ explicitly, we conclude that $(\det M_{n-k+1})'(0)=g_{n-k+1,n-k+1}'(0)\det M_{n-k}(0)$. 

Since $\dim \Ker g|_0=k$, $\det M_{n-k}(0)\neq 0$, while by LC-regularity, one can choose $\gamma(t)$ such that $g_{n-k+1,n-k+1}'(0)\neq 0$. It follows that $\det M_{n-k+1}(t)\neq 0$ for small $t>0$, contradicting $\dim \Ker g\geq k$. 
\endproof

\subsection{LC-transversality and LC-regularity}

Let $(M,Q)$ be a pseudo-Riemannian manifold. For $X\subset M$, $N^QX$ will denote the normal bundle with respect to $Q$ in either $\mathbb P_+(TM)$ or $TM$, depending on context. Let $\LC_M^*\subset \mathbb P_M$ correspond to $\LC_M\subset \mathbb P_+(TM)$ under the identification induced by $Q$.
	
\begin{Definition}\label{def:LC_transversality}
A differentiable polyhedron $ A\subset M$ is \emph{LC-transversal} if each smooth stratum of $\nc(A)$ intersects $\LC^*_M$ transversally. In particular, a submanifold $X \subset M$ is LC-transversal if $N^QX\pitchfork \LC_M$ in $\mathbb P_+(TM)$, or equivalently if $Q\in C^\infty (N^QX\setminus\underline 0)$ has no critical zeros.
\end{Definition}

LC-transversality is generic in the following sense.  

\begin{Proposition}\label{prop:transversality_is_generic}
	Any differentiable polyhedron $A \subset M$ becomes LC-transversal following an arbitrarily small perturbation of $A$.
\end{Proposition}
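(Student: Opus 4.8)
\textbf{Proof proposal for Proposition \ref{prop:transversality_is_generic}.}

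The plan is to reduce the statement to a transversality assertion that can be attacked stratum by stratum with the parametric transversality theorem (Thom's jet transversality / Sard-type argument). First I would recall that $\nc(A)$ is a compact Legendrian stratified cycle in $\mathbb P_M$ of dimension $n-1$, and that $\LC_M^* \subset \mathbb P_M$ is a smooth hypersurface (codimension $1$): here I use that $(M,Q)$ is pseudo-Riemannian, hence LC-regular by Proposition \ref{prop_lc_regularity_pseudo_riemann}, so that $\LC_M$, and therefore $\LC_M^*$ via the identification induced by $Q$, is a smooth submanifold of $\mathbb P_M$. Thus LC-transversality of $A$ means exactly that every smooth stratum $Z$ of $\nc(A)$ meets $\LC_M^*$ transversally inside $\mathbb P_M$; since $\dim Z + \dim \LC_M^* = (n-1) + (2n-2) < 2n-1$ only when the codimensions add up correctly, transversality here simply means the intersection is empty for strata of dimension $< 1$ and is a clean submanifold otherwise — in any case it is an open dense condition on maps.

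Next I would set up the perturbation. A small perturbation of $A$ should be realized by flowing $A$ under the time-one map of a compactly supported vector field $V$ on $M$, i.e. consider the family $A_V = \varphi_V^1(A)$ for $V$ ranging over a neighborhood of $0$ in $\Gamma_c^\infty(TM)$, or more concretely over a finite-dimensional family of vector fields that suffices to move $A$ in all directions (a partition-of-unity argument reduces to finitely many charts, since $A$ is compact). The conormal cycle is natural under diffeomorphisms, $\nc(\varphi_V^1(A)) = (\widehat{\varphi_V^1})_* \nc(A)$, where $\widehat{\varphi_V^1}$ is the induced contactomorphism of $\mathbb P_M$; so each stratum $Z$ of $\nc(A)$ is carried to $\widehat{\varphi_V^1}(Z)$. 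The key point is then the submersivity of the evaluation map: for a fixed stratum $Z$, the map $(z, V) \mapsto \widehat{\varphi_V^1}(z) \in \mathbb P_M$ is a submersion onto $\mathbb P_M$ at $V = 0$, because the contact vector fields obtained by lifting compactly supported $V$ span the full tangent space $T_w\mathbb P_M$ at each point $w$ (one can move the base point freely, and then adjust the contact direction using fiber-preserving Hamiltonian-type vector fields; equivalently, the prolongations of vector fields on $M$ generate a transitive action on $\mathbb P_M$). Granting this, the parametric transversality theorem applies: for a residual (hence dense) set of $V$, the map $z \mapsto \widehat{\varphi_V^1}(z)$ is transversal to $\LC_M^*$ on the stratum $Z$.

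Then I would run this over all strata. Since $\nc(A)$ has finitely many strata (it is a compact subanalytic/differentiable cycle), the intersection of the finitely many residual sets of "good" $V$ is again residual, hence dense, and in particular contains points $V$ arbitrarily close to $0$; for such $V$ the polyhedron $A_V = \varphi_V^1(A)$ is LC-transversal, and $A_V$ is an arbitrarily small perturbation of $A$. One technical care point: one must check that $\varphi_V^1(A)$ remains a differentiable polyhedron (it does, being the diffeomorphic image of one) and that its stratification of the conormal cycle is the pushforward stratification, so that "each stratum meets $\LC_M^*$ transversally" is the condition we have arranged.

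I expect the main obstacle to be the submersivity claim in the middle paragraph — verifying that varying $V$ genuinely moves each point of $\mathbb P_M$ in all directions, including the "contact" directions that are not just base-point translations. This requires knowing that the Lie algebra generated by prolongations to $\mathbb P_M = \mathbb P_+(T^*M)$ of compactly supported vector fields on $M$ acts infinitesimally transitively on $\mathbb P_M$; this is standard (it is the statement that $\mathrm{Diff}_c(M)$ acts transitively on the cosphere bundle, even $2$-transitively in a suitable sense), but it is the crux that makes the parametric transversality argument go through, and it is also what forces us to perturb $A$ by an ambient diffeomorphism rather than, say, by perturbing a defining function — the conormal cycle of $A$ lives in $\mathbb P_M$, not in $M$, and only ambient diffeomorphisms act naturally on it. A secondary, purely bookkeeping obstacle is making the finite-dimensionality reduction precise (choosing finitely many vector fields whose flows suffice), which uses compactness of $A$ and a standard partition-of-unity construction.
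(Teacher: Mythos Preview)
Your proposal is correct and is precisely the unpacking of what the paper's proof says in a single line: ``Immediate from the transversality theorem.'' You have supplied the details behind that sentence --- perturbing $A$ by ambient isotopies $\varphi_V^1$, observing that $\nc(A)$ moves by the lifted contactomorphism, and checking that the infinitesimal action of $\mathrm{Diff}_c(M)$ on $\mathbb P_M$ is transitive so that parametric transversality applies stratum by stratum --- whereas the authors take all of this as understood. One small remark: your dimension-count aside (``$\dim Z + \dim \LC_M^* = (n-1)+(2n-2) < 2n-1$ only when\ldots'') is garbled and in any case irrelevant, since the parametric transversality theorem requires no such inequality; you can simply drop that sentence.
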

	
\proof
Immediate from the transversality theorem. 
\endproof

Let us consider the important class of LC-transversal hypersurfaces. Let $X^n\subset M^{p,q}$ be an oriented hypersurface. At $x\in X$, the Gauss map $\nu:X\to \mathbb P_+(TM)$ gives rise to the shape operator $S_x:T_xX\to T_{\nu(x)}\mathbb P_+(T_xM)$ by composing $d_x\nu: T_xX \to T_{x,\nu(x)}\mathbb P_+(TM)$ with the projection $\pi_V:T_{x,\nu(x)}\mathbb P_+(TM)\to T_{\nu(x)}\mathbb P_+(T_xM)$ on the vertical subspace induced by the Levi-Civita connection. We say that $X$ has nonzero principal curvatures at $x\in X$ if $S_x$ is bijective.
For $M=\R^{p,q}$, this is equivalent to $M$ having non-zero Gaussian curvature with respect to any Euclidean metric (as the connections of the Euclidean and pseudo-Euclidean metrics coincide).

\begin{Lemma}\label{lem:nondegenerate_hypersurface}
Assume that a hypersurface $X\subset (M^{p,q},Q)$ has nonzero principal curvatures at every point $x$ such that $Q|_{T_xX}$ is degenerate. Then $X$ is LC-transversal.
\end{Lemma}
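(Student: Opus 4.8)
The plan is to show that a $Q$-degenerate point $x\in X$ cannot be a critical zero of $Q|_{N^QX\setminus\underline 0}$. Since $X$ is a hypersurface, the normal bundle $N^QX$ is a line bundle; at a $Q$-degenerate point $x$ the normal line $N^Q_xX$ must be a null line (indeed, $N^Q_xX$ is the $Q$-orthogonal complement of the hyperplane $T_xX$, and $x$ is $Q$-degenerate precisely when $T_xX$ contains its own normal, i.e. the normal direction lies in $\LC_M$). So fix a null normal vector $\xi\in N^Q_xX\setminus 0$; we must produce a tangent vector $W\in T_{(x,\xi)}(N^QX)$ with $d_{(x,\xi)}Q(W)\neq 0$.

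First I would set up coordinates adapted to the situation, working in $M=\R^{p,q}$ after restricting to a neighborhood (this is legitimate: LC-transversality is a local condition near $x$, and by the remark preceding the lemma, near a $Q$-degenerate point the relevant second-order data — the shape operator — agrees with the Euclidean one, whose connection is flat). Parametrize $X$ locally as a graph and parametrize the unit Euclidean normal $n(y)$ along $X$; then $N^QX$ near $(x,\xi)$ is parametrized by $(y,s)\mapsto s\,\#n(y)$ where $\#$ is the musical isomorphism for $Q$, or more simply one keeps track of the normal direction via the Euclidean Gauss map and scales. The key computation is the derivative of $Q$ along a \emph{horizontal} curve in $N^QX$: moving $y$ along $X$ while keeping the normal direction $\nu(y)$, one gets, at a point where $\nu(x)$ is null,
\begin{displaymath}
\frac{d}{dt}\Big|_{t=0} Q\big(\nu(\gamma(t))\big) = 2\,Q\big(\nu(x), d_x\nu(\dot\gamma(0))\big).
\end{displaymath}
Now $d_x\nu(\dot\gamma(0))$ decomposes into a vertical part (the shape operator $S_x$ applied to $\dot\gamma(0)$, living in $T_{\nu(x)}\mathbb P_+(T_xM)$) and a horizontal part; the horizontal part is $Q$-orthogonal to $\nu(x)$ only modulo the null direction, but the crucial term is the pairing of the null vector $\nu(x)$ against the vertical displacement $S_x(\dot\gamma(0))$. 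So the derivative is controlled, up to harmless terms, by $Q(\xi, S_x(w))$ for $w=\dot\gamma(0)\in T_xX$.

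The heart of the argument is then: since $S_x$ is bijective by hypothesis, the map $w\mapsto Q(\xi, S_x w)$ is a linear functional on $T_xX$, and I must argue it is \emph{not} identically zero. If it were, then $Q(\xi, S_x w)=0$ for all $w\in T_xX$; since $S_x$ is onto $T_{\nu(x)}\mathbb P_+(T_xM)\cong T_xX$ (identifying the vertical tangent space of the projectivization with $T_xX$ via the flat connection and the section $\xi$), this would force $\xi$ to be $Q$-orthogonal to all of $T_xX$ — but $\xi$ \emph{is} already $Q$-orthogonal to $T_xX$ by definition of the normal bundle, so this alone is not a contradiction. I need to be more careful: the correct reading is that the derivative of $Q$ in the \emph{vertical} direction (scaling $\xi$) vanishes at a null $\xi$ (since $Q(s\xi)=s^2Q(\xi)=0$ for all $s$), so the criticality must be tested purely in horizontal directions, and there the relevant pairing is between $\xi$ and the full derivative $d_x\nu(w)$ viewed back in $T_xM$ (not just its vertical part). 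Writing $d_x\nu(w) = S_x w + (\text{horizontal lift of }w)$ and noting that the horizontal lift of $w$ is $w$ itself (flat connection) which pairs with $\xi$ to give $Q(\xi,w)=0$, one is left with $\frac{d}{dt}Q(\nu(\gamma(t))) = 2Q(\xi, S_x w)$. Now if this vanishes for all $w$, bijectivity of $S_x$ gives $Q(\xi, \cdot)\equiv 0$ on $T_{\nu(x)}\mathbb P_+(T_xM)$; but under the connection-induced identification $T_{\nu(x)}\mathbb P_+(T_xM)\cong T_xM/\R\xi$, a functional $Q(\xi,\cdot)$ that kills everything means $\xi\in(\text{everything})^{\perp_Q}$, i.e. $\xi$ is in the radical of $Q$ — impossible since $Q$ is nondegenerate. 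This contradiction shows $x$ is a regular zero, hence $X$ is LC-transversal.

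\textbf{Main obstacle.} The delicate point is the bookkeeping in the second paragraph: correctly identifying which component of $d_x\nu$ enters the derivative of $Q$ along $N^QX$, and correctly identifying the vertical tangent space $T_{\nu(x)}\mathbb P_+(T_xM)$ with a quotient of $T_xM$ so that the bijectivity of the shape operator $S_x$ can be converted into the nondegeneracy statement $Q(\xi,\cdot)\not\equiv 0$. One must be careful that $\xi$ being null is exactly what makes the naive pairing $Q(\xi, T_xX)=0$ uninformative, so the whole argument hinges on extracting the genuinely transverse (vertical, curvature) direction and using nonzero principal curvatures there. Everything else — reduction to $\R^{p,q}$, the flatness of the connection, the graph parametrization — is routine.
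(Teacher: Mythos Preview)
Your argument is correct and is, at bottom, the same observation as the paper's, but packaged analytically rather than synthetically. The paper works entirely in $\mathbb P_+(TM)$: splitting $T_{(x,\nu(x))}\mathbb P_+(TM)=H\oplus V$ via the Levi-Civita connection, it notes that $T_{(x,\nu(x))}\LC_M=H\oplus V_0$ for a hyperplane $V_0\subset V$ (since $\LC_M$ is a fiber bundle over $M$ and a hypersurface in each fiber), while bijectivity of $S_x=\pi_V\circ d_x\nu$ forces $\pi_V(\mathrm{Im}\,d_x\nu)=V$; transversality then follows by a one-line dimension count. You instead compute $dQ$ along $N^QX\subset TM$ and obtain $2Q(\xi,S_x w)$, closing with nondegeneracy of $Q$. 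The two match once one identifies $V_0=\{[\eta]\in T_xM/\R\xi:Q(\xi,\eta)=0\}$, so this is a difference of presentation rather than of idea; the paper's version avoids all the bookkeeping you flag as the ``main obstacle.''

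One point to clean up: your reduction to flat $\R^{p,q}$ is neither needed nor justified by the remark you cite (that remark compares the pseudo-Euclidean and Euclidean shape operators \emph{within} $\R^{p,q}$, and does not license replacing a general $M$ by flat space). The formula $\tfrac{d}{dt}Q(\tilde\nu(\gamma(t)))=2Q(\xi,\nabla_w\tilde\nu)$ holds on any $M$ directly from metric-compatibility of the Levi-Civita connection, and $[\nabla_w\tilde\nu]=S_x(w)$ is precisely the paper's definition of the shape operator, so your computation runs unchanged in the curved case and the flat reduction should simply be dropped.
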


\proof
We should check that 
\begin{displaymath}
\mathrm{Im}(d_x\nu)+ T_{x,\nu(x)}\LC_M=T_{x,\nu(x)} \mathbb P_+(TM).
\end{displaymath}
Decompose into the horizontal and vertical subspaces: $T_{x,\nu(x)}\mathbb P_+(TM)= H\oplus V$. Clearly $T_{x,\nu(x)}\LC_M=H\oplus V_0$, where $V_0\subset V$ is a hyperplane. By assumption, $\textrm{Im}(\pi_V\circ d_x\nu)=V$, showing $X$ is LC-transversal.
\endproof

Back to general codimension, we now show that LC-transversality is stable under isometric embeddings.
\begin{Proposition}\label{prop:LC_regularity_transitivity}
Let $M\subset W$ be a pair of pseudo-Riemannian manifolds. Then a differentiable polyhedron $A \subset M$ is LC-transversal if and only if $A \subset W$ is LC-transversal.
\end{Proposition}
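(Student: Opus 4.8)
The plan is to reduce the statement to a pointwise, linear-algebraic comparison between the normal bundles of $A$ computed in $M$ and in $W$. Since LC-transversality is defined stratum-by-stratum on $\nc(A)$, and the conormal cycle $\nc(A)$ is intrinsic to $A$ as a subset of the ambient manifold, the essential point is to understand how the conormal directions of a stratum $S$ of $A$ (or of $\nc(A)$) relate when $S$ is viewed inside $M$ versus inside $W$, and how the light cones $\LC_M$ and $\LC_W$ restrict. First I would fix a point $x \in M$ and recall that, via the metric $Q$ on $W$ (which restricts to the metric on $M$ since $M \subset W$ is pseudo-Riemannian, hence non-degenerate on $T_xM$), we have an orthogonal splitting $T_xW = T_xM \oplus (T_xM)^{\perp_Q}$, with the restriction of $Q$ to each summand non-degenerate.

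Next I would carry out the comparison of normal bundles. For a submanifold (or stratum) $X$ through $x$ with $X \subset M \subset W$, the $Q$-normal space $N^Q_xX$ inside $T_xW$ decomposes as $N^Q_xX = (N^Q_xX \cap T_xM) \oplus (T_xM)^{\perp_Q} = N^{Q|_M}_xX \oplus (T_xM)^{\perp_Q}$, and the $Q$ on the second summand is non-degenerate and independent of the point along $M$ (as a sub-bundle it is the restriction of a fixed flat-in-the-$M$-directions structure to first order — more precisely, one uses that the Levi-Civita connection of $W$ restricted to $M$ splits into the Levi-Civita connection of $M$ plus the second fundamental form, but for the LC-transversality question only the zero-set of $Q$ and its first-order behaviour matter). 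Thus $Q$ on $N^QX \setminus \underline 0$ (inside $W$) is, near a degenerate direction, a ``sum'' of the function $Q$ on $N^{Q|_M}X$ and a fixed non-degenerate form on the complementary directions; I would show that $0$ is a regular value of the former on its relevant locus if and only if it is a regular value of the latter, because adding a non-degenerate quadratic form in independent variables cannot destroy regularity, and conversely a critical zero in the smaller bundle persists. The clean way to package this is via Lemma \ref{lem:reduce_to_kernel} applied to the function $Q$ on the total spaces $N^QX$ (in $W$) and $N^{Q|_M}X$ (in $M$): a null normal direction $\xi$ lies in $N^{Q|_M}X \oplus (T_xM)^{\perp_Q}$, and since $Q$ is non-degenerate on the second factor, $\xi$ must actually lie in $N^{Q|_M}X$, and $\Ker$ and the differential $d_\xi Q$ agree in both ambient settings.

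Then I would assemble the global statement: $A \subset M$ is LC-transversal iff every smooth stratum $Z_j$ of $\nc_M(A) \subset \mathbb P_M$ meets $\LC^*_M$ transversally; I would observe that $\nc_W(A)$ relates to $\nc_M(A)$ in a controlled way (the conormal cycle of $A$ computed in $W$ is obtained from that computed in $M$ by the ``spherical join'' with the unit sphere of $(T_xM)^{\perp_Q}$, or equivalently by the standard relation between the normal cycle in $M$ and in $W$ for $A \subset M$), so that the strata of $\nc_W(A)$ fiber over strata of $\nc_M(A)$ with fiber a sphere in the fixed complementary $Q$-non-degenerate directions. Transversality of $Z_j$ to $\LC^*_M$ in $\mathbb P_M$ is then equivalent to transversality of the corresponding stratum to $\LC^*_W$ in $\mathbb P_W$, precisely because the extra directions are $Q$-non-degenerate and hence $\LC^*_W$ restricted to them is a smooth sphere bundle meeting everything transversally — the pointwise computation of the previous paragraph is exactly what makes this work. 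For the submanifold case one can shortcut all of this: just apply Lemma \ref{lem:reduce_to_kernel} directly to $N^QX$ in $W$ versus in $M$ as above.

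The main obstacle I expect is bookkeeping the relation between $\nc_M(A)$ and $\nc_W(A)$ carefully enough — one must be sure that passing to a higher-codimension ambient manifold only adds $Q$-non-degenerate normal directions along $M$, and that the Levi-Civita splitting (second fundamental form of $M$ in $W$) does not introduce spurious critical points of $Q$ on the enlarged conormal bundle. Concretely, the worry is that a degenerate normal direction $\xi$ to a stratum, viewed in $W$, could have its differential $d_\xi Q$ killed by a contribution from the $(T_xM)^{\perp_Q}$-directions; but since $Q$ is non-degenerate there, any variation into those directions at a point where $\xi$ has no such component gives a nonzero derivative unless $\xi$ itself is non-null, so no cancellation occurs. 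Making this last point rigorous — essentially that regularity of a quadratic form is preserved under orthogonal direct sum with a non-degenerate form — is the technical heart, and everything else is the (routine but somewhat lengthy) translation between the intrinsic ``$0$ is a regular value of $Q$'' formulation and the extrinsic ``$\nc(A) \pitchfork \LC^*$'' formulation.
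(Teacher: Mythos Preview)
Your overall architecture matches the paper's: decompose $T_xW = T_xM \oplus (T_xM)^{\perp_Q}$, compare $N^{Q|_M}X$ with $N^Q X$ in $W$, and then upgrade from submanifolds to polyhedra by tracking how $\nc_W(A)$ is built from $\nc_M(A)$ plus the complementary directions. That part is fine.

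The gap is the sentence ``since $Q$ is non-degenerate on the second factor, $\xi$ must actually lie in $N^{Q|_M}X$.'' This is false: non-degenerate is not definite. If $\xi = \xi_1 + \xi_2$ with $\xi_1 \in N^{Q|_M}_xX$ and $\xi_2 \in (T_xM)^{\perp_Q}$, then $Q(\xi)=0$ only says $Q(\xi_1)=-Q(\xi_2)$, which is perfectly possible with $\xi_2 \neq 0$ when $Q|_{(T_xM)^{\perp_Q}}$ is indefinite. So the reduction you propose---``null normals in $W$ are already null normals in $M$, hence the two regularity conditions coincide''---collapses precisely in the indefinite case the paper is about. The paper handles this by a case split: if $Q(\xi_1)=-Q(\xi_2)\neq 0$, then the path $t\mapsto (1+t)\xi_1+(1-t)\xi_2$ inside the fibre $N^W_xX$ already has $\frac{d}{dt}Q|_{t=0}=4Q(\xi_1)\neq 0$, so regularity in $W$ is automatic there and no hypothesis on $M$ is needed; only in the case $Q(\xi_1)=Q(\xi_2)=0$ does one invoke LC-transversality of $X\subset M$ (extending $\xi_1$ to a curve $\xi_1(t)$ with $\frac{d}{dt}Q(\xi_1(t))\neq 0$ and keeping $\xi_2(t)$ null). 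For the reverse implication one likewise has to project a $W$-curve back to $M$ and check that the $M$-component still has nonzero $Q$-derivative, using that $\xi_2(0)=0$ forces $\frac{d}{dt}Q(\xi_2(t))|_{t=0}=0$.

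In short: your plan is the right one, but the linear-algebra shortcut you take is where the indefinite signature bites, and you need the two-case argument to close it.
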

	
\proof 
We write  $N^W_pM$ for the $Q$-normal space to $T_pM$ in $T_pW$, and similarly for the normal cycle. Let us first assume that $A=X$ is a submanifold, $\partial X=\emptyset$. 
	
Let $X\subset M$ be LC-transversal. Consider $(p,\nu)\in \LC_W\cap N^W_pX$ with $\nu\neq 0.$ We should find a path $x(t)\in X$, $x(0)=p$, and $\nu(t)\in N^W_{x(t)}X$ with $\left.\frac{d}{dt}\right|_{t=0} Q(\nu(t))\neq 0$. We have the $Q$-orthogonal decomposition $T_pW=T_pM\oplus N_p^W M$, and we decompose $\nu=\nu_1+\nu_2$ accordingly. In particular, $Q(\nu_1)=-Q(\nu_2)$, and both $\nu_1,\nu_2\in N_p^WX$.
	
If $Q(\nu_1)=Q(\nu_2)=0$ , use the LC-transversality of $X\subset M$ to choose $\nu_1(t)\in N_{x(t)}^MX\subset N_{x(t)}^WX$ such that $\left.\frac{d}{dt}\right|_{t=0}Q(\nu_1(t))\neq 0$. One can choose $\nu_2(t)\in N_{x(t)}^WM\subset N_{x(t)}^WX$ such that $Q(\nu_2(t))\equiv 0$, and therefore for $\nu(t):=\nu_1(t)+\nu_2(t)$ we have $\left.\frac{d}{dt}\right|_{t=0}Q(\nu(t))=\left.\frac{d}{dt}\right|_{t=0}Q(\nu_1(t))\neq 0$.
	
Otherwise, we have $Q(\nu_1)=-Q(\nu_2)\neq 0$, and can take $x(t)=p$ and $\nu(t)=(1+t)\nu_1+(1-t)\nu_2$.  Then $Q(\nu(t))=(1+t)^2Q(\nu_1)+(1-t)^2Q(\nu_2)=4tQ(\nu_1)$, concluding this case. 
	
Assume now $X\subset W$ is LC-transversal, and take $(p,\nu)\in N_p^MX\cap \LC_M$ with $\nu\neq 0$. Since $N_p^MX\subset N_p^WX$, it follows we can find a curve $x(t)\in X$ and $\nu(t)\in N_{x(t)}^WX$ with $x(0)=p$, $\nu(0)=\nu$ such that $\left.\frac{d}{dt}\right|_{t=0}Q(\nu(t))\neq 0$. Decompose $Q$-orthogonally $\nu(t)=\nu_1(t)+\nu_2(t)\in T_{x(t)}M\oplus N_{x(t)}^WM$, so that $Q(\nu(t))=Q(\nu_1(t))+Q(\nu_2(t))$. It follows that $\nu_1(t)=\nu(t)-\nu_2(t)\in N_{x(t)}^MX$. Note that $\nu_2(0)=0$, hence $\left.\frac{d}{dt}\right|_{t=0}Q(\nu_2(t))=0$. We conclude that $\left.\frac{d}{dt}\right|_{t=0}Q(\nu_1(t))\neq 0$, implying $\left.\frac{d}{dt}\right|_{t=0}(x(t), \nu_1(t))\notin T_{p,\nu_1}\LC_M$. It follows that $X\subset M$ is LC-transversal.
	
Finally, consider a differentiable polyhedron $ A$. A smooth stratum $Z_M$ of $\nc^M(A)$ gives rise to a smooth subset $Z_W$ of $\nc^W(A)$ in $W$ (which in general is only a subset of a smooth stratum): Assume $Z_M=\{(x,[\xi]): x\in  F, \xi \in L_x \}$,  where $F$ is a face of $A$, and $L_x\subset T_xM$ is a cone. Then $Z_W=\{ (x,\xi+\nu): x\in F, \xi\in L_x, \nu\in N^W_xM\}$. The union of all subsets $Z_W$ obtained that way is $\nc^W(A)$, unless $A$ is full-dimensional in $M$, whence one or two strata contained in $N^WM$ are not included, however the latter do not meet $\LC_W$. The proof above now can be repeated verbatim. 
\endproof
	
We now proceed to establish that for a manifold $X$, LC-regularity is equivalent to LC-transversality in the following sense.
	
\begin{Proposition}\label{prop:LC_regular_is_intrinsic}
	Let $(X,g)$ be a metric of changing signature. Then the following are equivalent.
	\\\emph{i)} There exists an isometric immersion $e:X\hookrightarrow M$ into a pseudo-Riemannian $M$ such that $e(X)$ is LC-transversal.
	\\\emph{ii)} For any isometric immersion $e:X\hookrightarrow M$ into a pseudo-Riemannian $M$, $e(X)$ is LC-transversal.
	\\\emph{iii)} $(X,g)$ is LC-regular.
\end{Proposition}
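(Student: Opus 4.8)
\textbf{Proof plan for Proposition \ref{prop:LC_regular_is_intrinsic}.}

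The plan is to prove the cycle $(2)\Rightarrow(1)\Rightarrow(3)\Rightarrow(2)$, using the Nash embedding theorem (Theorem \ref{thm_nash}) and the stability result Proposition \ref{prop:LC_regularity_transitivity} to reduce everything to a single well-chosen model embedding. The implication $(2)\Rightarrow(1)$ is vacuous once one knows an isometric immersion exists at all, which is precisely the content of Theorem \ref{thm_nash} applied to a pseudo-Riemannian manifold; but since $g$ here has changing signature, I first need a version of Nash's theorem for metrics of changing signature — this can be obtained by embedding $X$ isometrically into some $\mathbb{R}^{p',q'}$ via the standard trick of writing $g$ as a difference of two sums of squares of functions (which exists locally and can be globalized using a partition of unity, at the cost of raising $p',q'$), so this step is routine modulo bookkeeping. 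Alternatively, and more cleanly, I would note that it suffices for the Proposition to produce \emph{one} isometric immersion into a pseudo-Riemannian manifold and invoke Proposition \ref{prop:LC_regularity_transitivity} together with the equivalence for that one model, so the existence question can be deferred to wherever the overall paper establishes it.

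The heart of the matter is the equivalence $(1)\Leftrightarrow(3)$, and by Proposition \ref{prop:LC_regularity_transitivity} it is enough to prove it for a single isometric immersion $e:X\hookrightarrow M$, which I will take to be an embedding into $\mathbb{R}^{p',q'}$ with $Q$ the flat form. So fix $p\in X$ with $g_p$ degenerate (at non-degenerate points both conditions hold automatically, by Proposition \ref{prop_lc_regularity_pseudo_riemann} and Lemma \ref{lem:nondegenerate_hypersurface}-type reasoning, or directly), pick $v\in\Ker(g_p)\setminus\{0\}$, and analyze the two conditions at $(p,v)$. By Lemma \ref{lem:reduce_to_kernel}, LC-regularity at $p$ says $d_pg(v)\neq 0$ as an element of $T_p^*X$, i.e. there is $w\in T_pX$ with $\langle (L_wg)v,v\rangle\neq 0$. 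On the extrinsic side: since $v\in\Ker g_p\subset T_pX$ and $Q$ restricted to $T_pX$ is $g_p$, the vector $v$ viewed in $\mathbb{R}^{p',q'}$ satisfies $Q(v)=0$, and moreover $v$ is $Q$-orthogonal to $T_pX$ (as $g_p(v,\cdot)=0$), hence $v\in N^Q_pX$; thus $(p,[v])\in N^QX\cap \LC^*_M$, and LC-transversality at this point asks whether $Q\in C^\infty(N^QX\setminus\underline 0)$ has $dQ_{(p,v)}\neq 0$. The computation reduces, as in the proof of Proposition \ref{prop:transversality_is_generic}/\ref{prop:LC_regularity_transitivity}, to differentiating $Q$ along curves inside $N^QX$: a curve $x(t)\in X$ with $x(0)=p$ together with a lift $\nu(t)\in N^Q_{x(t)}X$, $\nu(0)=v$; then $\frac{d}{dt}\big|_0 Q(\nu(t)) = 2Q(v,\nu'(0))$, and writing $\nu'(0)=\nu'(0)^\top + \nu'(0)^\perp$ relative to $T_pX\oplus N^Q_pX$, the normal part contributes $2Q(v,\nu'(0)^\perp)$ which can be made nonzero iff $N^Q_pX$ is not $Q$-null, and the tangential part contributes a term that, after using that $v$ stays normal to $X$ along $x(t)$, matches exactly $d_pg(v)(\dot x(0))$ up to a nonzero constant. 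So the identity I must extract is: for $\nu(t)$ any normal lift of $v$ along a curve $x(t)$, $\frac{d}{dt}\big|_0 Q(\nu(t)) = d_pg(v)(\dot x(0)) + (\text{normal-direction term})$. This makes the equivalence transparent — if $d_pg(v)\neq 0$ one picks the curve realizing it (keeping $\nu$ as parallel as possible normally), and conversely if $d_pg(v)=0$ then \emph{every} normal variation of $(p,v)$ that keeps $Q=0$ to first order... wait, one must check the normal term too: if $g_p$ is degenerate but $g$ changes signature nearby, the normal bundle direction could save transversality, so I need to argue that when $d_pg(v)=0$ the relevant normal term also vanishes, or more precisely rule out LC-transversality.

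\textbf{Main obstacle.} The delicate point is the bookkeeping of the "normal-direction term" above: I must show that along $\LC^*_M\cap N^QX$ the only way to achieve $\frac{d}{dt}Q(\nu(t))\neq 0$ is via a genuine variation of the base point $x(t)$ detecting $d_pg(v)$, and that moving $\nu$ within the normal fibre (over fixed $p$) or within the $Q$-null locus of the normal bundle cannot produce transversality on its own. Concretely: at a point where $g_p$ has a $k$-dimensional kernel and $v$ lies in it, the $Q$-orthogonal complement $N^Q_pX\subset \mathbb{R}^{p',q'}$ is $(p'+q'-n)$-dimensional but $Q|_{N^Q_pX}$ need not be definite, so there could be a priori null directions in it; I need the elementary linear-algebra fact that $v$, being $Q$-isotropic \emph{and} $Q$-orthogonal to the whole of $T_pX$ where $Q$ degenerates exactly along $\mathbb{R}v$, forces the derivative of $Q$ on $N^QX$ in purely-fibre directions at $v$ to vanish, so that the fibre variation contributes nothing and the whole obstruction to transversality is intrinsic. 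Once this linear-algebra lemma is in place, the equivalence $(1)\Leftrightarrow(3)$ drops out, and combined with Proposition \ref{prop:LC_regularity_transitivity} (which upgrades "some immersion" to "any immersion") the full chain $(1)\Leftrightarrow(2)\Leftrightarrow(3)$ follows.
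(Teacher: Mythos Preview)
Your plan can be made to work, but the passage you flag as the ``Main obstacle'' contains a genuine gap and a confusion. The decomposition $\nu'(0)=\nu'(0)^\top+\nu'(0)^\perp$ is ill-defined precisely at the points that matter, since $T_pX\cap N^Q_pX=\Ker g_p$ is nonzero there. More seriously, you only \emph{assert} that the base-direction contribution ``matches $d_pg(v)(\dot x(0))$ up to a nonzero constant''. This is true, and the constant is exactly $-1$, but it requires proof: take a tangent extension $\tilde V$ and a normal extension $\hat V$ of $v$; then $Q(\hat V,\tilde V)\equiv 0$ along $X$, and differentiating with $\nabla Q=0$ gives $Q(\nabla_w\hat V,v)+Q(v,\nabla_w\tilde V)=0$ at $p$, i.e.\ $d_p(Q(\hat V))=-d_pg(v)$. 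Your side remark that the fibre term ``can be made nonzero iff $N^Q_pX$ is not $Q$-null'' is also off: the fibre derivative of $Q$ at $v$ in direction $w\in N^Q_pX$ is $2Q(v,w)$, which vanishes for \emph{all} such $w$ simply because $v\in T_pX=(N^Q_pX)^{\perp_Q}$, with no nullity hypothesis needed. You should also record separately that for null $\nu\in N^Q_pX\setminus T_pX$ transversality is automatic (move within the fibre, since then $Q(\nu,\cdot)\not\equiv 0$ on $N^Q_pX$).

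The paper takes a different and cleaner route that avoids this variational computation entirely: it embeds $X$ locally into some $M^{n,n}$ (Lemma \ref{lemma:baby_nash}) and constructs a smooth bundle isomorphism $B\colon TX\to N^QX$ with $B^*Q=-g$ (Corollary \ref{cor:local_identification_tangent_normal}, via Proposition \ref{prop:linear_smooth_normal_isometry}), so LC-regularity of $g$ on $TX$ and LC-transversality of $Q$ on $N^QX$ are manifestly equivalent. Finally, to upgrade ``some embedding'' to ``every embedding'', Proposition \ref{prop:LC_regularity_transitivity} alone is not enough: one must first place two given ambient manifolds into a common pseudo-Riemannian $W$, which is the content of Lemma \ref{lemma:two_manifolds_simultaneous_embedding}. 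You should make that step explicit.
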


We will need the following simple embedding results.

\begin{Lemma}\label{lemma:baby_nash}
Let $U\subset \R^n$ be open, and $g$ a metric of changing signature on $U$. Then one can find $M^{n,n}$ and an isometric embedding $e:(U,g)\hookrightarrow M$. 
\end{Lemma}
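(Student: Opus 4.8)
The goal is to embed an open set $U\subset\R^n$ with an arbitrary smooth symmetric $(0,2)$-tensor $g$ isometrically into some pseudo-Riemannian manifold $M^{n,n}$. The plan is to write down an explicit such embedding and an explicit pseudo-Riemannian metric on (a neighborhood of $U$ inside) $\R^{n}\times\R^{n}=\R^{2n}$, carrying the ``canonical'' signature $(n,n)$, and to check that the inclusion $e(x)=(x,0)$ pulls it back to $g$.

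\medskip

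\noindent\textbf{Step 1: A model metric of signature $(n,n)$.} On $\R^{2n}$ with coordinates $(x,y)=(x_1,\dots,x_n,y_1,\dots,y_n)$, consider the symmetric bilinear form
\begin{displaymath}
h \;=\; \sum_{i,j} g_{ij}(x)\,dx_i\,dx_j \;+\; 2\sum_i dx_i\,dy_i .
\end{displaymath}
At each point its matrix in the basis $(\partial_{x_i},\partial_{y_j})$ is $\left(\begin{smallmatrix} g(x) & I \\ I & 0\end{smallmatrix}\right)$, whose determinant is $(-1)^n\det(I)^2=(-1)^n\neq 0$, so $h$ is everywhere non-degenerate; and since this matrix is congruent to $\left(\begin{smallmatrix} 0 & I \\ I & 0\end{smallmatrix}\right)$ (subtract $\tfrac12 g(x)$ times the lower block from the upper, a congruence by a unipotent matrix), its signature is constantly $(n,n)$. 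Thus $M:=(\R^{2n},h)$ is pseudo-Riemannian of signature $(n,n)$; restricting $M$ to the open set $U\times\R^n$ (or any open neighborhood thereof) keeps this true.

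\medskip

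\noindent\textbf{Step 2: The embedding and the isometry check.} Define $e:U\to M$ by $e(x)=(x,0)$. This is a smooth embedding with $de|_x(\partial_{x_i})=\partial_{x_i}$, so
\begin{displaymath}
e^*h \;=\; \sum_{i,j} g_{ij}(x)\,dx_i\,dx_j \;+\; 2\sum_i dx_i\cdot d(0) \;=\; g,
\end{displaymath}
since the $y$-coordinates are pulled back to $0$. Hence $e:(U,g)\hookrightarrow M^{n,n}$ is an isometric embedding, as required. (If one prefers $M$ to be a genuine manifold without further structure one just takes $M=U\times\R^n$ with the metric $h$ above.)

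\medskip

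\noindent\textbf{Remarks on the approach and the main point.} There is essentially no obstacle here: the whole content is the choice of the model $h$, which is the natural ``Hessian-type'' completion of $g$ to a neutral metric on the cotangent-bundle-like space $U\times\R^n$ — indeed $h$ is, up to the term $\sum g_{ij}dx_idx_j$, the canonical pairing between base and fiber directions, which always has neutral signature. The only things to verify are the two bookkeeping facts above: that $h$ is non-degenerate of signature $(n,n)$ everywhere (done by the explicit congruence to the hyperbolic form, valid pointwise and smoothly in $x$), and that $e^*h=g$ (immediate from $de=\mathrm{id}$ on $x$-directions and $y\circ e\equiv 0$). The subtlety worth flagging is only that $g$ is allowed to be degenerate or of changing signature, but this plays no role: the added off-diagonal block $\left(\begin{smallmatrix}0&I\\I&0\end{smallmatrix}\right)$ dominates and forces non-degeneracy of $h$ regardless of $g$, which is exactly why one gains two units of both indices in passing to $M$.
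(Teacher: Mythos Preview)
Your proof is correct and follows essentially the same approach as the paper: the embedding $e(x)=(x,0)$ into $U\times\R^n$ with a block metric $\left(\begin{smallmatrix} g & R\,I\\ R\,I & 0\end{smallmatrix}\right)$. Your choice $R\equiv 1$ together with the explicit congruence to the hyperbolic form is in fact slightly cleaner than the paper's ``take $R(x)\gg\|g_x\|_\infty$'', since the congruence shows the signature is $(n,n)$ for any nonzero $R$ regardless of $g$.
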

	
\proof
We define $e:U\to \R^{2n}, x \mapsto (x, 0)$, and construct a pseudo-Riemannian metric $G$ on $U \times \R^n$. We use the standard Euclidean structures on $\R^n, \R^{2n}$ to identify $g, G$ with fields of symmetric matrices. Set  
\begin{displaymath}
G|_{(x,y)}=\left(\begin{matrix} g|_x & R(x) \cdot \mathrm{Id}_n\\R(x) \cdot \mathrm{Id}_n & 0 \end{matrix}\right).
\end{displaymath}
Clearly $e^*G=g$, and choosing $R(x)\gg \|g_x\|_\infty$, $G$ has signature $(n,n)$.
\endproof
	
\begin{Lemma}\label{lemma:two_manifolds_simultaneous_embedding}
Let $e_j:(X^n,g)\hookrightarrow (M_j, Q_j)$ be isometric embeddings, with $(M_j, Q_j)$ pseudo-Riemannian, $j=1,2$. Then for any $p\in X$ one can find a neighborhood $U\subset X$ of $p$ and open sets $U_j\subset M_j$  with $U_j\cap e_j(X)=e_j(U)$, and a pseudo-Riemannian $(M,Q)$, and isometric embeddings $f_j:U_j\hookrightarrow M$ such that $f_1\circ e_1|_{U}=f_2\circ e_2|_{U}$, the latter common image $\tilde X$ coincides with $f_1U_1\cap f_2U_2$, and $T(f_1U_1)\cap T(f_2U_2)=T\tilde X$. 
\end{Lemma}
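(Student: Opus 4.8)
The plan is to reduce the statement to a local normal form near $p$ and then glue the two ambient manifolds along a common copy of $X$ using a partition-type construction. First I would work in coordinates: pick a chart of $X$ around $p$ identifying a neighborhood of $p$ with an open set $U\subset\R^n$, and shrink $U_j$ so that $U_j$ is a tubular neighborhood of $e_j(U)$ in $M_j$, identified via the normal exponential map (with respect to $Q_j$, or just an auxiliary Riemannian metric — only the smooth structure matters here) with a neighborhood of the zero section in the normal bundle $N^{Q_j}e_j(U)$. Thus $U_j\cong U\times B_j$ for some ball $B_j\subset\R^{m_j-n}$, with $e_j$ becoming the zero section inclusion, and $Q_j$ becoming a field of symmetric bilinear forms on $\R^n\times\R^{m_j-n}$ whose restriction to the $\R^n$-factor along $B_j=\{0\}$ equals $g$.

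Next I would build the common target $M$ as $U\times B_1\times B_2$ (after further shrinking), with the diagonal-type embedding $\tilde X=U\times\{0\}\times\{0\}$, and with $f_1(x,b_1)=(x,b_1,0)$, $f_2(x,b_2)=(x,0,b_2)$. The metric $Q$ on $M$ has to be a pseudo-Riemannian metric simultaneously pulling back to $Q_1$ under $f_1$ and to $Q_2$ under $f_2$. The natural candidate is to take, at a point $(x,b_1,b_2)$, the block form built from $Q_1|_{(x,b_1)}$ on the $\R^n\oplus\R^{m_1-n}$ slot, the ``extra'' normal block of $Q_2|_{(x,b_2)}$ on the $\R^{m_2-n}$ slot, and then cross terms that interpolate so that the restriction to $b_2=0$ recovers $Q_1$ and the restriction to $b_1=0$ recovers $Q_2$ — concretely, write $Q_j = g \oplus 2C_j + D_j$ where $C_j$ is the mixed $\R^n$–normal block and $D_j$ the pure normal block (all depending on the full point), and set
\begin{displaymath}
Q|_{(x,b_1,b_2)} = g|_x \;\oplus\; \bigl(2C_1 + D_1\bigr)\Big|_{(x,b_1)} \;\oplus\; \bigl(2C_2 + D_2\bigr)\Big|_{(x,b_2)} \;+\; (\text{mixed } b_1\text{–}b_2 \text{ terms}).
\end{displaymath}
The mixed $b_1$–$b_2$ terms can be chosen to vanish to first order along $\tilde X$ (e.g.\ taken identically zero on a small enough neighborhood), which forces $T(f_1U_1)\cap T(f_2U_2)=T\tilde X$: a tangent vector in the intersection has no $b_1$-component (being tangent to $f_2U_2=\{b_1=0\}$) and no $b_2$-component, hence is tangent to $U\times\{0\}\times\{0\}$. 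It also ensures $f_1U_1\cap f_2U_2=\tilde X$ after shrinking, since a point with $b_1=b_2=0$ lies in $\tilde X$ by construction. One then checks $f_j^*Q=Q_j$ directly from the block decomposition: restricting $Q$ to $b_{3-j}=0$ kills exactly the blocks not coming from $Q_j$.

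The remaining point is non-degeneracy of $Q$ on a neighborhood of $\tilde X$. Along $\tilde X$ itself, $Q$ is block-diagonal in the three groups of variables: $g$ is non-degenerate only where $g$ is (it may be degenerate!), but the full $Q_j$ restricted to $\R^n\oplus\R^{m_j-n}$ is non-degenerate by hypothesis, and combining the two normal blocks with $g$ we get a block matrix whose determinant equals $\det(g)^{-1}\det(Q_1|_{\R^n\oplus\R^{m_1-n}})\det(Q_2|_{\R^n\oplus\R^{m_2-n}})$ up to the shared $g$-block being counted once — the standard Schur-complement computation shows this is nonzero precisely because each $Q_j$ is non-degenerate. (If $g$ itself degenerates one arranges the blocks as in Lemma~\ref{lemma:baby_nash}, adding off-diagonal $R(x)\mathrm{Id}$ terms between $\R^n$ and a normal slot so that the combined form stays non-degenerate; the same trick applies here.) By continuity, $Q$ stays non-degenerate on a neighborhood of $\tilde X$, and after one last shrinking of $U,U_j$ we are done. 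The main obstacle is precisely this bookkeeping: arranging the cross-terms between the two normal bundles and between $\R^n$ and the normals so that (i) $f_j^*Q=Q_j$ on the nose, (ii) the tangent-space intersection is exactly $T\tilde X$, and (iii) $Q$ is non-degenerate even at points where the intrinsic $g$ is degenerate — all three must hold simultaneously, and (iii) is the genuinely pseudo-Riemannian subtlety with no Riemannian analogue.
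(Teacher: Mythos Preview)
Your setup is essentially the paper's: localize, write $M_j$ as a product $U\times B_j$ with $e_j$ the zero-section, and build $M=U\times B_1\times B_2$ with $f_1(x,b_1)=(x,b_1,0)$, $f_2(x,b_2)=(x,0,b_2)$. The intersection and tangent-intersection claims are then immediate. The gap is entirely in step~(iii), the non-degeneracy of $Q$.

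First, your Schur-complement formula is not correct in general: with the block form
\[
Q=\begin{pmatrix} g & C_1 & C_2\\ C_1^T & D_1 & 0\\ C_2^T & 0 & D_2\end{pmatrix},
\]
the Schur complement of $g$ has off-diagonal blocks $-C_1^Tg^{-1}C_2$, so $\det Q\neq \det(g)^{-1}\det Q_1\det Q_2$ unless $C_1^Tg^{-1}C_2=0$. More seriously, your proposed fix at degenerate points of $g$ --- ``add off-diagonal $R(x)\mathrm{Id}$ terms between $\R^n$ and a normal slot'' --- cannot work as written: those blocks are exactly $C_j$, and they are determined by the requirement $f_j^*Q=Q_j$. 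You have no freedom there. The only free block is the $b_1$--$b_2$ block $E$, and you would need a separate argument that $E$ can be chosen smoothly in $x$ so as to make $Q$ non-degenerate even where $g$ degenerates; this is not automatic.

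The paper sidesteps this entirely. It does \emph{not} try to make the metric on $U\times B_1\times B_2$ non-degenerate: it picks \emph{any} smooth symmetric $2$-tensor $\tilde Q$ on $\tilde M=U\times B_1\times B_2$ extending $Q_1$ and $Q_2$ (such extensions obviously exist, e.g.\ by partition of unity, with no non-degeneracy required), and then applies Lemma~\ref{lemma:baby_nash} once more to isometrically embed $(\tilde M,\tilde Q)$ into a genuinely pseudo-Riemannian $(M,Q)$. Composing with this embedding gives the desired $f_j$. This turns your hardest step into a one-line citation.
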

	
\proof
We may work locally, thus assume $X=(\R^n,g)$, $p=0$, $M_j=(\R^{n+k_j}, Q_j)$, and $e_j(X)$ is the coordinate subspace in each $M_j$ given by $y_{n+1}=\dots=y_{n+k_j}=0$. Take  $\tilde M=\R^{n+k_1+k_2}$, and set 
\begin{align*} 
\tilde f_1(y):=(y_1,\dots,y_n,y_{n+1},\dots,y_{n+k_1}, 0,\dots,0),\\ 
\tilde f_2(y):=(y_1,\dots, y_n, 0,\dots,0, y_{n+1},\dots,y_{n+k_2}). 
\end{align*}
	
Now choose an arbitrary symmetric $2$-tensor $\tilde Q$ on $\tilde M$ restricting to $Q_j$ on $M_j$, and use Lemma \ref{lemma:baby_nash} to find an isometric embedding $e:(\tilde M, \tilde Q) \hookrightarrow (M,Q)$ in a pseudo-Riemannian manifold. It remains to set $f_j=e\circ \tilde f_j$.
	\endproof
	
	Finally, we will need the following statements, which will allow us to identify the tangent and normal bundles in a particular case. Denote $\perp_Q(E):=E^Q$.
	
\begin{Proposition}\label{prop:linear_smooth_normal_isometry}
Let $Q$ be the standard $(n,n)$ form on $\R^{2n}=\R^{n,n}$. Let $E_0\in \Gr_n(\R^{n,n})$ be any subspace. Then there is a $\perp_Q$-invariant open neighborhood $U$ of $E_0$, and a smooth section $B: U\to \Hom(E, E^Q)$ such that $B_{E^Q}=B_E^{-1}$, and for all $x\in E\in U$, $Q(B_Ex)=-Q(x)$.
\end{Proposition}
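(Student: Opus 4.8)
\textbf{Proof plan for Proposition \ref{prop:linear_smooth_normal_isometry}.}

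The plan is to produce $B_E$ by a direct algebraic construction depending smoothly on $E$, and then to check the two required identities ($B_{E^Q}=B_E^{-1}$ and $Q(B_E x)=-Q(x)$) by inspection. First I would fix the reference subspace $E_0$ and observe that $Q|_{E_0}$ has some signature $(a, n-a)$; since $Q$ has signature $(n,n)$, the restriction $Q|_{E_0^Q}$ then has signature $(n-a, a)$, and in particular both $E_0$ and $E_0^Q$ are themselves non-degenerate for $Q$. The set $U$ of subspaces $E$ on which $Q|_E$ is non-degenerate (equivalently $E\cap E^Q=0$, equivalently $\R^{2n}=E\oplus E^Q$) is open in $\Gr_n(\R^{n,n})$ and clearly $\perp_Q$-invariant, and contains $E_0$; this is the neighborhood I would take.

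On this $U$, the key point is that $Q|_E$ and $-Q|_{E^Q}$ are non-degenerate symmetric forms \emph{of the same signature} $(a,n-a)$, because of the signature bookkeeping above and the fact that signature is locally constant on $U$ (shrinking $U$ to a connected neighborhood of $E_0$ if necessary). Hence there is an isometry $(E, Q|_E)\to (E^Q, -Q|_{E^Q})$, and the task is to choose one smoothly in $E$. I would do this via polar-type decomposition: equip $\R^{2n}$ with its standard positive definite inner product $\langle\cdot,\cdot\rangle$, let $P_E\colon \R^{2n}\to E$ be the $\langle\cdot,\cdot\rangle$-orthogonal projection (smooth in $E$), and let $J\in\GL(\R^{2n})$ be the diagonal involution with $Q(x)=\langle Jx,x\rangle$. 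For $E\in U$ define the map $T_E := P_{E^Q}\circ J\colon E\to E^Q$; one checks $Q(T_E x) = \langle J T_E x, T_E x\rangle$ and, using that $J$ is a $Q$-isometry together with the orthogonal decomposition, relates this to $-Q(x)$ up to a positive self-adjoint correction. Precisely, the form $(x,y)\mapsto -Q(T_E x, T_E y)$ is a positive multiple (in the operator sense, via a $Q|_E$-self-adjoint positive operator $C_E$ depending smoothly on $E$) of $Q|_E$; then set $B_E := T_E\circ C_E^{-1/2}$, the square root being defined since $C_E$ is positive and depends smoothly on $E$. By construction $Q(B_E x) = -Q(x)$ for $x\in E$.

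It remains to arrange $B_{E^Q} = B_E^{-1}$. Since $E^Q\in U$ whenever $E\in U$, the above already produces \emph{some} isometry $B_{E^Q}\colon (E^Q, Q|_{E^Q})\to (E, -Q|_{E})$; but there is no reason the two independent constructions are mutually inverse. To fix this I would not define $B$ on all of $U$ by the formula above, but instead pick, smoothly and $\perp_Q$-equivariantly, a ``half'' of $U$: concretely, since $\perp_Q$ is a smooth free involution on $U$ (free because $E=E^Q$ is impossible for $\dim E=n$ in a form of signature $(n,n)$ with $E$ non-degenerate), one can shrink $U$ to a $\perp_Q$-invariant neighborhood of $E_0$ admitting a smooth section $U\to U/\!\perp_Q$, i.e. a smooth choice of one subspace from each pair $\{E, E^Q\}$. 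On the chosen representatives define $B_E$ by the construction of the previous paragraph, and on the complementary subspaces define $B_{E^Q} := B_E^{-1}$, which is automatically a smooth isometry $(E^Q, Q)\to(E, -Q)$, so that $Q(B_{E^Q} y) = -Q(y)$ holds as well. This yields $B$ on all of $U$ with both desired properties.

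The main obstacle I anticipate is the last point: ensuring that $B_{E^Q}=B_E^{-1}$ globally on a $\perp_Q$-invariant neighborhood, which requires the existence of a smooth $\perp_Q$-equivariant ``fundamental domain'' near $E_0$. This is where the hypothesis that we only need a \emph{neighborhood} of a single $E_0$ (rather than all of $\Gr_n$) is essential — locally the free involution $\perp_Q$ is trivializable, so such a section exists, whereas globally there could be an orientation-type obstruction. The smoothness of $C_E^{-1/2}$ (holomorphic functional calculus applied to a smoothly varying positive operator on a fixed-dimensional space) and the smoothness of $E\mapsto P_E$ are routine and I would only remark on them.
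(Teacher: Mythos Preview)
Your argument has a genuine gap at the very first step: you assert that $Q|_{E_0}$ is non-degenerate (``has some signature $(a,n-a)$''), but the proposition allows $E_0$ to be \emph{any} $n$-plane in $\R^{n,n}$, and in signature $(n,n)$ there are plenty of $n$-planes on which $Q$ is degenerate --- for instance the Lagrangian $E_0=\Span\{e_1+e_{n+1},\dots,e_n+e_{2n}\}$, on which $Q$ vanishes identically and $E_0=E_0^Q$. Your open set $U$ of non-degenerate $n$-planes therefore need not contain $E_0$. Worse, in the isotropic case $E_0=E_0^Q$ is a \emph{fixed point} of $\perp_Q$, so no $\perp_Q$-invariant neighborhood of $E_0$ can be split into two halves exchanged by the involution; your fundamental-domain device for enforcing $B_{E^Q}=B_E^{-1}$ collapses exactly there. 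This case is not exotic: in the paper's application (Corollary~\ref{cor:local_identification_tangent_normal}) one takes $E_0=T_pX$ for an LC-regular submanifold $X$, and the whole point is that $g=Q|_{T_pX}$ may be degenerate.

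The paper sidesteps both problems with a single symmetric construction. After choosing a suitable $Q$-compatible Euclidean metric $P$, it lets $\Lambda(E)$ be the midpoint of the unique shortest $P$-geodesic in $\Gr_n$ from $E$ to $E^Q$; since $\perp_Q$ is a $P$-isometry swapping the endpoints, $\Lambda(E)=\Lambda(E^Q)$ is automatically a $\perp_Q$-fixed (hence isotropic) subspace, and $B_E$ is taken to be the $P$-orthogonal reflection across $\Lambda(E)$. This reflection is its own inverse and sends $E$ to $E^Q$, so $B_{E^Q}=B_E^{-1}$ holds by construction, while $Q(B_Ex)=-Q(x)$ follows directly from $\Lambda(E)$ being isotropic --- all with no non-degeneracy assumption on $E$.
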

	
\proof
 Let $P$ be a Euclidean structure on $\R^{2n}$. Let $X_P$ be $\Gr_n(\R^{2n})$ equipped with the standard $ \OO(P)$-invariant metric, and let $d_P(E,F)$ denote its distance function. Note that $X_P$ is isometric to $X_{P'}$ for any other $P'$, in particular its convexity radius $\epsilon_0$ is independent of $P$.
A Euclidean structure given by $P(x,y)=Q(Sx,y)$, with $S\in\OO(Q)$ an involution, is called \emph{$Q$-compatible}, see \cite{bernig_faifman_opq} for some basic properties of compatible Euclidean structures. For such $P$ it holds that $\perp_Q=S\circ\perp_P$, and since $S\in\OO(P)$ we conclude that $\perp_Q$ is an isometry of $X_P$. 
	
	\textit{Claim.} There is a $Q$-compatible $P$ and a $\perp_Q$-invariant open set $U$ around $E_0$ such that any two points in $U$ are connected by a unique shortest geodesic.
	
Indeed, let $\Lambda_n(\R^{2n})$ be the set of $Q$-isotropic subspaces, and note that $\Lambda_n(\R^{2n})$ lies in the closure of any $\OO(Q)$-orbit on $\Gr_n(\R^{2n})$. It suffices to find $P$ and some $L_0\in \Lambda_n(\R^{2n})$ such that $d_P(E_0,L_0)<\epsilon_0$, for then $\perp_Q(L_0)=L_0$ and we can take $U:=\{E:d_P(E,L_0)<\epsilon_0\}$. To find such $(L_0,P)$, fix any $Q$-compatible metric $P'$, and choose $g\in \OO(Q)$ and some $L_0'\in\Lambda_n(\R^{2n})$ such that $d_{P'} (gE_0, L_0')<\epsilon_0$. As $\OO(Q)$ acts on the space of $Q$-compatible Euclidean structures by conjugating $S$, we conclude that $P=g^{-1}P'$, $L_0=g^{-1}L_0'$ satisfy  $d_{P} (E_0, L_0)<\epsilon_0$, proving the claim.
	
Fix such $P, U$. For any $E\in U$, let $\Lambda(E)$ be the midpoint of the shortest geodesic $(E, E^Q)$. Since the endpoints are interchanged by the isometry $\perp_Q$, we conclude that $\Lambda(E)^Q=\Lambda(E)$, that is $\Lambda(E)$ is isotropic. Let $\pi_E:\R^{2n}\to \Lambda(E)$ be the $P$-orthogonal projection. Then $B_Ex:= 2\pi_Ex-x\in\mathrm{End}(\R^{2n})$ is the reflection with respect to $\Lambda(E)$, in particular $B_E(E)=E^Q$. It follows that for $x\in E$, $Q(B_Ex)=4Q(\pi_Ex)+Q(x)-4Q(x,\pi_Ex)=Q(x)-4Q(x,\pi_Ex)=-Q(x)$, where the last equality follows from 
\begin{displaymath}
2Q(x,\pi_Ex)=Q(x)\iff Q(x,2\pi_Ex-x)=0\iff Q(x,B_Ex)=0.\qedhere
\end{displaymath}

\begin{Corollary}\label{cor:local_identification_tangent_normal} 
Let $(X^n,g)\subset (M^{n,n},Q)$ be an isometrically immersed manifold, $p\in X$. Then one can find an open neighborhood $U\subset X$ of $p$, and a smooth section $B:U\to \Hom(TX, N^QX)$ such that  $B_x$ is bijective for all $x\in U$, and $B_x^*Q=-g$.
\end{Corollary}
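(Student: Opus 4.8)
The plan is to bundle-ify Proposition \ref{prop:linear_smooth_normal_isometry} over $X$. Since the assertion is local around $p$, I would first shrink $X$ so that the immersion $e$ is an embedding; then $X\subset M$ is an embedded submanifold, $g=Q|_{TX}$, and $N^QX=(TX)^Q$ is a smooth rank-$n$ subbundle of $TM|_X$ (its rank is $2n-n=n$ at every point because $Q$ is non-degenerate on the ambient, irrespective of whether $g$ itself degenerates).

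Next I would trivialize $(TM,Q)$ near $p$ by choosing a smooth local $Q$-pseudo-orthonormal frame on a neighborhood $W$ of $p$ — such frames exist by the standard Gram--Schmidt argument for non-degenerate symmetric forms, equivalently by local triviality of the $\OO(n,n)$-frame bundle of $(M,Q)$. This gives a bundle isomorphism $\Phi\colon TM|_W\to W\times\R^{n,n}$ whose fibers $\Phi_x$ are $Q$-isometries onto the standard $\R^{n,n}$. Because each $\Phi_x$ is a $Q$-isometry, it carries $N^Q_xX=(T_xX)^Q$ onto $E(x)^Q$, where $E(x):=\Phi_x(T_xX)$, and $E\colon W\cap X\to\Gr_n(\R^{n,n})$ is a smooth map.

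Then I would apply Proposition \ref{prop:linear_smooth_normal_isometry} to $E_0:=E(p)$, obtaining an open $U'\ni E_0$ in $\Gr_n(\R^{n,n})$ and a smooth section $B'\colon U'\to\Hom(E,E^Q)$ with $B'_{E^Q}=(B'_E)^{-1}$ and $Q(B'_Ex)=-Q(x)$ for $x\in E$. Setting $U:=E^{-1}(U')\cap W$ and $B_x:=\Phi_x^{-1}\circ B'_{E(x)}\circ\Phi_x|_{T_xX}$, Step 1 shows $B_x$ maps $T_xX$ into $N^Q_xX$, it depends smoothly on $x\in U$, and it is bijective since $B'_{E(x)}$ is. Finally, for $v\in T_xX$ one has $Q(B_xv)=Q(B'_{E(x)}\Phi_xv)=-Q(\Phi_xv)=-Q(v)$; polarizing this quadratic identity — exactly as at the end of the proof of Proposition \ref{prop:linear_smooth_normal_isometry} — yields $Q(B_xv,B_xw)=-Q(v,w)=-g_x(v,w)$, i.e. $B_x^*Q=-g$, as required.

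I expect the only genuinely delicate point to be the smooth dependence on $x$, which reduces to the existence of the smooth pseudo-orthonormal frame used in the trivialization step and to the smoothness of $x\mapsto T_xX$ as a Grassmannian-valued map; once these are granted, the Corollary is a purely fiberwise restatement of Proposition \ref{prop:linear_smooth_normal_isometry} together with the polarization identity already employed in its proof.
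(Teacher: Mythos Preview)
Your proposal is correct and follows essentially the same route as the paper's proof: trivialize $TM$ near $p$ by a smooth family of $Q$-isometries onto a fixed model $\R^{n,n}$, push $T_xX$ forward to obtain a smooth Grassmannian-valued map $E$, apply Proposition~\ref{prop:linear_smooth_normal_isometry} at $E(p)$, and conjugate back. The paper phrases the trivialization as a section $A:U_1\to\Hom(T_pM,TM)$ of isometries rather than a frame, but this is the same data; your version is just more explicit about the polarization step and the rank of $N^QX$.
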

	
\proof
Choose a neighborhood $U_1\subset M$ of $p$ and a smooth section $A:U_1\to \Hom(T_pM, TM)$ such that $A_x\colon T_pM\to T_xM$ is an isometry for all $x\in U_1$. Define $E:U_1\cap X\to \Gr_n(T_pM)$ by $E(x)= A_ x^{-1}(T_xX)$. Use Proposition \ref{prop:linear_smooth_normal_isometry} to find an open neighborhood $U\subset X\cap U_1$ of $p$, and a section $\tilde B:U\to \Hom(E(x), E(x)^Q)$ such that $\tilde B^*Q=-Q$. Clearly $B_x:=A_x\circ \tilde B_x\circ A_x^{-1}$ is the desired section.
\endproof
	
\proof[Proof of Proposition \ref{prop:LC_regular_is_intrinsic}.]
As the statement is local, we may replace immersions by embeddings. The implication $ii)\Rightarrow i)$ follows from Lemma \ref{lemma:baby_nash}. 
	
For the reverse implication, assume the first item, and let $e':X\hookrightarrow M'$ be another embedding. Use Lemma \ref{lemma:two_manifolds_simultaneous_embedding} to construct a pseudo-Riemannian $N$, and isometric embeddings $f:M\to N$, $f':M'\to N$ that coincide on $X$. By Proposition \ref{prop:LC_regularity_transitivity}, $ e(X)$ is LC-transversal in $ M$ if and only if $ f(e(X))=f'(e'(X))$ is LC-transversal in $N$, if and only if $ e'(X)$ is LC-transversal in $M'$.
	
{By Lemma \ref{lemma:baby_nash} and Corollary \ref{cor:local_identification_tangent_normal}, we may embed $(X,g)$ in $(M^{n,n},Q)$ such that the critical zeros of $g \in C^\infty(TX \setminus \underline{0})$ correspond to the critical zeros of $Q \in C^\infty(N^QX \setminus \underline{0})$, thus LC-regularity of $X$ is equivalent to LC-transversality of $X \subset M$. }
\endproof
	
\begin{Corollary}
LC-regularity of $(X,g)$ is a generic property of the metric.
\end{Corollary}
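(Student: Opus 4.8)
The plan is to deduce genericity of LC-regularity from the genericity of LC-transversality already recorded in Proposition \ref{prop:transversality_is_generic}, using the equivalence of the two notions established in Proposition \ref{prop:LC_regular_is_intrinsic}. First I would fix a metric of changing signature $(X,g)$ and, appealing to the pseudo-Riemannian Nash embedding theorem (Theorem \ref{thm_nash}) in the non-degenerate case, or more robustly to Lemma \ref{lemma:baby_nash} applied in charts together with a partition of unity, realize $(X,g)$ as an isometrically embedded submanifold $e:X\hookrightarrow M$ of some pseudo-Riemannian manifold $M$. The point is that a small perturbation of $g$ corresponds, via such an embedding, to a small perturbation of the embedded submanifold $e(X)\subset M$ (keeping $M$ and its metric $Q$ fixed and only moving the submanifold); conversely, perturbing $e(X)$ inside $M$ changes the induced metric on $X$ by a small amount in the $C^\infty$ topology.

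The key steps, in order, would be: (1) choose the ambient embedding so that the assignment ``perturbation of $g$'' $\leftrightarrow$ ``perturbation of $e(X)$ in $M$'' is continuous and open in the relevant $C^\infty$ topologies — here one must be slightly careful, since not every nearby submanifold of $M$ arises from a nearby metric, but every nearby submanifold \emph{does} induce a nearby metric, which is the direction we need; (2) invoke Proposition \ref{prop:transversality_is_generic} to find, arbitrarily close to $e(X)$, an LC-transversal differentiable polyhedron, in fact (restricting the transversality theorem to embeddings) an LC-transversal submanifold $\tilde X\subset M$ diffeomorphic to $X$; (3) pull back the ambient metric $Q$ along the diffeomorphism $X\cong\tilde X$ to obtain a metric $\tilde g$ on $X$ that is $C^\infty$-close to $g$; (4) apply Proposition \ref{prop:LC_regular_is_intrinsic}, specifically the implication that LC-transversality of $\tilde X\subset M$ forces LC-regularity of $(X,\tilde g)$, to conclude that $\tilde g$ is LC-regular. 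Since $\tilde g$ can be taken arbitrarily $C^\infty$-close to $g$, LC-regular metrics form a dense subset; openness follows directly from Definition \ref{def:LC_regularity}, since the condition ``$0$ is a regular value of $g\in C^\infty(TX\setminus\underline 0)$'' is manifestly $C^\infty$-open (on compact pieces of the sphere bundle the defining inequality persists under small perturbation of $g$).

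I expect the main obstacle to be step (1): making precise the sense in which perturbing the metric on $X$ and perturbing the submanifold $e(X)$ inside a \emph{fixed} ambient $(M,Q)$ are interchangeable, and in particular that the space of induced metrics obtained by $C^\infty$-small ambient perturbations of $e(X)$ contains a $C^\infty$-neighborhood basis of $g$ is \emph{not} quite what we need — rather, one needs only the easy direction, that small ambient perturbations yield small changes of the induced metric, together with the fact that the LC-transversal perturbations guaranteed by Proposition \ref{prop:transversality_is_generic} can be taken among embedded submanifolds (not merely polyhedra). Once this is set up cleanly the argument is short. An alternative, more self-contained route avoiding ambient embeddings altogether is to work directly with the jet-transversality formulation: LC-regularity is transversality of the section $g:TX\setminus\underline 0\to\Sym^2T^*X$ to the zero section, and a standard application of the parametric transversality theorem to the family $g+\sum t_i h_i$ over a basis $\{h_i\}$ of symmetric $(0,2)$-tensors shows that transversality holds for an open dense (indeed residual) set of parameters; I would include this as the primary proof, as it sidesteps the ambient-perturbation bookkeeping entirely, and mention the embedding-based argument as a remark.
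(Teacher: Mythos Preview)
Your main approach---embed, perturb the submanifold to be LC-transversal via Proposition \ref{prop:transversality_is_generic}, then invoke Proposition \ref{prop:LC_regular_is_intrinsic}---is exactly the paper's (one-line) proof, and your unpacking of the submanifold-versus-metric perturbation correspondence is correct, including the observation that only the easy direction is needed.

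One small slip in your alternative route: LC-regularity is \emph{not} transversality of $g$ as a section $X\to\Sym^2T^*X$ to the zero section (that condition would simply say $g$ is nowhere the zero form). Rather, per Definition \ref{def:LC_regularity}, it is the condition that the scalar function $(x,v)\mapsto g_x(v,v)$ on $TX\setminus\underline 0$ has $0$ as a regular value, i.e., that this map is transversal to $\{0\}\subset\R$. With this corrected formulation the parametric transversality argument you sketch goes through.
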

	
\proof 
Fix an isometric embedding $X\hookrightarrow M^{p,q}$, apply Propositions \ref{prop:LC_regular_is_intrinsic} and \ref{prop:transversality_is_generic}. 
\endproof

\subsection{Wave front sets and restrictions of curvature measures}

\begin{Proposition} \label{prop:nondegenerate_restriction}
Let $e:X\looparrowright M^{p,q}$ be an LC-regular immersed submanifold, and assume $\Phi\in\mathcal C_{\emptyset, N^*(\LC_M^*)}^{-\infty}(M)$. Then $e^*\Phi\in\mathcal C^{-\infty}(X)$ is well-defined.
\end{Proposition}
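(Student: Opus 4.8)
The plan is to verify the three wave front transversality conditions in \eqref{eq_wf_conditions_pull_back} for the pair $(\emptyset,\Gamma)$ with $\Gamma=N^*(\LC_M^*)\subset T^*\mathbb P_M\setminus\underline 0$, and then invoke Proposition \ref{prop:restricting_curvature_measures}. Since the first slot $\Lambda=\emptyset$ (the interior form $\phi$ is assumed smooth) and $\Lambda$ is allowed to be arbitrary in Proposition \ref{prop:restricting_curvature_measures}, the condition $\Lambda\cap T^*_XY=\emptyset$ is vacuous, so only the two conditions $\Gamma\cap T^*_{X\times_Y\mathbb P_Y}\mathbb P_Y=\emptyset$ and $\Gamma\cap T^*_{\mathbb P_*(N^*X)}\mathbb P_Y=\emptyset$ need checking (here $Y=M$). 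As the assertion is local, I would work in a neighborhood where $e$ is an embedding and freely use the identification of $\LC_M^*\subset\mathbb P_M$ with $\LC_M\subset\mathbb P_+(TM)$ via $Q$.

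The first key step is to reformulate both conditions geometrically. The set $T^*_{X\times_M\mathbb P_M}\mathbb P_M$ is the conormal bundle in $\mathbb P_M$ of the submanifold $\mathbb P_M|_X=\pi^{-1}(X)$, and $T^*_{\mathbb P_*(N^*X)}\mathbb P_M$ is the conormal bundle of the submanifold $\mathbb P_+(N^QX)\subset\mathbb P_M$. Meanwhile $\Gamma=N^*(\LC_M^*)$ is the conormal bundle of the hypersurface $\LC_M^*$, which is smooth precisely because $(M,Q)$ is pseudo-Riemannian (Proposition \ref{prop_lc_regularity_pseudo_riemann}). The intersection of two conormal bundles $N^*(A)\cap N^*(B)$ (away from the zero section) is empty exactly when $A$ and $B$ intersect cleanly — in particular transversally — as submanifolds of $\mathbb P_M$. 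So the two conditions translate into: (a) $\LC_M^*$ is transversal to $\pi^{-1}(X)$ in $\mathbb P_M$; and (b) $\LC_M^*$ is transversal to $\mathbb P_+(N^QX)$ in $\mathbb P_M$.

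For (a): $\pi^{-1}(X)$ has codimension equal to the codimension of $X$ in $M$, and its tangent space at $(x,[\xi])$ consists of all vectors projecting into $T_xX$ under $d\pi$; since $\LC_M^*$ is a hypersurface, the only way transversality could fail is if $T_{(x,[\xi])}\LC_M^*\supset T_{(x,[\xi])}\pi^{-1}(X)$, which would force $\LC_M^*$ to contain $\pi^{-1}(X)$ locally, i.e. every covector over a point of $X$ would be $Q$-null — impossible since $Q|_{T^*_xM}$ is non-degenerate of indefinite-or-definite signature but certainly not identically zero on the sphere. Actually one must be slightly careful: transversality can fail along a lower-dimensional locus without containment, so the cleaner argument is that $\LC_M^*$ fibers over $M$ with fiber the (smooth) null quadric in each $\mathbb P(T^*_xM)$, $\pi^{-1}(X)$ is $\pi$-horizontal over $X$, and a direct tangent-space computation shows their tangent spaces always span because $d\pi$ restricted to $T\LC_M^*$ is already surjective onto $TM$. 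For (b): this is exactly where LC-transversality of $e(X)$ enters. By Definition \ref{def:LC_transversality}, $X$ LC-transversal means $N^QX\pitchfork\LC_M$ in $\mathbb P_+(TM)$, equivalently $\mathbb P_+(N^QX)\pitchfork\LC_M^*$ in $\mathbb P_M$ after transport by $Q$; and by Proposition \ref{prop:LC_regular_is_intrinsic}, LC-regularity of $(X,g)$ is equivalent to this. So (b) is precisely the hypothesis.

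The main obstacle I anticipate is condition (b) — more precisely, making rigorous the passage between "LC-transversal submanifold" as defined via $\nc(A)$ and the normal bundle $N^QX$, and the wave front statement $\Gamma\cap T^*_{\mathbb P_*(N^*X)}\mathbb P_Y=\emptyset$, keeping track of the $Q$-identification $T M\cong T^*M$ and checking it is compatible with the contact/projective structures involved. One also has to confirm that the relevant submanifolds are clean enough that "transversal" genuinely gives "disjoint conormals" (this is the standard fact that $N^*A\cap N^*B\subset\underline 0$ iff $A\pitchfork B$ for submanifolds, which holds here since all the sets in play are smooth submanifolds of $\mathbb P_M$, using that $(M,Q)$ is pseudo-Riemannian so $\LC_M^*$ is smooth). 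Once the dictionary is set up, condition (a) is a short direct computation and condition (b) is a restatement of the hypothesis, after which Proposition \ref{prop:restricting_curvature_measures} delivers $e^*\Phi\in\mathcal C^{-\infty}(X)$.
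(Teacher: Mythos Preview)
Your proposal is correct and follows essentially the same approach as the paper: verify the three transversality conditions of \eqref{eq_wf_conditions_pull_back} for $(\emptyset,N^*(\LC_M^*))$ and invoke Proposition~\ref{prop:restricting_curvature_measures}. The paper dispatches condition (a) in one clause by noting that $\LC_M^*$ is a fiber bundle over $M$ (hence $d\pi|_{T\LC_M^*}$ is surjective, which is exactly your cleaner argument), and handles condition (b) by observing it is equivalent to $N^*X\pitchfork\LC_M^*$, i.e.\ LC-transversality, which holds ``by assumption''---your explicit invocation of Proposition~\ref{prop:LC_regular_is_intrinsic} to pass from the stated LC-regularity hypothesis to LC-transversality is a welcome clarification of what the paper leaves implicit.
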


\proof
Let us check that $e$ is transversal to $(\emptyset, N^*(\LC_M^*))$ in the sense of  \eqref{eq_wf_conditions_pull_back}. The first condition  holds trivially. For the second one, we ought to show that $\LC_M^*$ and $X\times_M\mathbb P_M$ intersect transversally in $\mathbb P_M$, which is clear, as $\LC_M^*$ is a fiber bundle over $M$. Finally, the third condition $N^*(\LC_M^*)\cap N^*(N^*X)=\emptyset$ is equivalent to $N^*X\pitchfork \LC^*_M$, which holds by assumption. Proposition \ref{prop:restricting_curvature_measures} concludes the proof.
\endproof

\section{Construction of the curvature measures} 
\label{sec_construction}

Let $(M,Q)$ be a pseudo-Riemannian manifold of dimension $m+1$ and signature $(p,q)$. We will construct certain natural differential forms on $\mathbb P_M=\mathbb P_+(T^*M)$, which throughout this section is identified with $\mathbb P_+(TM)$ using the non-degenerate form $Q$ (i.e. $[Q(v,\cdot)]\equiv [v]$ for $v\in TM \setminus\underline 0$). 

\subsection{Construction of smooth forms outside of the light cone}\label{sec:smooth_forms}
\label{subsection_open_orbits}
Given $\epsilon=\pm 1$, let $K=K_+\cupdot K_-$ be a partition of $\{0,\dots,m\}$ in two sets of cardinalities $\#K_+=p$ and $\#K_-=q$ such that $0\in K_\epsilon$. Let $\epsilon_0,\ldots, \epsilon_m$ be given by $\epsilon_i=1$ if $i\in K_+$ and $\epsilon_i=-1$ if $i\in K_-$. In particular $\epsilon=\epsilon_0$.

Consider the group $G_K^\epsilon=\{g\in \textrm{SL}(m+1,\mathbb R): g^TI_K^\epsilon g=I_K^\epsilon \}$, where $I_K^\epsilon=\textrm {Diag}(\epsilon_0,\dots,\epsilon_m)$, which is isomorphic to $\mathrm{SO}(p,q)$. Note that if $g \in G_K^\epsilon$, then $(g^{-1})_{i,j}=g_{j,i} \epsilon_i \epsilon_j$. Let $\mathfrak g_K^\epsilon$ be its Lie algebra.

We denote by $S^\epsilon M$ the bundle over $M$ consisting of all tangent vectors of norm $\epsilon= \epsilon_0$. Let $F_K^\epsilon$ be the bundle over $M$ consisting of all tuples $(p,B_0,\ldots,B_m)$ such that $B_0,\ldots, B_m$ is a positive basis of $T_pM$ and $Q(B_i,B_j)=\delta_{ij}\epsilon_i$. We will denote $\pi_M\colon S^\epsilon M\to M$ and $\pi_0:F_K^\epsilon \to S^\epsilon M$ the projections $\pi_M(x,v)=p$, and $\pi_0(x,B_0,\ldots,B_m)=(x,B_0)$. The group $G_K^\epsilon$ acts on $F_K^\epsilon$ by
\begin{displaymath}
 g(x,B_0,\ldots,B_m)=(x,\sum_a B_a(g^{-1})_{a,0},\ldots,\sum_a B_a(g^{-1})_{a,m}).
\end{displaymath}

The stabilizer of $(x, B_0)$ will be denoted $H_K^\epsilon$. It is isomorphic to $\mathrm{SO}(p-1,q)$ for $\epsilon=+1$, and to $\mathrm{SO}(p,q-1)$ when $\epsilon=-1$. Forms on $S^\epsilon M$ may be identified with forms on $F_K^\epsilon$ which are invariant under $H_K^\epsilon$ and vanish whenever a tangent vector to the fiber of $\pi_0$ is plugged in. 

Consider the solder forms $\theta_i$, the connection forms $\omega_{i,j}$ associated to the Levi-Civita connection $\nabla$ of $Q$, and the curvature forms $\Omega_{i,j}$ corresponding to the curvature tensor $R(X,Y)Z=\nabla_X \nabla_YZ-\nabla_Y \nabla_XZ-\nabla_{[X,Y]}Z$. They are defined by
\begin{equation}\label{eq:convention}
 d\pi_M=\sum_{a=0}^m \theta_a B_a,\qquad \nabla B_j=\sum_{a=0}^m \omega_{a,j} B_a,\qquad  R(\ ,\ )B_j=\sum_{a=0}^m \Omega_{a,j}B_a.
\end{equation}

Note that $\omega=(\omega_{i,j})$ and $\Omega=(\Omega_{i,j})$ take values in $\mathfrak g_K^\epsilon \simeq\mathfrak{so}(p,q)$, so that 
\begin{equation}
\omega_{j,i} = -\epsilon_i\epsilon_j\omega_{i,j},\qquad\Omega_{j,i} = -\epsilon_i\epsilon_j\Omega_{i,j}.
 \end{equation}
This suggests to introduce the following notation. 
\begin{displaymath}
 \tilde \omega_{i,j} :=\omega_{i,j} \epsilon_j, \quad \tilde \Omega_{i,j}:=\Omega_{i,j} \epsilon_j.
\end{displaymath}
Then $\tilde \omega_{i,j},\tilde \Omega_{i,j}$ are antisymmetric with respect to their indices. The action of $G_K^\epsilon$ on these forms is given by 
\begin{displaymath}
 g^* \theta_i = \sum_{a=0}^m g_{i,a}  \theta_a, \quad g^*\tilde \omega_{i,j}  = \sum_{a,b=0}^m g_{i,a}g_{j,b} \tilde \omega_{a,b}, \quad  g^*\tilde\Omega_{i,j}  = \sum_{a,b=0}^m g_{i,a}g_{j,b} \tilde \Omega_{a,b}.
  \end{displaymath}

The structure equations read
\begin{align}
 d \theta_i & =  \sum_{a=0}^m \epsilon_a    \tilde \omega_{a,i}\wedge \theta_a, \\
 d\tilde \omega_{i,j} & = - \sum_{a=0}^m \epsilon_a \tilde \omega_{i,a} \wedge \tilde \omega_{a,j} + \tilde \Omega_{i,j},  \\
 d\tilde \Omega_{i,j} & = \sum_{a=0}^m \left(\epsilon_a \tilde \Omega_{i,a} \wedge \tilde \omega_{a,j}-\epsilon_a \tilde \omega_{i,a} \wedge \tilde \Omega_{a,j}\right). 
\end{align}

Note that $\theta_0$ descends to a well-defined $\alpha \in \Omega^1(S^\epsilon M)$, which is a contact form.

\begin{Lemma} \label{lemma_def_phi}
The form $\phi^\epsilon_{kr} \in \Omega^m(F_K^\epsilon)$ for $0 \leq 2r \leq k \leq m$ defined by
 \begin{equation}\label{eq_def_phi}
 \phi_{kr}^\epsilon:= \sum_{\pi} \mathrm{sgn}(\pi)  \tilde \Omega_{\pi_1\pi_2}\wedge \ldots  \wedge\tilde \Omega_{\pi_{2r-1}\pi_{2r}} \wedge  \theta_{\pi_{2r+1}}\wedge \ldots \wedge\theta_{\pi_k} \wedge \tilde \omega_{\pi_{k+1}0}\wedge \ldots \wedge\tilde \omega_{\pi_m0},
 \end{equation}
where $\pi$ runs over permutations of $1,\ldots,m$, descends to a form $\phi_{k,r}^\epsilon \in \Omega^m(S^\epsilon M)$. Moreover, this latter form is independent of $K$. 
\end{Lemma}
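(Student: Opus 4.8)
The statement has two parts: (i) the $m$-form $\phi_{kr}^\epsilon$ on $F_U^\epsilon$ descends to a form on $S^\epsilon M$, and (ii) the resulting form does not depend on the choice of the partition $U=U_+\cupdot U_-$.

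For part (i), the standard criterion quoted just above the lemma says that a form on $F_U^\epsilon$ descends to $S^\epsilon M$ precisely if it is $H_U^\epsilon$-invariant and is horizontal for the projection $\pi_0\colon F_U^\epsilon\to S^\epsilon M$, i.e. it is annihilated by contraction with any vertical vector field of $\pi_0$. I would therefore verify these two properties directly from the transformation formulas \eqref{eq_action_stabilizer1}--\eqref{eq_action_stabilizer3}. For invariance: $H_U^\epsilon=\Stab(x,B_0)$ consists of those $g\in G_U^\epsilon$ with $g_{i,0}=\delta_{i,0}$ (equivalently $g_{0,j}=\delta_{0,j}$, using $(g^{-1})_{i,j}=g_{j,i}\epsilon_i\epsilon_j$); such $g$ act on the indices $1,\dots,m$ by an element of $\mathrm{SO}(p-1,q)$ or $\mathrm{SO}(p,q-1)$. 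The expression $\phi_{kr}^\epsilon$ is, by construction, a full antisymmetrization over the indices $1,\dots,m$ of a tensor built from $\tilde\Omega$, $\theta$ and $\tilde\omega_{\cdot,0}$, all of which transform tensorially in those indices (note $g^*\tilde\omega_{a,0}=g_{a,b}g_{0,c}\tilde\omega_{b,c}=g_{a,b}\tilde\omega_{b,0}$ when $g\in H_U^\epsilon$, since $g_{0,c}=\delta_{0,c}$). Hence $g^*\phi_{kr}^\epsilon=\det(g|_{\{1,\dots,m\}})\,\phi_{kr}^\epsilon=\phi_{kr}^\epsilon$ because $\det g=1$ and $g_{0,0}=1$ force the lower block to have determinant $1$. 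For horizontality: the vertical tangent space of $\pi_0$ at a point is spanned by the fundamental vector fields of the $H_U^\epsilon$-action, under which $\theta_i$ ($i\ge 1$) and $\tilde\Omega_{i,j}$ are annihilated (they are semibasic / tensorial in a way that kills vertical directions), while $\iota_V\tilde\omega_{i,j}$ for vertical $V$ is the matrix entry of an element of $\mathfrak h_U^\epsilon\subset\mathfrak g_U^\epsilon$, which has vanishing $0$-row and $0$-column; thus $\iota_V\tilde\omega_{i,0}=0$ too. So every term of $\phi_{kr}^\epsilon$ is annihilated by $\iota_V$. This proves descent.

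For part (ii), independence of $U$: two partitions $U,U'$ (with $0$ in the positive or negative part as dictated by $\epsilon$) differ by a permutation of $\{1,\dots,m\}$ composed with possibly swapping which indices are labelled $+$ and which $-$; more invariantly, both $F_U^\epsilon$ and $F_{U'}^\epsilon$ sit inside the full orthonormal frame bundle adapted to $S^\epsilon M$, and are related by the transition functions of $\mathrm{SO}(p,q)$. A clean way is: fix a point $(x,v)\in S^\epsilon M$ and a local frame belonging to $F_U^\epsilon$; a local frame belonging to $F_{U'}^\epsilon$ over the same base point is obtained by applying a (locally constant on fibers, but base-varying) gauge transformation $h$ valued in $\mathrm{O}(p,q)$ that fixes $B_0=v$, so $h\in H$-type subgroup up to a sign permutation. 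The pullback of $\phi^\epsilon_{kr}$ built from the $U'$-frame equals the pullback under $h$ of the one built from the $U$-frame, and by exactly the same tensoriality-plus-$\det$ computation as in part (i) this equals $\phi^\epsilon_{kr}$ built from the $U$-frame. Since a form on $S^\epsilon M$ is determined by its pullback to any frame bundle sitting over it, the two descended forms coincide. Alternatively, and perhaps more transparently for the write-up: rewrite $\phi^\epsilon_{kr}$ intrinsically. The solder forms $\theta_i$ assemble into the $T M$-valued tautological $1$-form $\theta=\sum\theta_iB_i$; the $\tilde\Omega_{i,j}$ assemble, via raising an index with $Q$, into the curvature $2$-form viewed as a section of $\Lambda^2T^*M\otimes\Lambda^2 TM$; and $\sum_i\tilde\omega_{i,0}B_i$ is, up to the identification using $v$, the covariant derivative $\nabla$ of the unit(-norm-$\epsilon$) section, i.e. essentially the second fundamental form of $S^\epsilon M$ inside $TM$. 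Antisymmetrizing all these over the $m$ "horizontal" slots and contracting produces a form that is manifestly built only from $Q$, $\nabla$, $R$ and the point $v$ — no choice of $U$ enters — which is exactly the assertion. I would present this intrinsic repackaging as the proof of independence, flagging that it also re-proves descent.

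The main obstacle I anticipate is bookkeeping rather than conceptual: carefully checking the horizontality against $\pi_0$-vertical vectors, because the connection forms $\tilde\omega_{i,0}$ are \emph{not} horizontal individually — only the specific antisymmetrized combination appearing in $\phi_{kr}^\epsilon$ is — so one must use that contracting a vertical vector into $\tilde\omega_{i,0}$ lands in the $\mathfrak h_U^\epsilon$ block (zero $0$-row/column) and hence kills that factor, or else produces $\tilde\omega_{i,j}$ with $j\ge 1$ in a slot where antisymmetrization with the remaining $\tilde\omega_{\cdot,0}$ and $\theta$'s forces cancellation. Getting this cancellation argument stated cleanly — ideally by the intrinsic reformulation above, which makes horizontality automatic because the intrinsic object only remembers data on $S^\epsilon M$ — is where the real care is needed; the $\det g=1$, $g_{0,0}=1$ bookkeeping for $H_U^\epsilon$-invariance and for $U$-independence is then routine.
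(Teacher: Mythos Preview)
Your argument for part (i) is correct and is precisely the standard verification the paper alludes to (``an easy computation analogous to the Riemannian case'') without writing out. One point of confusion in your final paragraph, however: the forms $\tilde\omega_{i,0}$ \emph{are} individually horizontal for $\pi_0$, exactly as your own earlier computation shows. For $\xi\in\mathfrak h_U^\epsilon$ one has $\xi_{0,i}=0$ (stabilizing $B_0$), and then the relation $\xi_{i,0}=-\epsilon_0\epsilon_i\,\xi_{0,i}$ from $\mathfrak g_U^\epsilon$ forces $\xi_{i,0}=0$ as well; hence $\iota_{\xi^*}\tilde\omega_{i,0}=0$ termwise, and no cancellation argument is needed. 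Your anticipated ``main obstacle'' is therefore a non-issue.

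For part (ii) the paper takes a more concrete route than your gauge-transformation sketch. Two partitions $U,U'$ with $0\in U_\epsilon\cap U'_\epsilon$ differ by a permutation $\rho$ of $\{1,\dots,m\}$, and the paper writes down the explicit global diffeomorphism
\[
A\colon F_U^\epsilon\to F_{\rho U}^\epsilon,\qquad (p,B_0,B_1,\dots,B_m)\mapsto (p,B_0,\sgn(\rho)B_{\tau_1},B_{\tau_2},\dots,B_{\tau_m}),\quad \tau=\rho^{-1},
\]
which commutes with $\pi_0$; one then checks $A^*\theta_i=\theta_{\tau_i}$, $A^*\tilde\omega_{ij}=\tilde\omega_{\tau_i\tau_j}$, $A^*\tilde\Omega_{ij}=\tilde\Omega_{\tau_i\tau_j}$ (with an extra $\sgn(\rho)$ when an index equals $1$), whence $A^*\phi_{k,r}^\epsilon=\phi_{k,r}^\epsilon$ by reindexing the sum over permutations. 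Your approach (a) is this same idea stated more loosely---but note that the map between $F_U^\epsilon$ and $F_{U'}^\epsilon$ is a global relabeling diffeomorphism rather than a ``gauge transformation valued in $\OO(p,q)$'' within a single bundle, since frames in the two bundles satisfy \emph{different} orthonormality conditions $Q(B_i,B_j)=\delta_{ij}\epsilon_i^U$ versus $\delta_{ij}\epsilon_i^{U'}$. Your approach (b) (intrinsic repackaging via the tautological form, curvature, and $\nabla v$) would also work and has the virtue of making both descent and $U$-independence automatic, at the cost of setting up the intrinsic identifications; the paper's permutation argument is quicker.
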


\proof
 The first statement is an easy computation analogous to the Riemannian case and we omit the details. 

Let us show that $\phi_{k,r}^\epsilon$ is independent of $K$. Let $\rho$ be a permutation of $\{1,\ldots,m\}$ with inverse $\tau:=\rho^{-1}$. Then 
\begin{displaymath}
A: F_K^\epsilon \to F_{\rho K}^\epsilon, (p,B_0,B_1,B_2,\ldots,B_m) \mapsto (p,B_0,\mathrm{sgn}(\rho)B_{\tau_1},B_{\tau_2},\ldots,B_{\tau_m})
\end{displaymath}
is a diffeomorphism which commutes with $\pi_0$ (the sign is needed since we want to map a positive basis to a positive basis). It satisfies $A^*\theta_i=\theta_{\tau_i}, A^*\tilde\omega_{ij}=\tilde\omega_{\tau_i,\tau_j},A^*\tilde\Omega_{ij}=\tilde\Omega_{\tau_i,\tau_j}$ if $i,j>1$, and the same equations with added factor $\mathrm{sgn}(\rho)$ if $i=1$ or $j=1$. It follows that $A^*\phi^\epsilon_{k,r}=\phi^\epsilon_{k,r}$, hence $\phi_{k,r}$ is independent of $K$.
\endproof

\begin{Lemma}\label{lem:interior_smooth_form}
The form
\begin{displaymath}
\psi_{m+1,r}:=\sum_{\pi\in S_{n+1}} \sgn(\pi) \tilde \Omega_{\pi_0 \pi_1} \wedge \ldots \wedge \tilde \Omega_{\pi_{2r-2}\pi_{2r-1}} \wedge \theta_{\pi_{2r}} \wedge \ldots \wedge \theta_{\pi_m}  \in \Omega^{m+1}(F_K^\epsilon)
\end{displaymath}  
descends to a form $\psi_{m+1,r} \in \Omega^{m+1}(M)$ which is independent of $\epsilon$ and $K$.
\end{Lemma}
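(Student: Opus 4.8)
The plan is to reduce the statement about $\psi_{m+1,r}$ to two independent descent claims: first, descent from $F_U^\epsilon$ to $M$ (i.e. horizontality and $H_U^\epsilon$-invariance, in fact full $G_U^\epsilon$-invariance since all fibre directions are accounted for), and second, independence of the auxiliary choices $\epsilon$ and $U$. Since $\psi_{m+1,r}$ is an $(m+1)$-form on the $(m+1)$-manifold $M$ pulled back along $\pi_M \circ \pi_0 : F_U^\epsilon \to M$, and $\dim F_U^\epsilon = m+1 + \dim G_U^\epsilon$, to show it descends it suffices to check that (a) it is $G_U^\epsilon$-invariant and (b) it is horizontal, i.e. it annihilates any vertical vector for the projection to $M$; then the quotient construction gives a well-defined form downstairs, exactly as in Lemma \ref{lemma_def_phi}.

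For the $G_U^\epsilon$-invariance, I would use the transformation rules \eqref{eq_action_stabilizer1} and \eqref{eq_action_stabilizer3}: under $g \in G_U^\epsilon$ we have $g^*\theta_i = g_{i,j}\theta_j$ and $g^*\tilde\Omega_{i,j} = g_{i,a}g_{j,b}\tilde\Omega_{a,b}$, so
\begin{displaymath}
g^*\psi_{m+1,r} = \sum_{\pi \in S_{m+1}} \sgn(\pi)\, g_{\pi_0 a_0} g_{\pi_1 a_1}\cdots g_{\pi_m a_m}\, \tilde\Omega_{a_0 a_1}\wedge\cdots\wedge\theta_{a_{2r}}\wedge\cdots\wedge\theta_{a_m}.
\end{displaymath}
Summing first over $\pi$ with the indices $(a_0,\dots,a_m)$ fixed produces the determinant expansion $\sum_\pi \sgn(\pi)\prod_i g_{\pi_i a_i} = \det(g)\,\sgn(a_0,\dots,a_m)$ when the $a_i$ are distinct (and $0$ otherwise, which is consistent because the wedge of forms with a repeated index vanishes anyway after antisymmetrization); since $g \in \mathrm{SL}$, $\det g = 1$, and relabelling recovers $\psi_{m+1,r}$. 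This is the standard ``$\mathrm{SL}$-invariance of the top Pfaffian-type form'' computation. Horizontality is simpler: the $\pi$-sum in $\psi_{m+1,r}$ contains only the forms $\theta_i$ and $\tilde\Omega_{i,j}$, no connection forms $\tilde\omega_{i,j}$; the $\theta_i$ are semibasic by the first structure equation in \eqref{eq:convention}, and the curvature forms $\tilde\Omega_{i,j}$ are tensorial (semibasic) as well, so plugging a vector tangent to a $\pi_0$- or $\pi_M$-fibre into any factor kills the term. Hence $\psi_{m+1,r}$ is horizontal and the quotient form $\psi_{m+1,r}\in\Omega^{m+1}(M)$ exists.

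Independence of $U$ I would handle exactly as in Lemma \ref{lemma_def_phi}: for a permutation $\rho$ of $\{0,\dots,m\}$ preserving which of $U_\pm$ each index lies in, the reshuffling map $A : F_U^\epsilon \to F_{\rho U}^\epsilon$ (with the sign twist ensuring positive bases go to positive bases) satisfies $A^*\theta_i = \pm\theta_{\tau_i}$, $A^*\tilde\Omega_{i,j}=\pm\tilde\Omega_{\tau_i,\tau_j}$ where $\tau=\rho^{-1}$ and the sign is the sign of $\rho$ when $0\in\{i,j\}$; since every monomial in $\psi_{m+1,r}$ has exactly one factor carrying the index that $\rho$ sends to $0$ (or rather, the total sign contribution from the reshuffling is $\sgn(\rho)$ for each term, matching the change $\sgn(\pi)\mapsto\sgn(\rho\pi)$), one gets $A^*\psi_{m+1,r}=\psi_{m+1,r}$, so the descended form does not depend on the chosen partition. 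For independence of $\epsilon$: a choice of the other sign corresponds to rescaling / reorienting the distinguished vector $B_0$ and permuting so that $0$ sits in $U_{-\epsilon}$; but note that in $\psi_{m+1,r}$, unlike in $\phi^\epsilon_{kr}$, the index $0$ plays no privileged role — it ranges over the full symmetric group $S_{m+1}$ on $\{0,\dots,m\}$ symmetrically — so the form is manifestly symmetric in all indices and one checks directly that the construction is insensitive to which norm-$\pm1$ sphere bundle we started from (more precisely, both $S^+M$ and $S^-M$ map to $M$ and the forms $\theta_i,\tilde\Omega_{ij}$ are pulled back from the same frame-bundle-type data on $M$, so the resulting $(m+1)$-form is the same polynomial in the curvature of $(M,Q)$).

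The main obstacle I expect is the bookkeeping in the $\epsilon$-independence step: one must be careful that the ``positive basis'' orientation convention interacts correctly with swapping the role of $U_+$ and $U_-$ and with the fact that $0\in U_\epsilon$ is built into the definition, so that there is no hidden sign of $(-1)^{\text{something}}$ lurking. I would organize this by first proving $U$-independence within a fixed $\epsilon$ (verbatim from Lemma \ref{lemma_def_phi}), and then noting that the two values of $\epsilon$ give forms that agree because each is, after descent, the \emph{same} $\mathrm{GL}$-natural polynomial expression in the solder and curvature forms of $(M,Q)$ — concretely, $\psi_{m+1,r}$ descends to (a constant multiple of) the form whose integrand is $\Pf$-type contraction of $r$ copies of the Riemann tensor with $m+1-2r$ copies of the identity, which is patently defined intrinsically from $(M,Q)$ with no reference to $\epsilon$ or $U$. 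A clean way to phrase the conclusion is: both descended forms equal the restriction to $M$ of the universal invariant, hence coincide.
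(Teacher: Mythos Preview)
Your proposal is correct and follows essentially the same approach as the paper, which simply says ``Similar to Lemma \ref{lemma_def_phi}''; you have in fact spelled out more detail than the paper does. The key observations---full $G_U^\epsilon$-invariance via the determinant expansion (since the sum runs over all of $S_{m+1}$ and $\det g=1$), horizontality because only semibasic forms $\theta_i,\tilde\Omega_{ij}$ appear, and independence of $U,\epsilon$ because no index is distinguished---are exactly the right ones. One small correction: in the $U$-independence step the permutation $\rho$ need not preserve $U_\pm$; rather (as in the proof of Lemma \ref{lemma_def_phi}) it induces a diffeomorphism $A:F_U^\epsilon\to F_{\rho U}^\epsilon$ between bundles for \emph{different} partitions, and invariance of $\psi_{m+1,r}$ under $A^*$ shows the descended form is the same for both.
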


\proof 
Similar to Lemma \ref{lemma_def_phi} . 
\endproof

\subsection{Compatible Riemannian metrics}
We will be making use of a carefully chosen auxiliary Riemannian structure, as follows.

\begin{Definition}\label{def:compatible}Let $(M,Q)$ be a pseudo-Riemannian manifold.
\begin{enumerate}
	\item A Riemannian metric $P$ on $M$ is \emph{compatible with $Q$} if the tangent space admits a decomposition $TM=V_+\oplus V_-$, with positive definite forms $g_\pm$ on $V_\pm$ such that $P=g_+\oplus g_-$, $Q=g_+\oplus (-g_-)$. 
	\item A compatible Riemannian metric is \emph{quadratically compatible at $y\in M$} if there are coordinates $x_1,\dots, x_{m+1}$ on $M$ around $y$ such that $y=(0,\ldots,0)$,
\begin{displaymath}
P=\sum_i dx_i^2 + \sum\limits_{i\leq j} e_{i,j}dx_idx_j,\quad Q=\sum_i \epsilon_i dx_i^2 + \sum\limits_{i\leq j} \tilde e_{i,j}dx_idx_j,
\end{displaymath}
with $e_{i,j}, \tilde{e}_{i,j}=O( \|x\|_2^2), \epsilon_i=\pm1$.
\end{enumerate}	
\end{Definition}

\begin{Lemma}\label{lem:smooth_decomposition}
	Let $U\subset \R^m$ be open, and $A:U\to \Sym_n(\R)$ a smooth family of non-degenerate symmetric matrices of signature $(p,q)$.  For $x \in U$, let $E_+(x) \subset \R^n$ be the maximal subspace such that $A(x)(E_+(x))=E_+(x)$ and $A(x)|_{E_+(x)}>0$. Then $E_+:U \to \Gr_{p}(\R^n)$ is smooth. Similarly, the maximal negative-definite invariant subspace  $E_-(x)$ depends smoothly on $x$.  Moreover, if $d_xA=0$ then $d_xE_\pm=0$.
\end{Lemma}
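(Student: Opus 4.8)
\textbf{Proof plan for Lemma \ref{lem:smooth_decomposition}.}

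The plan is to exhibit $E_+(x)$ as the image of a smooth family of projection operators obtained via holomorphic (or simply $C^\infty$) functional calculus applied to the family $A(x)$. First I would fix $x_0 \in U$ and choose a real number $c > 0$ such that the spectrum of $A(x_0)$ is disjoint from $\{-c, c\}$; since $A(x_0)$ is symmetric of signature $(p,q)$, its $p$ positive eigenvalues all exceed some threshold and its $q$ negative eigenvalues are all below $-$(some threshold), so such a $c$ (in fact any sufficiently small $c$) works, and moreover the number of eigenvalues of $A(x_0)$ greater than $c$ is exactly $p$. By continuity of eigenvalues, there is a neighborhood $U_0 \ni x_0$ on which $\pm c$ remain in the resolvent set of $A(x)$ and on which $A(x)$ still has exactly $p$ eigenvalues exceeding $c$ (equivalently, lying in $(c,\infty)$) and $q$ eigenvalues below $-c$.

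Next I would define, on $U_0$, the spectral projection onto the sum of eigenspaces with eigenvalue in $(c,\infty)$ by the Riesz integral
\begin{displaymath}
\Pi_+(x) = \frac{1}{2\pi \i}\oint_{\gamma} (zI - A(x))^{-1}\, dz,
\end{displaymath}
where $\gamma$ is a fixed rectifiable closed contour in $\C$ enclosing exactly the portion $(c, +\infty)$ of the real axis relevant to these spectra (e.g. a large rectangle crossing the real axis at $c$ and at some point beyond the top eigenvalue), chosen once and for all small enough that it works uniformly on $U_0$. Since $x \mapsto A(x)$ is smooth and matrix inversion is a smooth map on invertible matrices, the integrand $(zI - A(x))^{-1}$ is smooth in $x$ for each $z \in \gamma$ (and jointly continuous, with all $x$-derivatives bounded, in $(x,z)$ on $U_0 \times \gamma$); hence differentiation under the integral sign shows $\Pi_+ : U_0 \to \mathrm{End}(\R^n)$ is smooth. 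It is a genuine projection with $\mathrm{Image}\,\Pi_+(x) = E_+(x)$, of constant rank $p$. A smooth family of projections of constant rank has smoothly varying image: locally one can pick a fixed $p$-dimensional subspace $W$ complementary to $\ker \Pi_+(x_0)$ and check that $\Pi_+(x)|_W : W \to \R^n$ is injective near $x_0$ with image $E_+(x)$, giving a smooth local frame; this identifies $x \mapsto E_+(x)$ with a smooth map into $\Gr_p(\R^n)$. Shrinking $U_0$ as needed and covering $U$ by such neighborhoods yields smoothness on all of $U$. The argument for $E_-$ is identical, integrating around $(-\infty, -c)$ (or equivalently applying the result to $-A$).

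I do not expect a serious obstacle here; the only point requiring minor care is the choice of contour/threshold $c$ that works uniformly on a neighborhood, which is handled by continuity of the spectrum, and the standard fact that the image of a smooth constant-rank family of idempotents is a smooth subbundle. An alternative, equally routine route avoiding contour integrals is to diagonalize: near $x_0$ write $A(x) = g$ in a chart where $A(x_0)$ is $\mathrm{Diag}(1,\dots,1,-1,\dots,-1)$, observe $A(x)$ stays in the same ``inertia class,'' and build $E_+(x)$ as the graph of a smooth map $E_+(x_0) \to E_-(x_0)$ solved for by the implicit function theorem from the condition that the off-diagonal block of $A(x)$ restricted to the graph vanishes; either approach suffices and I would present the functional-calculus one for brevity.
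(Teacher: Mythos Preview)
Your proof is correct and takes a genuinely different route from the paper. The paper's argument avoids contour integration: it shifts $A(x)$ by a smooth function $f(x)$ so that $A(x)+f(x)I$ is positive definite, then passes to the induced operator $B(x)=\wedge^p(A(x)+f(x)I)$ on $\wedge^p\R^n$. The point is that $B(x)$ has a \emph{simple} top eigenvalue $\prod_{j=1}^p(\lambda_j(x)+f(x))$, whose eigenline is the Pl\"ucker image of $E_+(x)$; smoothness of a simple-eigenvalue eigenline is elementary, and smoothness of $E_+$ follows. Your Riesz-projection argument is the standard functional-calculus approach and is arguably more robust (it generalizes verbatim to any spectral gap, to non-symmetric operators, and to infinite dimensions), whereas the paper's trick is more self-contained and avoids complex analysis entirely. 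Both are short; neither has any real advantage for the purpose at hand.
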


\proof
Let $\lambda_1(x)\geq \dots \geq \lambda_p(x)>0>\lambda_{p+1}(x) \geq \dots \geq \lambda_n(x)$ be the eigenvalues of $A(x)$, with corresponding Euclidean-orthonormal eigenbasis $v_1(x),\dots,v_n(x)$. As the statement is local, we may assume that $\lambda_n+c>0$ in $U$ for some fixed $c$. Then the wedge products of $p$-tuples of the $v_j(x)$ form an eigenbasis of $B(x):=\wedge^p(A(x)+ cI) \in \mathrm{End}(\wedge^p \R^n)$. In particular, $B(x)$ has a unique maximal eigenvalue $\lambda(x)=\prod_{j=1}^p(\lambda_j(x)+c)$ with eigenvector $w(x)=v_1(x) \wedge \dots \wedge v_p(x)$, which therefore must depend smoothly on $x$. But $w(x)$ corresponds to $E_+(x)$ under the Pl\"ucker embedding, thus $E_+:U\to \Gr_{p}(\R^n)$ is smooth. The case of $E_-$ is identical. 

Finally, assume $d_x A=0$, hence $d_xB=0$. As $B(x)w(x)=\lambda(x)w(x)$, we find $(B(x)-\lambda(x)I)d_xw=d_x\lambda\cdot w(x)$.
Now $\|w\|_2=1$ everywhere, so $\mathrm{Image}(d_xw)\perp w(x)$. But $w(x)^\perp$ is an invariant subspace of $B(x)-\lambda(x)I$, and so it must hold that $(B(x)-\lambda(x)I)d_xw=0$. As $B(x)-\lambda(x)I$ is invertible on $w^\perp$, we find $d_xw=0$.
\endproof

\begin{Lemma}\label{lemma_quad_comp}\mbox{}
\\\emph{i)} Any pseudo-Riemannian $(M, Q)$ has a compatible Riemannian metric.\\
\emph{ii)} For any $y\in M$ there is a neighborhood $U\subset M$ of $y$ and a compatible Riemannian metric $P$ in $U$ that is quadratically compatible with $Q$ at $y$. \\
\emph{iii)} Consider  $\R^{p,q}=\R^{p+q}$ with the standard bilinear forms $Q,P$. Let $M^{p',q'}\subset \mathbb R^{p,q}$ be  a pseudo-Riemannian submanifold, and $y\in M$ such that $T_{y}M$ is spanned by the coordinate vectors $\frac{\partial}{\partial x_1},\ldots,\frac{\partial}{\partial x_{p'}},\frac{\partial}{\partial x_{p+1}},\ldots, \frac{\partial}{\partial x_{p+{q'}}}$. Then there is a $Q$-compatible Riemannian metric $P'$ on a neighborhood of $y$ in $M$, that is quadratically compatible at $y$, and $P|_M-P'=O(\|x-y\|_2^2)$.
\end{Lemma}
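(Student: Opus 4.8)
The plan is to prove the three assertions of Lemma~\ref{lemma_quad_comp} in order, building the later parts on the earlier ones.

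\textbf{Part (1).} First I would produce the global decomposition $TM = V_+ \oplus V_-$. Locally, choosing coordinates, the metric $Q$ is a smooth family of symmetric matrices of signature $(p,q)$, so Lemma~\ref{lem:smooth_decomposition} gives smooth maximal positive and negative definite $Q$-invariant subbundles $V_+, V_-$ over each chart; these are intrinsically defined (maximal $Q$-positive, resp. $Q$-negative, invariant subspace), hence patch to global subbundles $V_\pm \subset TM$ with $TM = V_+ \oplus V_-$ and $Q|_{V_\pm} = \pm g_\pm$ for positive definite $g_\pm$. Setting $P := g_+ \oplus g_-$ gives the compatible metric. (One can alternatively start from any auxiliary Riemannian metric $P_0$, diagonalize $Q$ against $P_0$, and use the spectral decomposition; either way Lemma~\ref{lem:smooth_decomposition} supplies the smoothness.)

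\textbf{Part (2).} Now fix $y \in M$. Using part (1), pick a compatible $P$ near $y$. The goal is to improve the coordinates so that both $P$ and $Q$ osculate their constant-coefficient normal forms to second order at $y$. I would use normal coordinates for $P$ centered at $y$: in $P$-geodesic normal coordinates $x_1,\dots,x_{m+1}$ one has $P = \sum dx_i^2 + O(\|x\|^2)$ automatically (the first-order terms vanish because the Christoffel symbols of $P$ vanish at $y$ in normal coordinates). After an orthogonal change of the $x_i$ (which preserves the form of $P$) we may arrange that $V_\pm|_y$ are coordinate subspaces, so $Q|_y = \mathrm{Diag}(\epsilon_1,\dots,\epsilon_{m+1})$. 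It then remains to kill the first-order terms of $Q$. Here the key point is that $P$ and $Q$ are \emph{simultaneously} block-diagonal with respect to $V_+ \oplus V_-$ and agree up to the sign on $V_-$; in normal coordinates adapted so that the subbundles $V_\pm$ are parallel at $y$ (possible since $V_\pm$ are $\nabla^P$-related data one can flatten to first order), the Taylor expansion of $Q$ has no linear term. Concretely, since $g_\pm$ are Riemannian metrics on $V_\pm$ and $Q = g_+ \oplus(-g_-)$, $P = g_+\oplus g_-$, writing everything in a frame that is $P$-orthonormal and $\nabla^P$-parallel to first order at $y$ makes the linear terms of $g_+$ and $g_-$, hence of both $P$ and $Q$, vanish; this gives the desired $e_{i,j}, \tilde e_{i,j} = O(\|x\|^2)$.

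\textbf{Part (3).} For the submanifold case, $M^{p',q'} \subset \R^{p,q}$ with $T_yM$ the indicated coordinate subspace: here the ambient $P,Q$ are the standard constant forms. The plan is to pull back and adjust. The restriction $P|_M$ is a compatible Riemannian metric for $Q|_M$ (the ambient decomposition restricts, since $T_yM$ is $Q$-nondegenerate and the splitting $V_\pm$ can be taken ambient-coordinate-aligned near $y$). Now apply the construction of part (2) to $(M, Q|_M)$ at $y$, but starting the normal-coordinate construction from $P|_M$ itself rather than an arbitrary compatible metric: the output $P'$ is quadratically compatible at $y$, and since the coordinate change used is the $P|_M$-exponential map, which is tangent to the identity to first order at $y$ (its differential at $y$ is the identity and its second-order correction involves the Christoffel symbols of $P|_M$), the difference $P|_M - P'$ is $O(\|x-y\|_2^2)$. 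More carefully, $P'$ is obtained from $P|_M$ by a diffeomorphism $\varphi$ with $\varphi(y)=y$, $d\varphi_y = \mathrm{id}$, so $\varphi^* (P|_M) - P|_M = O(\|x-y\|^2)$, and $P' = \varphi^*(P|_M)$ up to a further $O(\|x-y\|^2)$ correction that removes the residual linear term of $Q|_M$ without disturbing $P$ to second order; combining gives the claim.

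\textbf{Main obstacle.} The delicate point is Part (2): showing the linear terms of $Q$ can be killed \emph{simultaneously} with those of $P$. For a single metric this is the standard existence of geodesic normal coordinates, but here we need one coordinate system that works for both $P$ and $Q$. The resolution is that $P$ and $Q$ differ only by a sign flip on the fixed subbundle $V_-$, i.e. $Q = J^*P$ for the involution $J = \mathrm{id}_{V_+} \oplus (-\mathrm{id}_{V_-})$ of $TM$ that is $\nabla^P$-parallel at $y$ after the adapted orthogonal normalization; hence whatever coordinate normalization kills the linear part of $P$ also kills that of $Q$. Making the bookkeeping of this "$J$ is parallel enough at $y$" precise — essentially that one can choose the $P$-normal coordinates so that $V_+$ and $V_-$ are spanned by coordinate fields not just at $y$ but to first order — is the one spot requiring genuine care; everything else is routine Taylor expansion.
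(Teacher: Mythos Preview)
Your Part (1) is fine and matches the paper. Parts (2) and (3), however, both contain genuine gaps, and your overall logic runs opposite to the paper's.

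\textbf{Part (2).} The central claim---that in $P$-geodesic normal coordinates the linear part of $Q$ also vanishes---is false in general. In $P$-normal coordinates one has $\partial_k Q_{ij}(y) = (\nabla^P_k Q)_{ij}(y)$, and there is no reason for $\nabla^P Q$ to vanish at $y$. Equivalently, $\Gamma^P - \Gamma^Q$ is a tensor, and a coordinate change can kill $\Gamma^P(y)$ and $\Gamma^Q(y)$ simultaneously only if this tensor already vanishes at $y$; for a generic compatible $P$ it does not. (Concretely: on $\R^2$ take $Q = (1+2x_2)\,dx_1^2 - dx_2^2$; the unique compatible $P$ with $V_\pm$ the coordinate axes is $P = (1+2x_2)\,dx_1^2 + dx_2^2$, and in $P$-normal coordinates $Q$ picks up a linear cross term $-4y_1\,dy_1\,dy_2$.) You correctly flag this as the obstacle, but ``$J$ is $\nabla^P$-parallel at $y$'' and ``$V_\pm$ can be made coordinate to first order'' are assertions, not arguments, and the first is exactly what fails in the example.

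\textbf{Part (3).} The opening claim that $P|_M$ is a compatible Riemannian metric for $Q|_M$ is wrong: compatibility requires a splitting $T_xM = V_+^M \oplus V_-^M$ that is simultaneously $P|_M$- and $Q|_M$-orthogonal, but the ambient splitting $\R^p \oplus \R^q$ restricts to $T_xM$ only at $x=y$, not nearby. So $P|_M$ is Riemannian but generally not compatible, and the rest of your argument (which also leans on the unproved Part (2)) does not get started.

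\textbf{How the paper proceeds.} The paper reverses your order: it proves (iii) directly, then deduces (ii) from (iii) via the pseudo-Riemannian Nash embedding. For (iii), parametrize $M$ near $y$ by the coordinates $x_i$, $i\in I$, and write the remaining ambient coordinates as graphs $x_j = f_j(x_I)$ with $f_j(y)=0$, $df_j(y)=0$. Then $dx_j = O(\|x-y\|)$, so both $Q|_M$ and $P|_M$ equal their constant diagonal values plus $O(\|x-y\|^2)$ \emph{automatically}. Now apply the construction of (i) with $P_0 = P|_M$: since both inputs are standard to second order, so are the eigenspace projectors, hence so is the output $P'$; this gives quadratic compatibility and $P|_M - P' = O(\|x-y\|^2)$ at once. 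For (ii), embed $(M,Q)$ isometrically in some $\R^{a,b}$ and invoke (iii). The upshot is that the flat ambient structure supplies for free the ``second-order osculation of both metrics'' that your intrinsic argument cannot obtain from $P$-normal coordinates alone.
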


\proof
We follow Chern \cite{chern63}. 

{i)} Start with any Riemannian structure $P_0$, let $V_\pm$ be the resulting decomposition of $TM$ into positive and negative eigenspaces of $Q$ with respect to $P_0$, and then define $g_\pm:= \pm Q|_{V_\pm}$. By Lemma \ref{lem:smooth_decomposition}, $V_\pm$ are smooth subbundles of $TM$, hence $P=g_+\oplus g_-$ is a smooth compatible Riemannian metric. 

{ii)} By Theorem \ref{thm_nash}, ii) follows from iii).

{iii)} By assumption, 
\begin{displaymath}
 P|_M=\sum_{i\in I}  dx_i^2+ \sum\limits_{i, j\in I}e_{i,j}dx_idx_j,\qquad Q|_M=\sum_{i\in I} \epsilon_i dx_i^2+ \sum\limits_{i, j\in I} \tilde e_{i,j}dx_idx_j,
\end{displaymath}
with $\epsilon_i=\pm1$, $ e_{i,j}(x),\tilde e_{i,j}(x)=O(\|x-y\|_2^2)$, and $I=\{1,\ldots,p', p+1,\ldots, p+q'\}$. Take $P_0=P|_M$,  yielding a decomposition $TM=V_+\oplus V_-$ and a compatible Riemannian metric $P'=Q|_{V_+}\oplus (-Q)|_{V_-}$  as in part i). By the last statement of Lemma \ref{lemma_quad_comp}, $P'$ is quadratically compatible at $y$, while $P_0-P'=O(\|x-y\|_2^2)$.\qedhere

\subsection{Traversing the light cone}
In this subsection we suppose that $M$ has an indefinite pseudo-Riemannian metric, and thus $S^+M$ and $S^-M$ are non-empty. We will patch together the forms $\phi_{k,r}^+,\phi_{k,r}^-$ defined on these bundles into globally defined generalized forms on the full bundle $\mathbb P_+(TM)$ of oriented tangent lines. 

Let $P$ be a Riemannian structure compatible with the pseudo-Riemannian $Q$. 
Define the field of involutive operators $S\in \mathrm{End}(TM)$ by $Q(u,v)=P(Su,v)$. Define the smooth function $\sigma:TM\setminus\underline0\to[-1,1]$ by $\sigma(v)=Q(v)/P(v)$, and $\epsilon=\mathrm{sign}(\sigma)$.

We may consider $\sigma$ as a function on $\mathbb P_+(TM)$, with $\LC_M=\sigma^{-1}(0)$. Recall that by Proposition \ref{prop_lc_regularity_pseudo_riemann}, $M$ is LC-regular, that is $0$ is a regular value of $\sigma$.

\subsubsection{The almost biorthonormal frame bundle}
Unlike the Riemannian setting where the orthonormal frame bundle provides adequate information, in the pseudo-Riemannian setting we ought to consider a more general frame bundle due to the existence of null directions.
Denote by $\Phi$ the bundle of full oriented flags over $M$, associated to the tangent bundle $TM$. Namely, $\Phi|_x=\{L_0\subset L_1\subset\dots\subset L_{m}= T_xM: \dim L_j=j+1\}$, where all $L_j$ are oriented subspaces.

Given a partition $ J=J_+\cupdot J_-$ of $\{2,\ldots, m\}$ and $\epsilon=\pm 1$, we consider the partition $ K=K_+\cupdot K_-$ of $\{0,\ldots,m\}$ given by $K_\epsilon=J_\epsilon\cup\{0\},K_{-\epsilon}=J_\epsilon\cup\{1\}$. The corresponding $\epsilon_0,\ldots,\epsilon_m$ are then
\begin{displaymath}
\epsilon_0=\epsilon, \quad\epsilon_1=-\epsilon,\quad \epsilon_i=\begin{cases} 1 & \text{ if } i\in J_+,\\-1 & \text{ if } i\in J_-.\end{cases}
\end{displaymath}
Let $\Phi_J\subset \Phi$ denote the open subbundle where $\mathrm{sign}\, Q|_{L_j}=(1+|J_+\cap[2,j]|,1+|J_-\cap[2,j]|)$ for $j\geq 2$. Let $\Phi_J^\pm\subset \Phi_J$ be the open subbundle where $Q|_{L_0}$ is $\pm-$definite, and $Q|_{L_1}$ is non-degenerate. Let us also put $\Phi_J^{\mathrm{nd}}=\Phi_J^+\cup\Phi_J^-$.

Observe that each flag $\psi=(L_0\subset\cdots\subset L_{m})$ in $\Phi$ corresponds to a unique $P$-orthonormal frame $E=(E_0,\dots, E_m)$ such that $E_0,\ldots, E_j$ is a positive basis of $L_j$ for all $j$. If $\psi\in\Phi_J^\pm$, there is also a unique $Q$-orthonormal frame $B=(B_0,\ldots, B_m)$  such that $B_0,\ldots, B_j$ is a positive basis of $L_j$. By definition, this frame $B$ belongs to the frame bundle $F_K^\pm$.

Let $\Psi_J\subset\Phi_{ J}$ denote the sub-bundle that over $p\in M$ has fiber consisting of oriented flags $L_0\subset\dots\subset L_m=T_pM$, such that: $L_0\not\subset V_+\cup V_-$, $L_1=L_0\oplus S(L_0)$ (with the direct sum orientation), and $S(L_j)=L_j$ for all $j\geq 2$. 

Let $\Psi_J^\pm \subset \Psi_J$ denote the subsets where $L_0$ is positive/negative definite. 
Let $\Psi_J^{\mathrm{nd}}=\Psi_J^+ \cup \Psi_J^-$ denote the open subset of $\Psi_J$ where $L_0$ is not light-like. 
Given  a $P$-orthonormal frame $E=(E_0, \dots, E_{m})$ associated to a flag in $\Psi_J^{\mathrm{nd}}$, both $E_i$ and $SE_i$ are $P$-orthogonal to $L_{i-1}=S(L_{i-1})$ for $i\geq 2$ whence 
\begin{equation}\label{eq_SE_i} 
 SE_0  =\sigma E_0+\tau E_1,\qquad
 SE_1  = \tau E_0-\sigma E_1,\qquad
 SE_i =\epsilon_i E_i,\quad i\geq 2,
 \end{equation}
 where $\sigma=Q(E_0),\tau=Q(E_0,E_1)$ satisfy $\sigma^2+\tau^2=1$ and $\tau\geq 0$. The associated $Q$-orthonormal frame  $B=(B_0,\dots,B_m)$ is
 \begin{equation}
 \label{eq_BitoE}
 B_0  = |\sigma|^{-\frac12}E_0,\qquad
 B_1  =-\epsilon\tau|\sigma|^{-\frac12}E_0+|\sigma|^\frac12 E_1, \qquad
 B_j  =E_j,\quad j\geq 2,
\end{equation}
where $\epsilon=\mathrm{sign}(\sigma)$. We thus call $\Psi^{\mathrm{nd}}_J$ the \emph{almost bi-orthonormal frame bundle}.

\subsubsection{Solder and connection forms}
We define forms on $\Psi_J$ as follows.  We have identified (using $P$) the full oriented flag bundle $\Phi$ in $TM$ with the bundle of oriented $P$-orthonormal frames over $M$. On the latter bundle, there are solder forms $\theta^P_i$, connection forms $\omega^P_{i,j}=-\omega^P_{j,i}$ and curvature forms $\Omega^P_{i,j}=-\Omega^P_{j,i}$. These forms may be pulled back to $\Psi_J$ to define forms  $\theta^E_i,\omega^E_{i,j},\Omega^E_{i,j}\in\Omega(\Psi_J)$. 

Similarly, using $Q$ instead of $P$, we may identify $\Phi_J^\epsilon$ with $F_K^\epsilon$. Let  $\theta^Q_i,\omega^Q_{i,j},\Omega^Q_{i,j}\in \Omega(\Phi_J^\mathrm{nd})$ correspond to the solder, connection and curvature forms. The restrictions of these forms to $\Psi_J^\mathrm{nd}$ will we denoted  $\theta^B_i,\omega^B_{i,j},\Omega^B_{i,j}$. We will also denote $\tilde\omega_{i,j}^B=\epsilon_j \omega_{i,j}^B,\tilde\Omega_{i,j}^B=\epsilon_j \Omega_{i,j}^B$.

\begin{Proposition}\label{prop_solder}
The solder forms with respect to $P$ and $Q$ are related by
\begin{equation*}
\theta_0^B  = |\sigma|^{\frac12}\theta_0^E+\epsilon\tau|\sigma|^{-\frac12}\theta_1^E,\qquad
\theta_1^B  = |\sigma|^{-\frac12}\theta_1^E,\qquad
\theta_j^B  =\theta_j^E,\quad j\geq 2.
\end{equation*}
\end{Proposition}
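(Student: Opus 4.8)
The plan is to exploit the fact that the solder form is intrinsic to $TM$ and hence does not depend on the frame chosen to express it. Over the almost bi-orthonormal frame bundle $\Psi_W^{\mathrm{nd}}$ both adapted frames are simultaneously available: the $P$-orthonormal frame $E=(E_0,\dots,E_m)$ and the $Q$-orthonormal frame $B=(B_0,\dots,B_m)$, related by \eqref{eq_B0toE}, \eqref{eq_B1toE} and \eqref{eq_BitoE}. Both families $\theta_i^E$ and $\theta_i^B$ are, by construction, restrictions to $\Psi_W^{\mathrm{nd}}$ of solder forms pulled back along bundle maps covering $\mathrm{id}_M$; so applying the defining relation $d\pi_M=\theta_iB_i$ of \eqref{eq:convention} (and its analogue $d\pi_M=\theta_i^EE_i$) and using that the differential of the common projection $\Psi_W^{\mathrm{nd}}\to M$ does not see the frame, I would first obtain the identity of $\pi_M^*TM$-valued $1$-forms
\begin{displaymath}
\theta_i^E E_i = d\pi_M = \theta_j^B B_j
\end{displaymath}
on $\Psi_W^{\mathrm{nd}}$.

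The second step is elementary linear algebra: substitute \eqref{eq_B0toE}, \eqref{eq_B1toE}, \eqref{eq_BitoE} into the right-hand side and compare coefficients of the linearly independent vectors $E_0,\dots,E_m$. This gives the triangular system
\begin{align*}
\theta_0^E &= |\sigma|^{-\frac12}\theta_0^B-\epsilon\tau|\sigma|^{-\frac12}\theta_1^B,\\
\theta_1^E &= |\sigma|^{\frac12}\theta_1^B,\\
\theta_j^E &= \theta_j^B,\qquad j\geq 2,
\end{align*}
which I would invert from the bottom up: the middle equation yields $\theta_1^B=|\sigma|^{-\frac12}\theta_1^E$, and feeding this into the first yields $\theta_0^B=|\sigma|^{\frac12}\theta_0^E+\epsilon\tau|\sigma|^{-\frac12}\theta_1^E$, which are the asserted formulas. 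Since $L_0$ is nowhere light-like on $\Psi_W^{\mathrm{nd}}$, the functions $\sigma,\epsilon,\tau$ are smooth there and $|\sigma|>0$, so every operation is carried out among genuine smooth forms.

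I do not expect a real obstacle here: the proposition is essentially a bookkeeping identity that sets up the substantially more delicate computations of the connection forms $\omega^B_{i,j}$ and curvature forms $\Omega^B_{i,j}$ in terms of the $P$-data. The only points meriting any attention are checking that $\theta_i^E E_i$ and $\theta_j^B B_j$ genuinely represent the same tautological object — automatic, since both frame-bundle identifications lie over $\mathrm{id}_M$ — and not dropping the sign $-\epsilon\tau$ inherited from \eqref{eq_B1toE} when solving the system.
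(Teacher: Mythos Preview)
Your proof is correct and is essentially the same as the paper's: the paper packages the change of frame as $B=EC$ with an explicit matrix $C$ and writes $\theta^B=C^{-1}\theta^E$, while you carry out the same computation componentwise by equating $\theta_i^E E_i=\theta_j^B B_j$ and solving the resulting triangular system.
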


\proof
We may rewrite equations \eqref{eq_BitoE} as 
$B=EC$ where $B=(B_0,\ldots,B_n)$, $E=(E_0,\ldots, E_n)$ and
\begin{displaymath}
C:=\diag\left( \left(\begin{matrix} |\sigma|^{-\frac12} & -\epsilon\tau|\sigma|^{-\frac12} \\ 0 & |\sigma|^{\frac12}\end{matrix}\right), I_{m-1}\right). 
\end{displaymath}

Then the column vectors $\theta^B=(\theta_0^B,\ldots, \theta_n^B)^T$, $\theta^E=(\theta_0^E,\ldots, \theta_n^E)^T$ of solder forms on $\Psi$ satisfy 
\begin{equation}\label{eq:thetaEtoB}
 \theta^B=C^{-1}\theta^E,
\end{equation}
from which the displayed equations follow.
\endproof

Our next aim is to find the relations between the connection forms for $P$ and $Q$. We write $\sigma=\cos (2\beta)$ and $\tau=\sin(2\beta)$ with $0<\beta<\frac{\pi}{2}$. 

\begin{Proposition}
Modulo $\mathrm{span}(\theta_0^E,\ldots, \theta_n^E)$ we have the following relations. 
\begin{align}
 d\beta & \equiv\omega_{0,1}^E, \label{eq_comparison_connection_forms1}\\
 \omega^B_{1,0} & \equiv|\sigma|^{-1}\omega^E_{1,0}, \label{eq_comparison_connection_forms2}\\
\omega^B_{j,0} & \equiv|\sigma|^{-\frac12}\omega^E_{j,0}, j\geq 2, \label{eq_comparison_connection_forms3}\\
\omega^B_{j,1} & \equiv-|\sigma|^{-\frac12}\epsilon\epsilon_j\omega^E_{j,1}, j\geq 2, \label{eq_comparison_connection_forms4}\\
\omega_{j,1}^E & \equiv\tau^{-1}(\epsilon_j-\sigma)\omega_{j,0}^E,\label{eq_comparison_connection_forms5}\\
\omega^B_{i,j} & \equiv\epsilon_i\epsilon_j\omega_{i,j}^E, i,j\geq 2. \label{eq_comparison_connection_forms6}
\end{align}
It follows that for $i,j\geq 2$,
\begin{align}
\omega^B_{j,1} & \equiv-\epsilon\tau|\sigma|^{-\frac12}\omega^E_{j,0}+|\sigma|^{\frac12}\omega^E_{j,1}, \label{eq_comparison_connection_forms7}\\
\omega_{i,j}^B &\equiv\omega_{i,j}^E=0\quad \mbox{if }\epsilon_i\neq \epsilon_j.\label{eq_different_signs_vanish}
\end{align}
\end{Proposition}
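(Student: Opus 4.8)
The plan is to compare the two Levi-Civita connections pulled back to the almost bi-orthonormal frame bundle $\Psi_W$ by exploiting the fact that both are uniquely characterized by the first structure equation together with the (anti)symmetry constraints coming from $P$, resp. $Q$. First I would record the transformation $B = EC$ from Proposition \ref{prop_solder}, with $C$ the block-diagonal matrix displayed there, and differentiate it: from $\nabla B_j = \omega^B_{i,j} B_i$ and $\nabla E_j = \omega^E_{i,j} E_i$ one gets the matrix identity $\omega^B = C^{-1}\omega^E C + C^{-1} dC$. The point is that $C$ depends only on $\sigma$ and $\tau$ (equivalently on $\beta$), so $C^{-1}dC$ is expressed entirely through $d\beta$; and since $\nabla$ is torsion-free and metric, the connection forms are determined modulo the span of the solder forms by the structure equations, which lets us drop all terms proportional to $\theta^E$ at each stage.

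The first step is \eqref{eq_comparison_connection_forms1}: writing $SE_0 = \sigma E_0 + \tau E_1$ and differentiating, using $\nabla S = 0$ (as $P$, $Q$ are parallel for their respective connections — but here one should instead use that $S$ is $\nabla^P$-parallel because $V_\pm$ are $\nabla^P$-parallel, which follows since $P$ and $Q$ are simultaneously parallel only if... ) — more carefully, I would instead extract $d\beta$ directly by differentiating $\sigma = \cos 2\beta$, $\tau = \sin 2\beta$ and the defining relations $\sigma = P(SE_0,E_0)$, $\tau = P(SE_0,E_1)$; modulo $\theta^E$ the derivative of $E_0$ is $\omega^E_{i,0}E_i$, and pairing against $SE_0$ and using \eqref{eq_SE_0}--\eqref{eq_SE_i} isolates $d\beta = \omega^E_{0,1} \bmod \theta^E$. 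Once $d\beta$ is known, I would read off the individual entries of $\omega^B = C^{-1}\omega^E C + C^{-1}dC$ block by block: the $(1,0)$ entry gives \eqref{eq_comparison_connection_forms2}, the entries $(j,0)$ and $(j,1)$ for $j\geq 2$ give \eqref{eq_comparison_connection_forms3} and \eqref{eq_comparison_connection_forms4} (the factor $-\epsilon\epsilon_j$ in the latter coming from the $\epsilon_j$ in raising/lowering via $Q$ versus the sign in $C$), and the lower-right block $I_{m-1}$ gives \eqref{eq_comparison_connection_forms6}, where the $\epsilon_i\epsilon_j$ reflects the passage between $\omega^B$ (antisymmetric in the $\tilde{}$-normalization) and the $P$-antisymmetric $\omega^E$.

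The relation \eqref{eq_comparison_connection_forms5} is of a different nature: it is an internal identity among the $P$-connection forms on $\Psi_W$, reflecting that the flag sub-bundle $\Psi_W$ is cut out by the condition $L_1 = L_0 \oplus SL_0$ and $L_j \cap V_\pm$ having prescribed dimensions. I would obtain it by differentiating $SE_j = \epsilon_j E_j$ for $j\geq 2$ (valid on $\Psi_W$ since $E_j \in V_{\pm}$ there), which yields $(\nabla^P S)E_j + S\nabla^P E_j = \epsilon_j \nabla^P E_j$; but $\nabla^P S$ need not vanish, so instead I would differentiate the scalar relation $Q(E_j, E_0) = P(SE_j, E_0) = \epsilon_j P(E_j,E_0) = 0$ together with $Q(E_j,E_1)=0$, use $dE_0 \equiv \omega^E_{a,0}E_a$, $dE_1 \equiv \omega^E_{a,1}E_a \bmod \theta^E$, and the expansions \eqref{eq_SE_0}--\eqref{eq_SE_1}; pairing appropriately produces the linear relation $\tau\,\omega^E_{j,1} = (\epsilon_j - \sigma)\omega^E_{j,0} \bmod \theta^E$. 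Substituting \eqref{eq_comparison_connection_forms5} into \eqref{eq_comparison_connection_forms4} and simplifying with $\sigma^2 + \tau^2 = 1$ gives \eqref{eq_comparison_connection_forms7}, and \eqref{eq_different_signs_vanish} is then immediate: if $\epsilon_i \neq \epsilon_j$ with $i,j\geq 2$ then $E_i, E_j$ lie in $V_+$ and $V_-$ (or vice versa), and since these are $\nabla^P$-parallel distributions on $\Psi_W$ one has $\omega^E_{i,j} = P(\nabla^P E_j, E_i) = 0 \bmod \theta^E$, while \eqref{eq_comparison_connection_forms6} forces $\omega^B_{i,j}$ to vanish as well.

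The main obstacle I anticipate is bookkeeping the signs and the $|\sigma|^{\pm 1/2}$ factors consistently when passing between the $Q$-antisymmetric normalization (the $\tilde\omega$, $\tilde\Omega$ of Section \ref{sec:smooth_forms}) and the genuinely antisymmetric $P$-connection forms, especially in the mixed entries involving index $1$, where $C$ is non-diagonal; getting \eqref{eq_comparison_connection_forms7} to come out with the stated coefficients $-\epsilon\tau|\sigma|^{-1/2}$ and $|\sigma|^{1/2}$ requires the substitution of \eqref{eq_comparison_connection_forms5} and the identity $\tau^2 = 1-\sigma^2 = (1-\sigma)(1+\sigma)$ to conspire correctly. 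The remaining computations are routine linear algebra on the structure equations.
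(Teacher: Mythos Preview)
Your overall strategy coincides with the paper's: both exploit the relation $\theta^B = C^{-1}\theta^E$ together with the two torsion-free structure equations to get $\omega^B \equiv C^{-1}\omega^E C + C^{-1}dC \pmod{\theta^E}$, and then read off the entries. Two points deserve correction.

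First, the matrix identity does \emph{not} follow from ``differentiating $B=EC$ using $\nabla B_j = \omega^B_{i,j}B_i$ and $\nabla E_j = \omega^E_{i,j}E_i$'': these are the connection forms of two \emph{different} connections $\nabla^Q$ and $\nabla^P$, so the usual gauge-change formula fails on the nose. What the paper does (and what you need) is substitute $\theta^B=C^{-1}\theta^E$ into the $Q$-structure equation $d\theta^B+\omega^B\wedge\theta^B=0$, use the $P$-structure equation to replace $d\theta^E$, and obtain $(dC^{-1}-C^{-1}\omega^E+\omega^B C^{-1})\wedge\theta^E=0$; Cartan's lemma then gives the relation modulo $\theta^E$. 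Equivalently: the difference of two torsion-free connections is a tensor, hence a combination of solder forms. Your allusion to the structure equations is the right instinct, but the argument as written is circular.

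Second, and more seriously, your argument for \eqref{eq_different_signs_vanish} via ``$V_\pm$ are $\nabla^P$-parallel distributions'' is wrong: $\nabla^P S\neq 0$ in general, so the eigenbundles of $S$ are \emph{not} parallel. The paper's argument is purely algebraic: from the lower-right block of the matrix identity one gets $\omega^B_{i,j}\equiv \omega^E_{i,j}$; combining with $\omega^B_{i,j}=-\epsilon_i\epsilon_j\omega^B_{j,i}$ and $\omega^E_{i,j}=-\omega^E_{j,i}$ yields $\omega^B_{i,j}\equiv -\omega^B_{j,i}$ as well as $\omega^B_{i,j}=\omega^B_{j,i}$ when $\epsilon_i\neq\epsilon_j$, forcing both to vanish. (Your idea can be repaired: differentiate the constraint $SE_j=\epsilon_jE_j$ on $\Psi_W$, note that $(\nabla^P S)E_j$ is horizontal, and pair with $E_i$; but this is not the parallelism you claimed.) Likewise, your derivation of \eqref{eq_comparison_connection_forms5} by differentiating $Q(E_j,E_0)=0$ on $\Psi_W$ is valid once you observe that the $\nabla^P Q$ contribution is a multiple of $\theta^E$, whereas the paper obtains it more directly by comparing the $(0,j)$ and $(j,0)$ entries of the matrix identity against the $Q$-antisymmetry $\omega^B_{0,j}=-\epsilon\epsilon_j\omega^B_{j,0}$.
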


\proof
By the structure equations and \eqref{eq:thetaEtoB}, 
\begin{displaymath}
0=d\theta^B+\omega^B\wedge \theta^B= (dC^{-1}-C^{-1}\omega^E+\omega^BC^{-1})\wedge \theta^E.
\end{displaymath}
It follows by Cartan's lemma \cite[Section 6.3.1]{sternberg12} that
\begin{equation} \label{eq_comparison_connection_forms}
\omega^B\equiv C^{-1}\omega^E C-(dC^{-1}) C. 
\end{equation}

Straightforward computations yield
\begin{displaymath}
C^{-1}\omega^E C=|\sigma|^{-1} \left(\begin{matrix}\epsilon \tau \omega^E_{1,0}& -\omega^E_{1,0} & \epsilon|\sigma|^{\frac12}(\sigma \omega^E_{0, j}+\tau \omega^E_{1,j} )_{j=2}^n \\ 
\omega^E_{1,0} & -\epsilon\tau\omega^E_{1,0} & (|\sigma|^{\frac12}\omega^E_{1, j})_{j=2}^n \\  
-|\sigma|^{\frac12}(\omega_{0,i}^E)_{i\geq 2} & \epsilon|\sigma|^{\frac12}(\tau\omega^E_{0,i}-\sigma\omega^E_{1,i})_{i\geq 2} & |\sigma|(\omega^E_{i,j})_{i,j\geq 2} \end{matrix}  \right), 
\end{displaymath}
and
\begin{displaymath}
 dC^{-1} \cdot C=|\sigma|^{-1} \diag\left( \left(\begin{matrix}-\epsilon\tau & 2 \\0 &\epsilon\tau  \end{matrix}  \right), 0_{n-1} \right)d\beta.
\end{displaymath}

Equations \eqref{eq_comparison_connection_forms1}--\eqref{eq_comparison_connection_forms6} now follow by comparing each entry in the matrix equation \eqref{eq_comparison_connection_forms}; and \eqref{eq_comparison_connection_forms7} follows by combining \eqref{eq_comparison_connection_forms4} and \eqref{eq_comparison_connection_forms5}.

Finally,  assume $i\in J_+, j\in J_-$. By \eqref{eq_comparison_connection_forms6},
\begin{displaymath}
 \omega^B_{i,j}\equiv -\omega^E_{i,j}= \omega^E_{j,i}\equiv -\omega^B_{j,i}.
\end{displaymath}
But $\omega^B_{i,j}=-\epsilon_i\epsilon_j\omega^B_{j,i}=\omega^B_{j,i}$, and thus $\omega^B_{i,j} \equiv 0 $, which is \eqref{eq_different_signs_vanish}
\endproof

\begin{Lemma}
For any $\psi\in \Psi_J$, a basis of  $T_\psi^*\Psi_J$ is given by $(\theta^E_j)_{j=0}^{n}$, $(\omega_{0,j}^E)_{j=1}^{n}$,  $(\omega_{i,j}^E)_{i<j, i,j \in J_+}$, $(\omega_{i,j}^E)_{i<j,i,j \in J_-}$.
\end{Lemma}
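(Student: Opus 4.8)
The plan is to establish the claimed list of forms is a coframe on $\Psi_W$ by a dimension count together with a linear independence argument, using the already-derived comparison formulas for the $Q$-connection forms in terms of the $P$-connection forms.

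First I would compute $\dim \Psi_W$. The bundle $\Psi_W$ is an open subbundle of the full oriented flag bundle $\Phi$ over $M$, so $\dim\Psi_W=\dim\Phi=(m+1)+\dim(\text{flag variety in }\R^{m+1})=(m+1)+\binom{m+1}{2}$. On the other hand I count the proposed $1$-forms: the solder forms $\theta^E_0,\dots,\theta^E_m$ contribute $m+1$; the forms $\omega^E_{0,j}$ for $j=1,\dots,m$ contribute $m$; and the forms $\omega^E_{i,j}$ with $i<j$ and $i,j$ both in $W_+$, resp. both in $W_-$, contribute $\binom{|W_+|}{2}+\binom{|W_-|}{2}$. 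Using $|W_+|+|W_-|=m-1$, a short arithmetic check shows $(m+1)+m+\binom{|W_+|}{2}+\binom{|W_-|}{2}=(m+1)+\binom{m+1}{2}$ precisely because the "missing" forms $\omega^E_{i,j}$ with $i,j\geq 2$ of opposite sign, and $\omega^E_{i,j}$ with exactly one index in $\{0,1\}$ and $i,j$... — here one must be careful: the full set of $\omega^E_{i,j}$, $i<j$, has $\binom{m+1}{2}$ elements, and we are keeping those with a $0$ index ($m$ of them: $\omega^E_{0,j}$, $j\geq1$) plus those with both indices $\geq 2$ of equal sign; we are dropping those with a $1$-index (i.e. $\omega^E_{1,j}$, $j\geq 2$, there are $m-1$ of them) and those with both indices $\geq 2$ of opposite sign (there are $|W_+||W_-|$ of them). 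So the count works iff $(m-1)+|W_+||W_-| = \binom{m-1}{2}+... $ — rather than belabor this, the cleanest route is: the proposed coframe has cardinality equal to $\dim\Psi_W$ provided we verify linear independence, and then surjectivity is automatic.

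Thus the real content is linear independence at each $\psi\in\Psi_W$ (or at each $\psi\in\Psi_W^{\mathrm{nd}}$ first, then by density and continuity on all of $\Psi_W$). For this I would argue that from the listed forms one can recover a known coframe on $\Psi_W$. Indeed $(\theta^E_j)_{j=0}^m$ and $(\omega^E_{i,j})_{i<j}$ together form the standard coframe of the orthonormal frame bundle identified with $\Phi$. So it suffices to show that the $\omega^E_{i,j}$ we have dropped — namely $\omega^E_{1,j}$ for $j\geq 2$, and $\omega^E_{i,j}$ for $i,j\geq 2$ of opposite $\epsilon$-sign — are, modulo $\mathrm{span}(\theta^E_0,\dots,\theta^E_m,\omega^E_{0,1},\dots,\omega^E_{0,m})$ and the retained $\omega^E_{i,j}$, expressible as combinations of the listed ones. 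For $\omega^E_{1,j}$ with $j\geq 2$ this is exactly equation \eqref{eq_comparison_connection_forms5}: $\omega^E_{j,1}\equiv \tau^{-1}(\epsilon_j-\sigma)\omega^E_{j,0}\pmod{\theta^E}$, and on $\Psi_W^{\mathrm{nd}}$ one has $\tau>0$ (in fact $\tau^2+\sigma^2=1$ and $\sigma\neq\pm1$ there), so this is a genuine relation expressing $\omega^E_{1,j}$ through $\omega^E_{0,j}$ and solder forms. For $\omega^E_{i,j}$ with $i,j\geq2$ of opposite sign, equation \eqref{eq_different_signs_vanish} gives $\omega^E_{i,j}\equiv 0\pmod{\theta^E}$. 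Hence the span of the listed forms together with the solder forms contains every $\omega^E_{i,j}$, i.e. contains a full coframe; since the listed set has exactly $\dim\Psi_W$ elements, it must itself be a coframe. I would then note the extension from $\Psi_W^{\mathrm{nd}}$ to $\Psi_W$ follows since linear independence is an open condition and $\Psi_W^{\mathrm{nd}}$ is dense in $\Psi_W$ (the light-like locus $L_0$ being a hypersurface condition by LC-regularity of $M$), or alternatively the identities \eqref{eq_comparison_connection_forms5}, \eqref{eq_different_signs_vanish} can be re-derived directly on all of $\Psi_W$.

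The main obstacle I anticipate is bookkeeping the dimension count correctly — in particular making sure the number of retained $\omega^E$-forms ($m$ of type $\omega^E_{0,j}$ plus $\binom{|W_+|}{2}+\binom{|W_-|}{2}$ of type $\omega^E_{i,j}$, $i,j\geq2$ same sign) plus $m+1$ solder forms equals $(m+1)+\binom{m+1}{2}$, which amounts to the identity $\binom{m-1}{2}-\binom{|W_+|}{2}-\binom{|W_-|}{2}=|W_+||W_-|$ combined with correctly accounting for the $\omega^E_{1,j}$. This is elementary but must be done carefully; everything else is a direct consequence of the comparison formulas \eqref{eq_comparison_connection_forms5} and \eqref{eq_different_signs_vanish} already in hand.
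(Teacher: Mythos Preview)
Your overall strategy---show the listed forms span using the comparison relations \eqref{eq_comparison_connection_forms5} and \eqref{eq_different_signs_vanish}, then count---is exactly the paper's, and that part of your argument is fine. The gap is your dimension count, which rests on a misreading of the definition of $\Psi_W$.

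You claim $\Psi_W$ is an open subbundle of the full flag bundle $\Phi$, hence $\dim\Psi_W=(m+1)+\binom{m+1}{2}$. That is false. Re-read the definition: a flag $L_0\subset L_1\subset\cdots\subset L_m$ lies in $\Psi_W$ only if $L_1=L_0\oplus S(L_0)$ and, for $j\geq 2$, $L_j=L_{j-1}\oplus\R v_j$ with $v_j\in V_\pm$ according to whether $j\in W_\pm$. These are closed constraints, not open ones; $\Psi_W$ is a genuine submanifold of $\Phi$. The fiber over a point of $M$ is parametrized by the oriented line $L_0$ (dimension $m$), together with a full oriented flag in $V_+/(L_1\cap V_+)\cong\R^{p-1}$ and one in $V_-/(L_1\cap V_-)\cong\R^{q-1}$. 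Since $|W_+|=p-1$ and $|W_-|=q-1$, this gives
\[
\dim\Psi_W=(m+1)+m+\binom{p-1}{2}+\binom{q-1}{2},
\]
which is exactly the cardinality of your list. (Your attempted identity $(m+1)+m+\binom{|W_+|}{2}+\binom{|W_-|}{2}=(m+1)+\binom{m+1}{2}$ is simply wrong---try $p=q=2$.)

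With the correct dimension in hand, the argument is clean: the pullbacks of all $\theta^E_i$ and $\omega^E_{i,j}$ span $T^*_\psi\Psi_W$ because they form a coframe on $\Phi\supset\Psi_W$; the relations \eqref{eq_comparison_connection_forms5} and \eqref{eq_different_signs_vanish} show that the listed subset still spans; and since its cardinality equals $\dim\Psi_W$, it is a basis. Note this is a spanning-plus-count argument, not a linear-independence-plus-count argument---you conflate the two in your write-up. No density argument from $\Psi_W^{\mathrm{nd}}$ is needed: relation \eqref{eq_comparison_connection_forms5} involves $\tau^{-1}$, but $\tau>0$ holds on all of $\Psi_W$ by the requirement $L_0\not\subset V_+\cup V_-$.
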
 

\proof 
The full collection $(\theta_i^E)_{i=0}^{n}$, $(\omega_{i,j}^E)_{i<j}$ is clearly a spanning set. 

For $j\geq 2$, it follows from \eqref{eq_comparison_connection_forms5} that $\omega_{1,j}^E$ belongs to the space spanned by $\{\omega_{0,i}^E,\theta^E_i\}_{i=0}^{n}$. 
Hence, by \eqref{eq_different_signs_vanish}, the displayed elements form a spanning set. To finish the proof, it remains to note that their number coincides with $\dim\Psi_J=(n+1)+n+{p-1 \choose 2}+{q-1 \choose 2}$.
\endproof

\begin{Proposition}\label{prop_connection}
For $i \geq 2$ we have the following relations between forms on $\Psi^{\mathrm{nd}}$, where $\Theta_j$ is the dual of $\theta_j^E$.  
\begin{align*}
 \omega^B_{1,0} & =|\sigma|^{-1}\omega^E_{1,0}\\
  & \quad -\left(|\sigma|^{-1}(1+\sigma^2)\Theta_0(d\beta)+\epsilon\tau\Theta_1(d\beta)\right)\theta_0^E\\
  & \quad -\left(\epsilon\tau\Theta_0(d\beta)+\tau^2|\sigma|^{-1}\Theta_1(d\beta)\right)\theta_1^E \\
  & \quad +\sum_{j\geq 2}\left(\frac{\tau\epsilon}{2}\Theta_1(\omega_{j,1}^E)- |\sigma|^{-1}\Theta_j(d\beta)+\frac12(|\sigma|+\epsilon_j\epsilon)\Theta_0(\omega_{j,1}^E)\right)\theta_j^E,\\
\omega^B_{i,0} & =|\sigma|^{-\frac12}\omega^E_{i,0}\\
 & \quad +\left(\epsilon_i|\sigma|^{-\frac12}\tau( \Theta_i(d\beta)+\Theta_0(\omega_{i,1}^E))\right)\theta_0^E \\
 & \quad +\left(- \epsilon|\sigma|^{\frac12}\epsilon_i\Theta_i(d\beta)-\frac12(\epsilon|\sigma|^{\frac12}\epsilon_i+|\sigma|^{-\frac12} )\Theta_0(\omega^E_{i,1})+\frac12\tau\epsilon_i|\sigma|^{-\frac12}\Theta_1(\omega_{i,1}^E)\right)\theta_1^E\\
 & \quad +\frac{1}{2}|\sigma|^{-\frac12}\sum_{j\geq 2}\left((\epsilon_i\epsilon_j-1)\Theta_0(\omega^E_{i,j}) +\epsilon_i\tau(\Theta_j(\omega^E_{i,1} ) - \Theta_i(\omega^E_{j,1} )) \right) \theta_j^E.
\end{align*}

In particular $\omega^B_{i,0}$ is homogeneous w.r.t. $\sigma$ for all $i\geq 1 $ with degrees 
\begin{displaymath}
 \deg_\sigma\omega_{1,0}^B=1, 
 \quad \deg_\sigma \omega_{i,0}^B=\frac32, \quad i\geq 2. 
\end{displaymath}
\end{Proposition}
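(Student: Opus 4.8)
\emph{Strategy.} The preceding proposition determines $\omega^B_{1,0}$ and $\omega^B_{i,0}$ only modulo $\mathrm{span}(\theta^E_0,\dots,\theta^E_m)$, so the task is exactly to pin down the missing $\theta^E$-coefficients. The plan is to exploit the fact that, once the solder forms $\theta^B_i$ are fixed, the Levi-Civita connection forms of $Q$ are the \emph{unique} $1$-forms $\omega^B_{a,b}$ satisfying the first structure equation $d\theta^B_i=\epsilon_a\tilde\omega^B_{a,i}\wedge\theta^B_a$ together with the $\mathfrak{so}(p,q)$-antisymmetry $\tilde\omega^B_{b,a}=-\tilde\omega^B_{a,b}$. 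Since $\theta^B$ is already expressed through $\theta^E,\sigma,\tau$ in Proposition~\ref{prop_solder}, and each $\omega^B_{a,b}$ is known up to a term $\sum_k X^{a,b}_k\theta^E_k$ with the $X^{a,b}_k$ unknown functions constrained by $\epsilon_b X^{a,b}_k=-\epsilon_a X^{b,a}_k$, substituting this ansatz into the structure equations turns the problem into a linear system for the $X^{a,b}_k$.

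\emph{The computation.} First I would record the ingredients of the left-hand sides: $d\theta^E_i=\omega^E_{i,a}\wedge\theta^E_a$, and, writing $\sigma=\cos 2\beta$, $\tau=\sin 2\beta$, the identities $d\sigma=-2\tau\,d\beta$, $d\tau=2\sigma\,d\beta$ with $d\beta=\omega^E_{0,1}+\sum_k\Theta_k(d\beta)\theta^E_k$; here $\Theta_k(d\beta):=d\beta(\Theta_k)$ is precisely the $\theta^E$-part left implicit in \eqref{eq_comparison_connection_forms1}, and likewise $\Theta_k(\omega^E_{a,b})$ will be the $\theta^E_k$-coefficients appearing in the answer. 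Differentiating the formulas of Proposition~\ref{prop_solder} expresses every $d\theta^B_i$ in the basis of $2$-forms built from wedges of $\theta^E_j,\omega^E_{0,j},\omega^E_{i,j}$. Substituting this, the solder relations, and the ansatz for the $\omega^B_{a,b}$ into $d\theta^B_i-\epsilon_a\tilde\omega^B_{a,i}\wedge\theta^B_a=0$, and then extracting the coefficient of each $\theta^E_j\wedge\theta^E_k$, yields (via Cartan's lemma) a linear system which, together with the antisymmetry and the auxiliary identities \eqref{eq_comparison_connection_forms5}, \eqref{eq_comparison_connection_forms7}, \eqref{eq_different_signs_vanish} (used to eliminate $\omega^E_{j,1}$ in favour of $\omega^E_{j,0}$ and to drop the opposite-sign forms $\omega^B_{i,j}$, $\epsilon_i\neq\epsilon_j$), is consistent and has a unique solution. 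Reading off $X^{1,0}_k$ and $X^{i,0}_k$ for $i\geq2$ gives the two displayed formulas. I expect this bookkeeping to be the main obstacle: it is a lengthy Cartan-calculus computation, and one must keep the $i=1$ equation separate from the $i\geq2$ equations and carefully track the $\tau^{-1}$ hidden inside $d\beta$ and inside $\omega^E_{j,1}$ (harmless, since $\tau>0$ everywhere on $\Psi_W$).

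\emph{Homogeneity.} This I would deduce by inspection of the resulting formulas. On $\Psi_W$ one has $\sigma\in(-1,1)$, hence $\tau=\sqrt{1-\sigma^2}$ is smooth and nonvanishing, and $\theta^E_j$, $\omega^E_{a,b}$ and all the coefficient functions $\Theta_k(d\beta),\Theta_k(\omega^E_{a,b})$ are smooth on $\Psi_W$; thus every summand of $\omega^B_{1,0}$ and of $\omega^B_{i,0}$ has the shape $f(\sigma)\cdot\omega'$ with $\omega'$ a smooth form on $\Psi_W$ and $f$ homogeneous. Using $\tau^2=1-\sigma^2$ to rewrite the mixed coefficients, e.g.\ $|\sigma|^{-1}(1+\sigma^2)=|\sigma|^{-1}+|\sigma|$ and $\tau^2|\sigma|^{-1}=|\sigma|^{-1}-|\sigma|$, one checks that the only functions $f$ occurring in $\omega^B_{1,0}$ are $|\sigma|^{-1}$, $|\sigma|$ and $\epsilon=\sign\sigma$, each with $s+\epsilon_f$ odd, i.e.\ of $\sigma$-degree $1$; while those in $\omega^B_{i,0}$, $i\geq2$, are $|\sigma|^{-1/2}$ and $\epsilon|\sigma|^{1/2}=\sign(\sigma)|\sigma|^{1/2}$, each with $s+\epsilon_f\equiv\tfrac32\bmod 2$. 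Since $\mathcal H_\delta(\Psi_W,\sigma)$ is linear (Proposition~\ref{prop:sigma}), this gives $\omega^B_{1,0}\in\mathcal H_1(\Psi_W,\sigma)$ and $\omega^B_{i,0}\in\mathcal H_{3/2}(\Psi_W,\sigma)$ for $i\geq2$, which are the asserted $\sigma$-degrees.
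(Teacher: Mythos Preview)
Your approach is correct and essentially coincides with the paper's: both use the torsion-free structure equation together with the $\mathfrak{so}(p,q)$-antisymmetry of $\omega^B$ and Cartan's lemma to solve for the $\theta^E$-coefficients. The paper streamlines the bookkeeping by packaging everything into the matrix $D:=dC^{-1}-C^{-1}\omega^E+\omega^B C^{-1}$ (where $\theta^B=C^{-1}\theta^E$), so that the structure equations for $P$ and $Q$ combine to give $D\wedge\theta^E=0$; Cartan's lemma then becomes the single symmetry condition $\Theta_a(D_{ib})=\Theta_b(D_{ia})$, and plugging in specific triples $(i,a,b)$ yields the displayed formulas directly, without having to write the ansatz for all $\omega^B_{a,b}$ separately.
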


\proof
Set $D:=dC^{-1}-C^{-1}\omega^E+\omega^BC^{-1}$. Then for each fixed $i=0,\ldots,n$ we have $\sum_{j=0}^n D_{ij} \wedge \theta_j^E=0$. By Cartan's lemma, there exists some symmetric matrix $R^i$ such that $D_{ij}=\sum_{a=0}^n R^i_{ja}\theta_a^E$. Expanding the coefficients of $D$ in the basis and writing explicitly the symmetry conditions yields the equation 
\begin{equation} \label{eq_symmetry_m_abstract}
 \Theta_a(D_{ib})=\Theta_b(D_{ia}), \quad i,a,b=0,\ldots,n.
\end{equation}

The entries of $D$ are as follows
\begin{align*}
 D_{0,0}&=-\epsilon|\sigma|^{-\frac12}\tau (\omega^E_{1,0}+d\beta), && D_{1,j}=|\sigma|^{-\frac12}\omega_{j,1}^E+\omega_{1,j}^B,
 \\D_{0,1}&=|\sigma|^{-\frac32}((1+\sigma^2)d\beta+\sigma^2\omega^E_{1,0}+|\sigma|\omega^B_{1,0}), &&D_{i,0}=-\omega_{i,0}^E+|\sigma|^{\frac12} \omega_{i,0}^B, 
 \\D_{0,j}&=|\sigma|^{-\frac12}(|\sigma|\omega_{j,0}^E+\epsilon\tau\omega_{j,1}^E+|\sigma|^{\frac12}\omega_{0,j}^B), &&D_{i,1}=|\sigma|^{-\frac12}(-|\sigma|^{\frac12}\omega^E_{i, 1}+\epsilon\tau\omega_{i,0}^B+\omega_{i,1}^B), 
 \\D_{1,0}&=-|\sigma|^{-\frac12}\omega_{1,0}^E+ |\sigma|^{\frac12}\omega^B_{1,0},&&D_{i,j}=-\omega_{i,j}^E+\omega_{i,j}^B.
\\ D_{1,1}&=\epsilon\tau|\sigma|^{-\frac32}( d\beta + |\sigma|\omega_{1,0}^B),
\end{align*}

In the following, let $i,j \geq 2$. Plugging particular triples $(i,a,b)$ into \eqref{eq_symmetry_m_abstract} we obtain the following equations
\begin{align*}
(0,0,1):\quad \Theta_0(\omega^B_{1,0}) & =-|\sigma|^{-1}(1+\sigma^2)\Theta_0(d\beta)-\epsilon\tau\Theta_1(d\beta),\\
(0,0,j):\quad \Theta_0(\omega_{0,j}^B) & =-\epsilon|\sigma|^{-\frac12}\tau(\Theta_j(d\beta)+\Theta_0(\omega_{j,1}^E)),\\
(1,0,1):\quad  \Theta_1(\omega^B_{1,0}) & =-\epsilon\tau\Theta_0(d\beta)-\tau^2|\sigma|^{-1}\Theta_1(d\beta).
\end{align*}

From the equations
\begin{align*}
 (0,i,j):\quad & \epsilon|\sigma|^{\frac12}(\Theta_j(\omega_{0,i}^B)-\Theta_i(\omega_{0,j}^B)) =\tau(\Theta_i(\omega_{j,1}^E)-\Theta_j(\omega_{i,1}^E)) ,\\
 (i,0,j):\quad & \Theta_j(\omega^B_{i,0}) =-|\sigma|^{-\frac12}\Theta_0(\omega^E_{i,j})+|\sigma|^{-\frac12  }\Theta_0(\omega^B_{i,j}) ,
\end{align*}
together with $\omega^B_{i,j}=-\epsilon_i\epsilon_j\omega^B_{j,i}$, we get 
\begin{align*}
\Theta_i(\omega^B_{j,0}) & =\frac{1}{2}|\sigma|^{-\frac12}\left((1-\epsilon_i\epsilon_j)\Theta_0(\omega^E_{i,j}) +\epsilon_j\tau(\Theta_i(\omega^E_{j,1} ) - \Theta_j(\omega^E_{i,1} )) \right).
\end{align*}

Then from the three equations
\begin{align*}
(0,1,j):\quad& \Theta_j(\omega^B_{1,0})-|\sigma|^{\frac12}\Theta_1(\omega^B_{0,j})=\tau\epsilon\Theta_1(\omega^E_{j,1})-(1+\sigma^2)|\sigma|^{-1}\Theta_j(d\beta),\\
(1,0,j):\quad& \Theta_j(\omega^B_{1,0})-|\sigma|^{-\frac12}\Theta_0(\omega^B_{1,j})=-|\sigma|^{-1}\Theta_0(\omega^E_{1,j}),\\
(j,0,1):\quad & \Theta_1(\omega^B_{j,0})-|\sigma|^{-1}\Theta_0(\omega^B_{j,1})=-|\sigma|^{-\frac12}\Theta_0(\omega^E_{j,1})+\tau\sigma^{-1}\Theta_0(\omega^B_{j,0}),
\end{align*}
 we get
\begin{align*} 
\Theta_1(\omega^B_{0,j}) & =|\sigma|^{\frac12}\Theta_j(d\beta)+\frac12(\epsilon|\sigma|^{-\frac12}\epsilon_j+|\sigma|^{\frac12} )\Theta_0(\omega^E_{j,1})-\frac12\tau\epsilon|\sigma|^{-\frac12}\Theta_1(\omega_{j,1}^E),\\
\Theta_j(\omega_{1,0}^B) & =\frac{\tau\epsilon}{2}\Theta_1(\omega_{j,1}^E)-|\sigma|^{-1}\Theta_j(d\beta)+\frac12(|\sigma|+\epsilon_j\epsilon)\Theta_0(\omega_{j,1}^E).
\end{align*}
The statement follows.\endproof

\subsubsection{Curvature forms}

\begin{Lemma}\label{lemma_curvature_extensions}
The curvature forms $\epsilon\Omega_{1,0}^B,|\sigma|^\frac12\Omega^B_{i,0}, \epsilon |\sigma|^\frac12\Omega^B_{i,1}$ and $\Omega^B_{i,j}$ with $i,j\geq 2$ admit  a smooth extension to $\Psi_J$. In particular, all the curvature forms $\Omega_{i, j}^B$ are homogeneous w.r.t. $\sigma$, and $\deg_\sigma \Omega_{1,0}^B=1$,  $\deg_\sigma \Omega_{j,0}^B=\frac32$, $\deg_\sigma \Omega_{j,1}^B=\frac12$, and $\deg_\sigma\Omega_{i,j}^B=0$ for $i,j\geq 2$.
\end{Lemma}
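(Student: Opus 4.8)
The plan is to avoid the structure equations and instead compare the $Q$-curvature written in the frame $B$ with the $Q$-curvature written in the frame $E$, which is globally smooth on $\Psi_W$. Let $R$ denote the Riemann curvature endomorphism of $Q$, a smooth $\End(TM)$-valued $2$-form on $M$, and define $\rho_{ij}\in\Omega^2(\Psi_W)$ by $R(\,\cdot\,,\,\cdot\,)E_j=\rho_{ij}E_i$; since $E=(E_0,\dots,E_m)$ is a smooth frame on $\Psi_W$ and $R$ is smooth on $M$, each $\rho_{ij}$ is a smooth $2$-form on all of $\Psi_W$. (These are \emph{not} the forms $\Omega^E_{ij}$, which come from the $P$-curvature.) Because $R(\,\cdot\,,\,\cdot\,)$ is tensorial in its last slot, the pointwise change of frame $B=EC$ of \eqref{eq_B0toE}--\eqref{eq_BitoE}, with $C$ the explicit matrix from the proof of Proposition \ref{prop_solder}, produces on $\Psi_W^{\mathrm{nd}}$ the matrix identity $\Omega^B=C^{-1}\rho\,C$.

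Next I would simply substitute. With $C$ block-diagonal (the identity outside the indices $0,1$, and top-left block $C_{00}=|\sigma|^{-\frac12}$, $C_{01}=-\epsilon\tau|\sigma|^{-\frac12}$, $C_{10}=0$, $C_{11}=|\sigma|^{\frac12}$), and $C^{-1}$ correspondingly with $(C^{-1})_{00}=|\sigma|^{\frac12}$, $(C^{-1})_{01}=\epsilon\tau|\sigma|^{-\frac12}$, $(C^{-1})_{11}=|\sigma|^{-\frac12}$, one gets at once, for $i,j\geq2$,
\begin{displaymath}
\Omega^B_{ij}=\rho_{ij},\qquad |\sigma|^{\frac12}\Omega^B_{i0}=\rho_{i0},\qquad \epsilon|\sigma|^{\frac12}\Omega^B_{i1}=-\tau\rho_{i0}+\sigma\rho_{i1}.
\end{displaymath}
Since $\sigma\in(-1,1)$ and $\tau=\sqrt{1-\sigma^2}>0$ on $\Psi_W$, all three right-hand sides are smooth on $\Psi_W$, which settles three of the four families. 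The fourth only gives $\Omega^B_{10}=|\sigma|^{-1}\rho_{10}$, still singular along $\LC_M$. To remove the pole I would invoke the $Q$-skew-adjointness of $R(\,\cdot\,,\,\cdot\,)$: by \eqref{eq_SE_0} and symmetry of $S$ with respect to $P$ one has $Q(E_i,E_0)=P(E_i,SE_0)=\sigma\delta_{i0}+\tau\delta_{i1}$ on all of $\Psi_W$, so pairing $R(\,\cdot\,,\,\cdot\,)E_0$ with $E_0$ yields $0=\sigma\rho_{00}+\tau\rho_{10}$; hence $\rho_{10}=-\sigma\tau^{-1}\rho_{00}$ and therefore $\epsilon\Omega^B_{10}=-\tau^{-1}\rho_{00}$ on $\Psi_W^{\mathrm{nd}}$, whose right-hand side is smooth on $\Psi_W$. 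I expect this to be the only non-routine point: recognizing that the apparent first-order pole of $\Omega^B_{10}$ is forced to disappear by a curvature symmetry rather than by an explicit cancellation among connection-form terms; everything else is bookkeeping with $C$.

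The homogeneity assertions are then a rereading of these formulas. A smooth form lies in $\mathcal H_0(\Psi_W,\sigma)$; $|\sigma|^{-\frac12}\cdot(\mathrm{smooth})$ lies in $\mathcal H_{3/2}$ (degree $-\tfrac12$, even); $\epsilon|\sigma|^{-\frac12}\cdot(\mathrm{smooth})=\sign(\sigma)|\sigma|^{-\frac12}\cdot(\mathrm{smooth})$ lies in $\mathcal H_{1/2}$ (degree $-\tfrac12$, odd); and $\epsilon\cdot(\mathrm{smooth})$ lies in $\mathcal H_1$ (degree $0$, odd). This is legitimate because $\sigma$ is the pullback of the function $\sigma$ on $\mathbb P_+(TM)$ under a submersion, and $0$ is a regular value of the latter by Proposition \ref{prop_lc_regularity_pseudo_riemann}. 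Finally, using the antisymmetry $\Omega^B_{ji}=-\epsilon_i\epsilon_j\Omega^B_{ij}$ (multiplication by a nonzero constant, hence preserving $\sigma$-degree) one reads off $\deg_\sigma\Omega^B_{1,0}=1$, $\deg_\sigma\Omega^B_{j,0}=\tfrac32$, $\deg_\sigma\Omega^B_{j,1}=\tfrac12$ and $\deg_\sigma\Omega^B_{i,j}=0$ for $i,j\geq2$, for every pair of indices.
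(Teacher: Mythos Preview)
Your proof is correct and follows essentially the same strategy as the paper: express the $Q$-curvature forms $\Omega^B_{ij}$ through the $Q$-curvature $R$ evaluated on the smooth $E$-frame, and use the $Q$-skew-adjointness of $R(\,\cdot\,,\,\cdot\,)$ to remove the apparent singularity in $\Omega^B_{1,0}$. The only difference is organizational: the paper works with the scalar formula $\Omega^B_{i,j}=\epsilon_i\,Q(R(\,\cdot\,,\,\cdot\,)B_j,B_i)$ and substitutes \eqref{eq_B0toE}--\eqref{eq_BitoE} directly, so that for $(i,j)=(1,0)$ the term $Q(R(\,\cdot\,,\,\cdot\,)E_0,E_0)$ is killed automatically by skew-adjointness and one lands on the manifestly smooth $\Omega^B_{1,0}=-\epsilon\,Q(R(\,\cdot\,,\,\cdot\,)E_0,E_1)$, avoiding the detour through $\rho_{10}=-\sigma\tau^{-1}\rho_{00}$ and the factor $\tau^{-1}$.
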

\proof
Let $R=[\nabla,\nabla]-\nabla_{[\,,\,]}$ denote the Riemannian curvature $(3,1)$-tensor of $Q$. Take vector fields $\tilde u, \tilde v$ on $\Psi^{\mathrm{nd}}_J$, and let $u,v$ be their respective projections to $M$. Then \eqref{eq:convention} gives
\begin{equation}\label{eq_expression_Omega}
 \Omega_{i,j}^B(\tilde u,\tilde v)=\epsilon_i Q(R(u,v)B_j, B_i). 
\end{equation} 

By \eqref{eq_BitoE}, it follows for $i,j\geq 2$ that 
\begin{displaymath}
 \Omega^B_{i,j}(\tilde u,\tilde v) = \epsilon_i Q(R(B_j, B_i)u, v)= \epsilon_i Q(R(E_j, E_i)u, v),
\end{displaymath}
which clearly extends smoothly to $\Psi_J$.

Similarly, by \eqref{eq_BitoE},
\begin{align*}	 
   \Omega_{1,0}^B(\tilde u,\tilde v)&=-\epsilon Q(R(u,v)E_0,E_1),\\
   \Omega_{j,0}^B(\tilde u,\tilde v)&=\epsilon_j|\sigma|^{-\frac12} Q(R(u ,v )E_0,E_j),\\
   \Omega^B_{j,1}(\tilde u,\tilde v)&=\epsilon_j(-\epsilon \tau |\sigma|^{-\frac12}Q(R(u,v)E_0,E_j)+|\sigma|^{\frac12}Q(R(u,v)E_1,E_j)).
\end{align*}
The statement follows.	
\endproof

 Recall that we denote by $\omega^P,\omega^Q$ the connection forms on the flag bundle $\Phi_J$, while the restrictions to the subbundle $\Psi_J$ were denoted $\omega^E,\omega^B$.

\begin{Lemma}\label{lemma:flat_space}
Consider $M=\R^{p,q}=\R^{n+1}$ with the standard forms $Q, P$, and fix $\psi\in\Psi_J^{\mathrm{nd}}$. The connection forms $\omega^P_{i,j}$, $\omega^Q_{i,j}$ satisfy the following relations at $\psi$.
\begin{enumerate}
 \item $\omega^Q_{1,0}=|\sigma|^{-1}\omega^P_{1,0}.$
 \item $\omega^Q_{j,0}=|\sigma|^{-\frac12}\omega^P_{j,0}, \quad j\geq 2$.
 \item $\omega^Q_{j,1}=-\epsilon\tau|\sigma|^{-\frac12}\omega^P_{j,0}+|\sigma|^{\frac12}\omega^P_{j,1},\quad j\geq 2$.
 \item $\omega^Q_{i,j}=\omega_{i,j}^P,\quad i>j\geq 2$. 
\end{enumerate}
\end{Lemma}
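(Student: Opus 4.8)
The plan is to work entirely at the fixed flag $\psi\in\Psi_W^{\mathrm{nd}}$ over a point $y\in\R^{n+1}$, and to compare the flat connection forms $\omega^P_{i,j}$ and $\omega^Q_{i,j}$ on the full flag bundle $\Phi_W$ of $\R^{n+1}$, rather than on $\Psi_W$; then restrict. The key observation is that in $\R^{p,q}$ the Levi-Civita connections of $Q$ and of the compatible Euclidean $P$ coincide as connections on $TM$ (both are the trivial flat connection, since $P$ and $Q$ are constant-coefficient forms). Hence the connection forms $\omega^P$, $\omega^Q$ are just the two different expressions of one and the same $\mathfrak{gl}(n+1)$-valued connection $1$-form relative to the two moving frames $E=(E_0,\dots,E_m)$ (the $P$-orthonormal frame attached to the flag) and $B=(B_0,\dots,B_m)$ (the $Q$-orthonormal frame attached to the same flag). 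If $B=EC$ is the change-of-frame matrix from \eqref{eq_B0toE}--\eqref{eq_BitoE}, i.e.
\begin{displaymath}
C=\diag\left(\left(\begin{matrix}|\sigma|^{-\frac12} & -\epsilon\tau|\sigma|^{-\frac12}\\ 0 & |\sigma|^{\frac12}\end{matrix}\right), I_{m-1}\right),
\end{displaymath}
then for a flat connection the matrices of connection forms transform by $\omega^Q = C^{-1}\omega^P C - (dC^{-1})C$, with no ``modulo $\theta^E$'' ambiguity: this is the flat-space specialization of the general formula \eqref{eq_comparison_connection_forms}, where the Cartan-lemma correction terms $R^i_{ja}\theta^E_a$ now vanish identically because the connection is flat (so $D_{ij}$, being a combination of curvature-free data, is actually zero, not merely a multiple of the solder forms).

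Granting this, the proof reduces to reading off the relevant entries of $C^{-1}\omega^P C$ and of $(dC^{-1})C$. These two matrices were already computed in the proof of Proposition \ref{prop_connection}: $C^{-1}\omega^E C$ has $(1,0)$-entry $|\sigma|^{-1}\omega^E_{1,0}$, $(j,0)$-entry $-|\sigma|^{\frac12}\omega^E_{0,j}\cdot(\text{sign})$... more precisely one reads the block structure displayed there with $\omega^E$ replaced by $\omega^P$, and $(dC^{-1})C$ is the diagonal-block matrix $|\sigma|^{-1}\diag\big(\big(\begin{smallmatrix}-\epsilon\tau & 2\\ 0 & \epsilon\tau\end{smallmatrix}\big),0_{n-1}\big)d\beta$. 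The point is that in flat space $d\beta$ and the off-diagonal $\theta$-type terms are not ``mod $\theta$'' — but the $d\beta$ terms only contribute to the $(0,0),(0,1),(1,1)$ block of $\omega^Q$, which is not among the four entries claimed in the lemma. For the entries that are claimed — $\omega^Q_{1,0}$, $\omega^Q_{j,0}$, $\omega^Q_{j,1}$ ($j\ge2$), $\omega^Q_{i,j}$ ($i>j\ge2$) — only the $C^{-1}\omega^P C$ part contributes, and reading those four entries off the displayed block matrix gives exactly i)--iv).

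Concretely: the $(1,0)$-entry of $C^{-1}\omega^P C$ is $|\sigma|^{-1}\omega^P_{1,0}$ (modulo a $d\beta$ term that lives in $D_{1,0}$-type corrections but cancels since the $(1,0)$ slot of $(dC^{-1})C$ is zero), giving i). The $(j,0)$-entry for $j\ge2$: $C$ acts as identity on indices $\ge2$ and the $0$-column of $C$ is scaled by $|\sigma|^{-\frac12}$ (from the $(0,0)$-entry of $C$), yielding $\omega^Q_{j,0}=|\sigma|^{-\frac12}\omega^P_{j,0}$, which is ii). The $(j,1)$-entry for $j\ge2$: the $1$-column of $C$ is the combination $-\epsilon\tau|\sigma|^{-\frac12}E_0 + |\sigma|^{\frac12}E_1$, so $\omega^Q_{j,1} = -\epsilon\tau|\sigma|^{-\frac12}\omega^P_{j,0} + |\sigma|^{\frac12}\omega^P_{j,1}$, which is iii). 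Finally for $i>j\ge2$, $C$ is the identity on these indices and $(dC^{-1})C$ has zero entries there, so $\omega^Q_{i,j}=\omega^P_{i,j}$, which is iv).

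I expect the main (minor) obstacle to be justifying cleanly that in flat space the ``$\mod\theta^E$'' in \eqref{eq_comparison_connection_forms} can be dropped for the entries in question — i.e. that the symmetric correction matrices $R^i$ from Cartan's lemma vanish on the relevant slots. The cleanest route is the one above: observe directly that both $\omega^P$ and $\omega^Q$ are the connection forms of the \emph{same} flat connection $\nabla$ on $T\R^{n+1}$ (since Euclidean and pseudo-Euclidean flat connections coincide, as already noted in the text before Lemma \ref{lem:nondegenerate_hypersurface}), expressed in the frames $E$ and $B=EC$ respectively; then the exact transformation law $\omega^Q = C^{-1}\omega^P C + C^{-1}dC$ holds identically (as matrices of $1$-forms at $\psi$), and $C^{-1}dC = -(dC^{-1})C$, so no Cartan-lemma ambiguity ever enters. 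Expanding the four entries then finishes the proof.

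\proof
Since $M=\R^{n+1}$ carries the constant-coefficient forms $P$ and $Q$, the Levi-Civita connections of $P$ and of $Q$ agree: both are the standard flat connection $\nabla$ on $T\R^{n+1}$. Over the flag bundle $\Phi_W$, the $P$-orthonormal frame $E=(E_0,\dots,E_m)$ and the $Q$-orthonormal frame $B=(B_0,\dots,B_m)$ attached to a given flag are related by $B=EC$, where by \eqref{eq_B0toE}--\eqref{eq_BitoE}
\begin{displaymath}
C=\diag\left(\left(\begin{matrix}|\sigma|^{-\frac12} & -\epsilon\tau|\sigma|^{-\frac12}\\ 0 & |\sigma|^{\frac12}\end{matrix}\right), I_{m-1}\right).
\end{displaymath}
The matrix of connection forms of the single connection $\nabla$ transforms under this change of frame exactly by
\begin{displaymath}
\omega^Q = C^{-1}\omega^P C + C^{-1}\,dC = C^{-1}\omega^P C - (dC^{-1})\,C,
\end{displaymath}
with no ``modulo $\theta^E$'' ambiguity, since we are comparing two frame expressions of the same connection $1$-form rather than solving the structure equations; equivalently, in \eqref{eq_comparison_connection_forms} the Cartan-lemma correction terms vanish because $\nabla$ is flat. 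Note $(dC^{-1})C$ is supported in the $(0,0),(0,1),(1,1)$ block only:
\begin{displaymath}
(dC^{-1})C = |\sigma|^{-1}\diag\left(\left(\begin{matrix}-\epsilon\tau & 2\\ 0 & \epsilon\tau\end{matrix}\right),0_{n-1}\right)d\beta.
\end{displaymath}
Hence for every entry outside that block, $\omega^Q$ equals the corresponding entry of $C^{-1}\omega^P C$.

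It remains to read off the four entries claimed. Write $\omega^Q_{i,j} = (C^{-1})_{i,a}\,\omega^P_{a,b}\,C_{b,j}$.

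i) For $(i,j)=(1,0)$: $C_{b,0}$ is nonzero only for $b=0$, with $C_{0,0}=|\sigma|^{-\frac12}$; and $(C^{-1})_{1,a}$ is nonzero only for $a=1$, with $(C^{-1})_{1,1}=|\sigma|^{-\frac12}$. Thus $\omega^Q_{1,0} = |\sigma|^{-\frac12}\omega^P_{1,0}\,|\sigma|^{-\frac12} = |\sigma|^{-1}\omega^P_{1,0}$.

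ii) For $(i,j)=(j,0)$ with $j\ge 2$: $(C^{-1})_{j,a}=\delta_{j,a}$ and $C_{b,0}=\delta_{b,0}|\sigma|^{-\frac12}$, so $\omega^Q_{j,0} = |\sigma|^{-\frac12}\omega^P_{j,0}$.

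iii) For $(i,j)=(j,1)$ with $j\ge 2$: $(C^{-1})_{j,a}=\delta_{j,a}$, while $C_{b,1}$ is nonzero for $b=0$ (value $-\epsilon\tau|\sigma|^{-\frac12}$) and $b=1$ (value $|\sigma|^{\frac12}$). Hence $\omega^Q_{j,1} = -\epsilon\tau|\sigma|^{-\frac12}\omega^P_{j,0} + |\sigma|^{\frac12}\omega^P_{j,1}$.

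iv) For $(i,j)$ with $i>j\ge 2$: $(C^{-1})_{i,a}=\delta_{i,a}$ and $C_{b,j}=\delta_{b,j}$, so $\omega^Q_{i,j}=\omega^P_{i,j}$.

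Finally, restricting these identities from $\Phi_W$ to the subbundle $\Psi_W^{\mathrm{nd}}$ and its frames replaces $\omega^P,\omega^Q$ by $\omega^E,\omega^B$; evaluating at the fixed flag $\psi\in\Psi_W^{\mathrm{nd}}$ gives exactly the four asserted relations.
\endproof
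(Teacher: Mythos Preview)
Your approach via the exact change-of-frame formula is sound in principle, but there is a genuine gap in the justification. You assert that on $\Phi_W$ the frames are related by $B=EC$ with the specific block-diagonal matrix $C$ from \eqref{eq_B0toE}--\eqref{eq_BitoE}. This is false: those equations hold only on the subbundle $\Psi_W$, where the flag satisfies the extra constraints $L_1=L_0\oplus SL_0$ and $L_j\cap V_\pm$ have prescribed dimensions. On all of $\Phi_W^{\mathrm{nd}}$ one has $B=E\tilde C$ for some upper-triangular matrix $\tilde C$ (both are Gram-Schmidt frames adapted to the same flag, so $B_j\in\mathrm{span}(E_0,\ldots,E_j)$, as the paper notes in its proof of iv)), and at $\psi$ one has $\tilde C|_\psi=C|_\psi$, but $d\tilde C|_\psi\neq dC|_\psi$ in general. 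Since the lemma asserts identities in $T_\psi^*\Phi_W$ (and is applied in Lemma~\ref{lemma_gauss_eqs} along $T_\psi\Phi_M$, which is not contained in $T_\psi\Psi_W$), you cannot simply use the $\Psi_W$-expression for $C$ and differentiate.

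The fix is short: because $\tilde C$ is upper-triangular on $\Phi_W$, both $\tilde C^{-1}$ and $d\tilde C$ are upper-triangular, hence $(\tilde C^{-1}d\tilde C)_{i,j}=0$ whenever $i>j$. All four entries in the lemma are strictly below the diagonal, so at $\psi$ one gets $\omega^Q_{i,j}=(\tilde C^{-1}\omega^P\tilde C)_{i,j}=(C^{-1}\omega^P C)_{i,j}$ there, and your explicit reading of those entries then finishes the proof. (Your side remark that the Cartan-lemma corrections in \eqref{eq_comparison_connection_forms} ``vanish because $\nabla$ is flat'' is also not the right reason: those corrections encode the difference between the $P$- and $Q$-Levi-Civita connections, which vanishes in flat space because the two connections coincide, not because of zero curvature per se.)

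By comparison, the paper's proof avoids the transition matrix altogether: it computes $\omega^Q_{i,j}=\epsilon_iQ(B_i,\nabla B_j)$ directly, writes $B_j=\sum_{k\le j}c_kE_k$ on $\Phi_W$, and uses that at $\psi\in\Psi_W$ one has $Q(E_i,E_k)=\epsilon_i\delta_{ik}$ for $i\ge 2$, which kills the Leibniz terms $dc_k\cdot Q(E_i,E_k)$. This is exactly the pointwise manifestation of the upper-triangularity argument above.
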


\proof
Let us prove iii) and iv), the other relations being similar. Recall that $\nabla E_i=\omega_{k,i}^P E_k$ while $\nabla B_i=\omega_{k,i}^Q B_k$. For $i\geq 2$, using \eqref{eq_BitoE} we get
\begin{align*}
\omega_{i,1}^Q &=\epsilon_i Q(E_i,\nabla(-\epsilon\tau|\sigma|^{-\frac12}E_0+|\sigma|^{\frac12}E_1 ))\\
 & =\epsilon_i (-\epsilon\tau|\sigma|^{-\frac12}Q(E_i,\nabla E_0)+|\sigma|^{\frac12}Q(E_i,\nabla E_1) )\\
 & =-\epsilon\tau |\sigma|^{-\frac12} \omega_{i,0}^P + |\sigma|^{\frac12}\omega_{i,1}^P,
\end{align*}
which is iii). To show iv), recall that $B_j\in \mathrm{span}(E_0,\ldots, E_j)$ on $\Phi_J$ and $B_j=E_j$ at $\psi$. Therefore, for $i>j\geq 2$,
\begin{displaymath}
 \omega_{i,j}^Q =\epsilon_i Q(E_i,\nabla B_j)=\epsilon_i Q(E_i,  \nabla E_j)=\omega_{i,j}^P.\qedhere
\end{displaymath}

\begin{Lemma}\label{lemma_gauss_eqs}
Let $M^{p',q'}\subset \R^{p,q}$ be a pseudo-Riemannian submanifold. Let $\Phi_M \subset \Phi_J$ be the subset consisting of flags  $\{L_i\}_{i=0}^n$ of $\R^{p,q}$ such that there exists some $x\in M$ with $L_m=T_xM$. For $\psi\in \Phi_M \cap \Psi_J$ and $2\leq  i,j\leq m$, the following relations hold at $T_\psi\Phi_M$  
\begin{align*}
\tilde\Omega_{0,1}^{Q} &=-\sum_{r=m+1}^{n} \epsilon_r \omega_{0,r}^{P}\wedge \omega_{1,r}^{P},\\
\tilde\Omega_{0,j}^{Q}&=\epsilon\epsilon_j|\sigma|^{-\frac12}\sum_{r=m+1}^{n} \epsilon_r \omega_{0,r}^{P}\wedge \omega_{j,r}^{P},\\
\tilde\Omega_{1,j}^{Q}&=-\epsilon_j|\sigma|^{-\frac12} \sum_{r=m+1}^{n} \epsilon_r (-\tau\omega_{0,r}^{P}+\sigma\omega_{1,r}^{P})\wedge \omega_{j,r}^{P},\\
\widetilde \Omega_{i,j}^{Q}&=\epsilon_i\epsilon_j \sum_{r=m+1}^{n} \epsilon_r \omega_{i,r}^{P}\wedge\omega_{j,r}^{P}.
\end{align*} 
\end{Lemma}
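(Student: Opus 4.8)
\textbf{Proof proposal for Lemma \ref{lemma_gauss_eqs}.}

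The plan is to compute the curvature forms $\tilde\Omega_{i,j}^Q$ on the flag subbundle $\Phi_M$ by means of the Gauss equation, exploiting the flatness of the ambient $\R^{p,q}$. First I would recall that since $\R^{p,q}$ is flat, its connection form $\omega^P$ is flat, so the structure equation $d\tilde\omega_{i,j}^Q = -\epsilon_a\tilde\omega_{i,a}^Q\wedge\tilde\omega_{a,j}^Q + \tilde\Omega_{i,j}^Q$ for the intrinsic Levi-Civita connection of $(M,Q)$, combined with the analogous equation for the ambient flat connection restricted to $T\Phi_M$, yields the standard Gauss equation: the intrinsic curvature $\tilde\Omega^Q_{i,j}$ (for indices $i,j$ tangent to $M$) equals a sum of wedge products of second fundamental form components, which in the frame language are exactly the connection forms $\omega^P_{a,r}$ with $a$ a tangent index and $r$ a normal index (i.e. $m+1\le r\le n$). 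Concretely, on $T_\psi\Phi_M$ the tangential part of the ambient covariant derivative of $B_j$ gives the intrinsic $\omega^Q$, and the normal part gives the second fundamental form; taking the exterior derivative and projecting to $T\Phi_M$ produces $\tilde\Omega^Q_{i,j} = \epsilon_i\epsilon_j\sum_{r=m+1}^n \epsilon_r\,\omega^P_{i,r}\wedge\omega^P_{j,r}$ for $i,j\ge 2$.

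The next step is to pass from the $Q$-adapted curvature forms to expressions in the $P$-adapted connection forms using the relations between $\omega^Q$ and $\omega^P$ already established. The key inputs are Lemma \ref{lemma:flat_space}, which gives $\omega^Q_{j,0}=|\sigma|^{-\frac12}\omega^P_{j,0}$, $\omega^Q_{j,1}=-\epsilon\tau|\sigma|^{-\frac12}\omega^P_{j,0}+|\sigma|^{\frac12}\omega^P_{j,1}$ and $\omega^Q_{i,j}=\omega^P_{i,j}$ for $i,j\ge 2$ — but applied now with the normal index $r$ in place of the ``tangent'' index, since in the flag bundle $\Phi_M$ of the submanifold, $B_r = E_r$ for $r\ge m+1$ just as $B_j=E_j$ for $j\ge 2$. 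Thus $\omega^Q_{0,r}$, $\omega^Q_{1,r}$, $\omega^Q_{j,r}$ relate to $\omega^P_{0,r}$, $\omega^P_{1,r}$, $\omega^P_{j,r}$ by exactly the formulas of Lemma \ref{lemma:flat_space} with the roles of $\{0,1,j\}$ unchanged and the second index replaced by $r$. Substituting these into the Gauss equation $\tilde\Omega^Q_{0,1} = -\sum_r\epsilon_r\,\omega^Q_{0,r}\wedge\tilde\omega^Q_{1,r}$ (and its analogues, remembering $\tilde\omega_{a,b}=\epsilon_b\omega_{a,b}$, hence the $\epsilon_i\epsilon_j$ prefactors) and collecting the $|\sigma|^{\pm\frac12}$ and $\tau,\sigma$ factors yields precisely the four displayed identities. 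For instance, $\tilde\Omega^Q_{1,j}$ picks up $\omega^Q_{1,r} = -\epsilon\tau|\sigma|^{-\frac12}\omega^P_{0,r}+|\sigma|^{\frac12}\omega^P_{1,r}$ wedged against $\omega^Q_{j,r}=|\sigma|^{-\frac12}\omega^P_{j,r}$, and multiplying by the appropriate signs gives $-\epsilon_j|\sigma|^{-\frac12}\sum_r\epsilon_r(-\tau\omega^P_{0,r}+\sigma\omega^P_{1,r})\wedge\omega^P_{j,r}$ after absorbing an $\epsilon$ via $\epsilon^2=1$ and $\sigma=\epsilon|\sigma|$.

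I would organize the write-up as: (1) state the Gauss equation for $(M,Q)\subset\R^{p,q}$ in the frame bundle, deriving it from the vanishing of the ambient $P$-curvature together with the splitting $T_\psi\R^{p,q}$-frame $=$ tangent frame $\oplus$ normal frame along $\Phi_M$; (2) record that on $\Phi_M\cap\Psi_W$ the biorthonormal change-of-frame formulas \eqref{eq_B0toE}--\eqref{eq_BitoE} and Lemma \ref{lemma:flat_space} apply verbatim to the normal directions $r\ge m+1$; (3) substitute and simplify. The main obstacle I anticipate is purely bookkeeping: getting every $\epsilon_i,\epsilon_j,\epsilon_r,\epsilon$ and every power of $|\sigma|$, $\tau$, $\sigma$ in the right place, in particular keeping straight the tilde-vs-untilde conventions and the fact that $\sigma = \epsilon|\sigma|$ lets one trade an overall $\epsilon$ for a $\sigma$ in the $\tilde\Omega^Q_{0,j}$ and $\tilde\Omega^Q_{1,j}$ formulas. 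There is no conceptual difficulty beyond the Gauss equation itself; the only subtlety worth a sentence is why the formulas of Lemma \ref{lemma:flat_space}, proved there for the index pair involving a ``$j\ge 2$'' tangent direction, transfer unchanged to a normal index $r$ — namely, both rest only on $B_a = E_a$ at $\psi$ for $a\notin\{0,1\}$ and on the flatness of $\R^{p,q}$, both of which hold along $\Phi_M$ for normal as well as tangent indices.
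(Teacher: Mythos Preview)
Your approach is correct and essentially identical to the paper's: derive the Gauss equation $\tilde\Omega^Q_{i,j} = \epsilon_i\epsilon_j\sum_{r=m+1}^n\epsilon_r\,\omega^Q_{r,i}\wedge\omega^Q_{r,j}$ from the flatness of $\R^{p,q}$ and then substitute the relations of Lemma \ref{lemma:flat_space}. Two small remarks: first, Lemma \ref{lemma:flat_space} is stated on the frame bundle of the ambient $\R^{p,q}$ for arbitrary indices $j\ge2$ (and $i>j\ge2$), so the normal indices $r\ge m+1$ are already covered and no separate ``transfer'' argument is needed; second, the $B$-frame and hence $\omega^Q,\Omega^Q$ are only defined on $\Psi_W^{\mathrm{nd}}$, so to cover all $\psi\in\Phi_M\cap\Psi_W$ as stated the paper adds a one-line continuity step via Lemma \ref{lemma_curvature_extensions}, which you should include.
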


\begin{proof} 
Suppose first  $\psi \in \Phi_M \cap \Psi_J^{\mathrm{nd}}$. By the structure equations on $M$ and $\R^{p,q}$,
 \begin{align*}
  d\tilde\omega_{i,j}^{Q}&= \sum_{r=0}^m \epsilon_r \tilde\omega_{i,r}^{Q}\wedge\tilde\omega_{j,r}^{Q}+\tilde \Omega_{i,j}^{Q}= \sum_{r=0}^n \epsilon_r \tilde\omega_{i,r}^{Q}\wedge\tilde\omega_{j,r}^{Q}
 \end{align*}
 for $0\leq i,j\leq m$, which yields
 \begin{equation}\label{eq:gauss_Q}
  \tilde \Omega_{i,j}^{Q} =\epsilon_i\epsilon_j\sum_{r=m+1}^n  \epsilon_r \omega_{r,i}^{Q}\wedge\omega_{r,j}^{Q}.
 \end{equation}
The stated equations follow using Lemma \ref{lemma:flat_space}. For $\psi\notin \Psi_J^{\mathrm{nd}}$ the equations follow by continuity using Lemma \ref{lemma_curvature_extensions}
\end{proof}

\subsubsection{Globally defined forms}\label{sub:globablly_defined_forms}

Assuming $p,q>0$, we are now able to patch the forms $\phi_{k,r}^\pm\in \Omega(S^\pm M)$ from Lemma \ref{lemma_def_phi} across the light cone. 
Let $\phi_{k,r}\in\Omega(\mathbb P_+(TM)\setminus \LC_M)$ be the smooth form that coincides with $\phi_{k,r}^+, \phi_{k,r}^-$ on $S^+M,S^-M$ respectively, following the identification $S^+M\cup S^-M=\mathbb P_+(TM)\setminus \LC_M$ .

\begin{Proposition}\label{prop:phi_degree}
The form $\phi_{k,r}\in\Omega(\mathbb P_+(TM)\setminus \LC_M)$ is homogeneous w.r.t. $\sigma$, with $\deg_\sigma\phi_{k,r}=\frac {m-k-1}{2}$.
\end{Proposition}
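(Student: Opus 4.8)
The plan is to pull the assertion back to the almost bi-orthonormal frame bundle $\Psi_W$, where $\phi_{k,r}$ is assembled out of the forms $\theta^B_i,\tilde\omega^B_{i,0},\tilde\Omega^B_{i,j}$ whose $\sigma$-degrees were determined in Propositions \ref{prop_solder}, \ref{prop_connection} and Lemma \ref{lemma_curvature_extensions}, and then to count $\sigma$-degrees monomial by monomial in \eqref{eq_def_phi}. First I would localize near the light cone and lift. Let $U\subset\mathbb P_+(TM)$ be the open set of oriented lines $L_0$ with $L_0\not\subset V_+\cup V_-$; a null line is split by $V_+\oplus V_-$, hence lies in $U$, so $\LC_M\subset U$, and by Lemma \ref{lem:ignore_outskirts} it suffices to prove $\phi_{k,r}\in\mathcal H_{(m-k-1)/2}(U,\sigma)$. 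Fix a partition $W=W_+\cupdot W_-$ of $\{2,\dots,m\}$ with $\#W_+=p-1$, $\#W_-=q-1$. For any $L_0\in U$ the plane $L_1=L_0\oplus SL_0$ is $2$-dimensional and $(V_+\oplus V_-)$-split, so $L_0$ extends to a flag in $\Psi_W$; hence the flag-bundle projection $\pi\colon\Psi_W\to U$ is a surjective submersion, with $\pi^{-1}(\LC_M)=\Psi_W\setminus\Psi_W^{\mathrm{nd}}$ and $\pi^*\sigma=Q(E_0)$ (which has no critical zeros, $M$ being LC-regular by Proposition \ref{prop_lc_regularity_pseudo_riemann}). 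By Lemma \ref{lem:sigma_pull_back} the claim reduces to $\pi^*\phi_{k,r}\in\mathcal H_{(m-k-1)/2}(\Psi_W,\pi^*\sigma)$.

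On $\Psi_W^{\mathrm{nd}}$ the form $\pi^*\phi_{k,r}$ is given by formula \eqref{eq_def_phi} with $\theta_i,\tilde\omega_{i,0},\tilde\Omega_{i,j}$ replaced by $\theta^B_i,\tilde\omega^B_{i,0},\tilde\Omega^B_{i,j}$, since $\phi_{k,r}^{\pm}$ descends from $F_U^{\pm}$ and $\Psi_W^{\pm}\hookrightarrow\Phi_W^{\pm}\cong F_U^{\pm}$ pulls the $Q$-solder, connection and curvature forms back to $\theta^B,\omega^B,\Omega^B$. Recalling that multiplying a $\sigma$-homogeneous form by $\epsilon_0=\operatorname{sign}\sigma$ or by $\epsilon_1=-\operatorname{sign}\sigma$ shifts its $\sigma$-degree by $1$, while multiplying by a constant $\epsilon_j$ ($j\ge2$) leaves it unchanged, Propositions \ref{prop_solder}, \ref{prop_connection} and Lemma \ref{lemma_curvature_extensions} give, as elements of $\mathcal H_\bullet(\Psi_W,\pi^*\sigma)$,
\[
\deg_\sigma\theta^B_1=\tfrac32,\quad \deg_\sigma\theta^B_j=0,\quad \deg_\sigma\tilde\omega^B_{1,0}=0,\quad \deg_\sigma\tilde\omega^B_{j,0}=\tfrac12,\quad \deg_\sigma\tilde\Omega^B_{1,j}=\deg_\sigma\tilde\Omega^B_{j,1}=\tfrac32,\quad \deg_\sigma\tilde\Omega^B_{i,j}=0
\]
for all $i,j\ge2$ (here $\tilde\Omega^B_{1,j}=\pm\operatorname{sign}(\sigma)\Omega^B_{j,1}$).

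Then I would run the degree count. In a monomial of \eqref{eq_def_phi} indexed by a permutation of $\{1,\dots,m\}$, the index $1$ occurs in exactly one factor. Take as baseline the value in which each of the $r$ curvature and $k-2r$ solder factors contributes $0$ and each of the $m-k$ connection factors contributes $\tfrac12$, namely $\tfrac{m-k}2$. If $1$ sits in a curvature or in a solder factor, that factor contributes $\tfrac32$ rather than $0$, so the total $\sigma$-degree is $\tfrac{m-k+3}2$; if $1$ sits in a connection factor, that factor contributes $0$ rather than $\tfrac12$, so the total is $\tfrac{m-k-1}2$. Since $\tfrac{m-k+3}2\equiv\tfrac{m-k-1}2\pmod 2$, every monomial lies in $\mathcal H_{(m-k-1)/2}(\Psi_W,\pi^*\sigma)$, and hence so does their sum $\pi^*\phi_{k,r}$ by Proposition \ref{prop:sigma}. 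Lemmas \ref{lem:sigma_pull_back} and \ref{lem:ignore_outskirts} then conclude that $\phi_{k,r}\in\mathcal H_{(m-k-1)/2}(\mathbb P_+(TM),\sigma)$, i.e. $\deg_\sigma\phi_{k,r}=\tfrac{m-k-1}2$.

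The only genuinely delicate point is that this count closes up solely because of the sign factors $\epsilon_0,\epsilon_1=\pm\operatorname{sign}\sigma$ hidden inside the tilded forms: without them the three placements of the index $1$ would produce $\sigma$-degrees differing by $\pm1$ instead of $\pm2$, so that $\phi_{k,r}$ would fail to be $\sigma$-homogeneous at all. Everything else — the covering of a neighbourhood of $\LC_M$ by the image of a single suitably chosen $\Psi_W$ (which is exactly where the identity $L_1=L_0\oplus SL_0$ and its splitness are used), and the identification of $\pi^*\phi_{k,r}$ with \eqref{eq_def_phi} written in the $B$-frame — is routine given the preceding subsections.
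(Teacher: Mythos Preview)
Your proof is correct and follows essentially the same route as the paper: localize to the open set $U=\mathbb P_+(TM)\setminus(\mathbb P_+(V_+)\cup\mathbb P_+(V_-))$, pull back along the surjective submersion $\Psi_W\to U$, read off the $\sigma$-degrees of $\theta^B_i$, $\tilde\omega^B_{i,0}$, $\tilde\Omega^B_{i,j}$ from Propositions \ref{prop_solder}, \ref{prop_connection} and Lemma \ref{lemma_curvature_extensions}, and count monomial by monomial according to the position of the index $1$, then descend via Lemmas \ref{lem:sigma_pull_back} and \ref{lem:ignore_outskirts}. Your justification that $\Psi_W\to U$ is surjective (via $L_1=L_0\oplus SL_0$ being $(V_+\oplus V_-)$-split) is slightly more explicit than the paper's, and your closing remark about the role of the sign factors $\epsilon_0,\epsilon_1$ in making the two cases differ by $2$ rather than $1$ is a correct and useful observation.
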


\proof 
Denote $U:=\mathbb P_+(TM)\setminus (\mathbb P_+(V_+)\cup \mathbb P_+(V_-))$.  Restrict $\phi_{k,r}$ to $U$, and identify with its pull-back by $\pi_0:\Psi_J \to U$. From Proposition \ref{prop_solder}
\begin{displaymath}
\deg_\sigma\theta^B_0 =\frac12, \quad \deg_\sigma\theta^B_1 =\frac32, \quad \deg_\sigma\theta^B_i =0, i\geq 2.
\end{displaymath}
By Proposition \ref{prop_connection} and Lemma \ref{lemma_curvature_extensions}, it holds for $i,j\geq 2$ that 
\begin{align*}
 \deg_\sigma\tilde\omega_{1,0}^B  &=\deg_\sigma\omega_{1,0}^B+1=0, \qquad
\hspace{30pt}  \deg_\sigma\tilde\omega_{ i,0}^B  =\deg_\sigma\omega_{ i,0}^B+1= \frac12,\\
 \deg_\sigma \tilde{\Omega}^B_{i,1} &=\frac32,\qquad
\hspace{97pt} \deg_\sigma \tilde{\Omega}^B_{i,j} =0.
\end{align*}
Since $\tilde\omega_{i,j},\tilde\Omega_{i,j}$ are antisymmetric, these $\sigma$-degrees are independent of the order of indices. Using these values, and Proposition \ref{prop:sigma}, it is straightforward to compute the $\sigma$-degree of each term in \eqref{eq_def_phi}. Those terms containing a factor $\theta_1^B$ or a factor $\tilde\Omega_{1,i}^B$ have $\sigma$-degree $\frac32+\frac12(m-k)\equiv \frac{m-k-1}2$. The remaining terms are multiples of $\tilde\omega_{0,1}^B$ and have $\sigma$-degree $\frac{m-k-1}2$. Examining the definition of $\phi_{k,r}$, we conclude it is homogeneous w.r.t. $\pi_0^*\sigma$ of degree $\frac{m-k-1}2$ on $\Psi_J \setminus \pi_0^{-1}\LC_M$, and thus by Lemma \ref{lem:sigma_pull_back} it is homogeneous w.r.t. $\sigma$ over $U\setminus \LC_M$ of the same degree. As $\LC_M\subset U$, it follows by Lemma \ref{lem:ignore_outskirts} that $\phi_{k,r}\in\mathcal H_{\frac{m-k-1}2}(\mathbb P_+(TM),\sigma)$.
\endproof

We now use Definition \ref{def:meromorphic_families} to define certain meromorphic families of generalized forms on $\mathbb P_+(TM)$. By Propositions \ref{prop:independence_meromorphic} and \ref{prop:phi_degree}, the following are well-defined.

\begin{Definition} \label{def_phi_kri}
 For $(m-k)$ even, let $\phi_{k,r}^0,\phi_{k,r}^1 \in \Omega_{-\infty}^{m}(\mathbb P_+(TM))$ be
\begin{align*}
 \phi_{k,r}^0  =\sigma_+^s\phi_{k,r}|_{s=0},\qquad
 \phi_{k,r}^1  =(-1)^{\frac{m-k}2}\sigma_-^s\phi_{k,r}|_{s=0}.
\end{align*}
For $(m-k)$ odd, let $\phi_{k,r}^0,\phi_{k,r}^1\in\Omega_{-\infty}^{m}(\mathbb P_+(TM))$ be
\begin{align*}
 \phi_{k,r}^0  =\epsilon^{\frac{m-1-k}2}|\sigma|^s \phi_{k,r}|_{s=0},\qquad
 \phi_{k,r}^1  =\frac{\pi}2 \mathrm{Res}_{s=0}\epsilon^{\frac{m+1-k}2}|\sigma|^s \phi_{k,r}.
\end{align*}
\end{Definition}

\begin{Lemma}
The generalized forms $\phi_{k,r}^i$ are independent of the choice of $P$.
\end{Lemma}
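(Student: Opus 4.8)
The plan is to trace how the generalized forms $\phi_{k,r}^i$ depend on $P$ through the construction, and to isolate the only place where $P$ enters the \emph{definition} (as opposed to the intermediate description): the function $\sigma=Q/P$. First I would note that the smooth form $\phi_{k,r}\in\Omega(\mathbb P_+(TM)\setminus\LC_M)$ is manifestly $P$-independent: by Lemma \ref{lemma_def_phi} and Lemma \ref{lem:interior_smooth_form} the forms $\phi^\pm_{k,r}$ on $S^\pm M$ are built purely from the solder, connection and curvature forms of the Levi-Civita connection of $Q$ and are independent of the auxiliary partition $U$; hence their common extension $\phi_{k,r}$ across the light cone is intrinsic to $(M,Q)$. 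So the only potential $P$-dependence in Definition \ref{def_phi_kri} sits in the homogeneous-distribution factor applied to $\sigma$.

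Next I would compare two compatible Riemannian metrics $P_0,P_1$, with associated functions $\sigma_0=Q/P_0$, $\sigma_1=Q/P_1$ on $\mathbb P_+(TM)$. The key observation is that $\sigma_0$ and $\sigma_1$ have the \emph{same zero set} $\LC_M$, the \emph{same sign} everywhere (namely $\epsilon$, the sign of $Q$ on the given line), and, since $0$ is a regular value of both by Proposition \ref{prop_lc_regularity_pseudo_riemann}, one can write $\sigma_1=h\cdot\sigma_0$ for a strictly positive smooth function $h$ on $\mathbb P_+(TM)$ (define $h:=\sigma_1/\sigma_0$ away from $\LC_M$; it extends smoothly and positively across $\LC_M$ because both functions vanish to first order transversally with matching sign — this is exactly the content of having $0$ as a regular value, applied fiberwise, and can be checked locally in a chart as in the proof of Proposition \ref{prop:phi_degree}). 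Then for a $\sigma$-homogeneous form $\omega=f(\sigma_0)\omega'\in\mathcal H_\delta(\mathbb P_+(TM),\sigma_0)$ and any of the homogeneous families $\chi^s\in\{x_\pm^s,|x|^s,\sign(x)|x|^s\}$, the substitution rule gives $\chi^s(\sigma_1)=\chi^s(h\sigma_0)=h^s\,\chi^s(\sigma_0)$ as meromorphic families of generalized functions (valid for $\Re s$ large by the homogeneity of $\chi^s$, then extended); multiplying by the smooth $\sigma_0$-homogeneous datum and applying Definition \ref{def:meromorphic_families}, one obtains $\chi^s\cdot\omega$ computed with $\sigma_1$ equals $h^s$ times the same computed with $\sigma_0$. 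Since $h$ is smooth and positive, $h^s|_{s=0}=1$ and $h^s$ is analytic at $s=0$, so taking the value at $s=0$ yields equality of the two regularizations.

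Finally I would handle the residue terms (the case $(m-k)$ odd, where $\phi_{k,r}^1=\tfrac\pi2\Res_{s=0}\epsilon^{(m+1-k)/2}|\sigma|^s\phi_{k,r}$): writing the family as $h^s$ times the $\sigma_0$-family and using that $\partial_s(h^s)|_{s=0}=\log h$ is smooth, the residue at $s=0$ — which picks out the $s^{-1}$-coefficient of a simple pole — transforms as $\Res_{s=0}(h^s\,F(s)) = h^0\,\Res_{s=0}F(s) = \Res_{s=0}F(s)$, because the analytic factor $h^s$ is $1+O(s)$ and multiplying a simple pole by $1+O(s)$ leaves the residue unchanged. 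Likewise $|\sigma_1|^s|_{s=0}=1$ handles $\phi_{k,r}^0$. The same argument applies verbatim with $\sigma_\pm$, $|\sigma|$ and $\epsilon^a|\sigma|^s$ in place of the model families. Thus $\phi_{k,r}^i$ computed from $P_0$ and from $P_1$ agree. The main obstacle — really the only point requiring care — is establishing that $\sigma_1/\sigma_0$ extends to a \emph{smooth} positive function across $\LC_M$; this follows from LC-regularity (both $\sigma_j$ are submersions onto a neighborhood of $0\in\R$ near $\LC_M$ with coinciding sign), checked in a local chart straightening $\LC_M$, exactly in the spirit of the computations in Section \ref{sec_construction}. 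Once that is in hand, everything else is the formal behavior of homogeneous distributions under multiplication by a smooth positive function, which neither shifts poles nor changes residues or values at $s=0$.
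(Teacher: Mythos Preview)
Your approach is essentially the same as the paper's: both observe that $\phi_{k,r}$ is intrinsically defined from $Q$ alone, and then compare the meromorphic families for two choices $P_0,P_1$ via the positive smooth function $h=\sigma_1/\sigma_0$, concluding from $h^s|_{s=0}=1$ (and analyticity of $h^s$ at $s=0$) that the values and residues at $s=0$ coincide.

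One remark: you identify as the ``main obstacle'' the smooth positive extension of $h=\sigma_1/\sigma_0$ across $\LC_M$, invoking LC-regularity and local charts. This is unnecessary. Since $\sigma_j=Q/P_j$ on $\mathbb P_+(TM)$, one has identically $\sigma_1/\sigma_0=P_0/P_1$, which is the ratio of two positive-definite quadratic forms restricted to each line and is therefore a manifestly smooth, strictly positive function on all of $\mathbb P_+(TM)$; no regularity of the zero set is needed. The paper makes exactly this observation (writing $r(L_0)=P_1|_{L_0}/P_2|_{L_0}$) and then argues, as you do, by meromorphic continuation. Your treatment of the residue case is a correct and useful spelling-out of what the paper leaves as ``similar''.
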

\proof
We will consider the case of even $(m-k)$ and $i=0$, as all other cases can be treated similarly. 
Let $P_1$, $P_2$ be two Riemannian structures. We then get two meromorphic families that are given by $\phi_j(s)=(\sigma_j)_+^s\psi_j$, $j=1,2$, where $\psi_j=\phi_{k,r}^i$ with the corresponding $P_j$. Outside of the light cone, $\psi_1=\psi_2$ by Lemma \ref{lemma_def_phi}.  

Define $r(L_0):=\frac{\sigma_2}{\sigma_1}=\frac{P_1|_{L_0}}{P_2|_{L_0}}>0$, which is a smooth function on $\mathbb P_+(TM)$. Thus outside of the light cone, $\phi_2(s)=r(L_0)^s  \phi_1(s)$ holds at all $s$. But for large $\Re(s)$, both families are continuous, hence $\phi_2(s)=r(L_0)^s  \phi_1(s)$ holds on $\mathbb P_+(TM)$ for all large $\Re(s)$. The conclusion follows by uniqueness of meromorphic extension.
\endproof

In case $M$ has a definite metric we extend the previous definitions as follows. When $M^{p,0}$ is Riemannian, the smooth form $\phi_{k,r}^+$ is defined on the whole $\mathbb P_+(TM)$ and we put  $\phi_{k,r}^0=\phi_{k,r}^+,\phi_{k,r}^1=0$. When $M^{0,q}$ is negative definite, the form $\phi_{k,r}^-$ is smooth on $\mathbb P_+(TM)$ and we put $\phi_{k,r}^i=(-1)^{m-1-k}\chi_i^{-\frac{m-1-k}2}(-1)\phi_{k,r}^-$.

\subsection{Lipschitz-Killing curvature measures}

\begin{Definition} \label{def_lk_curvatures}\mbox{}
\\\emph{i)} The \emph{Lipschitz-Killing forms} $\kappa_k\in \Omega^{m+1}(M)$, $\lambda_k \in  \Omega^{m}_{-\infty}(\mathbb P_+(TM))$ are defined by
\begin{align*}
\kappa_k & =\begin{cases}
\frac{\i^q}{k!\left(\frac{m+1-k}2\right)!(4\pi)^{\frac{m+1-k}2}}\psi_{m+1,\frac{m+1-k}2}  & \text{if }m+1-k \text{ is even},\\
0 & \text{if }m+1-k \text{ is odd},
\end{cases}\\
\lambda_k & ={\i^q}\sum_{\nu=0}^{\left\lfloor \frac{m-k}{2}\right\rfloor}  \frac{\phi^0_{k+2\nu,\nu}-\i \phi^1_{k+2\nu,\nu}}{k!\nu!(4\pi)^{\nu}(m+1-k-2\nu)!\omega_{m+1-k-2\nu}}.
\end{align*}
	\\\emph{ii)} The \emph{Lipschitz-Killing curvature measures} are the complex valued generalized curvature measures
\begin{displaymath}
\Lambda_k:=[\kappa_k, \lambda_k] \in \mathcal{C}^{-\infty}(M).
\end{displaymath} 
Define $\widetilde{\mathcal {LK}}(M)\subset \mathcal C^{-\infty}(M)$ to be the $\C$-span of all $\mathrm{Re}\Lambda_k,\mathrm{Im}\Lambda_k$ as $k\geq 0$.
	\\\emph{iii)} The \emph{intrinsic volumes} are the complex valued generalized valuations 
\begin{displaymath}
\mu_k:=\mathrm{glob}(\Lambda_k) \in \mathcal{V}^{-\infty}(M).
\end{displaymath}
The $\C$-span of all $\mathrm{Re} \mu_k, \mathrm{Im}\mu_k$ is denoted $\mathcal{LK}(M)\subset\mathcal V^{-\infty}(M)$.
\end{Definition}

\begin{Remark}
	When $(M,g)$ is Riemannian, we have $\mathbb P_M=SM^+$ and $\phi_{k,r}^0=\phi_{k,r}^+, \phi_{k,r}^1=0$. Hence, in this case the Lipschitz-Killing curvature measures and the intrinsic volumes are real valued and coincide with the classical notions.  
\end{Remark}

\begin{Remark}\label{rm:kappas}
The form $\kappa_k$ is a smooth form of top degree on $M$, hence we may define for all relatively compact Borel sets $U\subset M$ that $\Lambda_k(M,U):=\int_U \kappa_k$. It is straightforward to check that, if $m-k+1$ is even,
	\begin{displaymath}
	\kappa_k=\frac{\i^q}{(\frac{m-k+1}2)!(8\pi)^{\frac{m-k+1}2}} \sum_{\alpha,\tau}  \sgn(\tau) R_{\alpha_1,\alpha_2}^{\alpha_{\tau_1},\alpha_{\tau_2}}\cdots R_{\alpha_{m-k},\alpha_{m-k+1}}^{\alpha_{\tau_{m-k}},\alpha_{\tau_{m-k+1}}} \vol_X,
\end{displaymath}
where the sum runs over $1\leq \alpha_1,\ldots,\alpha_{m-k+1}\leq m+1$ and $\tau\in S_{m-k+1}$, and 
$R_{i,j}^{r,s}=\epsilon_s R_{i,j,s}^r$ are the components of the curvature tensor $R$ (in a $Q$-orthonormal basis) after raising one index.
\end{Remark}

It will be convenient to consider the generalized curvature measures $C_{k,r}^i \in \mathcal{C}_k^{-\infty}(M)$, for $0\leq 2r\leq  k\leq m$, $0\leq 2s\leq m+1$ and $i\in\mathbb Z_2$, defined by 
\begin{align} \label{eq:Ckr}
&C_{k,r}^i  :=  (-1)^{i}\frac{\omega_k}{\pi^k(m+1-k)!\omega_{m+1-k}} [0,\phi_{k,r}^i],\\
&C_{m+1,s}^0 :=  \frac{\omega_{m+1}}{\pi^{m+1}} [\psi_{m+1,s},0], \qquad 
 C_{m+1,s}^1 :=0. \label{eq:C_top}
\end{align}
In this notation
\begin{equation}\label{eq:C_to_Lambda}
 \Lambda_k=\i^q \frac{\pi^k}{k!\omega_k} \sum_{\nu=0}^{\frac{m-k+1}{2}}\frac{1}{4^{\nu}}{\frac k 2+\nu\choose \nu} (C^{0}_{k+2\nu,\nu}+\i C^1_{k+2\nu,\nu}).
\end{equation}

\begin{Remark}\label{rmk:definition_of_measure_on_projective_space}
	When $M=\R^{p,q}$, $\mathbb P_+(TM)=\R^{p,q}\times\mathbb P_+(\R^{p,q})$, and since $\lambda_0\in\Omega_{-\infty}^{0,p+q-1}(\mathbb P_+(TM))$ is isometry-invariant, it can be considered as a generalized form of top degree on $\mathbb P_+(\R^{p,q})$, that is $\lambda_0\in\mathcal M^{-\infty}(\mathbb P_+(\R^{p,q}))^{\OO(p,q)}$. It is not hard to show that $\Re(\lambda_0), \Image(\lambda_0)$ span the space of all such generalized measures.
\end{Remark}

\subsection{Wave front sets}
It will be convenient to make the statements in cotangent space, while carrying out the proofs in the tangent space.
\begin{Lemma}\label{lem:fiber_is_transversal}
For pseudo-Riemannian $M$, $\WF(\phi_{k,r}^{i,M})\subset N^*(\LC^*_M)$. 
\end{Lemma}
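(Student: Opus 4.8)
The plan is to compute the wave front set of $\phi_{k,r}^{i,M}$ by tracing through the construction in Definition \ref{def_phi_kri}. Recall that each $\phi_{k,r}^{i,M}$ is obtained from the smooth form $\phi_{k,r}\in\Omega(\mathbb P_+(TM)\setminus\LC_M)$ (which is $\sigma$-homogeneous of degree $\tfrac{m-k-1}2$ by Proposition \ref{prop:phi_degree}) by multiplying with a homogeneous distribution in the variable $\sigma$ — namely one of $\sigma_+^s$, $\sigma_-^s$, $|\sigma|^s$ — and taking a value or residue at $s=0$. Writing $\phi_{k,r}=f(\sigma)\phi'$ with $\phi'\in\Omega(\mathbb P_+(TM))$ smooth and $f$ homogeneous, Definition \ref{def:meromorphic_families} gives $\phi_{k,r}^{i,M}=(\chi^s\cdot f)(\sigma)\phi'|_{s=0\text{ or Res}}$, so it is a smooth form times the pullback under $\sigma$ of a homogeneous distribution on $\R$.

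The first step is to recall the standard wave front set bound for a pullback: if $u\in C^{-\infty}(\R)$ and $\sigma:\mathbb P_+(TM)\to\R$ is a submersion at every point of $\sigma^{-1}(0)$ (which holds by LC-regularity, Proposition \ref{prop_lc_regularity_pseudo_riemann}, since $\LC_M=\sigma^{-1}(0)$ is exactly the statement that $0$ is a regular value), then $\WF(\sigma^*u)\subset\sigma^*\WF(u)\cup\sigma^*(\{\text{characteristic directions}\})$; more precisely, away from $\LC_M$ the distribution $\sigma^*u$ is smooth (since $u$ is smooth on $\R\setminus\{0\}$ and each $\chi^s\cdot f$ is smooth there), so its wave front set is contained in $\{(x,\xi): x\in\LC_M,\ \xi=\lambda\, d_x\sigma,\ \lambda\neq 0\}$, which is precisely the conormal bundle $N^*\LC_M$ of the hypersurface $\LC_M\subset\mathbb P_+(TM)$. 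The second step is to transport this statement from the tangent-space picture to the cotangent-space picture: throughout Section \ref{sec_construction}, $\mathbb P_+(TM)$ and $\mathbb P_M=\mathbb P_+(T^*M)$ are identified via $Q$, and under this identification $\LC_M$ corresponds to $\LC_M^*$ by definition; hence $N^*\LC_M$ corresponds to $N^*(\LC_M^*)$, giving $\WF(\phi_{k,r}^{i,M})\subset N^*(\LC_M^*)$ as claimed. Multiplying by a smooth form $\phi'$ does not enlarge the wave front set, and taking a residue (a coefficient in a Laurent expansion in the parameter $s$) likewise does not enlarge it, since it is a limit of the meromorphic family in a fixed space $\Omega^m_{-\infty,\Gamma}$ with $\Gamma=N^*(\LC_M^*)$.

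The only subtlety — and the step I expect to need the most care — is the definite-metric case handled at the end of Subsection \ref{sub:globablly_defined_forms}: when $M$ is Riemannian or negative definite, $\LC_M=\emptyset$, and $\phi_{k,r}$ is genuinely smooth on all of $\mathbb P_+(TM)$, so $\phi_{k,r}^{i,M}$ is smooth and $\WF=\emptyset\subset N^*(\LC_M^*)$ trivially. Thus the lemma holds in all cases. A clean write-up would simply invoke \cite[Section 8.1]{hoermander_pde1} (or \cite[Theorem 8.2.4]{hoermander_pde1} for pullback by a submersion together with the fact that a homogeneous distribution on $\R$ has wave front set contained in $\{(0,\xi):\xi\neq 0\}$), combined with Proposition \ref{prop:phi_degree} and the $Q$-identification $\LC_M\leftrightarrow\LC_M^*$.
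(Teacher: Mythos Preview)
Your proposal is correct and follows essentially the same approach as the paper's proof, which is the one-line chain $\WF(\phi_{k,r}^{i,M})=\WF(\chi_i(\sigma))\subset\sigma^*\WF(\chi_i)=\sigma^*(N^*\{0\})=N^*(\LC_M)$. You have simply spelled out more of the justification (the submersion property from LC-regularity, the effect of multiplying by a smooth form, the residue case, and the definite-signature edge case), all of which the paper leaves implicit.
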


\proof
Follows at once from eq. \eqref{eq:WF_sigma}, as $\LC_M^*=\{\sigma=0\}$.
\endproof

\begin{Corollary}\label{cor:wave_front_set_of_LK}
  For all $k,r,i$ we have $C_{k,r}^{i, M}\in \mathcal C_{\emptyset, N^*(\LC_M^*)}^{-\infty}(M)$.
\end{Corollary}

\proof
As $C_{k,r}^{i,M}$ is represented by multiples of $\psi_{m+1,r}$ and $\phi_{k,r}^{i,M}$, it remains to note that the former is smooth by Lemma \ref{lem:interior_smooth_form}. 
\endproof

\begin{Corollary}\label{cor:LK_restrictions_well_def}
Let $e:(X,g)\looparrowright M^{p,q}$ be an isometrically immersed pseudo-Riemannian, or more generally LC-regular, manifold. Then $e^*C_{k,r}^{i,M}$ is well-defined.
\end{Corollary}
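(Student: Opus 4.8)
The plan is to reduce the statement to Proposition~\ref{prop:nondegenerate_restriction} combined with Corollary~\ref{cor:wave_front_set_of_LK}. The strategy is as follows. First I would recall that $C_{k,r}^{i,M}\in\mathcal C_{\emptyset,N^*(\LC_M^*)}^{-\infty}(M)$ by Corollary~\ref{cor:wave_front_set_of_LK}: the curvature measure $C_{k,r}^{i,M}$ is represented by a pair of generalized forms, the top-degree piece being a multiple of $\psi_{m+1,r}$ (smooth by Lemma~\ref{lem:interior_smooth_form}, hence empty wave front set) and the $\mathbb P_M$-piece being a multiple of $\phi_{k,r}^{i,M}$, whose wave front set is contained in $N^*(\LC_M^*)$ by Lemma~\ref{lem:fiber_is_transversal}. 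Thus the hypothesis ``$\Phi\in\mathcal C_{\emptyset,N^*(\LC_M^*)}^{-\infty}(M)$'' of Proposition~\ref{prop:nondegenerate_restriction} is satisfied with $\Phi=C_{k,r}^{i,M}$.

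Next I would invoke the equivalence established in Section~\ref{sec:lc_regular}: by Proposition~\ref{prop:LC_regular_is_intrinsic}, the hypothesis that $(X,g)$ is LC-regular (equivalently pseudo-Riemannian, which is a special case by Proposition~\ref{prop_lc_regularity_pseudo_riemann}) is precisely the condition that $e(X)$ is LC-transversal in $M$, i.e.\ $N^QX\pitchfork\LC_M$, equivalently $N^*X\pitchfork\LC_M^*$ in $\mathbb P_M$. This is exactly the transversality input needed to apply Proposition~\ref{prop:nondegenerate_restriction}. Since that proposition is stated for immersions $e:X\looparrowright M$, no extra work is needed to pass from embeddings to immersions. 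Applying it with $\Phi=C_{k,r}^{i,M}$ gives that $e^*C_{k,r}^{i,M}\in\mathcal C^{-\infty}(X)$ is well-defined, which is the assertion.

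There is essentially no obstacle here: the corollary is a direct concatenation of the two previous results, and the only thing to verify is that the wave front hypotheses match up, which they do on the nose. If one wanted to be careful, the single point worth spelling out is that the three transversality conditions \eqref{eq_wf_conditions_pull_back} with $\Lambda=\emptyset$ and $\Gamma=N^*(\LC_M^*)$ reduce — as shown in the proof of Proposition~\ref{prop:nondegenerate_restriction} — to (i) the trivial first condition, (ii) transversality of $\LC_M^*$ with $X\times_M\mathbb P_M$, automatic because $\LC_M^*$ is a fiber subbundle, and (iii) $N^*(\LC_M^*)\cap N^*(N^*X)=\emptyset$, equivalent to $N^*X\pitchfork\LC_M^*$, which is the LC-transversality guaranteed by LC-regularity via Proposition~\ref{prop:LC_regular_is_intrinsic}. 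I would simply cite those results rather than re-derive anything.

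\proof
By Corollary~\ref{cor:wave_front_set_of_LK}, $C_{k,r}^{i,M}\in\mathcal C_{\emptyset,N^*(\LC_M^*)}^{-\infty}(M)$. By Proposition~\ref{prop:LC_regular_is_intrinsic} (and Proposition~\ref{prop_lc_regularity_pseudo_riemann} in the pseudo-Riemannian case), the immersion $e:(X,g)\looparrowright M^{p,q}$ has LC-transversal image, i.e.\ $N^QX\pitchfork\LC_M$. Proposition~\ref{prop:nondegenerate_restriction} then applies and yields that $e^*C_{k,r}^{i,M}\in\mathcal C^{-\infty}(X)$ is well-defined.
\endproof
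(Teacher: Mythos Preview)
Your proof is correct and follows essentially the same route as the paper, which simply cites Corollary~\ref{cor:wave_front_set_of_LK} and Proposition~\ref{prop:nondegenerate_restriction}. Your additional citation of Proposition~\ref{prop:LC_regular_is_intrinsic} is not strictly needed, since Proposition~\ref{prop:nondegenerate_restriction} is already stated for LC-regular immersed submanifolds, but it does no harm.
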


\proof
This follows from Corollary \ref{cor:wave_front_set_of_LK} and Proposition \ref{prop:nondegenerate_restriction}.
\endproof

We need more information on the wave front set for the restriction computation.
\begin{Lemma}\label{lem:can_restrict_fiberwise}
Fix a pair of pseudo-Riemannian manifolds $M\subset W$, and $x\in M$.
\\\emph{i)} Let $\theta:N^*M\hookrightarrow \mathbb P_W$ be the inclusion. Then $\WF(\theta^*\phi_{k,r}^{i,W})\cap N^*(N^*_xM)=\emptyset$.
\\\emph{ii)}  Let $Z$ be the oriented blow-up of $\mathbb P_W\times_W M$ along $N^*M$, and $\alpha:Z\to \mathbb P_W$ is the composition of the blow-up map $b:Z\to \mathbb P_W\times_W M$ with the inclusion $j:\mathbb P_W\times_W M\hookrightarrow\mathbb P_W$ as in section \ref{sec:restriction_of_curvature_measures}. Let $\pi:Z\to M$ be the obvious projection. 
Then $\WF(\alpha^*\phi_{k,r}^{i,W})\cap N^*(\pi^{-1}x)=\emptyset$.

\end{Lemma}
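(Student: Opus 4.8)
The plan is to reduce both statements to the single wave front computation already available, namely $\WF(\phi_{k,r}^{i,W})\subset N^*(\LC_W^*)$ from Lemma \ref{lem:fiber_is_transversal}, combined with the standard functoriality of wave front sets under pull-back by a submersion (the pull-back of a generalized form along a submersion $f$ has $\WF$ contained in $f^*\WF$, and in particular is well-defined whenever no covector in the target $\WF$ annihilates the fibers of $f$; see \cite[Theorem 8.2.4]{hoermander_pde1}). Since $\phi_{k,r}^{i,W}$, up to a smooth factor, is of the form $\chi_i(\sigma_W)$ for the defining function $\sigma_W$ of $\LC_W$, it suffices to understand how $\sigma_W$ pulls back under $\theta$ and under $\alpha$, and then check a transversality statement against $N^*(N^*_xM)$, resp. $N^*(\pi^{-1}x)$.

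For part (i): the map $\theta:N^*M\hookrightarrow\mathbb P_W$ is, after identifying $\mathbb P_W$ with $\mathbb P_+(TW)$ via $Q_W$, the inclusion of the $Q_W$-normal bundle $N^{Q_W}M$. The composite $\sigma_W\circ\theta$ is a smooth function on $N^{Q_W}M$ whose zero set is exactly $\LC_W^*\cap N^*M = N^{Q_W}M \cap \LC_W$. Because $M\subset W$ is a pseudo-Riemannian submanifold, $(M,g)$ is LC-regular (Proposition \ref{prop_lc_regularity_pseudo_riemann}), hence by Proposition \ref{prop:LC_regular_is_intrinsic} its image is LC-transversal in $W$, i.e. $N^{Q_W}M \pitchfork \LC_W$ in $\mathbb P_+(TW)$; equivalently, $0$ is a regular value of $Q_W$ restricted to $N^{Q_W}M\setminus\underline 0$, so $\sigma_W\circ\theta$ has $0$ as a regular value. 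Thus $\theta^*\chi_i(\sigma_W) = \chi_i(\sigma_W\circ\theta)$ is a well-defined generalized form whose wave front set lies in $N^*(\{\sigma_W\circ\theta=0\}) = N^*(\LC_W^*\cap N^*M)$, which sits inside the conormal bundle of a smooth hypersurface of $N^*M$ transverse to each fiber $N^*_xM$. Transversality to the fiber gives precisely $\WF(\theta^*\phi_{k,r}^{i,W})\cap N^*(N^*_xM)=\emptyset$: a nonzero conormal covector to $\LC_W^*\cap N^*M$ cannot simultaneously annihilate $T(N^*_xM)$, since $\LC_W^*\cap N^*M$ and $N^*_xM$ meet transversally inside $N^*M$ (the former is a hypersurface surjecting submersively onto $M$). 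The smooth factor multiplying $\chi_i(\sigma_W)$ does not enlarge the wave front set, so this finishes (i).

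For part (ii): the map $\alpha:Z\to\mathbb P_W$ factors as $j\circ b$ where $b:Z\to\mathbb P_W\times_W M$ is the oriented blow-up along $N^*M$ and $j$ is the inclusion. Here $\sigma_W\circ\alpha$ is a smooth function on $Z$; away from the exceptional divisor $\alpha$ agrees with the projection $\mathbb P_W\times_W M\to\mathbb P_W$ followed by restriction, and near the exceptional divisor the blow-up is a submersion onto a neighbourhood in $\mathbb P_W\times_W M$. The key point is that the composition $\pi:Z\to M$ and the function $\sigma_W\circ\alpha$ behave well relative to each other: one should check that $\sigma_W\circ\alpha$ has $0$ as a regular value when restricted to each fiber $\pi^{-1}x$, or rather that its zero set meets $\pi^{-1}x$ transversally. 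This again follows from LC-transversality of $M$ in $W$: the fiber $\pi^{-1}x$ is (a blow-up of) $\mathbb P_+(T_xW)$, on which $\sigma_W$ restricts with $\LC_{T_xW}$ as a regular zero set, and the blow-up along $N^*_xM$ — a submanifold of that fiber — does not destroy this, since $N^{Q_W}_xM \pitchfork \LC_W$ at $x$ means the blow-up center meets the hypersurface transversally, so the proper transform stays a smooth hypersurface of $\pi^{-1}x$. Hence $\alpha^*\chi_i(\sigma_W)$ is well-defined with $\WF\subset N^*(\{\sigma_W\circ\alpha = 0\})$, and this zero set is a hypersurface of $Z$ transverse to every fiber $\pi^{-1}x$, yielding $\WF(\alpha^*\phi_{k,r}^{i,W})\cap N^*(\pi^{-1}x)=\emptyset$.

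The main obstacle I anticipate is the careful bookkeeping near the exceptional divisor of the blow-up $Z$ in part (ii): one must verify that the proper transform of $\LC_W^*\cap(\mathbb P_W\times_W M)$ under $b$ remains a smooth hypersurface transverse to the fibers of $\pi$, which is exactly where LC-transversality of $M$ in $W$ (equivalently, $N^{Q_W}M\pitchfork\LC_W$) is used in an essential way, and where the naive pull-back estimate $\WF(\alpha^*\omega)\subset\alpha^*\WF(\omega)$ must be supplemented by the observation that $\alpha$ is not a submersion but a composition of a blow-down with an embedding. I would handle this by working locally in coordinates adapted to $N^*M$, writing the blow-up map explicitly, and noting that transversality of the center to $\LC_W^*$ is precisely the condition that makes the pulled-back defining function non-degenerate on the exceptional fiber. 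Parts (i) is comparatively routine once Proposition \ref{prop:LC_regular_is_intrinsic} is invoked.
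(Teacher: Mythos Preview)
Your proposal is correct and follows essentially the same route as the paper: both arguments reduce to the fact that each normal space $N^Q_xM$ is a non-degenerate subspace of $T_xW$, so that $\mathbb P_+(N^Q_xM)$ meets the fiberwise light cone $\LC_{T_xW}$ transversally; this yields the required fiber transversality in (i) and, via the observation that $d_zb(T_z(\pi^{-1}x))\supset T_{b(z)}\mathbb P_+(N^Q_xM)$ at exceptional points, also in (ii). Two small remarks: first, your detour through Propositions~\ref{prop_lc_regularity_pseudo_riemann} and~\ref{prop:LC_regular_is_intrinsic} to obtain LC-transversality is unnecessary---the paper simply uses that $M$ pseudo-Riemannian forces $N^Q_xM$ non-degenerate, which gives the fiberwise transversality directly without invoking the global equivalence of LC-regularity and LC-transversality; second, your phrasing ``the proper transform stays a smooth hypersurface'' in (ii) is slightly loose---what you need (and what your coordinate computation would show) is that $\sigma_W\circ\alpha$ has $0$ as a regular value on each fiber $\pi^{-1}x$, which the paper obtains by the tangent-space argument $d_zb(T_z(\pi^{-1}x))+T_{b(z)}(\LC_W|_M)=T_{b(z)}(\mathbb P_W|_M)$.
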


\proof \noindent 
i) We have $\WF(\theta^*\phi_{k,r}^{i,W})\subset \theta^* \WF(\phi_{k,r}^{i,W})\subset N^*(\LC_W\cap N^Q M)$. 
	Note that $\LC_W\cap N^Q M$ is a hypersurface in $N^QM$, while $N_x^QM\subset T_xW$ is a non-degenerate subspace. It follows that $N_x^QM\pitchfork(\LC_W\cap N^Q M)$ in $N^QM$, implying the statement.
	
ii)  Denote $X|_M:=X\times_W M$ for $X\subset SW:=\mathbb P_+(TW)$. Note that $\WF(j^*\phi_{k,r}^{i,W})\subset N^*(\LC_W\cap SW|_M)$. Hence $\WF(\alpha^*\phi_{k,r}^{i,W})\subset b^*N^*(\LC_W|_M)$. 
	
	Consider $z\in \pi^{-1}x$. If $b(z)\notin N^*M$, we conclude as in i). Assume now $b(z)\in N^*M$. We ought to check 
	$(d_zb)^*(N_{b(z)}^*\LC_W|_M)\cap N^*(\pi^{-1}x)=\emptyset$, equivalently that $d_zb(T_z(\pi^{-1}x))+T_{b(z)}(\LC_W|_M)=T_{b(z)}SW|_M$. Now $d_zb(T_z(\pi^{-1}x))$ contains $T_{b(z)}N^Q_xM$, while $\LC_W|_M\subset SW|_M$ is a hypersurface. As $N^Q_xM$ is non-degenerate, the conclusion follows as before.

\endproof

\section{Restricting Lipschitz-Killing curvature measures}
\label{sec_restriction}

We proceed to establish Weyl's principle for pseudo-Riemannian manifolds.

\begin{Theorem}\label{thm:weyl}
Let $e:M^{p',q'}\looparrowright N^{p,q}$ be an isometric immersion of pseudo-Riemannian manifolds and let $\Lambda_k^N, \Lambda_k^M$ be the Lipschitz-Killing curvature measures of $N$ and $M$ respectively. Then the restriction $e^*\Lambda_k^N$ exists, and $e^*\Lambda_k^{N}=\Lambda_k^{M}$.
\end{Theorem}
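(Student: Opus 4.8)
\textbf{Proof strategy for Theorem \ref{thm:weyl}.}

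The plan is to reduce the global statement to a purely local computation on the almost bi-orthonormal frame bundle, and then to identify the restricted forms by means of the Gauss equations combined with the distributional integral of Proposition \ref{prop:J_integral_2d}. First I would observe that, since the Lipschitz-Killing curvature measures are built from the forms $\phi_{k,r}^{i}$ and $\psi_{m+1,r}$ via the linear combination \eqref{eq:C_to_Lambda}--\eqref{eq:Ckr}, and since restriction of generalized curvature measures is continuous and compatible with linear combinations (Proposition \ref{prop:restricting_curvature_measures}), it suffices to compute $e^*C_{k,r}^{i,N}$ in terms of the $C_{k',r'}^{i',M}$ and check that the resulting identity, when fed through \eqref{eq:C_to_Lambda}, reproduces $\Lambda_k^M$. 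The fact that $e^*C_{k,r}^{i,N}$ is well-defined is already Corollary \ref{cor:LK_restrictions_well_def}. By Proposition \ref{prop:LC_regularity_transitivity} and the locality of the construction, I may assume $e$ is an embedding, factor it locally through a chain of codimension-one steps if convenient, or better, treat the general codimension directly using the restriction formula \eqref{eq:restriction_curvature_measure}: the restricted forms are $\phi'=q_*\theta^*\omega$ and $\omega'=\beta_*\alpha^*\omega$ with $\omega$ a multiple of $\phi_{k,r}^{i,N}$ (the interior form of $C_{k,r}^{i}$ is zero for $k\leq m$, so only the second component of $\Lambda$ contributes off the top degree, while the top-degree term is handled by Lemma \ref{lemma_curvature_extensions}-type smoothness and the classical computation).

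The core of the argument is the computation of $\theta^*\phi_{k,r}^{i,N}$ and $\alpha^*\phi_{k,r}^{i,N}$ and their push-forwards. Here I would work on the frame bundle picture: choose, near a point of $M$, a $Q$-compatible Riemannian metric $P$ on $N$ which is quadratically compatible at that point (Lemma \ref{lemma_quad_comp}(3), so that $P|_M$ and the intrinsic compatible metric on $M$ agree to second order, making the Levi-Civita connections and curvatures of the ambient and intrinsic metrics comparable at the point). Then $\phi_{k,r}^{i,N}$ is the meromorphic regularization, at $s=0$, of $\chi_i^s(\sigma)\phi_{k,r}^N$ where $\sigma = Q/P$ on the unit normal sphere; pulling back to $N^*M\cong\mathbb P_+(N^QM)$ one must expand $\phi_{k,r}^N$ in the solder, connection and curvature forms of the normal directions using the Gauss equations of Lemma \ref{lemma_gauss_eqs}, which replace the ambient curvature forms $\tilde\Omega_{i,j}^Q$ by the intrinsic ones plus quadratic terms in the second fundamental form $\omega_{\cdot,r}^P$. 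The integration over the normal sphere fibre then produces exactly an integral of the shape \eqref{eq:integral_definition}: a product of powers of $\sin$ and $\cos$ of the angular variable (which is the $\beta$ of Proposition \ref{prop_solder}, encoding $\sigma=\cos 2\beta$, $\tau=\sin 2\beta$) against $\chi_i^{-(m+2)/2}(\sigma\cos^2t+\rho\sin^2t)$, with $\rho$ the second slot coming from the splitting $N^QM = (N^QM\cap V_+)\oplus(N^QM\cap V_-)$. Proposition \ref{prop:J_integral_2d} evaluates this integral to a single product $\chi_{i+j}^{\cdot}\chi_j^{\cdot}$ with the precise Beta-function constant $S(a,m-a)$; matching the $\omega_k/(\pi^k(m+1-k)!\,\omega_{m+1-k})$ normalizations in \eqref{eq:Ckr} with the combinatorial factors in \eqref{eq:C_to_Lambda} and the constants $(4\pi)^\nu$, $\omega_{m+1-k-2\nu}$ in the definition of $\lambda_k$ is where the ``array of cancellations and coincidences'' advertised in the introduction must be verified; this bookkeeping, together with the recursion \eqref{eq:integral}, is what forces $e^*\Lambda_k^N=\Lambda_k^M$ with no change in the power of $\i$ beyond $\i^{q'}$, i.e. independence of the ambient signature.

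I expect the main obstacle to be precisely this last matching: organizing the sum over $r$ (pairs of ambient curvature forms) and over the number of normal-direction factors so that the regularized fibre integrals assemble into the stated linear combination of intrinsic $C_{k',r'}^{i',M}$. Concretely, one has to (i) check that the wave front bookkeeping of Lemma \ref{lem:can_restrict_fiberwise} really licenses taking $\theta^*$ and $\alpha^*$ of the meromorphic family and commuting the $s=0$ specialization with the fibre push-forward $q_*$, $\beta_*$ — this uses continuity of pull-back and push-forward in the normal topology and Proposition \ref{prop:independence_meromorphic} to rule out poles at $s=0$; (ii) handle the boundary/full-dimensional strata of $\nc$ where the fibre is a closed hemisphere rather than a sphere, which is where the $\chi_i^s(\pm1)$ evaluation identities and the $S(a,m-a)$ constant enter; and (iii) track the sign $\epsilon$ and the parity index $i\in\mathbb Z_2$ through the identity $\chi_i^{s+1}(-1)=-\chi_i^s(-1)$, ensuring the imaginary part $\phi_{k,r}^1$ transforms correctly. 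Once the codimension-one case is established by this fibre computation, the general case follows either by iterating (using Proposition \ref{prop:LC_regularity_transitivity} to keep LC-transversality along the way and the already-known existence of all intermediate restrictions) or, more cleanly, by running the same normal-sphere integral in one step in arbitrary codimension, the $t$-integral over $[0,\pi/2]$ being replaced by an integral over the normal unit sphere that factors through the same $J_{m,a}$ after choosing coordinates adapted to $V_\pm$.
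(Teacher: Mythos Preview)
Your architecture is right in outline, but two key ingredients are missing or misdescribed.

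First, the fibre integration is a two-step process, not a single $J_{m,a}$. The paper parametrizes the blow-up via $H(\xi,t,y)=\cos t\,\xi+\sin t\sum_r y_r SE_{m+r}$ with $(t,y)\in[0,\pi/2]\times S^{n-m-1}$, so that $\sigma_N\circ H=\cos^2t\,\sigma_M(\xi)+\sin^2t\,Q(y)$. The $t$-integral is indeed $J_{m,a}(\sigma_M,Q(y);\chi_i)$ via Proposition~\ref{prop:J_integral_2d}, but there remains an integral over $y\in S^{n-m-1}$ of expressions of the form $\chi_j^{-(n-m+h)/2}(Q(y))\,Q(y,\zeta_{\pi_1})\wedge\cdots\wedge Q(y,\zeta_{\pi_h})$. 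This requires the pseudo-Riemannian version of Weyl's polarization lemma (Lemma~\ref{lemma_weyl}), which you do not mention at all; it is what produces the symmetrized pairings $Q(\zeta_i,\zeta_j)=\tilde\Omega_{i,j}^B$ (via the Gauss equations of Lemma~\ref{lemma_gauss_eqs}) together with the constant $c(n-m,h)\chi_j^{-(q-q')/2}(-1)$ governing the index shift $i\mapsto i+q-q'$. Your description collapses these two integrations and misidentifies the angular variable: the $t$ above is the blow-up coordinate, not the $\beta$ of Proposition~\ref{prop_solder}, which relates $P$- and $Q$-frames on a \emph{single} manifold and plays no role in the restriction computation.

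Second, the paper does not restrict $C_{k,r}^{i,N}$ for general curved $N$ and general $r$. It first reduces to $N=\R^{p,q}$ via the pseudo-Riemannian Nash embedding (Theorem~\ref{thm_nash}) and functoriality of $e^*$; since $\R^{p,q}$ is flat, $\Lambda_k^{\R^{p,q}}$ involves only $C_{k,0}^i$, so only $e^*C_{k,0}^i$ must be computed (Propositions~\ref{prop_res_boundary} and~\ref{prop_res_interior}), after which the general case follows in one line. Attempting to restrict all $C_{k,r}^i$ directly from a curved ambient would be substantially harder and is unnecessary.
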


\subsection{Fiberwise evaluation of generalized forms.}
The knowledge of the wave front set of $\phi_{k,r}^i$ will allow us to evaluate the restriction fiberwise, as follows.
\begin{Lemma} \label{lem:pointwise}
 Fix a manifold $M$. Let $F\hookrightarrow S\twoheadrightarrow M$ be a fiber bundle, and $E\twoheadrightarrow S$ some vector bundle. For $x\in M$, let $i_x\colon S|_x \hookrightarrow S$ be the fiber inclusion.
\\\emph{i)}  Take $\omega\in\Gamma^{-\infty}(S,E)$ such that $\WF(\omega)\cap N^*(S|_x)=\emptyset\quad\forall x\in M$. Set
\begin{displaymath}
\omega_x:=i_x^*\omega\in \Gamma^{-\infty}(S|_x,i_x^*E).
\end{displaymath}
 If $\omega_x=0$ for all $x\in M$, then $\omega=0$.
 \\\emph{ii)} Let  $W^r$ be a compact manifold (possibly with boundary). Set $\tilde i_x=i_x\times\id\colon S|_x \times W \hookrightarrow  S\times W$, and $p\colon S\times W\to S$, $p_x\colon S|_x\times W\to S|_x$ the projections.  
 Given $\omega\in \Gamma^{-\infty}(S\times W,E \boxtimes \Dens(TW))$ such that $\WF(\omega)\cap N^*(S|_x\times W)=\emptyset$,   we have  $\WF(p_*\omega) \cap N^*(S|_x)=\emptyset$ for all $x\in S$, and
 \begin{displaymath}
 (p_x)_*(\tilde i_x^*\omega)=(p_*\omega)_x\in \Gamma^{-\infty}(S|_x,E).
 \end{displaymath}

\end{Lemma}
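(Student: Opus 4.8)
The plan is to reduce both assertions to the standard wave front calculus for restriction (pull-back to a submanifold) and for fibre integration (push-forward): the only real content is that the stated transversality hypotheses are exactly what makes all the operations involved well-defined and mutually compatible.

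For part (1), since the conclusion $\omega=0$ is local on $S$, I would first pass --- via a partition of unity on $M$, a local trivialization $S|_U\cong U\times F$, a trivialization of $E$, a further localization in the fibre, and a splitting into components --- to the case $S=U\times V$ with $U\subseteq\mathbb R^m$, $V\subseteq\mathbb R^r$ open and $\omega\in C^{-\infty}(U\times V)$ scalar. In this model $\bigcup_x N^*(S|_x)$ is the set of nonzero covectors annihilating the fibre directions, so the hypothesis says precisely that $\WF(\omega)$ avoids them; this is the condition under which $i_x^*\omega$ is defined, and also, by the wave front estimate for push-forward along the submersion $p'\colon U\times V\to U$ (whose transpose produces only fibre-annihilating covectors), it forces, for every $\chi\in C_c^\infty(V)$, the push-forward $T_\chi\omega:=p'_*(\omega\cdot(1\boxtimes\chi))\in C^{-\infty}(U)$ to have empty wave front set, hence to be \emph{smooth}. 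By the standard theory of restriction of distributions to submanifolds \cite{hoermander_pde1}, this smooth function is $x\mapsto\langle i_x^*\omega,\chi\rangle$. Thus if $i_x^*\omega=0$ for all $x$, then $\langle\omega,\psi\boxtimes\chi\rangle=\langle T_\chi\omega,\psi\rangle=0$ for all $\psi\in C_c^\infty(U)$ and $\chi\in C_c^\infty(V)$; since finite sums $\sum_j\psi_j\boxtimes\chi_j$ are dense in $C_c^\infty(U\times V)$, this gives $\omega=0$ on the chart, and reassembling over the cover of $M$ yields $\omega=0$.

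For part (2), I would use that the square built from $\tilde i_x$, $p$, $p_x$, $i_x$ is Cartesian, that $p$ and $p_x$ are proper submersions (as $W$ is compact; the case with boundary is identical, working with supported currents), and that the twist by $\Dens(TW)$ is exactly what makes $p_*$ and $(p_x)_*$ canonical. The first claim is pure wave front calculus: at $(s,w)$ one has $N^*(S|_x\times W)=N^*(S|_x)\oplus\{0\}$ (zero in the $W$-conormal), and the push-forward estimate for $p$ only involves covectors of the form $(\sigma,0)$, all of which lie in $N^*(S|_x\times W)$ when $\sigma\in N^*_s(S|_x)$; since $\WF(\omega)$ misses each $N^*(S|_x\times W)$, we get $\WF(p_*\omega)\cap N^*(S|_x)=\emptyset$, so that $(p_*\omega)_x=i_x^*(p_*\omega)$ is defined. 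For the identity $(p_x)_*(\tilde i_x^*\omega)=(p_*\omega)_x$ I would pick a closed conic neighbourhood $\Gamma$ of $\WF(\omega)$ still disjoint from the closed conic set $\bigcup_x N^*(S|_x\times W)$; on the corresponding spaces of generalized sections with wave front set in $\Gamma$ (resp.\ in the cones it determines under each map) all four operations $\tilde i_x^*$, $(p_x)_*$, $p_*$, $i_x^*$ are defined and continuous in the normal topology, and for smooth $\omega$ the identity is the obvious Fubini statement; since compactly supported smooth sections are sequentially dense, it passes to all such $\omega$ by continuity.

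I expect the main obstacle to be the wave front bookkeeping in the second half of (2): one must choose the auxiliary cone $\Gamma$ so that it simultaneously certifies the transversality needed for $\tilde i_x^*$, lies in the continuity domain of $p_*$ and $(p_x)_*$, and has $p_*$-image transversal to the conormal of $S|_x$ --- so that the Cartesian-square identity, trivial for smooth forms, survives in the limit. Everything else (the reductions in (1), the Fubini computations, the density statements) is routine.
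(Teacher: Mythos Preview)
Your proof is correct. Part (ii) follows exactly the paper's line: wave front calculus for the first claim, then the Cartesian-square identity for smooth $\omega$ passed to the limit by continuity of all four operations in the normal topology.

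For part (i) you take a genuinely different, slightly slicker route. The paper argues directly by approximation: it picks smooth $\omega^j\to\omega$ in $C^{-\infty}_{\WF(\omega)}$, writes $\langle\omega^j,\eta\rangle$ as a double integral, and uses \emph{hypocontinuity} of the pairing against the bounded family $\{\eta\cdot\delta_x\}_x$ to upgrade the pointwise convergence $\langle i_x^*\omega^j,\eta(x,\cdot)\rangle\to\langle i_x^*\omega,\eta(x,\cdot)\rangle$ to uniform convergence in $x$, so that limit and $\int_M$ can be interchanged. You instead observe first that $T_\chi\omega=p'_*(\omega\cdot(1\boxtimes\chi))$ is \emph{smooth} (because the push-forward cone is empty), and then identify $T_\chi\omega(x)$ with $\langle i_x^*\omega,\chi\rangle$. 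The one point to flag is that this identification, which you attribute to ``standard theory'', is not an explicit statement in H\"ormander: it needs exactly the continuity of $p'_*$ from $C^{-\infty}_\Gamma$ into $C^{-\infty}_\emptyset=C^\infty$ in the normal topology (so that $T_\chi\omega^j(x)\to T_\chi\omega(x)$ pointwise), together with continuity of $i_x^*$, and density of smooth sections. That is the same machinery the paper invokes via hypocontinuity, so the two arguments differ more in packaging than in depth. Your version has the advantage of cleanly isolating the smoothness of $T_\chi\omega$ as a standalone fact; the paper's version avoids naming $T_\chi\omega$ and works with a single test function $\eta$ rather than tensor products.
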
  

\proof \noindent
i) We may assume $E=S\times \R$, and furthermore by the sheaf property of distributions we may work locally and assume $S=M\times F$, $M=\R^m$, $F=\R^f$, $i_x(\xi)=(x,\xi)$. We use the Lebesgue measures on $M$, $F$ to identify functions and measures.
Let $\eta\in  C^\infty_c(S)$ be a test function supported in $M_\eta\times F_\eta$. We ought to show $\langle \omega,\eta\rangle=0$.

Let $\omega^j\to\omega$, $j\to \infty$ be a smooth approximating sequence in the normal topology of $C^{-\infty}_{\WF(\omega)}(S)$. By Fubini's theorem,
\begin{equation}\label{eq:Fubini}
\langle \omega^j, \eta\rangle=\int_M \int_{F}\omega^j(x,\xi)\eta(x,\xi)d\xi \ dx.
\end{equation} 
As $j\to\infty$, we have $\langle \omega^j, \eta\rangle \to \langle \omega, \eta\rangle$ by definiton, and similarly for every $x$ it holds that $\int_{F}\omega^j(x,\xi)\eta(x,\xi)d\xi\to \langle \omega_x,\eta(x,\bullet)\rangle$ by the continuity of $i_x^*$.

Let us show the latter convergence is uniform in $x\in M_\eta$. Write
\begin{displaymath} \int_{F}\omega^j(x,\xi)\eta(x,\xi)d\xi- \langle \omega_x,\eta(x,\bullet)\rangle_F=\langle i_x^*(\omega^j-\omega), \eta\rangle_F=\langle \omega^j-\omega, \eta\cdot\delta_x\rangle.
 \end{displaymath}
 
Writing $\Lambda=\cup_{x\in M}N^*S|_x\subset T^*S$, it is easy to see that $\{\eta\cdot \delta_x\}_{x\in M_\eta}\subset C^{-\infty}_{\Lambda}(S)$ is a bounded set. It follows by the hypocontinuity of the pairing in the normal topology that $\sup_{x\in M_\eta}|\langle \omega^j-\omega, \eta\cdot\delta_x\rangle|\to 0$ as $j\to\infty$.
Thus we can interchange in \eqref{eq:Fubini} the limit as $j\to\infty$ with integration over $M$, concluding the proof.

ii)
Let us check that $\WF(p_*\omega)\cap N^*(S|_x)=\emptyset$. This follows at once from wave front set calculus: if $\xi\in\WF (p_*\omega)\cap N^*(S|_x)$, we have $\eta=(d_{z, w}p)^*\xi\in \WF(\omega)$ for some $(z,w)\in S\times W$. But then for any $v\in T_{z,w}(S|_x\times W)$ it holds that $v':=d_{z,w}p (v)\in T(S|_x)$ and hence $\eta(v)=\xi(v')=0$. That is $\eta\in N^*(S|_x\times W)$, contradicting the assumption.  

Now for smooth $\omega$ clearly $ i_x^*\circ p(\omega)=p_F\circ \tilde i_x(\omega)$. The general case follows by continuity of the pull-back and push-forward operations. \qedhere

\endproof 

\subsection{Quadratically compatible points}

For further use, we rewrite the forms $\phi_{k,r}^i$ in Riemannian terms at points of quadratic compatibility.

\begin{Lemma}\label{lem:thetas_vanish}
Let the Riemannian metric $P$ be quadratically compatible with $Q$ at $x$, and $\psi\in \Psi|_x$. It then holds at $\psi$ that $\Theta_i(d\sigma)=0$, and $\Theta_i(\omega^E_{j,k})=0$ for all $i,j,k$.
\end{Lemma}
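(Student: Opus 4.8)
The statement concerns the dual basis vectors $\Theta_i$ to the solder forms $\theta_i^E$ (with respect to the distinguished frame/coframe on $\Psi_W$ constructed above), evaluated at a point $\psi$ in the fiber over $x$, under the hypothesis that $P$ is quadratically compatible with $Q$ at $x$. Recall that quadratic compatibility means there are coordinates with $P = \sum dx_i^2 + O(\|x\|_2^2)$ and $Q = \sum \epsilon_i dx_i^2 + O(\|x\|_2^2)$. The key consequence I would extract first is that \emph{at the point $x$} the Levi-Civita connection forms $\omega^P_{i,j}$ of $P$ and the coordinate (trivial) connection agree, i.e. all Christoffel symbols of $P$ vanish at $x$; equivalently $\omega^P_{i,j}$ is a multiple of the $\theta^E$'s (horizontal in the relevant sense) and the connection forms restricted to the vertical directions at $\psi$ behave like the flat model. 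The same holds for the function $\sigma = Q/P$: since both $P$ and $Q$ differ from their constant-coefficient models by $O(\|x\|^2_2)$, the differential $d\sigma$ at $x$ vanishes in the directions coming from the base, i.e. $d\sigma$ at $\psi$ is purely vertical along the flag bundle.

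The crux is then to understand the horizontal/vertical decomposition encoded by the dual frame $\{\Theta_i\}$: by construction $\Theta_i$ is dual to $\theta_i^E$ in the basis of $T_\psi^*\Psi_W$ consisting of $(\theta_j^E)$, $(\omega^E_{0,j})$, $(\omega^E_{i,j})_{i<j,\,i,j\in W_+}$, $(\omega^E_{i,j})_{i<j,\,i,j\in W_-}$. So $\Theta_i$ annihilates \emph{all} the connection forms in this basis and pairs with $\theta_j^E$ as $\delta_{ij}$; in other words $\Theta_i$ is a ``horizontal lift of a coordinate vector field'' in the flat trivialization. I would argue that $d\sigma$ and each $\omega^E_{j,k}$ (for indices not already in the chosen cobasis, these are expressed via \eqref{eq_comparison_connection_forms5} and \eqref{eq_different_signs_vanish} in terms of the cobasis elements), when evaluated at $\psi$ under quadratic compatibility, lie in the span of the connection forms $\omega^E$ alone, with \emph{no} $\theta^E$ component. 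Pairing with $\Theta_i$, which kills every $\omega^E$ in the cobasis, then gives zero. Concretely: $\Theta_i(d\sigma) = 0$ because $d\sigma|_\psi \in \mathrm{span}(\omega^E\text{-forms})$ (the base-derivative of $\sigma$ vanishes at $x$ by quadratic compatibility, so only the ``fiber'' part survives, and the fiber of $\Psi_W\to M$ is parametrized by the connection directions); and $\Theta_i(\omega^E_{j,k}) = 0$ because every $\omega^E_{j,k}$ is, modulo the $\theta^E$'s, a linear combination of the chosen cobasis connection forms — but one must check the $\theta^E$-coefficient itself vanishes at $\psi$, which again follows from Christoffel symbols of $P$ vanishing at $x$.

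The main obstacle I anticipate is bookkeeping precision: the relation $\omega^B = C^{-1}\omega^E C - (dC^{-1})C$ holds only \emph{modulo multiples of $\theta^E$}, and the whole point of Proposition \ref{prop_connection} was to pin down those $\theta^E$-multiples. So the careful claim to prove is not merely ``$\omega^E_{j,k}$ is a cobasis element'' but ``the correction terms (the $R^i_{ja}$ in Cartan's lemma, equivalently the $\theta^E$-coefficients) vanish at $\psi$ when $P$ is quadratically compatible at $x$.'' I would establish this by writing $\omega^E_{i,j} = \Gamma^P_{i,j;k}\,\theta_k^E + (\text{purely vertical part})$ where the first term encodes the Christoffel symbols of $P$, and then invoking that quadratic compatibility forces $\Gamma^P_{i,j;k}(x) = 0$. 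Once that vanishing is in hand, applying $\Theta_i$ — which by definition annihilates all connection forms in the cobasis and all the residual $\theta^E$-free pieces — immediately yields both conclusions. The argument is short but requires care that ``quadratic compatibility'' is used in exactly the form ``first-order data of $P$ at $x$ is that of the Euclidean metric.''
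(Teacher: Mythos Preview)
Your overall strategy coincides with the paper's: show that at $\psi$ the forms $d\sigma$ and $\omega^E_{j,k}$, when expanded in the distinguished cobasis of $T_\psi^*\Psi_W$, have vanishing $\theta^E$-components. Your treatment of $d\sigma$ is fine and matches the paper's.

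There is, however, a genuine gap in your justification of $\Theta_i(\omega^E_{j,k})=0$. The decomposition you propose, $\omega^E_{i,j}=\Gamma^P_{i,j;k}\theta^E_k+(\text{purely vertical part})$, is not valid on $\Psi_W$. On the full frame bundle $\Phi$ the connection form $\omega^P_{i,j}$ is already ``purely vertical'' in your sense (it annihilates the $P$-horizontal subspace by definition); the $\theta^E$-components that may appear after restriction to $\Psi_W$ arise from how $\Psi_W$ sits inside $\Phi$, and this embedding is governed by $S=P^{-1}Q$, not by $P$ alone. Vanishing of the $P$-Christoffel symbols at $x$ is therefore not sufficient; you also need that the first-order data of $Q$ (equivalently, of $S$) is trivial at $x$. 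You noted this at the outset but did not use it in the execution.

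The paper's argument closes this gap cleanly without coordinates: quadratic compatibility forces $H^P_\psi=H^Q_\psi$ (both connections depend only on first derivatives, which agree at $x$), and one has in general $H^P_\phi\cap H^Q_\phi\subset T_\phi\Psi_W$ for any $\phi\in\Psi_W$. Hence the common horizontal subspace $H_\psi$ is tangent to $\Psi_W$. Now if $\sum a_{ij}\,\omega^E_{i,j}|_{\Psi_W}=\sum B_i\,\theta^E_i|_{\Psi_W}$ is any linear relation, restrict it to $H_\psi$: the left side vanishes (all $\omega^P$ vanish on horizontal vectors) while the $\theta^E_i$ are independent there, so $B_i=0$. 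This is exactly the statement $\Theta_i(\omega^E_{j,k})=0$. Your ``$\Theta_i$ is a horizontal lift'' intuition is correct and is the dual formulation of this argument, but it needs the inclusion $H^P_\psi\subset T_\psi\Psi_W$, which in turn requires both metrics, not just $P$.
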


\proof
Denote by $H_\phi^P\subset T_\phi\Phi$ the $P$-horizontal subspace at an arbitrary point $\phi\in \Psi$, and similarly for $Q$. Clearly $H^P_\phi\cap H^Q_\phi\subset T_\phi \Psi$.  

Note that due to the quadratic compatibility, $H_\psi^P=H_\psi^Q$. This is because it is determined by the common zeros of the connection forms, which only depend on the first derivatives of the metrics at $\psi$. Denoting this subspace by $H_\psi$, we have $H_\psi\subset T_\psi\Psi$. If there is a linear relation of the form 
\begin{displaymath}
 \sum a_{ij}\omega_{i,j}^E|_\Psi =\sum B_i\theta_i^E|_\Psi,
\end{displaymath}
we can restrict it to $H_\psi$, where all connection forms vanish, while the solder form $\theta_i^E$ are linearly independent. Hence $B_i=0$, as required.

Now $\sigma=\frac{Q(E_0)}{P(E_0)}$. Working with the distinguished coordinates $x_j$, together with the associated coordinates $\xi_j$ on the tangent space, it is clear that $\frac{\partial \sigma}{\partial x_j}(x)=0$, that is $d_\psi\sigma\in\Span(\omega^E_{0, i})_{i=1}^n$.
\endproof

Recalling that $\tilde \omega_{i,j}=- \epsilon_i\omega_{j,i}$, it follows from Propositions \ref{prop_solder} and \ref{prop_connection} that, at a point $z_0$ of quadratic compatibility,
\begin{equation} \label{eq_phi_in_terms_of_rho}
 (\phi_{k,r})_{z_0}=\epsilon^{m-k}|\sigma|^{-\frac{m+1-k}2}(\rho_{k,r})_{z_0},
\end{equation}
where the smooth form $\rho_{k,r}$ is given by
\begin{align}
 \rho_{k,r} & = \sum_{\substack{\pi \\ 1\notin \{\pi_1,\dots,\pi_{2r}\}}} \sgn \pi \wedge_{j=1}^r \tilde\Omega^B_{\pi_{2j-1},\pi_{2j}}\largewedge_{j=2r+1}^k\theta^E_{\pi_j}\wedge_{j=k+1}^m\omega_{\pi_j,0}^E \nonumber \\
& \quad +|\sigma|^{\frac12}\sum_{\substack{\pi\\1\in \{\pi_1,\dots,\pi_{2r}\}}} \sgn \pi  \wedge_{j=1}^r \tilde\Omega^B_{\pi_{2j-1},\pi_{2j}}\wedge_{j=2r+1}^k\theta^E_{\pi_j}\largewedge_{j=k+1}^m \omega_{\pi_j,0}^E. \label{eq_def_rho}
\end{align}

From \eqref{eq_phi_in_terms_of_rho} and Definition \ref{def_phi_kri} we conclude that, at a point $z_0$ of quadratic compatibility,
\begin{equation} \label{eq_relation_phi_rho}
(\phi_{k,r}^i)_{z_0}=(\chi_i^{-\frac{m+1-k}2}(\sigma)\rho_{k,r})_{z_0}.
\end{equation}

We also can rewrite the form $\psi_{m+1,r}$ in the following way:

\begin{displaymath}
	\psi_{m+1,r}=\sum_{\pi\in S_{m+1}} \sgn(\pi)  \tilde\Omega^B_{\pi_0,\pi_1}\wedge\cdots \wedge\tilde\Omega^B_{\pi_{2r-2},\pi_{2r-1}} \wedge\theta_{\pi_{2r}}\wedge\cdots\wedge\theta_{\pi_m}.
\end{displaymath}

\subsection{Weyl's lemma}

The following is well-known.
\begin{Lemma}
Let $\langle\cdot,\cdot\rangle$ be a symmetric form of signature $(p,q)$  in $\mathbb R^n=\mathbb R^{p+q}$. The algebra of $\mathrm{SO}(p,q)$-invariant elements
of $\mathrm{Sym}(\mathbb R^{n})^*$ is generated by $\langle\cdot,\cdot\rangle$ and $\det$. 
\end{Lemma}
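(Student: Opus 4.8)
The plan is to recognize the statement as the first (and second) fundamental theorem of invariant theory for the orthogonal group, combined with a standard descent from the complex group $\mathrm{SO}(n,\mathbb C)$ to the real form $\mathrm{SO}(p,q)$. First I would fix a $Q$-orthonormal basis, so that $\langle\cdot,\cdot\rangle=\sum_i\epsilon_i x_i^2$ with $\epsilon_i=\pm1$, and reduce to polynomial functions on a finite direct sum $(\mathbb R^n)^{\oplus N}$ of copies of the standard representation: using the isomorphism $\mathbb R^n\cong(\mathbb R^n)^*$ induced by $\langle\cdot,\cdot\rangle$ together with polarization, every invariant element of $\mathrm{Sym}(\mathbb R^n)^*$ is the restriction to the diagonal of a polynomial invariant on such a sum, so it suffices to produce generators of $\big(\mathbb R[(\mathbb R^n)^{\oplus N}]\big)^{\mathrm{SO}(p,q)}$ for every $N$.

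Next I would complexify and use Zariski density. A $\mathrm{SO}(p,q)$-invariant polynomial with real coefficients on $(\mathbb R^n)^{\oplus N}$ is also a complex polynomial on $(\mathbb C^n)^{\oplus N}$ invariant under $\mathrm{SO}(p,q)$; since the identity component $\mathrm{SO}(p,q)^0$ is a real Lie subgroup of full real dimension $\binom n2$ inside the connected complex group $\mathrm{SO}(n,\mathbb C)=\mathrm{SO}(Q_{\mathbb C})$, it is Zariski dense there, so the polynomial is automatically $\mathrm{SO}(n,\mathbb C)$-invariant. Conversely, taking real and imaginary parts of an $\mathrm{SO}(n,\mathbb C)$-invariant polynomial, restricted as a function to the real points, yields $\mathrm{SO}(p,q)$-invariant real polynomials. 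Thus $\big(\mathbb C[(\mathbb C^n)^{\oplus N}]\big)^{\mathrm{SO}(n,\mathbb C)}=\mathbb C\otimes_{\mathbb R}\big(\mathbb R[(\mathbb R^n)^{\oplus N}]\big)^{\mathrm{SO}(p,q)}$, and since the intended generators $\langle v_i,v_j\rangle$ and $\det[v_{i_1}|\cdots|v_{i_n}]$ are defined over $\mathbb R$, it is enough to prove the statement over $\mathbb C$.

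For the complex statement I would invoke the classical fundamental theorems of Weyl for $\mathrm O(n)$ and $\mathrm{SO}(n)$. Choosing a reflection $r\in\mathrm O(n,\mathbb C)\setminus\mathrm{SO}(n,\mathbb C)$, one has $\mathrm O=\mathrm{SO}\sqcup r\,\mathrm{SO}$, so $R:=\mathbb C[(\mathbb C^n)^{\oplus N}]^{\mathrm{SO}}$ decomposes as $R^+\oplus R^-$ under the induced $\mathbb Z/2$-action of $r$. The first fundamental theorem for $\mathrm O(n)$ gives that $R^+=\mathbb C[(\mathbb C^n)^{\oplus N}]^{\mathrm O}$ is generated by the inner products $q_{ij}=\langle v_i,v_j\rangle$, while the fundamental theorem for $\mathrm{SO}(n)$ gives that the $R^+$-module $R^-$ is generated by the maximal minors $d_{i_1\cdots i_n}=\det[v_{i_1}|\cdots|v_{i_n}]$, subject to the relation $d_{i_1\cdots i_n}\,d_{j_1\cdots j_n}=\det(q_{i_a j_b})_{a,b}\in R^+$. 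Hence $R$ is generated as a $\mathbb C$-algebra by the $q_{ij}$ and the $d_{i_1\cdots i_n}$, that is, by $\langle\cdot,\cdot\rangle$ and $\det$; passing back to the real invariants as above finishes the proof.

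The genuinely nontrivial content — Weyl's first and second fundamental theorems — is classical, and I would cite rather than reprove it (Weyl, \emph{The Classical Groups}; see also Procesi or Goodman--Wallach). The only steps needing care are the two reductions: polarization down to vector variables, and the Zariski-density descent from $\mathrm{SO}(n,\mathbb C)$ to $\mathrm{SO}(p,q)$. The latter is the main subtlety here, since for indefinite signature $\mathrm{SO}(p,q)$ is disconnected, so one must pass to its identity component to guarantee Zariski density, and one must also keep track that the generators exhibited over $\mathbb C$ are the real objects $\langle\cdot,\cdot\rangle$ and $\det$ so that they still generate after descent.
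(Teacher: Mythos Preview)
Your proposal is correct and follows essentially the same route as the paper: reduce from $\mathrm{SO}(p,q)$-invariants to $\mathrm{SO}(n,\mathbb C)$-invariants by complexification, then cite the classical first fundamental theorem (the paper cites Goodman--Wallach). The only technical difference is in the passage from real to complex invariance: the paper argues via the Lie algebra (infinitesimal invariance under $\mathfrak{so}(p,q)$ complexifies to invariance under $\mathfrak{so}(p,q)\otimes\mathbb C$, hence under the connected group $\mathrm{SO}(p,q,\mathbb C)$), and then uses the explicit diagonal conjugation $J=\mathrm{diag}(\sqrt{\epsilon_1},\ldots,\sqrt{\epsilon_n})$ to identify $\mathrm{SO}(p,q,\mathbb C)\cong\mathrm{SO}(n,\mathbb C)$ and check that the generators pull back to $\langle\cdot,\cdot\rangle$ and $\det$; you instead use Zariski density of $\mathrm{SO}(p,q)^0$ in $\mathrm{SO}(Q_{\mathbb C})$. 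Both arguments are standard and equivalent in effect; the paper's Lie-algebra version has the small advantage of sidestepping the connectedness issue you flag, since differentiating the action only ever sees the identity component anyway.
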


%

We need to generalize Weyl's lemma from \cite{weyl_tubes} to indefinite signatures. 

\begin{Lemma}[Weyl lemma] \label{lemma_weyl}
Let $\mathbb R^{p,q}$ be  endowed with the standard bilinear forms $Q,P$.
Let $V$ be a vector space, and let $\zeta_1,\ldots, \zeta_h\in V^*\otimes \mathbb R^{p,q}$. Let $S^{p+q-1}=\{y\colon P(y,y)=1\}$ be the unit sphere endowed with the volume form $dS^{p+q-1}$ induced by $P$. Then
\begin{multline*}
 \int_{S^{p+q-1}}\chi_i^{-\frac{p+q+h}2}(Q(y,y)) Q( y, \zeta_1)\wedge \cdots\wedge Q( y,\zeta_h) d_yS^{p+q-1}=\\
 =c(p+q,h)\chi_i^{-\frac{q}2}(-1)\frac{1}{h!}\sum_{\pi\in S_h} \sgn \pi Q( \zeta_{\pi_1},\zeta_{\pi_2})\wedge\cdots\wedge Q(\zeta_{\pi_{h-1}},\zeta_{\pi_h}),
\end{multline*}
where $Q(\zeta_k,\zeta_j) \in \largewedge^2V^*$ is defined by $Q(\zeta_k,\zeta_j)(u,v):=Q(\zeta_k(u),\zeta_j(v)) -Q(\zeta_k(v),\zeta_j(u))$, and
where $c(p+q,h)=0$ if $h$ is odd, and for even $h$
\begin{displaymath}
c(p+q,h)= \frac{2\Gamma(\frac{h+1}2) \pi^\frac{p+q-1}2}{\Gamma(\frac{p+q+h}2)}.
\end{displaymath}
\end{Lemma}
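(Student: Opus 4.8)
The plan is to reduce the claimed integral identity to the one-variable distributional computation already established in Proposition \ref{prop:J_integral_2d}, via a clean change of variables on the sphere together with the invariant-theoretic structure from the previous lemma. The key observation is that the integrand is $\OO(p+q)$-invariant in the Euclidean sense only after integrating: the left-hand side, viewed as an element of $\largewedge^h V^*$ depending polynomially and $\mathrm{SO}(p,q)$-equivariantly on the data $\zeta_1,\ldots,\zeta_h$, must by the preceding lemma be expressible in terms of the invariants built from $Q$, namely the products $Q(\zeta_{\pi_i},\zeta_{\pi_j})$. Since the integrand is multilinear and totally antisymmetric in the $\zeta$'s (the wedge product together with the antisymmetrization $Q(\zeta_k,\zeta_j)$ forces this), the only possible output is a scalar multiple of $\sum_{\pi\in S_h}\sgn\pi\, Q(\zeta_{\pi_1},\zeta_{\pi_2})\wedge\cdots\wedge Q(\zeta_{\pi_{h-1}},\zeta_{\pi_h})$, with the scalar a distribution in no remaining variables, i.e. a constant depending only on $p+q$, $h$, $i$. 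In particular $h$ odd gives zero by the antisymmetry/parity, matching $c(p+q,h)=0$.

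First I would make the invariance argument precise. The map $(\zeta_1,\ldots,\zeta_h)\mapsto \int_{S^{p+q-1}}\chi_i^{-\frac{p+q+h}{2}}(Q(y,y))\,Q(y,\zeta_1)\wedge\cdots\wedge Q(y,\zeta_h)\,d_yS^{p+q-1}$ is well-defined as a $\largewedge^h V^*$-valued function because $\chi_i^{-\frac{p+q+h}{2}}(Q(y,y))$ is a well-defined distribution on $S^{p+q-1}$ — here one uses that $Q|_{S^{p+q-1}}$ has $0$ as a regular value (the light cone meets the sphere transversally), which is exactly an instance of LC-regularity, so pulling back the homogeneous distribution $\chi_i^s$ is legitimate. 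The substitution $g\in\mathrm{SO}(p,q)$ acts on $y$ but does not preserve $S^{p+q-1}$; instead one argues as in the proof of the previous lemma by complexifying and using that $\mathrm{SO}(p,q,\C)\cong\mathrm{SO}(p+q,\C)$, so that the integral, as a polynomial-valued equivariant gadget, lands in the span of the claimed tensor. This pins down the answer up to the constant $c(p+q,h)\chi_i^{-q/2}(-1)/h!$.

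To compute the constant, I would evaluate both sides on a convenient choice of $\zeta_1,\ldots,\zeta_h$ — the standard approach is to take $V=\R^h$ and $\zeta_j$ to pick out a distinct coordinate direction among $e_1,\ldots,e_h$ (with appropriate signs so that $Q(\zeta_j,\zeta_k)$ becomes, up to sign, $dx_j\wedge dx_k$), reducing the wedge of $Q(y,\zeta_j)$'s to $\pm y_1\cdots y_h\, dx_1\wedge\cdots\wedge dx_h$ and the right-hand side to a single explicit term. Then the identity becomes a scalar integral $\int_{S^{p+q-1}}\chi_i^{-\frac{p+q+h}{2}}(Q(y,y))\,y_1\cdots y_h\,dS^{p+q-1}$, which I would compute by passing to polar-type coordinates separating the first $h$ coordinates (each contributing a factor handled by the $S(a,b)$ Beta-integrals of \eqref{eq:sine_integral}) from the radial/angular integration against $\chi_i^s$, and then iterating the two-variable reduction of Proposition \ref{prop:J_integral_2d}, or equivalently directly recognizing the result via the Fourier-transform identities \eqref{eq_fourier_chi} as in that proof. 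The identities $\chi_i^{s}(-1)=(-1)^{\lfloor s+\frac12\rfloor}\delta_{2s,i}$ and $\chi_i^{s+1}(-1)=-\chi_i^s(-1)$ will bookkeep how the factor $\chi_i^{-q/2}(-1)$ emerges, and the Gamma-function arithmetic $\frac{2\Gamma(\frac{h+1}{2})\pi^{\frac{p+q-1}{2}}}{\Gamma(\frac{p+q+h}{2})}$ will fall out of collecting the Beta-function values.

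The main obstacle I anticipate is the rigorous handling of the pulled-back distribution $\chi_i^{-\frac{p+q+h}{2}}(Q(y,y))$ on the sphere and the interchange of this distributional pairing with the (finite-dimensional, hence harmless) wedge-product bookkeeping — in particular justifying that the equivariance argument, which is transparent for genuine functions, survives the meromorphic continuation defining $\chi_i^s$ at its (possibly singular) value $s=-\frac{p+q+h}{2}$. This is the analogue of the subtlety already confronted in Proposition \ref{prop:independence_meromorphic} and Section \ref{sec_computation_integrals}, and I expect it to be dispatched by working with the meromorphic family $\chi^s$ for $\Re s$ large, where everything is a convergent integral of continuous functions and the invariance is classical, and then analytically continuing the resulting identity of $\largewedge^h V^*$-valued meromorphic functions in $s$ down to the desired value, invoking Proposition \ref{prop:independence_meromorphic} (or the explicit regularity of $\chi_i^s(Q(y,y))$ guaranteed by the transversality of $Q$ and $S^{p+q-1}$) to ensure no pole obstructs the specialization.
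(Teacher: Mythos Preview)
Your overall strategy matches the paper's: establish $\mathrm{SO}(p,q)$-equivariance so that the preceding lemma forces the form of the answer, then pin down the constant on a special choice of $\zeta$. Two concrete points, however, do not go through as written.

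First, complexification does not prove invariance of the \emph{integral}. The preceding lemma uses complexification to classify $\mathrm{SO}(p,q)$-invariant polynomials, but that lemma applies only after you know the integral is invariant under $g\in\mathrm{SO}(p,q)$ acting on the $\zeta_j$, and since $g$ does not preserve $S^{p+q-1}$ this is not automatic. Nor can one analytically continue from large $\Re s$: the invariance is tied to the specific exponent. The paper proves it directly by the projective change of variables $z=\bar g(y):=g(y)/P(g(y))^{1/2}$ on the sphere, whose Jacobian is $P(gy)^{-(p+q)/2}$; since $\chi_i^{-(p+q+h)/2}(Q(y))\,Q(y,\zeta_1)\cdots Q(y,\zeta_h)$ is homogeneous of degree exactly $-(p+q)$ in $y$, this Jacobian cancels and invariance follows. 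Only then is the preceding lemma invoked.

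Second, your test choice of distinct $\R^{p,q}$-directions, say $\zeta_j=dx_j\otimes e_j$, gives $Q(\zeta_j,\zeta_k)=Q(e_j,e_k)\,dx_j\wedge dx_k=0$ for $j\neq k$, so the right-hand side vanishes; by the sign-flip symmetry $y_j\mapsto -y_j$ the left-hand side $\int\chi_i(Q(y))\,y_1\cdots y_h\,dS$ vanishes too. You obtain $0=0$, which determines nothing. The paper takes all $\zeta_j=dx_j\otimes e_1$ (same $\R^{p,q}$-direction), giving $y_1^h$ on the left and a nonzero multiple of $dx_1\wedge\cdots\wedge dx_h$ on the right. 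The scalar integral $\int_{S^{p+q-1}}\chi_i^{-(p+q+h)/2}(Q(y))\,y_1^h\,dS$ is then reduced by the coarea formula for $u(y)=Q(y,y)$ on the sphere, whose level sets $u^{-1}(r)$ are products of round spheres of radii $\sqrt{(1\pm r)/2}$, to a one-dimensional integral in $r\in[-1,1]$; the substitution $r=\cos 2t$ identifies it with a \emph{single} instance of $J_{p+q+h-2,q-1}(1,-1;\chi_i^{-(p+q+h)/2})$ from Proposition~\ref{prop:J_integral_2d}. No iteration is needed.
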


\proof
For $p=0$ or $q=0$, the statement follows from the Lemma in Weyl's paper \cite{weyl_tubes}. We proceed with the case $pq\neq0$.

Note that the integral is unchanged by any $g\in\mathrm{SO}(p,q)$. Indeed, let  $\bar g\colon S^{p+q-1}\to S^{p+q-1}$ be given by $\bar g(y)=\frac{g(y)}{P(g(y))^{\frac12}}$. The Jacobian of $z=\bar g y$ is $\frac{d_zS^{p+q-1}}{d_yS^{p+q-1}}=P(gy)^{-\frac{p+q}{2}}$. This follows by identifying $S^{p+q-1}=\mathbb P_+(\R^{p,q})$, and noting that the bundle of dual densities is $\mathrm{SL}(\R^{p+q})$-equivariantly isomorphic to the bundle $\mathbb  \Dens(L)^{p+q}$ over $L\in\mathbb P_+(\R^{p,q})$, and the action of $\bar g$ on $\Dens(L)$ is by $P(gy)^{-\frac12}$.
Then
\begin{align*}
&\int_{S^{p+q-1}}\chi_i^{-\frac{p+q+h}2}(Q(y,y)) Q( y, g^{-1}\zeta_1)\wedge \cdots\wedge Q( y,g^{-1}\zeta_h) d_yS^{p+q-1}=\\
&=\int_{S^{p+q-1}}\chi_i^{-\frac{p+q+h}2}(Q(gy,gy)) Q( gy, \zeta_1)\wedge \cdots\wedge Q( gy,\zeta_h) d_yS^{p+q-1}\\
&=\int_{S^{p+q-1}}\chi_i^{-\frac{p+q+h}2}(Q(\bar gy,\bar gy)) Q( \bar gy, \zeta_1)\wedge \cdots\wedge Q( \bar gy,\zeta_h) P(g(y))^{-\frac{p+q}2} d_yS^{p+q-1}\\
&=\int_{S^{p+q-1}}\chi_i^{-\frac{p+q+h}2}(Q(z,z)) Q(z, \zeta_1)\wedge \cdots\wedge Q( z,\zeta_h) d_zS^{p+q-1},
\end{align*}
as claimed.

By linearity and the previous lemma, the integral must be a linear combination of terms of the form
$Q(\zeta_{\pi_1},\zeta_{\pi_2})\cdots Q(\zeta_{\pi_{h-1}},\zeta_{\pi_h})$.
The result follows by skew-symmetry, except for the constants. To find these, let us take $\zeta_{ j}=dx_j \otimes (1,0,\cdots,0)^T$ and compute
\begin{displaymath}
 I=\int_{S^{p+q-1}}\chi_i^{-\frac{p+q+h}2}(Q( y,y)) y_1^h dS^{p+q-1}.
\end{displaymath}

Let us consider $u(y)=Q( y,y)$ as a function on $S^{p+q-1}$. Its gradient has $P$-norm $|\nabla u(y)|=2\sqrt{1-u(y)^2}$. Its level sets are $u^{-1}(r)=\left(\frac{1+r}2\right)^\frac12S^{p-1}\times \left(\frac{1-r}2\right)^\frac12S^{q-1}$. The coarea formula then gives
\begin{align*}
 I & =\int_{-1}^1 \left(\int_{u^{-1}(r)} \chi(r)|\nabla u|^{-1}\right)dr\\
 &=\int_{-1}^1 \chi(r)\frac12 \left(1-r^2\right)^{-\frac12} \left(\frac{1+r}2\right)^\frac{p+h-1}2 \int_{S^{p-1}} y_1^h dS^{p-1} \left(\frac{1-r}2\right)^\frac{q-1}2\int_{S^{q-1}} dS^{q-1} dr\\
 &= \frac{2\Gamma\left(\frac{h+1}2\right)\pi^{\frac{p-1}2}}{\Gamma\left(\frac{p+h}2\right)} \frac{2\pi^{\frac{q}2}}{\Gamma\left(\frac{q}2\right)} \frac14 \int_{-1}^1  \left(\frac{1+r}2\right)^\frac{p+h-2}{2} \left(\frac{1-r}2\right)^\frac{q-2}2\chi^{-\frac{p+q+h}2}(r) dr
\end{align*} 
if $h$ is even and $I=0$ otherwise. 

The change of variables $r=\cos(2t)=\cos^2t-\sin^2t$ turns the previous integral into $4J_{p+q+h-2,q-1}(1,-1,\chi_i)=4J_{p+q+h-2,p+h-1}(-1,1,\chi_i)$ (see \eqref{eq:integral_definition}). Using Proposition \ref{prop:J_integral_2d} we find that 
$$
 I = \frac{2\Gamma\left(\frac{h+1}{2}\right)\pi^{\frac{p-1}2}}{\Gamma\left(\frac{p+h}2\right)}\frac{2\pi^{\frac{q}2}}{\Gamma\left(\frac{q}2\right)}\frac12 B\left(\frac{p+h}2,\frac{q}2\right) \chi_i^{-\frac{q}2}(-1)=\frac{2\Gamma\left(\frac{h+1}2\right) \pi^\frac{p+q-1}2}{\Gamma\left(\frac{p+q+h}2\right)}\chi_i^{-\frac{q}2}(-1).\qedhere
$$

\subsection{Computation of the restriction}

\begin{Proposition}\label{prop_res_boundary}
Let $e\colon M^{p',q'}\looparrowright \mathbb R^{p,q}$ be an isometric immersion, and put $m=p'+q'-1, n=p+q-1$.  Then modulo filtration $m+1$, the restriction  $e^*C_{k,0}^i$ is
\begin{displaymath}
e^* C^i_{k,0} \equiv (-1)^{(q-q')i +\lfloor \frac{q-q'}{2}\rfloor}\sum_{\nu=0}^{\lfloor\frac{m-k}{2}\rfloor}\frac{1}{2^{2\nu}}{\frac k 2+\nu\choose \nu} C^{i+q-q'}_{k+2\nu,\nu}.
\end{displaymath}
\end{Proposition}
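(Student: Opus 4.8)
The strategy is to compute $e^*C^i_{k,0}=[0,\phi^i_{k,0}]$ via the restriction formula \eqref{eq:restriction_curvature_measure}, namely $\omega'=\beta_*\alpha^*\phi^i_{k,0}$, $\phi'=q_*\theta^*\phi^i_{k,0}$, and to identify the result fiberwise using Lemma \ref{lem:pointwise}. The wave front set estimates in Lemma \ref{lem:fiber_is_transversal}, Corollary \ref{cor:wave_front_set_of_LK} and especially Lemma \ref{lem:can_restrict_fiberwise} guarantee that all pull-backs and push-forwards involved are well-defined, and that the restricted forms can be computed one fiber at a time over a point $x\in M$. Since both sides are natural under isometries and the statement is local and modulo filtration $m+1$, I may choose coordinates around a fixed point $x\in M$ in which the auxiliary Riemannian metric $P$ on $\R^{p,q}$ restricts to a metric $P'$ on $M$ that is quadratically compatible with $Q|_M$ at $x$, using Lemma \ref{lemma_quad_comp}(iii); the discrepancy $P|_M-P'=O(\|\cdot\|^2)$ does not affect the value of the generalized forms at $x$ (the defining distributions are built from $\sigma$ and first-order data only, cf. the proof of independence of $\phi^i_{k,r}$ from $P$). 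This reduces everything to the model computation where $\phi_{k,0}$ is expressed in Riemannian terms via \eqref{eq_phi_in_terms_of_rho}--\eqref{eq_relation_phi_rho}: at the relevant point, $(\phi^i_{k,0})=\chi_i^{-\frac{m+1-k}{2}}(\sigma)\rho_{k,0}$ with $\rho_{k,0}$ a \emph{smooth} form expressed through the solder forms $\theta^E_j$, the connection forms $\omega^E_{j,0}$ and the curvature forms $\tilde\Omega^B_{ij}$.

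Next I carry out the fiber integral. Over $x\in M$, the fiber of $\mathbb P_W|_M$ is the Euclidean sphere $S^{p+q-1}$ in $T_xW=\R^{p,q}$, and restricting $\alpha^*\phi^i_{k,0}$ and pushing forward along $\beta$ amounts, after the quadratically-compatible reduction, to integrating over the part of that sphere complementary to $\mathbb P_+(T_xM)$, i.e. over the normal sphere directions $S^{p-p'+q-q'-1}$. In these normal directions the Gauss equations of Lemma \ref{lemma_gauss_eqs} express each $\tilde\Omega^{Q}_{ij}$ (restricted to $T\Phi_M$) as $\epsilon_i\epsilon_j\sum_{r}\epsilon_r\,\omega^P_{ir}\wedge\omega^P_{jr}$ over the normal index range $r=m+1,\dots,n$, i.e. as $Q$-bilinear expressions in the second fundamental form. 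Plugging these into $\rho_{k,0}$ and integrating the $\chi^{-\frac{m+1-k}{2}}_i(\sigma)$-weighted product of $Q(y,\cdot)$-type one-forms over $S^{p+q-1}$ is \emph{exactly} the situation of the Weyl Lemma \ref{lemma_weyl}, with ambient sphere dimension $p+q-1$ and $h$ equal to the number of $\omega^P_{\cdot,0}$-type factors contracted against normal directions. The Weyl lemma outputs the antisymmetrized sum of $Q$-contractions $Q(\zeta_\pi,\zeta_\pi)$, which precisely reassembles (by the Gauss equations run in reverse, now intrinsically on $M$) the curvature forms $\tilde\Omega^{B,M}_{ij}$ of $M$, hence the forms $\rho^M_{k+2\nu,\nu}$ and thus $\phi^{i'}_{k+2\nu,\nu}$ on $M$. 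The combinatorial bookkeeping — how many ways $2\nu$ of the tangent "wedge slots" become pairs $\tilde\Omega^B$ and the remaining slots split between $\theta^E$ and $\omega^E_{\cdot,0}$ — produces the binomial coefficient $\binom{k/2+\nu}{\nu}$ and the power $2^{-2\nu}$; the constant $c(p+q,h)$ from the Weyl lemma, combined with the normalizing factors $\frac{\omega_k}{\pi^k(m+1-k)!\,\omega_{m+1-k}}$ in \eqref{eq:Ckr}, must collapse to exactly $1$ after using $S(a,b)=\frac12 B(\frac{a+1}2,\frac{b+1}2)$ and the duplication formula for $\Gamma$ — this telescoping of Gamma factors is precisely the "perfectly tuned parameters" alluded to in the introduction, and should be checked against Proposition \ref{prop:J_integral_2d}. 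Finally the sign $(-1)^{(q-q')i+\lfloor(q-q')/2\rfloor}$ and the index shift $i\mapsto i+q-q'$ come from the factor $\chi_i^{-\frac{q-q'}{2}}(-1)=(-1)^{\lfloor\frac{q-q'}2+\frac12\rfloor}\delta_{q-q',\,i\!\!\mod 2}$ appearing in the Weyl lemma output (with ambient negative index versus the negative index of $M$ differing by $q-q'$), together with the identities $\chi_i^{s+1}(-1)=-\chi_i^s(-1)$ for the parity tracking.

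\textbf{Main obstacle.} The genuinely delicate part is \emph{not} the vanishing of the non-transversal wave-front directions (handled by Lemma \ref{lem:can_restrict_fiberwise}) nor the fiberwise reduction (Lemma \ref{lem:pointwise}), but the bookkeeping that matches the Weyl-lemma output to the \emph{exact} linear combination $\sum_\nu 2^{-2\nu}\binom{k/2+\nu}{\nu}C^{i+q-q'}_{k+2\nu,\nu}$, constants and all. One must: (i) correctly enumerate which subsets of the permutation $\pi$ in \eqref{eq_def_rho} land in normal versus tangential directions after the blow-up, keeping track of the factor $|\sigma|^{1/2}$ that distinguishes the two sums in \eqref{eq_def_rho} and of how $\chi^{s}_i\cdot|\sigma|^{1/2}=\chi^{s+1/2}_i$ shifts the homogeneity degree so that the Weyl lemma is applied with the right exponent $-\frac{p+q+h}2$; (ii) verify that the $\Gamma$-function constant $c(p+q,h)$, after summation over $\nu$ with the $\rho$-combinatorics, combines with $\frac{\omega_k}{\pi^k(m+1-k)!\,\omega_{m+1-k}}$ to reproduce the normalization of $C^{\cdot}_{k+2\nu,\nu}$ on $M$ with coefficient exactly $2^{-2\nu}\binom{k/2+\nu}{\nu}$; and (iii) confirm the sign/parity shift via the $\chi$-identities. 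I expect (i) and (ii) to require the most care, and it is essentially here that the computation of the integral $J_{m,a}$ in Section \ref{sec_computation_integrals} (Proposition \ref{prop:J_integral_2d}) is invoked to make the recursion close.
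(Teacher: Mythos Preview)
Your overall strategy matches the paper's, and you have correctly identified every ingredient: fiberwise reduction via Lemma \ref{lem:pointwise}, quadratic compatibility via Lemma \ref{lemma_quad_comp}, the Gauss equations of Lemma \ref{lemma_gauss_eqs}, the Weyl Lemma \ref{lemma_weyl}, and the distributional integral $J_{m,a}$ of Proposition \ref{prop:J_integral_2d}. But your description of the fiber integration is garbled in a way that, followed literally, would not produce the formula.

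The fiber of $\beta$ over a point $[\xi]\in\mathbb P_M$ is \emph{not} the normal sphere $S^{n-m-1}$ alone: via the explicit parametrization $H(\xi,t,y)=\cos t\,\xi+\sin t\sum_r y_r\,SE_{m+r}(\xi)$ used in the paper, it is $[0,\tfrac\pi2]\times S^{n-m-1}$. Under $H^*$ the argument of the distributional weight becomes $\sigma_N=\cos^2 t\cdot\sigma_M(\xi)+\sin^2 t\cdot Q(y)$, and the relevant exponent is $-\tfrac{n+1-k}{2}$ (ambient), not $-\tfrac{m+1-k}{2}$. The push-forward is therefore a \emph{two-step} integral. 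One integrates over $t$ first: this is exactly $J_{n-k-1,\,n-m+h-1}(\sigma_M,Q(y);\chi_i^{-\frac{n-k+1}{2}})$, and Proposition \ref{prop:J_integral_2d} factors it as a sum of products $\chi_\bullet^{-\frac{m-k-h+1}{2}}(\sigma_M)\cdot\chi_\bullet^{-\frac{n-m+h}{2}}(Q(y))$. Only after this decoupling does the Weyl Lemma apply --- and it applies on the \emph{normal} sphere $S^{n-m-1}$, i.e.\ with $(p,q)$ in its statement replaced by $(p-p',q-q')$, not on $S^{p+q-1}$ as you wrote. The factor $\chi_j^{-\frac{q-q'}{2}}(-1)$ emerging from the Weyl Lemma is what produces the sign $(-1)^{(q-q')i+\lfloor(q-q')/2\rfloor}$ and the index shift $i\mapsto i+q-q'$.

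So $J_{m,a}$ is not a bookkeeping afterthought that ``makes the recursion close'': it is the step that separates the tangential singularity $\chi_\bullet(\sigma_M)$ from the normal integrand $\chi_\bullet(Q(y))$, and without it the Weyl Lemma cannot be applied at all. Once you reorder the argument this way, your points (i)--(iii) are precisely the remaining work, and the $\Gamma$-factor collapse you anticipate does occur.
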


\begin{proof} 
We will compute $\tilde\phi_{k,0}^i=\beta_*\alpha^*\phi_{k,0}^i$ where $\alpha,\beta$ are as in \eqref{eq:restriction_curvature_measure}.
	By Lemma \ref{lem:can_restrict_fiberwise} and Lemma \ref{lem:pointwise}, $(\tilde\phi_{k,0}^i)_{z_0}$ is well-defined for all $z_0\in M$, and those values determine $\tilde\phi_{k,0}^i$. By invariance of the constructions, we may only consider $z_0$
	such that $T_{z_0}M$ is spanned by the coordinate vectors $\frac\partial{\partial x_ 0},\cdots ,\frac\partial{\partial x_{p'-1}},\frac\partial{\partial x_p},\cdots,\frac\partial{\partial x_{p+q'-1}}$.

Let $P,Q$ be the standard bilinear forms on $\mathbb R^{n+1}\equiv \mathbb R^{p,q}:=N$. Let $E_0,\ldots, E_n$ be a $P$-orthonormal frame of $\R^{p,q}$ defined locally on $\mathbb P_M$ so that $E_0, \ldots , E_m$ define a local section of the full flag bundle $\Phi(M)\to \mathbb P_M$,  (where $\mathbb P_M$ is identified with $\mathbb P_+(TM)$ using $Q$). Suppose further that $E_0,\ldots, E_n$ defines an element of $\Psi_J(\R^{p,q})$ (e.g. with respect to the partition $J$ that has $J_+=\{2,\ldots,p'-1, m+1,\ldots,m+p-p'+1\}$) whenever $E_0\in S_{z_0}M$. In particular,  $SE_r=\epsilon_rE_r$ for $m+1\leq r\leq n$ at $S_{z_0}M$, with $\epsilon_r=1$ if $r\in J_+$ and $\epsilon_r=-1$ otherwise.

Given $\xi\in \mathbb P_M, t\in [0,\frac\pi2]$ and $y\in S^{n-m-1}$ put 
\begin{equation*}
 \widetilde H(\xi,t,y)=\cos(t) \xi+\sin(t)\sum_{r=1}^{n-m} y_r SE_{m+r}(\xi),
\end{equation*}
and consider $H\colon \mathbb P_M\times[0,\frac\pi 2]\times S^{n-m-1}\rightarrow \mathbb P_N$ given by 
\begin{displaymath}
 H(\xi,t,y)=P^{-\frac12}(\widetilde H(\xi,t,y)) \widetilde H(\xi,t,y).
\end{displaymath}  Note that $H(\xi,\frac\pi2,y)$ is $Q$-orthogonal to $T_xM$ for $\xi\in S_xM$. We will assume that $E_0,\ldots, E_m$ and $SE_{m+1},\ldots, SE_n$ are positively oriented bases of $T_xM$ and its $Q$-orthogonal complement, respectively.

Let $\sigma_N(\zeta)=Q(\zeta),\sigma_M(\xi)=Q(\xi)$ for $\zeta\in \mathbb P_N, \xi\in \mathbb P_M$. For $\xi\in S_{z_0}M$ we have
\begin{equation}\label{eq:sigma_N}
 \sigma_N(H(\xi,t,y))=\cos^2(t)\sigma_M(\xi)+\sin^2(t)\sum_{r=1}^{n-m}\epsilon_{m+r} y_r^2.
\end{equation}

Let $\hat p$ be the projection of $\mathbb P_M\times [0,\frac\pi2]\times S^{n-m-1}$ to the first factor. Then $\beta_* \alpha^*(\phi_{k,0}^i)=\hat p_* H^*(\phi_{k,0}^i)$. We proceed to compute this generalized form evaluated at $z_0$.  
By Lemma \ref{lem:pointwise} ii), we have  $\hat p_*(H^*(\phi_{k,0}^i))_{z_0}=(\hat p_{z_0})_*(H^*(\phi_{k,0}^i)_{z_0})$, where $\hat p_{z_0}$ is the restriction of $\hat p$ to $\mathbb P_+(T_{z_0}M)\times [0,\frac\pi2]\times S^{n-m-1}$.

Given $\xi\in \mathbb P_M,t\in[0,\frac\pi2],y\in S^{n-m-1}$, let $\hat E_0(\xi,t,y),\ldots,\hat E_{m+1}(\xi,t,y)$ be the $P$-orthonormal basis obtained by the Gram-Schmidt process applied to the sequence $H(\xi,t,y),$  $E_1(\xi),\ldots, E_m(\xi), \frac{d}{dt}H(\xi,t,y)$. Note that $\hat E_0\equiv H$. Given $y\in S^{n-m-1}$ and $\xi\in S_xM$, let $\hat E_{m+2},\ldots, \hat E_n$ be a positively oriented $P$-orthonormal basis of the $P$-orthogonal space to $T_xM\oplus\mathbb{R} \sum_{r=1}^{n-m} y_r SE_{m+r}$. Taken together, $\hat E_0,\ldots, \hat E_n$ form a $P$-orthonormal basis.
Let $\theta^E_i,\omega^E_{i,j}$ (resp. $\hat\theta^E_i,\hat\omega^E_{i,j}$) be the solder and connection forms associated to $E_0,\ldots, E_n$ (resp. $\hat E_0,\ldots,\hat E_n$).  Thus $\theta_i^E,\omega_{i,j}^E$ are differential forms on $\mathbb P_M$, while $\hat\theta^E_i,\hat\omega^E_{i,j}$ are forms on $\mathbb P_M\times[0,\frac{\pi}2]\times S^{n-m-1}$.

By \eqref{eq_def_rho} and \eqref{eq_relation_phi_rho}, and noting that $\rho_{k,0}$ is $\OO(n)$-invariant, we have 
\begin{displaymath}
 H^* \phi_{k,0}^i=\chi_i^{-\frac{n-k+1}2}(\sigma)\sum_\pi\mathrm{sgn}(\pi) \hat \theta^E_{\pi_1}\wedge\cdots\wedge\hat\theta^E_{\pi_k}\wedge \hat \omega^E_{\pi_{k+1},0}\wedge \cdots \wedge\hat \omega^E_{\pi_n,0}.
\end{displaymath}

Given $\xi\in S_{z_0}M$, note that $P(\tilde H(\xi,t,y))=1, d(P(\tilde H))_{(\xi,t,y)}=0$ and $\hat E_{i}(\xi,t,y)= E_i(\xi)$ for $i=1,\ldots,m$. Hence, at $(\xi,t,y)$ 
\begin{align*}
\hat E_{m+1}&=-\sin(t)E_0+\cos(t) \sum_{r=1}^{n-m} y_r SE_{m+r},\\
d\hat E_0&=d\tilde H=\hat E_{m+1} dt+\cos(t)dE_0+\sin(t)\sum_{r=1}^{n-m} d(y_r SE_{m+r}).
\end{align*}

Thus, the following relations hold at $(\xi,t,y)$ with $\xi\in S_{z_0}M$
\begin{align*}
\hat\theta_i^E&=P(\hat E_i,\cdot)=\theta_i^E, \qquad i=1,\ldots, m,\\
\hat\theta_{m+1}^E&=-\sin(t) P(E_0,\cdot )+\cos(t)\sum_r y_r P(SE_{m+r},\cdot)=-\sin(t) \theta_0^E,
\end{align*}
\begin{align*}
\hat\omega_{1,0}^E=P(\hat E_1,d\hat E_0) &=\cos(t) P(E_1, dE_0)+\sin(t)\sum_r P(SE_1, y_rdE_{m+r})\\
& =\cos(t) \omega_{1,0}^E+\sin(t)\sum_r y_r(\tau \omega_{0,m+r}^E-\sigma\omega_{1,m+r}^E), 
\end{align*}
\begin{align*}
\hat \omega_{i,0}^E=P(\hat E_i, d\hat E_0)
&=\cos(t) P(E_i, dE_0)+\sin(t)\sum_r P(SE_i,y_r dE_{m+r})\\
&=\cos(t) \omega_{i,0}^E +\sin(t)\epsilon_i\sum_r y_r \omega_{i,m+r}^E,\qquad i=2,\ldots,m,
\end{align*}
since $SE_1=\tau E_0-\sigma E_1, SE_i=\epsilon_i E_i$ at $z_0$. Also, up to terms lacking $dt$ we have 
\begin{align*}
\hat\omega_{m+1,0}^E & =P(\hat E_{m+1}, d\hat E_0) \equiv dt.
\end{align*}

For $j=2,\ldots,n- m$, up to terms lacking $dy$
\begin{align*}
\hat\omega^E_{m+j,0}&=\sin(t) \sum_r dy_r P(\hat E_{m+j}, SE_{m+r})=\sin(t)\langle dy,e_{j}\rangle,
\end{align*}
where $e_j=(P( \hat E_{m+j},SE_{m+1}),\ldots, P(\hat E_{m+j},SE_{n}))\in \mathbb R^{n-m}$. The vectors $e_2,\ldots, e_{n-m}$ are orthonormal, as their entries are the  coordinates of $\hat E_{m+2},\ldots, \hat E_{n}$ with respect to the $P$-orthonormal basis $SE_{m+1},\ldots,SE_n$. Hence, $e_2,\ldots, e_{n-m}$ form a positively oriented orthonormal basis of $T_y S^{n-m-1}$,  and thus
\begin{align*}
\hat\omega^E_{m+1,0}\wedge\cdots\wedge\hat\omega^E_{n,0}&=\sin^{n-m-1}(t)dt\wedge dS^{n-m-1}
\end{align*}
modulo terms vanishing on $\mathbb R \cdot \frac{\partial}{\partial t} \oplus T_yS^{n-m-1}$. Hence 
\begin{align*}
 H^*(\phi_{k,0}^i)_{z_0}&\equiv\frac{(n-k)!}{(m-k)!}\sum_{h=0}^{m-k}f_h(t)T_h\wedge dt\wedge dS^{n-m-1},
\end{align*}
where
\begin{align*}
 f_h(t)&={m-k\choose h}\cos^{m-k-h}(t)\sin^{n-m+h-1}(t)\chi^{-\frac{n-k+1}2}_i(H^*\sigma_{N}),\\
 T_h&=\sum_{\pi\in S_m} \sgn \pi \langle y,{\hat\zeta}_{\pi_1}\rangle\wedge\cdots \wedge\langle y,{\hat\zeta}_{\pi_h}\rangle\wedge \theta^E_{\pi_{h+1}}\wedge\cdots\wedge\theta^E_{\pi_{h+k}}\wedge\omega^E_{\pi_{h+k+1},0}\wedge\cdots\wedge\omega^E_{\pi_{m},0},
\end{align*}
with 
\begin{align}
\hat \zeta_1&=\tau(\omega_{0,m+1}^E,\ldots, \omega_{0,n}^E)-\sigma(\omega_{1,m+1}^E,\ldots,\omega_{1,n}^E),\label{eq_zeta_1}\\
\hat \zeta_i&=\epsilon_{i}(\omega^E_{i,m+1},\ldots, \omega^E_{i,n})\quad  \mbox{ if }1<i\leq m.\label{eq_zeta_r}
\end{align}

 We now compute $(\hat p_{z_0})_* H^*(\phi_{k,0}^i)_{z_0}=(p_2)_* (p_1)_* H^*(\phi_{k,0}^i)_{z_0}$, where  
$$\mathbb P_+(T_{z_0}M)\times [0,\frac\pi2]\times S^{n-m-1}\stackrel{p_1}\longrightarrow\mathbb  P_+(T_{z_0}M)\times S^{n-m-1}\stackrel{p_2}\longrightarrow \mathbb P_+(T_{z_0}M)$$are the projections. Denoting $\sigma_t(\xi,u)=\sigma_N(H(\xi,t,u))$, by \eqref{eq:sigma_N} and Proposition \ref{prop:J_integral_2d}, we have
\begin{align}
 (p_1)_*&(f_h(t) dt) ={m-k\choose h} \int_0^{\frac\pi2} \cos^{m-k-h}(t)\sin^{n-m+h-1}(t)\sigma_t^*\chi^{-\frac{n-k+1}2}_i dt\notag\\
 =&{m-k\choose h}S\left({n-m+h-1},{m-k-h}\right)\Big(\chi_i^{-\frac{m-k-h+1}2}(\sigma_M)\chi_0^{-\frac{n-m+h}2}(Q(y))+\notag \\
 &+(-1)^{i+1}\chi_{i+1}^{-\frac{m-k-h+1}2}(\sigma_M)\chi_1^{-\frac{n-m+h}2}(Q(y))\Big).\label{int_fdt}
\end{align}

Considering
\begin{equation} \label{eq_zetas}
\zeta_j:=\hat \zeta_j \cdot \mathrm{diag}(\epsilon_{m+1},\ldots, \epsilon_{n}),
 \end{equation}
 we have $\langle y,\hat \zeta_j\rangle =Q(y,\zeta_j)$, and by the Gauss equations of Lemma \ref{lemma_gauss_eqs}
\begin{align*}
 Q(\zeta_i,\zeta_j)=\tilde\Omega^B_{i,j},\qquad Q(\zeta_1,\zeta_j)=|\sigma_M|^\frac12\tilde\Omega^B_{1,j},\qquad  2\leq i,j\leq m.
\end{align*}
Given $\pi\in S_m$, let $\delta(\pi)=|\sigma_M|^{\frac12}$ if $1\in \{\pi_1,\ldots, \pi_h\}$, and $\delta(\pi)=1$ otherwise.
 
The Weyl Lemma \ref{lemma_weyl} yields
\begin{align*}
&\int_{S^{n-m-1}} \chi^{-\frac{n-m+h}2}_j(Q(y)) Q(y,\zeta_{\pi_1}) \wedge\cdots \wedge Q(y,\zeta_{\pi_h}) dS^{n-m-1}=\\
 &=c(n-m,h)\chi_j^{-\frac{q-q'}2}(-1)\frac1{h!}\sum_{\tau\in S_h} \sgn \tau Q(\zeta_{\pi\circ\tau_1},\zeta_{\pi\circ\tau_2})\wedge \cdots \wedge Q(\zeta_{\pi\circ\tau_{h-1}},\zeta_{\pi\circ\tau_h})\\
 &=c(n-m,h)\chi_j^{-\frac{q-q'}2}(-1)\delta(\pi)  \tilde\Omega^B_{\pi_1,\pi_2}\wedge \cdots \wedge \tilde\Omega^B_{\pi_{h-1},\pi_h}.
\end{align*}

 Now we take the Riemannian metric $P'$ given by Lemma \ref{lemma_quad_comp} iii). Since $P'$ coincides with $P$ up to second order, their solder and connection forms coincide at $z_0$. Therefore, for even $h$, 
\begin{align*}
  (p_2)_*& (p_1)_*( f(t)T_hdtdS^{n-m-1})={m-k\choose h} S(n-m+h-1,m-k-h)c(n-m,h)\cdot\\
 &\cdot\left(\chi_0^{-\frac{q-q'}2}(-1)\chi_i^{-\frac{m-k-h+1}2}(\sigma_M)+(-1)^{i+1}\chi_1^{-\frac{q-q'}2}(-1)\chi_{i+1}^{-\frac{m-k-h+1}2}(\sigma_M)\right)\rho_{k+h,h}\\
 &=(-1)^{(q-q')(i+1)+\left\lfloor\frac{q-q'}2\right\rfloor}{m-k\choose h}\frac{\Gamma\left(\frac{m-k-h+1}2\right)\Gamma\left(\frac{h+1}2\right) \pi^\frac{n-m-1}2}{\Gamma\left(\frac{n-k+1}2\right)}\phi_{k+h,h}^{i+q-q'}.
\end{align*}
Thus,
\begin{align}\notag
\beta_*\alpha^*\phi^i_{k,0} = & (-1)^{(q-q')(i+1)+\lfloor\frac{q-q'}{2}\rfloor} \frac{(n-k)!}{(m-k)!}\cdot\\
&\cdot\sum_{\nu=0}^{\lfloor\frac{m-k}{2}\rfloor}{m-k\choose 2\nu}\frac{\Gamma(\frac{m-k-2\nu+1}2)\Gamma(\frac{2\nu+1}2) \pi^\frac{n-m-1}2}{\Gamma(\frac{n-k+1}2)} \phi_{k+2\nu,\nu}^{i+q-q'}.\label{eq:restriction_form_boundary}
\end{align}
The statement follows.
\end{proof}

\begin{Proposition} \label{prop_res_interior}
Let $e\colon M^{p',q'}\rightarrow \mathbb R^{p,q}$ be an isometric embedding, and put $m=p'+q'-1, n=p+q-1$. The interior term of the restriction $e^*C_{k,0}^i$ is 
 \begin{displaymath}
\frac{(-1)^{(q-q')i+\lfloor \frac{q-q'}2\rfloor}}{2^{m+1-k}}{\frac{m+1}{2}\choose \frac{m+1-k}{2}} C_{m+1,\frac{m+1-k}{2}}^{i+q-q'}
\end{displaymath}
if $m-k$ is odd, and it vanishes otherwise.
\end{Proposition}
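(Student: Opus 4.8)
**Proof plan for Proposition 5.?? (the interior term of $e^*C_{k,0}^i$).**

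The plan is to reuse the bulk of the computation from Proposition \ref{prop_res_boundary}, tracking only the top-filtration ($\deg = m+1$) component, which was discarded there. Recall that the restriction of a curvature measure $[\phi,\omega]$ has two pieces: $\phi' = q_*\theta^*\omega$ (the interior term) and $\omega' = \beta_*\alpha^*\omega$ (the boundary term). Since $C_{k,0}^i = \mathrm{const}\cdot[0,\phi_{k,0}^i]$ has no interior form, the interior term of its restriction is $\phi' = q_*\theta^*\phi_{k,0}^i$, where $q\colon \mathbb P_+(N^*M)\to M$ is the projection and $\theta\colon \mathbb P_+(N^*M)\hookrightarrow \mathbb P_N$ the fiberwise inclusion. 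By Lemma \ref{lem:can_restrict_fiberwise}(i) and Lemma \ref{lem:pointwise}, $\theta^*\phi_{k,0}^i$ is well-defined and its pushforward can be computed fiberwise at an arbitrary base point $z_0\in M$; by naturality we take $z_0$ adapted to the coordinate axes as in Proposition \ref{prop_res_boundary}.

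First I would set up the fiber integral: over $z_0$, the fiber $\mathbb P_+(N^*_{z_0}M) = S^{n-m-1}$ (the $Q$-normal directions of unit $P$-length), and $\theta$ sends $y\in S^{n-m-1}$ to the corresponding point of $\mathbb P_N$. Then $\theta^*\phi_{k,0}^i$ is obtained from the formula \eqref{eq_relation_phi_rho}–\eqref{eq_def_rho} for $\phi_{k,0}^i$ by substituting $\xi = \sum_r y_r SE_{m+r}$, i.e. by restricting all the forms $\hat\theta_j^E,\hat\omega_{j,0}^E$ along the normal sphere. This is the $t=\pi/2$ slice of the homotopy $H$ used in Proposition \ref{prop_res_boundary}; concretely, on $S_{z_0}M$ one has $\sigma_N = \sum_r \epsilon_{m+r}y_r^2 = -Q$-restricted-to-normal, and the solder forms $\hat\theta_j^E$ for $j\le m$ pull back to the tangential solder forms $\theta_j^E$, while the remaining solder/connection forms encode the normal geometry. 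The key point for getting degree $m+1$ is that we now need a form of top degree on $M$, so all $m+1$ "tangential slots" must be filled by $\theta^E_0,\dots,\theta^E_m$; counting degrees against the light-cone weights (Proposition \ref{prop:phi_degree}) shows this forces $\phi_{k,0}^i$ to contribute only when $m-k$ is odd — exactly the parity condition in the statement — and the normal part then consists entirely of curvature forms $\tilde\Omega^B_{i,j}$, which by the Gauss equations (Lemma \ref{lemma_gauss_eqs}) reassemble into the intrinsic curvature forms of $M$, yielding $\psi_{m+1,(m+1-k)/2}$.

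The concrete computation then runs parallel to Proposition \ref{prop_res_boundary}: one isolates the part of $\theta^*\phi_{k,0}^i$ with exactly $m+1-k$ factors $Q(y,\zeta_j)$ and $k$ factors $\theta^E_j$, applies the Weyl Lemma \ref{lemma_weyl} with $h = m+1-k$ (even, since $m-k$ is odd) to carry out the $S^{n-m-1}$-integral, producing the constant $c(n-m, m+1-k)\cdot\chi_i^{-(q-q')/2}(-1)$ together with an antisymmetrized product $\sum_\tau \sgn\tau\, Q(\zeta_{\tau_1},\zeta_{\tau_2})\wedge\cdots$ of the normal curvature $2$-forms. Using Lemma \ref{lemma_quad_comp}(iii) to pass to a quadratically compatible metric $P'$ (so $P$ and $P'$ have the same $1$-jet at $z_0$, hence the same solder and connection forms), the Gauss equations identify this antisymmetrized product with $\psi_{m+1,(m+1-k)/2}$ up to the standard combinatorial factor $\binom{(m+1)/2}{(m+1-k)/2}$, and gathering the $\Gamma$-factors from $c(n-m,m+1-k)$ against the normalization constants in \eqref{eq:Ckr}–\eqref{eq:C_top_0} produces exactly $2^{-(m+1-k)}\binom{(m+1)/2}{(m+1-k)/2}$ and the sign $(-1)^{(q-q')i + \lfloor (q-q')/2\rfloor}$, where the half-integer index shift $i\mapsto i+q-q'$ comes from $\chi_i^{s}(-1) = (-1)^{\lfloor s+1/2\rfloor}\delta_{2s,i}$ applied with $s = -(q-q')/2$. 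I expect the parity/degree bookkeeping — verifying that the only surviving term of $\phi_{k,0}^i$ in top filtration degree is the all-$\theta$ one and that the sign from the Gram–Schmidt reorderings and from $\chi_i^s(-1)$ combine correctly — to be the main obstacle; the $\Gamma$-function arithmetic is routine once the Weyl Lemma is in place.
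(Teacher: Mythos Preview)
Your plan is essentially the paper's own proof: compute $q_*\theta^*\phi_{k,0}^i$ fiberwise at an adapted $z_0$ via Lemmas \ref{lem:can_restrict_fiberwise}(i) and \ref{lem:pointwise}, recognize the map $\theta$ as the $t=\pi/2$ slice of the homotopy $H$ from Proposition \ref{prop_res_boundary}, apply the Weyl Lemma \ref{lemma_weyl} with $h=m+1-k$ over the normal sphere, and use the Gauss equations (Lemma \ref{lemma_gauss_eqs}) to identify the resulting antisymmetrized product with $\psi_{m+1,(m+1-k)/2}$.

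Two inaccuracies in your write-up are worth flagging. First, the parity condition $m-k$ odd does \emph{not} come from $\sigma$-homogeneity or Proposition \ref{prop:phi_degree}; it comes directly from the Weyl Lemma, since $c(n-m,h)=0$ for $h$ odd and here $h=m+1-k$. Second, the sentence ``all $m+1$ tangential slots must be filled by $\theta^E_0,\dots,\theta^E_m$'' is misleading: only $k$ of the $m+1$ tangential one-forms are solder forms $\theta^E_j$; the remaining $m+1-k$ are the second-fundamental-form forms $\langle y,\hat\zeta_j\rangle$ (with an extra $\hat\zeta_0=\epsilon(\omega^E_{0,m+1},\dots,\omega^E_{0,n})$ appearing because at $t=\pi/2$ the index $0$ re-enters via $\hat\theta^E_{m+1}$ and $\hat\omega^E_{m+1,0}$). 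It is precisely these $m+1-k$ factors that the Weyl Lemma converts into $(m+1-k)/2$ curvature two-forms $\tilde\Omega^B$, yielding $\psi_{m+1,(m+1-k)/2}$. With these two points corrected, your outline matches the paper's argument.
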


\begin{proof} 
Let $q,\theta$ be as in \eqref{eq:restriction_curvature_measure}, and $z_0$ as in the previous proof. Using Lemma \ref{lem:can_restrict_fiberwise}, and applying Lemma \ref{lem:pointwise}ii) with $S=M$ and $W=N_{z_0}^*M$, we conclude that $q_*\theta^*\phi_{k,0}^i$ is a smooth measure, and we may compute the restriction $q_*\theta^*\phi_{k,0}^i|_{z_0}$ fiberwise. Take a $P$-orthonormal frame $E_0,\ldots, E_n$ defined on $M$, such that $E_0,\ldots, E_m$ are tangent to $M$, and $SE_i=\epsilon_i E_i$ at $z_0$ for $i=0,\ldots,n$. Define  $G\colon {M\times S^{n-m-1}}\rightarrow \mathbb P_N$ by
\begin{displaymath}
G(x,y)=\sum_{r=1}^{n-m}y_rSE_{m+r}(x).
\end{displaymath}
Note that $\sigma_N(G(z_0,y))=\sum_{r=1}^{n-m}\epsilon_{m+r} y_r^2=Q(y)$.

Let $\hat E_0(x,y), \ldots, \hat E_{m+1}(x,y)$ be the $P$-orthonormal vectors obtained by applying the Gram-Schmidt process to $G(x,y), E_1,\ldots, E_m,-E_0$. Let $ \hat \theta_i,\hat\omega_{i,j}$ be the corresponding solder and connection forms. Note that these $\hat E_i$ correspond to the previous $\hat E_i$  when  $t=\frac\pi2, \sigma=\epsilon, \tau=0$. Hence, at $z_0$ we can use the previously obtained relations for $\hat \theta_i^E,\hat\omega_{i,j}^E$ taking these values for $t,\sigma,\tau$.  

Moreover, at $z_0$ we have
\begin{displaymath}
\hat\omega_{m+1,0}^E=\sum_{r=1}^{n-m}y_r \omega_{m+r,0}^E
\end{displaymath}
and
\begin{displaymath}
\hat\omega^E_{m+2,0}\wedge\cdots\wedge\hat\omega^E_{n,0}\equiv dS^{n-m-1}
\end{displaymath}
modulo terms vanishing at $T_yS^{n-m-1}$. Therefore
\begin{displaymath}
 G^*(\phi_{k,0}^i)_{z_0}\equiv\frac{(n-k)!}{(m+1-k)!}\chi_i(G^*\sigma_N)Y_{m+1-k}\wedge dS^{n-m-1},
\end{displaymath}
where, using the notation  \eqref{eq_zeta_1}, \eqref{eq_zeta_r}, \eqref{eq_zetas}, and putting $\hat \zeta_0=\epsilon(\omega_{0,m+1}^E,\ldots,\omega_{0,n}^E)$,
\begin{displaymath}
Y_h=\sum_{\pi\in S_{m+1}} \sgn \pi \langle y,\hat \zeta_{\pi_0} \rangle\wedge\cdots \wedge\langle y,\hat \zeta_{\pi_{h-1}}\rangle\wedge \theta^E_{\pi_{h}}\wedge\cdots\wedge\theta^E_{\pi_{m}}.
\end{displaymath} 

By the Weyl Lemma \ref{lemma_weyl} and the Gauss equations of Lemma \ref{lemma_gauss_eqs},
\begin{multline*}
\int_{S^{n-m-1}} \chi_i^{-\frac{n-k+1}2}(Q(y)) \langle y,\hat \zeta_{\pi_0}\rangle\wedge\cdots\wedge\langle y,\hat \zeta_{\pi_{m-k}}\rangle dy\\
=\frac{c(n-m,m+1-k)\chi_i^{-\frac{q-q'}2}(-1)}{(m+1-k)!}\sum_{\tau\in S_{m+1-k}} \sgn \tau Q(\zeta_{\pi\circ\tau_0},\zeta_{\pi\circ\tau_1}) \wedge \cdots \wedge Q(\zeta_{\pi\circ\tau_{m-k-1}},\zeta_{\pi\circ\tau_{m-k}})\\
=c(n-m,m+1-k)\chi_i^{-\frac{q-q'}2}(-1) \sum_{\tau\in S_{m+1-k}} \sgn \tau  \tilde\Omega^B_{\pi_0,\pi_1}\wedge\cdots\wedge \tilde\Omega^B_{\pi_{m-k-1},\pi_{m-k}},
\end{multline*}
where we put $\zeta_0=\epsilon \hat \zeta_0$. Therefore, 
\begin{align}
q_*\theta^*\phi_{k,r}^i&=\int_{S^{n-m-1}} G^*(\phi_{k,0}^i) dS^{n-m-1}\notag \\
&= \frac{c(n-m,m+1-k)\chi_i^{-\frac{q-q'}2}(-1)(n-k)!}{(m+1-k)!} \psi_{m+1,\frac{m+1-k}2}\label{eq:restriction_form_interior}
\end{align}
which vanishes if $m-k$ is even.
The statement follows.
\end{proof}

\proof[Proof of Theorem \ref{thm:weyl}]
Let $e:M^{p',q'}\looparrowright N^{p,q}$ be an isometric immersion between pseudo-Riemannian manifolds. We want to show that $e^*\Lambda_k^{N}=\Lambda^{M}_{k}$. 

The case $N=\R^{p,q}$ follows from Propositions \ref{prop_res_boundary} and \ref{prop_res_interior} as we show next. Suppose that $e:M^{p',q'} \looparrowright \R^{p,q}$ is an isometric immersion. Since the curvature of $\R^{p,q}$ vanishes, we have  by \eqref{eq:C_to_Lambda}
\begin{align*}
e^* \Lambda_k^{\R^{p,q}} & = \i^q \frac{\pi^k}{k! \omega_k} e^*(C_{k,0}^0+\i C_{k,0}^1)\\
& = \i^q \frac{\pi^k}{k! \omega_k} (-1)^{\lfloor \frac{q-q'}{2}\rfloor} \sum_{\nu=0}^{\frac{m+1-k}{2}} \frac{1}{2^{2\nu}} C_{k+2\nu,\nu}^{q-q'}\\
& \quad + \i^{q+1} \frac{\pi^k}{k! \omega_k}  (-1)^{(q-q')+\lfloor \frac{q-q'}{2}\rfloor} \sum_{\nu=0}^{\frac{m+1-k}{2}} \frac{1}{2^{2\nu}} C_{k+2\nu,\nu}^{1+q-q'} = \Lambda_k^{M^{p',q'}},
\end{align*} 
using \eqref{eq:C_to_Lambda} and considering different cases according to the parity of $q-q'$.

For the general case, let $\tilde e\colon N^{p,q}\rightarrow \R^{a,b}$ be an isometric embedding, which exists by the pseudo-Riemannian Nash Theorem \ref{thm_nash}. Using the previous case and the functoriality of pull-backs we deduce
 \begin{displaymath}
   e^* \Lambda^N_k=e^* \tilde e^*\Lambda_k^{\R^{a,b}}= (\tilde e \circ e)^*\Lambda_k^{\R^{a,b}}= \Lambda_k^M.\qedhere
\end{displaymath}

\begin{Remark}\label{rem:forms_are_defined}
	In fact, since \eqref{eq:restriction_form_boundary} and \eqref{eq:restriction_form_interior} hold exactly and not just at the level of curvature measures, we have $e^*(\kappa^N_k, \lambda^N_k)=(\kappa^M_k,\lambda^M_k)$ also at the level of forms.
\end{Remark}

\begin{Definition}\label{def:LC_regular_LK}
	Let $(X,g)$ be an LC-regular manifold of changing signature. Define its Lipschitz-Killing forms (resp. curvature measures, valuations) by $(\kappa^X_k, \lambda^X_k):=e^*(\kappa^M_k,\lambda^M_k)$, resp. $\Lambda_k^X:=e^*\Lambda_k^M$, $\mu_k^X:=e^*\mu_k^M$, where $e:X\hookrightarrow M$ is an isometric embedding into any pseudo-Riemannian manifold $M$.
\end{Definition}
\begin{Proposition}\label{prop:LC_regular_LK}
	The Lipschitz-Killing curvature measures and forms of an LC-regular manifold are well-defined.
\end{Proposition}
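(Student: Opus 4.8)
The goal is to show that Definition \ref{def:LC_regular_LK} is unambiguous: the forms $(\kappa^X_k,\lambda^X_k)$, the curvature measures $\Lambda^X_k$, and the valuations $\mu^X_k$ do not depend on the choice of isometric embedding $e:X\hookrightarrow M$ into a pseudo-Riemannian manifold. The plan is to reduce this to the Weyl principle (Theorem \ref{thm:weyl}) combined with the transitivity and existence results for embeddings already established in Section \ref{sec:lc_regular}.

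\proof
First I would note that the restriction $e^*(\kappa^M_k,\lambda^M_k)$ (equivalently $e^*\Lambda^M_k$) is \emph{well-defined} for any isometric embedding $e:X\hookrightarrow M$ of an LC-regular manifold: by Proposition \ref{prop:LC_regular_is_intrinsic}, LC-regularity of $(X,g)$ forces $e(X)$ to be LC-transversal in $M$, and then Corollary \ref{cor:LK_restrictions_well_def} (via Corollary \ref{cor:wave_front_set_of_LK} and Proposition \ref{prop:nondegenerate_restriction}) guarantees that the pull-backs of the representing forms $\kappa^M_k,\lambda^M_k$ exist as generalized forms. Since LC-regularity is an intrinsic property, this applies to \emph{every} embedding into \emph{every} pseudo-Riemannian ambient space. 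So the only issue is independence of the choice.

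Next, the independence is a local statement on $X$: two generalized forms on $X$ agree iff they agree in a neighbourhood of each point. So fix $p\in X$ and let $e_1:X\hookrightarrow M_1$, $e_2:X\hookrightarrow M_2$ be two isometric embeddings into pseudo-Riemannian manifolds. Apply Lemma \ref{lemma:two_manifolds_simultaneous_embedding}: there are neighbourhoods $U\subset X$ of $p$, $U_j\subset M_j$ of $e_j(p)$ with $U_j\cap e_j(X)=e_j(U)$, a pseudo-Riemannian manifold $(N,Q)$, and isometric embeddings $f_j:U_j\hookrightarrow N$ with $f_1|_{e_1 U}=f_2|_{e_2 U}=:g$, a common isometric embedding $g:U\hookrightarrow N$. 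Now on $U$ the composition $f_j\circ e_j = g$ for $j=1,2$, so by functoriality of the restriction operation (for generalized forms/curvature measures, valid once the relevant wave-front transversality holds, which it does here since $U_j$ and hence $U$ are LC-transversal in $N$ by Proposition \ref{prop:LC_regularity_transitivity}) we get
\begin{displaymath}
e_j^*\bigl(\kappa^{M_j}_k,\lambda^{M_j}_k\bigr)\big|_U = e_j^* f_j^*\bigl(\kappa^{N}_k,\lambda^{N}_k\bigr)\big|_{f_j(U_j)} = g^*\bigl(\kappa^{N}_k,\lambda^{N}_k\bigr).
\end{displaymath}
Here the middle equality uses Remark \ref{rem:forms_are_defined} (the Weyl principle holds at the level of forms, not just curvature measures): since $f_j:U_j\hookrightarrow N$ is an isometric immersion of pseudo-Riemannian manifolds, $f_j^*(\kappa^N_k,\lambda^N_k)=(\kappa^{M_j}_k,\lambda^{M_j}_k)$ on $U_j$. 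The right-hand side $g^*(\kappa^N_k,\lambda^N_k)$ does not depend on $j$, so $e_1^*(\kappa^{M_1}_k,\lambda^{M_1}_k)$ and $e_2^*(\kappa^{M_2}_k,\lambda^{M_2}_k)$ agree on $U$, hence near $p$. As $p$ was arbitrary, they agree on all of $X$. The same argument (or globalization of the form-level statement) gives independence of $\Lambda^X_k$ and $\mu^X_k$.

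The only delicate point is making sure the functoriality $e_j^*\circ f_j^* = (f_j\circ e_j)^*$ is legitimate for these generalized objects; this requires checking that each restriction in the chain satisfies the wave-front transversality conditions of \eqref{eq_wf_conditions_pull_back}, which follows from the fact that all the submanifolds involved are LC-transversal in their ambients (Propositions \ref{prop:LC_regularity_transitivity} and \ref{prop:nondegenerate_restriction}), together with $\WF(\phi_{k,r}^{i,N})\subset N^*(\LC^*_N)$ from Lemma \ref{lem:fiber_is_transversal}. The composition identity for restrictions of smooth curvature measures is \eqref{eq:res_curv}, and it extends by continuity to the generalized setting under these transversality hypotheses, exactly as in the proof of Proposition \ref{prop:restricting_curvature_measures}. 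The LC-regularity of $U$ (equivalently LC-transversality of $g(U)$ in $N$, by Propositions \ref{prop:LC_regular_is_intrinsic} and \ref{prop:LC_regularity_transitivity}) ensures $g^*(\kappa^N_k,\lambda^N_k)$ itself is well-defined.
\endproof
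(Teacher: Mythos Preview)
Your proof is correct and follows essentially the same approach as the paper: use Lemma \ref{lemma:two_manifolds_simultaneous_embedding} to embed neighbourhoods of two ambients into a common pseudo-Riemannian manifold, then invoke the Weyl principle (Theorem \ref{thm:weyl} and Remark \ref{rem:forms_are_defined}) together with functoriality of restriction to conclude independence of the embedding. You have merely spelled out in more detail what the paper compresses into a single sentence, in particular the wave-front transversality checks needed for the composition identity $e_j^*\circ f_j^* = (f_j\circ e_j)^*$ in the generalized setting; the one item you omit is the existence of at least one embedding, which the paper covers by citing Lemma \ref{lemma:baby_nash}.
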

\proof
The existence of some isometric embedding follows from Lemma \ref{lemma:baby_nash}. The restriction is then well-defined by Proposition \ref{prop:nondegenerate_restriction}. The restriction is independent of the choice of $M$ by Lemma \ref{lemma:two_manifolds_simultaneous_embedding} and Theorem \ref{thm:weyl}, resp. Remark \ref{rem:forms_are_defined}.
\endproof

\section{Basic properties of the Lipschitz-Killing curvature measures}\label{sec:properties}
For a manifold $X$, let $\mathrm{Met}_{p,q}(X)$ denote the space of pseudo-Riemannian metrics of signature $(p,q)$ on $X$, and similarly $\mathrm{Met}_{\LC}(X)$ is the space of LC-regular metrics.

\begin{Proposition} \label{prop_wave_front_lkc}
For any $M^{p,q}$ it holds that $\Lambda_k^M\in \mathcal C^{-\infty}_{\emptyset, N^*(\LC^*_M)}(M)$. In particular, $\Lambda_k^M( A,\bullet)\in\mathcal M^{-\infty}(M)$ is well-defined for any LC-transversal $A \in \mathcal P(M)$. 
\end{Proposition}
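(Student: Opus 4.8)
The plan is to derive the wave front statement directly from the explicit description of the Lipschitz-Killing curvature measures given in Definition \ref{def_lk_curvatures}, together with the wave front bounds already established in Section \ref{sec_construction}. Recall that $\Lambda_k^M=[\kappa_k,\lambda_k]$, where $\kappa_k\in\Omega^{m+1}(M,\mathbb C)$ is a \emph{smooth} form (a universal polynomial in the curvature tensor times $\vol_M$, see Remark \ref{rm:kappas}) and $\lambda_k\in\Omega_{-\infty}^{m}(\mathbb P_+(TM),\mathbb C)$ is a finite linear combination of the generalized forms $\phi_{k+2\nu,\nu}^0$ and $\phi_{k+2\nu,\nu}^1$. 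Since wave front sets of smooth forms are empty, the interior form $\kappa_k$ contributes nothing, so it suffices to control $\WF(\lambda_k)$. As $\WF$ of a finite sum is contained in the union of the individual wave front sets, I would invoke Lemma \ref{lem:fiber_is_transversal}, which states precisely that $\WF(\phi_{k,r}^{i,M})\subset N^*(\LC_M^*)$ for every admissible $k,r,i$; hence $\WF(\lambda_k)\subset N^*(\LC_M^*)$ as well. This gives $\Lambda_k^M\in\mathcal C^{-\infty}_{\emptyset,N^*(\LC_M^*)}(M)$ by the definition of the spaces $\mathcal C^{-\infty}_{\Lambda,\Gamma}(M)$, since $\Lambda_k^M$ is represented by the pair $(\kappa_k,\lambda_k)$ with $\WF(\kappa_k)=\emptyset\subset\Lambda$ for any $\Lambda$ (in particular $\Lambda=\emptyset$) and $\WF(\lambda_k)\subset N^*(\LC_M^*)=\Gamma$. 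This is essentially a restatement of Corollary \ref{cor:wave_front_set_of_LK}, which already records $C_{k,r}^{i,M}\in\mathcal C^{-\infty}_{\emptyset,N^*(\LC_M^*)}(M)$, combined with the expression \eqref{eq:C_to_Lambda} of $\Lambda_k$ as a linear combination of the $C_{k+2\nu,\nu}^i$ together with the top-degree terms $C_{m+1,s}^0$, the latter being smooth by Lemma \ref{lem:interior_smooth_form}.

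For the second assertion, I would appeal to Proposition \ref{prop:curvature_measures_at_polyhedra} with $\Lambda=\emptyset$ and $\Gamma=N^*(\LC_M^*)$. That proposition says that the evaluation map $\mathrm{ev}_A$ extends continuously to $\mathcal C^{-\infty}_{\Lambda,\Gamma}(M)$ provided each smooth stratum $A_i$ of $A$ satisfies $N^*A_i\cap\Lambda=\emptyset$ and each smooth stratum $Z_j$ of $\nc(A)$ satisfies $N^*Z_j\cap\Gamma=\emptyset$. The first condition is automatic here because $\Lambda=\emptyset$. The second condition, $N^*Z_j\cap N^*(\LC_M^*)=\emptyset$ for every stratum $Z_j$ of $\nc(A)$, is exactly the transversality of $Z_j$ to $\LC_M^*$ inside $\mathbb P_M$: two submanifolds intersect transversally if and only if their conormal bundles meet only in the zero section. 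But this is precisely the definition of $A$ being LC-transversal (Definition \ref{def:LC_transversality}): each smooth stratum of $\nc(A)$ intersects $\LC_M^*$ transversally. Hence for LC-transversal $A$, the evaluation $\Lambda_k^M(A,\bullet)=\mathrm{ev}_A(\Lambda_k^M)$ is a well-defined element of $\mathcal M^{-\infty}(M,\mathbb C)$.

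I do not expect any serious obstacle here: the statement is a bookkeeping consequence of results already proved, the only mildly delicate point being the identification of the transversality hypothesis in Proposition \ref{prop:curvature_measures_at_polyhedra} with the definition of LC-transversality, which amounts to the elementary fact that $N^*Z\cap N^*Y=\emptyset$ (off the zero section) is equivalent to $Z\pitchfork Y$. One should also note that $\Lambda_k^M$ being complex-valued causes no difficulty: one works componentwise in real and imaginary parts, each lying in $\mathcal C^{-\infty}_{\emptyset,N^*(\LC_M^*)}(M)$. Thus the proof is short: cite \eqref{eq:C_to_Lambda}, Corollary \ref{cor:wave_front_set_of_LK} and Lemma \ref{lem:interior_smooth_form} for the first claim, then Proposition \ref{prop:curvature_measures_at_polyhedra} together with the equivalence of conormal-disjointness and transversality for the second.

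\proof
By \eqref{eq:C_to_Lambda}, $\Lambda_k^M$ is a finite linear combination of the curvature measures $C_{k+2\nu,\nu}^0, C_{k+2\nu,\nu}^1$ and, when $m+1-k$ is even, the top-degree term $C_{m+1,\frac{m+1-k}2}^0$. By Corollary \ref{cor:wave_front_set_of_LK}, each $C_{k',r}^{i,M}$ lies in $\mathcal C^{-\infty}_{\emptyset,N^*(\LC_M^*)}(M)$, and $C_{m+1,s}^0$ is represented by the smooth form $\psi_{m+1,s}$ (Lemma \ref{lem:interior_smooth_form}), hence also lies in $\mathcal C^{-\infty}_{\emptyset,N^*(\LC_M^*)}(M)$. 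Since the representing pairs add and wave front sets of sums are contained in the union of wave front sets, $\Lambda_k^M\in\mathcal C^{-\infty}_{\emptyset,N^*(\LC_M^*)}(M)$.

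For the second claim, let $A\in\mathcal P(M)$ be LC-transversal. By Definition \ref{def:LC_transversality}, every smooth stratum $Z_j$ of $\nc(A)$ intersects $\LC_M^*$ transversally in $\mathbb P_M$, which is equivalent to $N^*Z_j\cap N^*(\LC_M^*)=\emptyset$ (outside the zero section). Since also $N^*A_i\cap\emptyset=\emptyset$ for every stratum $A_i$ of $A$, Proposition \ref{prop:curvature_measures_at_polyhedra} applies with $\Lambda=\emptyset$, $\Gamma=N^*(\LC_M^*)$, and shows that $\mathrm{ev}_A$ extends to a continuous map $\mathcal C^{-\infty}_{\emptyset,N^*(\LC_M^*)}(M)\to\mathcal M^{-\infty}(M,\mathbb C)$. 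Applying this to $\Lambda_k^M$ (componentwise in real and imaginary parts) shows that $\Lambda_k^M(A,\bullet)\in\mathcal M^{-\infty}(M,\mathbb C)$ is well-defined.
\endproof
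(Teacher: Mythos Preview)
Your proof is correct and follows essentially the same approach as the paper's own proof, which simply cites Definition \ref{def_lk_curvatures} and Corollary \ref{cor:wave_front_set_of_LK} for the first assertion and Proposition \ref{prop:curvature_measures_at_polyhedra} for the second. You have merely unpacked these citations in more detail, including making explicit the elementary equivalence between transversality of strata with $\LC_M^*$ and disjointness of conormal bundles, which the paper leaves implicit.
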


\proof
The first assertion is immediate from Definition \ref{def_lk_curvatures} and Corollary \ref{cor:wave_front_set_of_LK}. The second follows from Proposition \ref{prop:curvature_measures_at_polyhedra}.
\endproof

\begin{Proposition}\label{prop:top_degree}
	Let $(X^n,g)$ be LC-regular, and put $\mathrm{sign}g|_{T_xX}=(p_x,q_x)$. Then 
	\begin{displaymath}
	\Lambda_n^X= \i^{q_x}\vol_{X,g}.
	\end{displaymath}
\end{Proposition}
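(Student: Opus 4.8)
The statement is the top-degree case $k = n = \dim X$ of Theorem~\ref{thm:weyl} combined with the explicit formula for $\Lambda_n$ from Definition~\ref{def_lk_curvatures}. The plan is to first verify the formula for a pseudo-Riemannian $X$ (constant signature), where the claim amounts to unwinding Definition~\ref{def_lk_curvatures}, and then upgrade to the LC-regular case by restriction from an isometric embedding, invoking Definition~\ref{def:LC_regular_LK} and Proposition~\ref{prop:LC_regular_LK}.

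\textbf{Step 1: the pseudo-Riemannian case.} Suppose first $g$ has constant signature $(p,q)$, so $m+1 = n$ and $q_x \equiv q$. In Definition~\ref{def_lk_curvatures}, the interior form $\kappa_n$ corresponds to $m+1-k = 0$, which is even, so $\kappa_n = \frac{\i^q}{n!\cdot 0!\cdot(4\pi)^0}\psi_{n,0} = \frac{\i^q}{n!}\psi_{n,0}$. From Lemma~\ref{lem:interior_smooth_form}, $\psi_{n,0} = \psi_{m+1,0}$ is the form built only from solder forms: $\psi_{m+1,0} = \sum_{\pi \in S_{m+1}}\sgn(\pi)\,\theta_{\pi_0}\wedge\cdots\wedge\theta_{\pi_m} = (m+1)!\,\theta_0\wedge\cdots\wedge\theta_m$, which descends to $(m+1)!$ times the Riemannian/pseudo-Riemannian volume form on $M$; that is, $\psi_{n,0} = n!\,\vol_{X,g}$ (this is the standard identity $\theta_0\wedge\cdots\wedge\theta_m = \vol$ on the orthonormal frame bundle, valid because the $\theta_i$ are $Q$-orthonormal coframes and $\vol_{X,g} = \sqrt{|\det g|}\,dx_1\wedge\cdots\wedge dx_n$). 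Hence $\kappa_n = \i^q\vol_{X,g}$. For the boundary form $\lambda_n$, the sum in Definition~\ref{def_lk_curvatures} runs over $0 \le \nu \le \lfloor (m-n)/2\rfloor = \lfloor -1/2\rfloor = -1$, hence is empty, so $\lambda_n = 0$. Therefore $\Lambda_n = [\kappa_n, \lambda_n] = [\i^q\vol_{X,g}, 0]$, and by Remark~\ref{rm:kappas} (or directly, since a curvature measure $[\phi,0]$ of top form-degree is $\Lambda_n(A,B) = \int_{A\cap B}\phi$) this is exactly the measure $\i^q\vol_{X,g}$. One should double-check the sign/normalization conventions here, but this is the only real computation and it is routine.

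\textbf{Step 2: the LC-regular case.} Now let $(X^n,g)$ be merely LC-regular, with $\mathrm{sign}\,g|_{T_xX} = (p_x,q_x)$ possibly varying with $x$. By Lemma~\ref{lemma:baby_nash} there is an isometric embedding $e: X \hookrightarrow M^{p,q}$ into some pseudo-Riemannian manifold, and by Definition~\ref{def:LC_regular_LK} together with Proposition~\ref{prop:LC_regular_LK} we have $\Lambda_n^X = e^*\Lambda_n^M$, well-defined and independent of $e$. Applying Theorem~\ref{thm:weyl} with $k = n = \dim X \le \dim M$: the curvature measure $\Lambda_n^M$ has form-degree conventions such that $e^*\Lambda_n^M = \Lambda_n^X$ as an element of $\mathcal{C}^{-\infty}(X)$, and since restriction to a submanifold of the same dimension $n$ picks out the top-degree interior term, we get $\Lambda_n^X = \i^{q'_x}\vol_{X,g}$ where $q'_x$ is the negative index of the \emph{induced} form $g = e^*Q$ at $T_xX$. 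On the open dense subset $X_{\mathrm{ND}}$ (open and dense by Proposition~\ref{prop_lc_regularity_pseudo_riemann}), $g$ is non-degenerate and $q'_x = q_x$, so $\Lambda_n^X = \i^{q_x}\vol_{X,g}$ there; since both sides are measures absolutely continuous with respect to Lebesgue measure and agree on a dense open set whose complement has measure zero (the degenerate locus, where $\vol_{X,g} = 0$ anyway as $\sqrt{|\det g|} = 0$), they agree everywhere. Concretely, on the degenerate locus both sides vanish, so the identity $\Lambda_n^X = \i^{q_x}\vol_{X,g}$ holds as measures on all of $X$.

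\textbf{Main obstacle.} The only genuinely delicate point is bookkeeping: making sure the normalization constant in $\kappa_n$ (the factor $\frac{1}{n!(\frac{m+1-k}{2})!(4\pi)^{(m+1-k)/2}}$ at $k = n$) and the identification $\psi_{n,0} = n!\,\vol$ combine to give exactly $\i^q$ with no stray factor, and that the degree conventions under restriction in Theorem~\ref{thm:weyl} indeed make $e^*\Lambda_n^M$ land as the top-degree curvature measure on $X$ rather than shifting filtration. Both are straightforward given the definitions, but worth stating carefully. I would present the argument essentially as the two steps above, with Step 1 as a short explicit computation and Step 2 as a one-line appeal to Theorem~\ref{thm:weyl} plus a density argument on $X_{\mathrm{ND}}$.
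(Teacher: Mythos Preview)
Your Step~1 is correct and is a clean direct computation from Definition~\ref{def_lk_curvatures}. The problem is in Step~2. You assert that ``both sides are measures absolutely continuous with respect to Lebesgue measure,'' but for $\Lambda_n^X$ this is precisely what is at issue in the LC-regular case. A priori $\Lambda_n^X = e^*\Lambda_n^M$ is only an element of $\mathcal M^{-\infty}(X)$, and a generalized measure is \emph{not} determined by its restriction to an open dense set: any distribution supported on $X\setminus X_{\mathrm{ND}}$ (a surface-carried delta, say) restricts to zero on $X_{\mathrm{ND}}$ yet is nonzero. So the equality on $X_{\mathrm{ND}}$ does not propagate. Your phrase ``restriction to a submanifold of the same dimension $n$ picks out the top-degree interior term'' only says that $\Lambda_n^X$ lands in $\mathcal C_n^{-\infty}(X)=\mathcal M^{-\infty}(X)$; it does not compute the measure.

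Nor is there an easy wave-front repair. Over a degenerate point $x\in X\setminus X_{\mathrm{ND}}$ the fibre $N^Q_xX$ contains $\Ker g_x$ and is itself $Q$-degenerate, so $Q|_{N^Q_xX}$ fails to have $0$ as a regular value and the light cone meets this fibre non-transversally. The fiberwise evaluation machinery (Lemma~\ref{lem:can_restrict_fiberwise}, Lemma~\ref{lem:pointwise}) that underpins the restriction computations in Section~\ref{sec_restriction} therefore breaks down exactly at the points you need, and $q_*\theta^*\lambda_n^M$ could in principle carry a singular contribution on the degenerate locus.

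The paper avoids the density argument altogether. It embeds $X$ in a flat $V=\R^{p,q}$, identifies $\Lambda_n^X$ with $e^*\mu_n^V$ viewed as an element of $\mathcal M^{-\infty}(X)$, and then tests against valuations of the form $\psi=\int_S\chi_Y\,d\nu(Y)$ built from full-dimensional $Y\subset X$. The value $\langle \mu_n^V,\chi_{e(Y)}\rangle$ is governed by the Klain section of $\mu_n^V$, which was computed in \cite{bernig_faifman_opq} and is continuous on the whole Grassmannian, giving $\i^{q_x}\vol$ even at degenerate $n$-planes. This yields the formula on all of $X$ in one stroke, with no appeal to $X_{\mathrm{ND}}$.
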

\proof
Fix an embedding $e:X^n\hookrightarrow \R^{p,q}$, and denote $V=\R^{p,q}$. Note that $e^*\mu_n^{V}=\glob(e^*\Lambda_n^{V})$ can be identified with $\Lambda_n^X=e^*\Lambda_n^{V}$, as both are elements of $\mathcal M^{-\infty}(X)$.
Now consider test valuations $\psi\in\mathcal V_c^\infty(X)$ of the form $\psi=\int_S \chi_{Y} d\nu(Y)$, where $S$ is a smooth family of $n$-submanifolds with boundary, and $\nu\in\mathcal M^\infty_c(S)$, see \cite{fu_alesker_product}. Note that all possible images $[\psi]\in \mathcal V_c^\infty(X)/\mathcal W^\infty_1(X)=C_c^\infty(X)$ form a dense subset. We compute
\[\langle e^*\mu_n^{V}, \psi\rangle = \langle \mu_n^{V}, e_*\psi\rangle=\int_S\langle \mu_n^{V}, \chi_{e(Y)}\rangle d\nu(Y)=\int_S\int_{Y}\i^{q_x}\vol_{X,g}d\nu(Y),\]
where the last two equalities follow from the continuity of the Klain section of $\mu_n^{V}$ and its explicit value, see \cite{bernig_faifman_opq}. Thus $ \langle \Lambda_n^X, \psi\rangle=\langle \i^{q_x}\vol_{X,g}, \psi\rangle$, concluding the proof.

\endproof

\begin{Proposition}\label{prop:filtration_degree}
	For $(X,g)$ LC-regular,	$\Lambda_k^X\in\mathcal C_k^{-\infty}(X)\setminus \mathcal C_{k+1}^{-\infty}(X)$, and $\mu_k^X\in\mathcal W_k^{-\infty}(X)\setminus \mathcal W_{k+1}^{-\infty}(X)$. Moreover, $\mu_k^X$ has Euler-Verdier eigenvalue $(-1)^k$. 
\end{Proposition}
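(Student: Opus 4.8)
The idea is to reduce all three assertions to the flat model via an isometric embedding $e:X\hookrightarrow M^{p,q}$ (which exists by Lemma \ref{lemma:baby_nash} when $g$ has changing signature, or by Theorem \ref{thm_nash} in the pseudo-Riemannian case), and then to the Euclidean model $\R^{p,q}$ using Theorem \ref{thm:weyl} and the pseudo-Riemannian Nash theorem. Since restriction $e^*$ is compatible with the filtrations (Proposition \ref{prop:restricting_curvature_measures} gives $\glob\circ e^*=e^*\circ\glob$, and restriction preserves the filtration degree by construction from \eqref{eq:restriction_curvature_measure}), and since $\glob$ carries $\mathcal C_k^{-\infty}$ onto $\mathcal W_k^{-\infty}$, it suffices to prove the statement on $\R^{p,q}$, where $\Lambda_k$ and $\mu_k$ are $\OO(p,q)$-invariant.

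\textbf{Step 1: $\Lambda_k^X,\mu_k^X$ lie in filtration degree $k$.} Inspect Definition \ref{def_lk_curvatures}: the form $\lambda_k$ is a linear combination of the $\phi^i_{k+2\nu,\nu}$, each of which is a generalized form of $\sigma$-degree determined by Proposition \ref{prop:phi_degree}, but more relevantly each $\phi^i_{\ell,r}$ involves exactly $\ell$ factors among the $\theta$'s and $\tilde\Omega$'s (counting each curvature form as two solder-type factors in the filtration sense) — precisely as in \eqref{eq_def_phi}, where the $\tilde\omega_{\pi_j,0}$ factors do not raise the vertical degree. Thus $\lambda_k\in\Omega^m_{-\infty}(\mathbb P_M)$ has vertical filtration degree $k$ in the sense inducing the filtration on $\mathcal C^{-\infty}$, and $\kappa_k$ is a top form on $M$, so $\Lambda_k=[\kappa_k,\lambda_k]\in\mathcal C_k^{-\infty}$; applying $\glob$ gives $\mu_k\in\mathcal W_k^{-\infty}$. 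This is the routine part, parallel to the Riemannian case.

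\textbf{Step 2: $\Lambda_k^X\notin\mathcal C_{k+1}^{-\infty}$ and $\mu_k^X\notin\mathcal W_{k+1}^{-\infty}$.} It is enough to exhibit one point $x$ and one tangent plane where the ``leading symbol'' of $\Lambda_k$ in $\mathcal C_k^{-\infty}/\mathcal C_{k+1}^{-\infty}$ is nonzero. Here I would again restrict to $\R^{p,q}$, and within it to a nondegenerate coordinate subspace $\R^{p',q'}$ (or directly take $X=\R^{p',q'}$ sitting in some $\R^{p,q}$). On a flat space the curvature forms vanish, so $\lambda_k$ reduces to the single term $\nu=0$, i.e. the multiple of $\phi^0_{k,0}-\i\phi^1_{k,0}$, whose image in the associated graded is, up to a nonzero constant, the invariant generalized density $\lambda_0$-type object paired against the $(n-1-k)$-spherical integration — nonzero because by Remark \ref{rmk:definition_of_measure_on_projective_space} (and the case $k=0$, Remark \ref{rmk:definition_of_measure_on_projective_space}) these generalized measures on $\mathbb P_+(\R^{p,q})$ are nonzero, being explicitly $\OO(p,q)$-invariant distributions whose real and imaginary parts span a two-dimensional space. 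The cleanest route is to invoke the known nonvanishing of the classical Lipschitz-Killing curvature measures: on the nondegenerate open set $X_{\mathrm{ND}}$ (dense by Proposition \ref{prop_lc_regularity_pseudo_riemann}), $\Lambda_k^X$ restricts to $\i^{q}\,\mathrm{LK}_k\cdot\vol_g$ (this is Proposition \ref{prop_res_boundary}/\eqref{eq:C_to_Lambda} evaluated on a pseudo-Riemannian piece, cf. Theorem \ref{thm:distribution_curvatures}), and the classical $k$-th Federer curvature measure is a genuine nonzero element of $\mathcal C_k/\mathcal C_{k+1}$ — e.g. it is nonzero on a round sphere or on a suitable quadric. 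Since $X_{\mathrm{ND}}$ is open and the filtration is local, nonvanishing there forces $\Lambda_k^X\notin\mathcal C_{k+1}^{-\infty}(X)$; globalizing gives $\mu_k^X\notin\mathcal W_{k+1}^{-\infty}(X)$.

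\textbf{Step 3: Euler--Verdier eigenvalue.} Recall $\sigma[[\phi,\omega]]=(-1)^n[[\phi,a^*\omega]]$ on smooth valuations, extended to generalized valuations by the same formula on representing currents. So I must compute $a^*\lambda_k$ where $a$ is the fiberwise antipodal map on $\mathbb P_+(TM)$. Under $v\mapsto -v$ one has $\sigma(-v)=\sigma(v)$, $\epsilon(-v)=\epsilon(v)$, the solder forms $\theta_i\mapsto-\theta_i$, the connection forms $\omega_{i,j}\mapsto\omega_{i,j}$, and the curvature forms $\Omega_{i,j}\mapsto\Omega_{i,j}$ (curvature is tensorial in the base, unaffected by the antipode in the fiber), while $\chi^s_i(\sigma)$ is even hence unchanged. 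Counting signs in \eqref{eq_def_phi}: $\phi^i_{\ell,r}$ carries $\ell-2r$ solder factors $\theta$ and $m-\ell$ connection factors $\tilde\omega_{\cdot,0}$, and each $\tilde\omega_{\cdot,0}$ is itself antipode-invariant mod lower terms — wait, more carefully, one must also track that $\theta_0^B,\theta_1^B$ acquire the expected signs from Proposition \ref{prop_solder}. The net effect is $a^*\phi^i_{\ell,r}=(-1)^{\ell-2r}\cdot(-1)^{?}\phi^i_{\ell,r}$; after bookkeeping (done exactly as in the Riemannian computation of Chern--Weil forms) one finds $a^*\lambda_k=(-1)^{n}(-1)^{k}\cdot(\text{sign})\,\lambda_k$ so that $\sigma\mu_k^X=(-1)^n\cdot(-1)^{?}\mu_k = (-1)^k\mu_k$. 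I would justify this most economically by noting the eigenvalue is invariant under restriction (since $\sigma$ commutes with $e^*$, which is a standard fact about the Euler--Verdier involution and pull-backs, true on the open dense nondegenerate set where things are smooth and classical), so it suffices to verify $\sigma\mu_k=(-1)^k\mu_k$ for the classical Riemannian/pseudo-Riemannian Lipschitz--Killing valuation, where it is known (the $k$-th intrinsic volume has parity $(-1)^k$ as a translation-invariant valuation on $\R^{p,q}$, compatible with $\sigma$ via Alesker's correspondence). The main obstacle is making the three ``restriction commutes with the structure'' reductions airtight in the generalized (distributional) setting — in particular that $\sigma$ and the filtration interact well with $e^*$ on generalized valuations — but since restriction is defined by honest operations on currents \eqref{eq:restriction_curvature_measure} and the needed identities already hold on the dense subspace of smooth valuations (and pass to the limit in the normal topology by Proposition \ref{prop:restricting_curvature_measures}), this is bookkeeping rather than a genuine difficulty.
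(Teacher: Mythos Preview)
Your overall strategy matches the paper's: reduce to the flat case via an isometric embedding and the Weyl principle, verify the filtration-degree and Euler--Verdier claims there, and pull back. Step~1 and the reduction in Step~3 are essentially what the paper does (the paper simply says ``all statements are easily verified for $X=\R^{p,q}$'' and ``the last statement reduces to $\R^{p,q}$ by the Nash embedding theorem and Weyl principle'').

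The one place where the paper does something cleaner than you is Step~2. Rather than invoking the nonvanishing of the classical Federer curvature measure in the associated graded (which is true but an external fact), the paper picks a $k$-dimensional \emph{pseudo-Riemannian} submanifold $M\subset X_{\mathrm{ND}}$ (possible since $X_{\mathrm{ND}}$ is open and dense by Proposition~\ref{prop_lc_regularity_pseudo_riemann}) and restricts $\Lambda_k^X$ to it. By the Weyl principle and Proposition~\ref{prop:top_degree}, $\Lambda_k^X|_M=\Lambda_k^M=\i^{q}\vol_{M,g}\neq 0$. Since $\dim M=k$, the filtration on $\mathcal C^{-\infty}(M)$ stops at degree $k$, so any element of $\mathcal C_{k+1}^{-\infty}(X)$ would restrict to zero on $M$; hence $\Lambda_k^X\notin\mathcal C_{k+1}^{-\infty}(X)$. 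The same restriction argument gives $\mu_k^X\notin\mathcal W_{k+1}^{-\infty}(X)$ directly, without the extra step of deducing it from the curvature-measure statement via globalization (which, as you implicitly note, is not automatic: $\glob$ is surjective but has a kernel, so nonvanishing of $\Lambda_k$ modulo $\mathcal C_{k+1}$ does not by itself force nonvanishing of $\mu_k$ modulo $\mathcal W_{k+1}$). This trick---restricting to a submanifold of the critical dimension and using that $\Lambda_{\dim M}$ is the volume---is self-contained and avoids appealing to Riemannian nonvanishing results.

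Your direct sign computation in Step~3 is visibly unfinished (the ``wait'' and ``$(-1)^{?}$''); you are right to abandon it for the reduction argument, which is exactly what the paper does.
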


\proof
All statements are easily verified for $X=\R^{p,q}$. For any LC-regular $X$ and any $k\leq \dim X$, we can choose by Proposition \ref{prop_lc_regularity_pseudo_riemann} a $k$-dimensional pseudo-Riemannian submanifold $M\subset X_{\textrm{ND}}$, whence $\Lambda_k^X|_M=\Lambda_k^M\neq 0$ by Proposition \ref{prop:top_degree}. Thus $\Lambda_k^X\notin  \mathcal C_{k+1}^{-\infty}(X)$, and similarly $\mu_k^X \notin  \mathcal W_{k+1}^{-\infty}(X)$.  The last statement reduces to $\R^{p,q}$ by the Nash embedding theorem and the Weyl principle.
\endproof

\begin{Corollary}\label{cor:LK_valuations}
If $(X^n,g)$ is LC-regular and contains a pseudo-Riemannian subset of indefinite signature, then $(\mu^X_k)_{k=0}^n$, $(\bar\mu^X_k)_{k=1}^{n-1}$ are linearly independent, as well as $(\Lambda_k^X)_{k=0}^n$, $(\bar\Lambda_k^X)_{k=0}^{n-1}$.
In particular for $M^{p,q}$ with $p,q\geq 1$, we have $\dim \widetilde{\mathcal {LK}}(M)=2\dim M+1$ and $\dim \mathcal {LK}(M)=2\dim M$.	
\end{Corollary}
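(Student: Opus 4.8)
The plan is to deduce Corollary~\ref{cor:LK_valuations} from Proposition~\ref{prop:filtration_degree} together with the behavior of $\Lambda_k$ and $\mu_k$ under restriction to pseudo-Riemannian submanifolds. The key observation is that linear independence is a local property: it suffices to produce, for each $j$, a point or a submanifold where the $j$-th invariant is ``visible'' while all lower-degree ones with respect to the filtration are killed. Since $(X,g)$ contains a pseudo-Riemannian open subset $U$ of indefinite signature $(p,q)$ with $p,q\geq 1$, by Proposition~\ref{prop_lc_regularity_pseudo_riemann} and the discussion around it we may, for each $k\leq n$, choose inside $U$ a $k$-dimensional pseudo-Riemannian submanifold $M_k$ whose induced signature $(p_k,q_k)$ can be prescribed (at least so that $q_k$ takes the two parities $0$ and $1$, using that $p,q\geq 1$). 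By the Weyl principle (Theorem~\ref{thm:weyl}, applied through Definition~\ref{def:LC_regular_LK}), $\Lambda_k^X|_{M_k}=\Lambda_k^{M_k}$, and for $j<k$ we have $\Lambda_j^X|_{M_k}=\Lambda_j^{M_k}\in\mathcal C^{-\infty}_j(M_k)$, while for $j>k$ the restriction $\Lambda_j^{M_k}$ vanishes for dimensional reasons since $\dim M_k=k$.

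First I would treat the valuations $\mu_k^X$. Suppose $\sum_{k=0}^n a_k\mu_k^X+\sum_{k=1}^{n-1}b_k\bar\mu_k^X=0$ with $a_k,b_k\in\R$. Restricting to a top-dimensional pseudo-Riemannian submanifold $M_n\subset U$ of signature with $q_n\equiv 0$, and then to one with $q_n\equiv 1$, Proposition~\ref{prop:top_degree} gives that the coefficient of $\vol$ in $\mu_n^{M_n}=\i^{q_n}\vol$ forces $a_n=b_n=0$ when we separate the two parities; more precisely, restricting to submanifolds with $q_n$ even gives $a_n\vol=0$ and with $q_n$ odd gives $b_n$-type contributions, so $a_n=0$; since there is no $\bar\mu_n$ term in the sum the $n$-th term is simply handled by $a_n=0$. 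Proceeding downward by induction on $k$: having shown $a_j=b_j=0$ for $j>k$, restrict the remaining relation $\sum_{j\leq k}a_j\mu_j^X+\sum_{j\leq k}b_j\bar\mu_j^X=0$ to a $k$-dimensional pseudo-Riemannian $M_k\subset U$. All terms with $j>k$ are already gone; all terms with $j<k$ lie in $\mathcal W^{-\infty}_{j}(M_k)\supset \mathcal W^{-\infty}_{k}(M_k)$, hence the image of the relation in $\mathcal W^{-\infty}_k(M_k)/\mathcal W^{-\infty}_{k+1}(M_k)$ reads $a_k[\mu_k^{M_k}]+b_k[\bar\mu_k^{M_k}]=0$. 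Now $\mu_k^{M_k}=\glob(\Lambda_k^{M_k})$ with $\Lambda_k^{M_k}=\i^{q_k}\mathrm{LK}_k\cdot\vol$ by Theorem~\ref{thm:distribution_curvatures} (equivalently Proposition~\ref{prop:filtration_degree}'s proof), which is nonzero by Proposition~\ref{prop:top_degree}; choosing $q_k$ of both parities and using that $\i^{q_k}$ is real for $q_k$ even and purely imaginary for $q_k$ odd, the two choices separate $a_k$ from $b_k$, forcing $a_k=b_k=0$. This completes the induction and proves $(\mu_k^X)_{k=0}^n$, $(\bar\mu_k^X)_{k=1}^{n-1}$ are $\R$-independent; the argument for $(\Lambda_k^X)_{k=0}^n$, $(\bar\Lambda_k^X)_{k=0}^{n-1}$ is identical, using the filtration on $\mathcal C^{-\infty}$ and noting that $\Lambda_0$ has nonzero imaginary part so that $\bar\Lambda_0$ genuinely appears and the $k=0$ step needs both parities of $q_0$ as well.

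Finally, for the count: in the case $M=M^{p,q}$ with $p,q\geq 1$, $M$ itself is pseudo-Riemannian of indefinite signature, so the hypothesis of the first part is met with $X=M$, $n=\dim M$. The spanning direction is immediate from the definitions of $\widetilde{\mathcal{LK}}(M)$ and $\mathcal{LK}(M)$ as the real spans of $\mathrm{Re}\,\Lambda_k,\mathrm{Im}\,\Lambda_k$, resp. $\mathrm{Re}\,\mu_k,\mathrm{Im}\,\mu_k$; since $\mu_0=\chi$ is real, $\mathrm{Im}\,\mu_0=0$, so $\mathcal{LK}(M)$ is spanned by $\mu_0,\mathrm{Re}\,\mu_k,\mathrm{Im}\,\mu_k$ ($1\leq k\leq n$), i.e. at most $2n$ elements; independence of $(\mu_k)_{k=0}^n$, $(\bar\mu_k)_{k=1}^{n-1}$ over $\R$ is equivalent to independence of $\{\mu_0\}\cup\{\mathrm{Re}\,\mu_k,\mathrm{Im}\,\mu_k: 1\leq k\leq n\}$, giving $\dim_\R\mathcal{LK}(M)=2n=2\dim M$. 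For $\widetilde{\mathcal{LK}}(M)$, $\Lambda_0$ has both nonzero real and imaginary parts, so the spanning set is $\{\mathrm{Re}\,\Lambda_k,\mathrm{Im}\,\Lambda_k:0\leq k\leq n\}$, a priori $2n+2$ elements, but $\Lambda_n=\i^{q}\vol$ is a fixed power of $\i$ times a real measure, so $\mathrm{Re}\,\Lambda_n$ and $\mathrm{Im}\,\Lambda_n$ are linearly dependent (one of them vanishes), cutting the count to $2n+1$; independence of $(\Lambda_k)_{k=0}^n$, $(\bar\Lambda_k)_{k=0}^{n-1}$ over $\R$ is exactly the statement that these $2n+1$ elements are independent, so $\dim_\R\widetilde{\mathcal{LK}}(M)=2n+1=2\dim M+1$.

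The main obstacle I anticipate is the bookkeeping in the descending induction: one must be careful that restricting a generalized valuation or curvature measure to the chosen submanifold is legitimate (it is, since pseudo-Riemannian submanifolds are LC-transversal by Proposition~\ref{prop:LC_regular_is_intrinsic}, and the restriction is computed by Theorem~\ref{thm:weyl} and Remark~\ref{rem:forms_are_defined}), and that the passage to the associated graded $\mathcal W^{-\infty}_k/\mathcal W^{-\infty}_{k+1}$ (resp. $\mathcal C^{-\infty}_k/\mathcal C^{-\infty}_{k+1}$) is compatible with restriction — which follows because restriction preserves the filtration, as recorded in Proposition~\ref{prop:restricting_curvature_measures} and the filtration compatibility of $\glob$. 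The separation of $a_k$ from $b_k$ via the two parities of $q_k$ is the one genuinely new small point, but it only uses $\i^{\text{even}}\in\R$, $\i^{\text{odd}}\in\i\R$, and the nonvanishing from Proposition~\ref{prop:top_degree}.
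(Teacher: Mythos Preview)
Your ingredients---the filtration from Proposition~\ref{prop:filtration_degree}, restriction to $k$-dimensional pseudo-Riemannian submanifolds, and the parity of $q_k$ via Proposition~\ref{prop:top_degree}---are exactly those of the paper's proof. The assembly, however, has a genuine gap.

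You run a \emph{descending} induction: having killed coefficients with $j>k$, you restrict $\sum_{j\le k}(a_j\mu_j+b_j\bar\mu_j)=0$ to $M_k$ and claim that the image in $\mathcal W_k^{-\infty}(M_k)/\mathcal W_{k+1}^{-\infty}(M_k)$ is $a_k[\mu_k^{M_k}]+b_k[\bar\mu_k^{M_k}]$. But for $j<k$ the terms $\mu_j^{M_k}$ lie in the \emph{larger} filtration piece $\mathcal W_j^{-\infty}(M_k)\supsetneq \mathcal W_k^{-\infty}(M_k)$, so they have no image in that subquotient at all; the relation as a whole lies in $\mathcal W_k$ only because it equals zero, and its image is then zero, which gives no information. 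The filtration works in the other direction: take the \emph{minimal} $k$ with $(a_k,b_k)\neq(0,0)$. Then
\[
a_k\mu_k+b_k\bar\mu_k=-\sum_{j>k}(a_j\mu_j+b_j\bar\mu_j)\in\mathcal W_{k+1}^{-\infty}(X),
\]
and restricting to $M_k$ (restriction preserves the filtration and $\mathcal W_{k+1}^{-\infty}(M_k)=0$) yields $a_k\mu_k^{M_k}+b_k\bar\mu_k^{M_k}=0$; now your parity-of-$q_k$ argument applies. This is precisely what the paper means by ``by Proposition~\ref{prop:filtration_degree} we may consider each $k$ separately''.

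A second, smaller gap: for the curvature-measure case at $k=0$ you invoke ``both parities of $q_0$'', but a $0$-dimensional submanifold carries no signature and $\Lambda_0^{\{\mathrm{pt}\}}$ is real, so this separates nothing. The paper handles this step differently: it restricts to some $M^{1,1}\subset X_{\mathrm{ND}}$ and checks from the definition (the forms $\phi_{0,0}^0,\phi_{0,0}^1$) that $\mathrm{Re}\,\Lambda_0^M$ and $\mathrm{Im}\,\Lambda_0^M$ are independent modulo $\mathcal C_1^{-\infty}(M)$. Your final dimension count is correct.
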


\proof
 
By Proposition \ref{prop:filtration_degree}, we may consider each $k$ separately. For $k\geq1$, the statement follows from
Propositions \ref{prop:top_degree} and \ref{prop_lc_regularity_pseudo_riemann}. For $k=0$, $\mu^X_0$ is the Euler characteristic by Theorem \ref{thm:Lambda_0_glob}. By assumption, we may find a submanifold $M^{1,1}\subset X_{\textrm{ND}}$. It now can be seen from definition that $\Lambda_0^M=\Lambda_0^X|_M$ has linearly independent real and imaginary parts, completing the proof. 
\endproof

\begin{Remark}
For $(X^n,g)$ LC-regular, the last statement of the corollary fails in general. By Proposition \ref{prop:top_degree}, $\dim\Span\{\Re\Lambda_n, \Image \Lambda_n\}\in\{1,2\}$ depending on the signatures occurring in $X$.
\end{Remark}

\proof[Proof of Theorem \ref{mainthm_weyl}]
The existence of the functors has been proven in Section \ref{sec_restriction}. 
For uniqueness of the valuation functor, assume that $\psi:\mathbf{\Psi Met}\to \mathbf {GVal}$ is a covariant functor commuting with $\mathcal{M}$. Fix $p,q>0$. By \cite{bernig_faifman_opq} we know that $\dim \Val^{-\infty}(\R^{p,q})^{\OO(p,q)}=2(p+q)$. Using Corollary \ref{cor:LK_valuations} we conclude that 
\begin{equation} \label{eq_decomposition_flat}
\psi^{\R^{p,q}}=\sum_{k=0}^{p+q} a_k \mu_k^{\R^{p,q}}+\sum_{k=1}^{p+q-1} b_k \bar \mu_k^{\R^{p,q}},
\end{equation}
with constants $a_k,b_k$.  These constants do not depend on the choice of $\R^{p,q}$ as long as $p+q>k$, as follows by applying the functoriality of $\mu_k$ to some simultaneous isometric embeddings of $\R^{p,q}$ and $\R^{p',q'}$ into some $\R^{p'',q''}$. 

We now define sequences $a_k,b_k, k=0,1,\ldots$ by taking arbitrary $p+q>k$ in \eqref{eq_decomposition_flat}. Then $\psi=\sum_{k=0}^\infty a_k \mu_k^{\R^{p,q}}+\sum_{k=1}^\infty b_k \bar \mu_k^{\R^{p,q}}$ on each pseudo-Euclidean space. By functoriality and the pseudo-Riemannian Nash embedding theorem \ref{thm_nash}, this equation then holds on each pseudo-Riemann manifold. 
\endproof

The Lipschitz-Killing curvature measures are homogeneous.

\begin{Proposition} \label{prop_scaling} Let $(X,g)$ be LC-regular, and $\lambda\neq 0$. Then
	\begin{align}\label{eq:homothety}
	\Lambda_k^{X,\lambda g}=\begin{cases} 
	\sqrt{\lambda}^k\Lambda_k^{X,g},& \lambda>0,\\ 
	 \sqrt{\lambda}^k \overline{\Lambda_k^{X,g}},& \lambda<0.
	\end{cases}
	\end{align}
Analogous formulas hold for the intrinsic volumes.
\end{Proposition}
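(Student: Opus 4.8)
\textbf{Proof strategy for Proposition \ref{prop_scaling}.} The plan is to reduce the statement to the flat model $\R^{p,q}$ via an isometric embedding, and there to trace through the explicit formulas defining $\Lambda_k$. First I would treat the case of a pseudo-Riemannian $(X,g)$ of signature $(p,q)$. By the Nash embedding Theorem \ref{thm_nash}, choose an isometric embedding $e\colon (X,g)\hookrightarrow \R^{p',q'}$. Note that $\lambda g$ is again a metric (LC-regular, of some definite signature at each point if $g$ was, or with the same null cone up to reparametrization), and that $e$ is \emph{not} isometric for $\lambda g$; instead one uses the homothety $h_\mu\colon \R^{p',q'}\to\R^{p',q'}$, $x\mapsto \mu x$, which scales the flat metric $Q$ by $\mu^2$. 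For $\lambda>0$ one writes $\lambda g = (e\circ$ (scaled embedding)$)^*Q$ by composing $e$ with $h_{\sqrt\lambda}$ into $\R^{p',q'}$; for $\lambda<0$ one must instead map into $\R^{q',p'}$ (swapping the roles of positive and negative coordinates), since $-Q$ on $\R^{p',q'}$ is $Q$ on $\R^{q',p'}$. Then by Theorem \ref{thm:weyl}, $\Lambda_k^{X,\lambda g} = (\text{that embedding})^*\Lambda_k^{\R^{p',q'}}$ (resp. $\Lambda_k^{\R^{q',p'}}$), so everything reduces to understanding how $\Lambda_k^{\R^{p,q}}$ transforms under the homothety $h_\mu$ and under the identification $\R^{p,q}\cong\R^{q,p}$ via $Q\mapsto -Q$.

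The core computation is therefore on the flat space. On $\R^{p,q}$ the form $\lambda_k$ (Definition \ref{def_lk_curvatures}) is built from the $\phi^i_{k,r}$, which by construction (Definitions \ref{def:meromorphic_families}, \ref{def_phi_kri}) are homogeneous distributions in $\sigma = Q/P$; the pullback $h_\mu^*$ rescales $\sigma$ by $\mu^2$ and the solder/connection forms $\theta^E_i,\omega^E_{ij}$ scale by $\mu$. Counting degrees: $\phi_{k,r}$ has $\sigma$-degree $\tfrac{m-k-1}{2}$ by Proposition \ref{prop:phi_degree}, and it is an $m$-form in $\theta,\omega$, while $\kappa_k=\frac{\i^q}{\cdots}\psi_{m+1,\cdots}$ is built from $m{+}1-2r$ solder forms and $r$ curvature forms. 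Tracking these weights against the normalizing constants $\frac{1}{k!(\cdots)!(4\pi)^\cdots\omega_\cdots}$ in $\lambda_k$, one verifies that $h_\mu^*\Lambda_k^{\R^{p,q}} = \mu^{k}\Lambda_k^{\R^{p,q}}$ for $\mu>0$ (note the factor $\vol$ in $\kappa_k$ contributes $\mu^{m+1}$ and the rest contributes $\mu^{-(m+1-k)}$). For $\lambda<0$ the identification $\R^{p,q}\cong\R^{q,p}$ sends $q\mapsto p$ in the prefactor $\i^q$, and sends $\sigma\mapsto-\sigma$, hence $\chi^s_i(\sigma)\mapsto \chi^s_i(-\sigma)$; using the identities $\chi_i^{s}(-1)=(-1)^{\lfloor s+\frac12\rfloor}\delta_{2s,i}$ and the structure of $\phi^0,\phi^1$, together with $\i^p = \i^{-q}\cdot\i^{p+q} = \overline{\i^q}\cdot(\pm\cdots)$, one checks that the net effect is precisely complex conjugation. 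Combining with the $|\lambda|^{k/2}$ scaling gives the stated formula $\sqrt{\lambda}^k\overline{\Lambda_k^{X,g}}$.

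For general LC-regular $(X,g)$ of changing signature, one argues pointwise/locally: the same reduction via Lemma \ref{lemma:baby_nash} embeds $(X,\lambda g)$ isometrically, and the identities \eqref{eq:restriction_form_boundary}, \eqref{eq:restriction_form_interior} hold at the level of forms (Remark \ref{rem:forms_are_defined}), so the scaling behavior of $\lambda_k,\kappa_k$ on the flat ambient space transfers verbatim; alternatively, by Proposition \ref{prop_basic_properties}(2) (continuity in $g$) and density of pseudo-Riemannian metrics (Proposition \ref{prop_lc_regularity_pseudo_riemann}) within LC-regular ones, the formula for pseudo-Riemannian $g$ extends by continuity to all LC-regular $g$, once one notes that the maps $g\mapsto\lambda g$ and $g\mapsto\Lambda_k^{X,g}$ and complex conjugation are all continuous. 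The statement for $\mu_k=\glob(\Lambda_k)$ follows by applying the globalization functor, which commutes with everything in sight.

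\textbf{Main obstacle.} The bookkeeping of signs and powers of $\i$ in the $\lambda<0$ case is the delicate point: one must carefully match the relabeling $(p,q)\leftrightarrow(q,p)$ (affecting $\i^q$, the partition data $U_\pm$, and orientations of the frames), the sign flip $\sigma\mapsto-\sigma$ (affecting which of $\chi_0^s,\chi_1^s$ appears and with what sign via $\chi_i^{s+1}(-1)=-\chi_i^s(-1)$), and the definition of $\phi^0,\phi^1$ in the even vs. odd $(m-k)$ cases separately, to see that the combined operation is exactly $\Lambda_k\mapsto\overline{\Lambda_k}$ and not some other power of $\i$. I expect this to be a somewhat lengthy but entirely mechanical verification, best organized by first establishing $h_\mu^*\Lambda_k=\mu^k\Lambda_k$ for $\mu>0$ and then handling the single reflection $g\mapsto -g$ as the separate nontrivial input.
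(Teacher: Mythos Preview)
Your strategy is correct but takes an unnecessary detour through flat space. The paper's proof is more direct: it reduces LC-regular to pseudo-Riemannian via Definition~\ref{def:LC_regular_LK} (your ``first alternative''), and then on a general pseudo-Riemannian $(M,Q)$ computes the effect of $Q\mapsto -Q$ directly on the frame bundle, without embedding into any $\R^{p',q'}$. The key observation is that the same compatible Riemannian metric $P$ serves both $Q$ and $\widehat Q:=-Q$; swapping the partition $\widehat W_\pm:=W_\mp$ one has $\widehat\theta_i=\theta_i$, $\widehat\omega_{i,j}=\omega_{i,j}$, $\widehat\Omega_{i,j}=\Omega_{i,j}$, $\widehat\sigma=-\sigma$, $\widehat\epsilon_i=-\epsilon_i$, whence $\widehat\phi_{k,r}=(-1)^{m-k+r}\phi_{k,r}$ on the open orbits. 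A short case analysis on the parity of $m-k$ then gives the transformation of $\phi_{k,r}^0,\phi_{k,r}^1$, hence of $C_{k,r}^i$, and plugging into \eqref{eq:C_to_Lambda} yields $\widehat\Lambda_k=\i^k\overline{\Lambda_k}$. Your reduction to $\R^{p',q'}$ buys nothing: the claimed homogeneity $h_\mu^*\Lambda_k^{\R^{p',q'}}=\mu^k\Lambda_k^{\R^{p',q'}}$ and the identification $\Lambda_k^{\R^{p',q'}}\leftrightarrow\Lambda_k^{\R^{q',p'}}$ still require exactly the same scaling/sign analysis of the defining forms, just specialized to zero curvature.

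One minor gap: your ``alternative'' extension to LC-regular via continuity and density misreads Proposition~\ref{prop_lc_regularity_pseudo_riemann}. That proposition says the non-degenerate \emph{points} of an LC-regular $(X,g)$ are dense in $X$; it does \emph{not} say that pseudo-Riemannian metrics are dense among LC-regular metrics on $X$ (indeed, if $g$ has changing signature on a connected $X$, no nearby metric can be pseudo-Riemannian). Your first alternative via embeddings is the correct one, and is exactly what the paper does by invoking Definition~\ref{def:LC_regular_LK}.
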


\begin{proof} 
By Definition \ref{def:LC_regular_LK}, we may assume $(X,g)=(M,Q)$ is pseudo-Riemannian. For $\lambda>0$, the stated formula is easy to check. It remains to prove the case $\lambda=-1$.

Put $\widehat Q=-Q$. The corresponding solder, connection and curvature forms are related by $\widehat \theta_i=\theta_i, \widehat \omega_{i,j}=\omega_{i,j}, \widehat\Omega_{i,j}=\Omega_{i,j}$, while $\widehat \sigma=-\sigma, \widehat\epsilon_i=-\epsilon_i$. It follows that $\widehat\phi_{k,r}=(-1)^{m-k+r}\phi_{k,r}$ on the open orbits. For $(m-k)$ even, one checks that 
$
\widehat\phi_{k,r}^i=(-1)^{\frac{m-k+2r}2}\phi_{k,r}^{i+1},
$
while for $(m-k)$ odd, it holds
$
\widehat\phi_{k,r}^i=(-1)^{\frac{m-k+2r+1-2i}2}\phi_{k,r}^i.
$
It follows that $
\widehat\Lambda_k=\i^{k}\overline{\Lambda_k}$ as stated. 
\end{proof}

Next we address the dependence of $\Lambda_k$ on the metric. We work with the local H\"older space of functions $C^{n+\alpha}=C^{n,\alpha}$ with $n\geq0$ and $0<\alpha<1$, consisting of $C^n$ functions whose derivatives of order $n$ are locally $\alpha$-H\"older. For $f\in C_c^{n+\alpha}(M)$, write $\|f\|_{n+\alpha}$ for the corresponding norm.
The norm of a differential form is taken to be the maximum of the norms of its coefficients in the  standard basis.  

\begin{Proposition}\label{prop:continuity_of_LK}
	For a manifold $M$ of dimension $m+1$ and $0\leq k\leq m-1$, the assignment $\Lambda_k: \mathrm{Met}_{p,q}(M)\to \mathcal C^{-\infty}(M)$ is continuous in the $C^{\frac{m+3-k}{2}+\epsilon}$ local H\"older topology on $\mathrm{Met}_{p,q}(M)$, for all $\epsilon>0$.
\end{Proposition}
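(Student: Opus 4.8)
The plan is to reduce the continuity of $\Lambda_k$ to the continuity of the underlying generalized forms $\kappa_k$ and $\lambda_k$ as functions of $g$, and then track carefully how many derivatives of $g$ enter into each ingredient. Recall from Definition \ref{def_lk_curvatures} and \eqref{eq:C_to_Lambda} that $\Lambda_k=[\kappa_k,\lambda_k]$ with $\kappa_k$ built from the curvature form $\psi_{m+1,*}$ and $\lambda_k$ built from the generalized forms $\phi_{k+2\nu,\nu}^i$ on $\mathbb P_+(TM)$. Since the map $(\phi,\omega)\mapsto[\phi,\omega]$ into $\mathcal C^{-\infty}(M,\C)$ is continuous for the (quotient of the) normal topology, it suffices to show that $g\mapsto\kappa_k^{M,g}\in\Omega^{m+1}(M,\C)$ and $g\mapsto\lambda_k^{M,g}\in\Omega^m_{-\infty}(\mathbb P_+(TM),\C)$ are continuous in the stated H\"older topology. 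For $\kappa_k$, which is a smooth top-degree form polynomial in the curvature tensor, this is immediate: the curvature tensor involves two derivatives of $g$, so $\kappa_k$ depends continuously on $g$ in $C^2\subset C^{\frac{m+3-k}{2}+\epsilon}$ (for $k\le m-1$ the latter exponent is $\ge 2$). The real work is $\lambda_k$.

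For $\lambda_k$, the first step is to recall the construction of $\phi_{k,r}$ in Section \ref{sec_construction}: one picks a compatible Riemannian metric $P$ (Lemma \ref{lemma_quad_comp}), forms $\sigma=Q/P$ on $\mathbb P_+(TM)$, and on the open orbits $\phi_{k,r}$ is a smooth form built from the $Q$-solder, connection and curvature forms; by Proposition \ref{prop:phi_degree} it is $\sigma$-homogeneous of degree $\frac{m-k-1}{2}$, and $\phi_{k,r}^i$ is obtained by multiplying by the meromorphic family $\chi_i^{-\frac{m+1-k}{2}}(\sigma)$ and evaluating/taking residue at $s=0$ (Definition \ref{def_phi_kri}). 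The key structural fact, visible from \eqref{eq_phi_in_terms_of_rho}--\eqref{eq_relation_phi_rho}, is that $\phi_{k,r}^i=\chi_i^{-\frac{m+1-k}{2}}(\sigma)\cdot\rho_{k,r}$ where $\rho_{k,r}$ is a \emph{smooth} form whose coefficients are polynomial in the curvature tensor of $Q$, the first derivatives of the metrics, and $\sigma,\tau$. Thus $\rho_{k,r}$ depends continuously on $g$ in the $C^2$ topology. The delicate factor is the homogeneous distribution $\chi_i^{-\frac{m+1-k}{2}}(\sigma)$: one must show that the map $g\mapsto \chi_i^{s}(\sigma_g)$, as a generalized function on $\mathbb P_+(TM)$, is continuous in $g$. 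Here $\sigma_g$ is a smooth function with $0$ a regular value, and pulling back a fixed $1$-dimensional homogeneous distribution $\chi_i^s$ by $\sigma_g$ is continuous in $\sigma_g$ in the $C^{N}$ topology, where $N$ is governed by the order of the distribution $\chi_i^s$, namely $|s|+1\sim \frac{m+3-k}{2}$ derivatives (the distribution $\chi_i^{-\frac{m+1-k}{2}}$ has ``order'' roughly $\frac{m-1-k}{2}$, and pulling it back by $\sigma$ in a way that is continuous in the normal topology on currents with wave front in $N^*(\sigma^{-1}0)$ costs that many derivatives of $\sigma$, plus the base regularity $C^{1+\epsilon}$ needed to control the hypersurface $\sigma^{-1}(0)$ itself). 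Since $\sigma_g=Q/P$ and $P$ can be chosen to depend continuously (indeed, polynomially after the smooth diagonalization of Lemma \ref{lem:smooth_decomposition}) on $g$, the map $g\mapsto\sigma_g$ is continuous $C^{\ell}\to C^{\ell}$ for every $\ell$; combining, $g\mapsto\chi_i^s(\sigma_g)$ is continuous in the $C^{\frac{m+3-k}{2}+\epsilon}$ topology.

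The final step is to combine: by the hypocontinuity of the wedge (multiplication) in the normal topology, the product $\chi_i^{s}(\sigma_g)\cdot\rho_{k,r}^g$ is jointly continuous in the pair $(\chi_i^s(\sigma_g),\rho_{k,r}^g)$, hence continuous in $g$ in the $C^{\frac{m+3-k}{2}+\epsilon}$ topology (the smooth factor needs only $C^2$, which is dominated). Summing over $\nu$ as in the definition of $\lambda_k$, and noting that the worst term is $\nu=0$, i.e. $\phi_{k,0}^i$, which carries the distribution $\chi_i^{-\frac{m+1-k}{2}}$ of the largest order among all the $\phi_{k+2\nu,\nu}^i$ appearing, gives the stated exponent $\frac{m+3-k}{2}+\epsilon$. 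Passing to $[\kappa_k,\lambda_k]$ and then, if desired, noting that an LC-regular metric can be perturbed within LC-regular metrics (the LC-regularity condition is open in $C^1$) yields the analogous statement in the LC-regular setting, though the Proposition as stated is only for $\mathrm{Met}_{p,q}(M)$.

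\textbf{Main obstacle.} The crux is the precise bookkeeping in the middle step: showing that pulling back the one-variable homogeneous distribution $\chi_i^{-\frac{m+1-k}{2}}$ by the smooth submersion (near its zero set) $\sigma_g$ is continuous in $\sigma_g$ with respect to exactly the $C^{\frac{m+3-k}{2}+\epsilon}$-topology, in the normal topology on the target space of currents with controlled wave front set. This requires a quantitative version of the pullback estimate for homogeneous distributions --- essentially counting how the order of a homogeneous distribution of degree $s$ interacts with the regularity needed on the defining function, and checking that the half-integer $-\frac{m+1-k}{2}$ produces order $\sim\frac{m-1-k}{2}$, so that together with the $1+\epsilon$ Hölder regularity needed to even make sense of the pullback near the (non-Morse, but regular) hypersurface $\LC_M$ one lands on $\frac{m+3-k}{2}+\epsilon$. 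The restriction $k\le m-1$ is exactly what guarantees this exponent is $\ge 2$ so the smooth factor $\rho_{k,r}$ (which needs $C^2$) is not the bottleneck; for $k\in\{m,m+1\}$ the curvature measure is essentially the volume and a separate, easier argument (Proposition \ref{prop:top_degree}) applies.
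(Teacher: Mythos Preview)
Your approach is essentially the paper's: reduce to continuity of the defining forms, factor each $\phi_{k,r}^i$ as a homogeneous distribution in $\sigma$ times a smooth form involving two derivatives of the metric, and let the most singular distributional factor (at $\nu=0$) dictate the H\"older exponent. Two points where your execution differs from the paper and would need tightening.

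First, the citation of \eqref{eq_phi_in_terms_of_rho}--\eqref{eq_relation_phi_rho} is misleading: those identities hold only at points of quadratic compatibility. Globally, $\phi_{k,r}$ is a \emph{finite linear combination} of products $\sigma^*\chi^d\cdot\rho$ with $-\tfrac12\ge d\ge -\tfrac{m+1-k}{2}$ and $\rho$ smooth; this follows from the $\sigma$-homogeneity analysis of Section~\ref{sec_construction}, and the paper states it in exactly this form. Your argument goes through once this is said, since the worst $d$ is $-\tfrac{m+1-k}{2}$.

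Second, your derivative accounting in the parenthetical is off by one, and the tool you invoke (hypocontinuity of the wedge in the normal topology) only delivers $C^\infty$-continuity, not the precise H\"older exponent. The paper instead works directly in the dual H\"older space $(C^{\nu-2})^*$: the distribution $\chi^d$ lies in $(C^{-d-1+\epsilon})^*$, so $\sigma_Q^*\chi^d\in (C^{\nu-2})^*$ with $\nu-2=\tfrac{m-1-k}{2}+\epsilon$; to multiply by $\rho_Q$ and stay in $(C^{\nu-2})^*$ one needs $\rho_Q\in C^{\nu-2}$, which forces $Q\in C^\nu$ since $\rho_Q$ contains curvature. The continuity of $Q\mapsto\sigma_Q^*\chi^d$ in $(C^{\nu-2})^*$ is then checked by dualizing to the pushforward $(\sigma_Q)_*(\text{test})$ and applying the coarea formula. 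So the ``$+2$'' in the exponent comes from the curvature in $\rho_Q$, not from an extra $1+\epsilon$ for the hypersurface as you suggest; once this bookkeeping is corrected your sketch matches the paper.
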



\proof
We may assume $M=\R^{m+1}$. As the inner term of $\Lambda_k$ is a smooth measure given by a polynomial in the curvature tensor, it remains to consider the boundary term. It suffices to show that the forms $\phi_{k,r}^i$ defined in Section \ref{sub:globablly_defined_forms} are continuous. 

Denote $\nu=\frac{m+3-k}{2}+\epsilon$. We may choose a $C^\nu$-smooth assignment of compatible Riemannian metrics $P=P(Q)$ over $M$ for $Q\in \mathrm{Met}_{p,q}(M)$ in a $C^\nu$-neighborhood $\Omega$ of some $Q_0$. The forms $\phi_{k,r}$ are given by linear combinations of products $\sigma_Q^*\chi^d\cdot \rho_Q$, where $\sigma_Q=\frac{Q}{P}\in C^\infty(\mathbb P_+(TM))$, $\chi^d\in C^{-\infty}(\R)$ is $d$-homogeneous with $-\frac12\geq d\geq -\frac{m+1-k}{2}$, and $\rho_Q$ is a smooth form that is $C^{\nu-2}$-continuous in $Q\in \Omega$. 

Note that $\chi^d\in C_c^{-d-1+\epsilon}(\R)^*$ for all $\epsilon>0$, see \eqref{eq:homogeneous_distribution_def}. It follows that $\sigma_Q^*\chi^d\in C^{\nu-2}(\mathbb P_+(TM))^*$, as the singular points of $\chi^d$ are regular values of $\sigma_Q$. 

Fix cut-off functions $\psi_1,\psi_2\in C^\infty_c(\mathbb P_+(TM))$ such that $\Supp\psi_{2}\subset\psi_1^{-1}(1)$. Denote $B=\{\beta\in C^{\nu-2}_{\Supp \psi_1}(\mathbb P_+(TM)):\|\beta\|_{\nu-2}= 1\}$. For $\rho\in C^{\nu-2}(\mathbb P_+(TM))$ and $\eta\in C^{\nu-2}(\mathbb P_+(TM))^*$ we have
\begin{align*}
\|\psi_2 \eta  \rho \|^*_{\nu-2}\leq \sup_{\beta\in B}\langle \beta,\psi_2 \eta \rho\rangle= \sup_{\beta\in B}\langle \beta\psi_1\rho,\psi_2 \eta\rangle\leq C\| \psi_2\eta\|^*_{\nu-2}\|\psi_1\rho\|_{\nu-2}
\end{align*}
for some constant $C>0$. We then may write 
\begin{multline*}
|\psi_2(\sigma_Q^*\chi^d\rho_Q-\sigma_{Q_0}^*\chi^d\rho_{Q_0})\|^*_{\nu-2} \\
\leq C(\| \psi_2\sigma_Q^*\chi^d\|^*_{\nu-2}\|\psi_1(\rho_Q-\rho_{Q_0})\|_{\nu-2} +\| \psi_2(\sigma_Q^*\chi^d-\sigma_{Q_0}^*\chi^d)\|^*_{\nu-2}\|\psi_1\rho_{Q_0}\|_{\nu-2}  ). 
\end{multline*}

It remains to show that $\| \psi_2(\sigma_Q^*\chi^d-\sigma_{Q_0}^*\chi^d)\|^*_{\nu-2}=o(\|\psi_1(\sigma_Q-\sigma_{Q_0})\|_{\nu})$. Write 
\begin{displaymath}
\|\psi_2(\sigma_Q^*\chi^d-\sigma_{Q_0}^*\chi^d)\|^*_{\nu-2}\leq \sup_{\beta\in B} \langle \chi^d, (\sigma_Q)_*(\psi_2\beta)-(\sigma_{Q_0})_*(\psi_2\beta)\rangle.
\end{displaymath}

Splitting $\chi^d=\chi^d\cdot \mathbbm1_{[-1/2,1/2]}+ \chi^d\cdot (1-\mathbbm1_{[-1/2,1/2]})$, the corresponding second summand is $o(\|\psi_1(\sigma_Q-\sigma_{Q_0})\|_{C^1})$, uniformly in $\beta\in B$. For the first summand,
\begin{multline*} 
\sup_{\beta\in B} \langle \chi^d \mathbbm1_{[-1/2,1/2]}, (\sigma_Q)_*(\psi_2\beta)-(\sigma_{Q_0})_*(\psi_2\beta)\rangle \leq\\
\|  \chi^d \mathbbm1_{[-1/2,1/2]} \|^*_{\nu-2}\sup_{\beta\in B}\|(\sigma_Q)_*(\psi_2\beta)-(\sigma_{Q_0})_*(\psi_2\beta) \|_{\nu-2}.  
\end{multline*}
We use an auxiliary Riemannian structure and the co-area formula to write 
\begin{displaymath}
(\sigma_Q)_*(\psi_2\beta)(t)= \int_{\{\sigma_Q=t\}}\frac{\psi_2(z)\beta(z)}{\|\nabla \sigma_Q\|}dz,
\end{displaymath}
which is readily seen to be $C^{\nu-2}$-continuous in $Q\in\Omega$, uniformly in $\beta\in B$. 

The inclusion $C^\nu(\mathbb P_+(TM))^* \hookrightarrow C^{-\infty}(\mathbb P_+(TM))$ is continuous with the strong dual topology on the right hand side, hence $Q \mapsto \sigma_Q^*\chi^d\cdot \rho_Q$ is continuous on $\Omega$.
\endproof

\begin{Corollary}\label{cor:LC_continuity}
	For a manifold $X^{n}$, $0\leq k\leq n-2$ and $\epsilon>0$, the assignment $\Lambda_k: \mathrm{Met}_{\LC}(X)\to \mathcal C^{-\infty}(M)$ is continuous in the $C^{n-\frac{k}2+1+\epsilon}$ topology on $\mathrm{Met}_{\LC}(X)$.
\end{Corollary}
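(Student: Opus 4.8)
\proof
The plan is to reduce the statement to the pseudo-Riemannian case of Proposition \ref{prop:continuity_of_LK} by means of the cheap isometric embeddings of Lemma \ref{lemma:baby_nash}. Continuity of an assignment with values in $\mathcal C^{-\infty}(X,\mathbb C)$ can be tested against data supported in coordinate charts, and $\Lambda_k$ is compatible with open isometric inclusions (Definition \ref{def:LC_regular_LK} and Theorem \ref{thm:weyl}), so I would first localize and assume $X=U$ is an open subset of $\mathbb R^n$; moreover it suffices to prove continuity near a fixed $g_0\in\mathrm{Met}_{\LC}(U)$ and, shrinking $U$, to work over a relatively compact $U'\Subset U$. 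By Lemma \ref{lemma:baby_nash}, for all $g$ in a $C^1$-neighbourhood $\mathcal N$ of $g_0$ with uniformly bounded sup-norm one may use one and the same function $R(\cdot)$, obtaining a pseudo-Riemannian metric $G(g)$ of signature $(n,n)$ on $M:=U\times\mathbb R^n$ together with the fixed isometric embedding $e\colon U\hookrightarrow M$, $x\mapsto(x,0)$, in such a way that $g\mapsto G(g)$ is affine, hence $C^\nu$-to-$C^\nu$ continuous for every $\nu$. By Definition \ref{def:LC_regular_LK}, $\Lambda_k^{U,g}=e^*\Lambda_k^{M,G(g)}$.

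Now $\dim M=m+1=2n$, so Proposition \ref{prop:continuity_of_LK} applies (note $0\le k\le n-2\le 2n-2=m-1$) and gives continuity of $G\mapsto\Lambda_k^{M,G}$ in the $C^{\frac{m+3-k}2+\epsilon}=C^{n-\frac k2+1+\epsilon}$ local Hölder topology. Combined with the continuity of $g\mapsto G(g)$ this shows $g\mapsto\Lambda_k^{M,G(g)}$ is continuous in the $C^{n-\frac k2+1+\epsilon}$ topology on $g$. It remains to post-compose with the restriction $e^*$ and to check the composition is continuous into $\mathcal C^{-\infty}(U,\mathbb C)$.

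By Corollary \ref{cor:wave_front_set_of_LK}, $\Lambda_k^{M,G(g)}\in\mathcal C^{-\infty}_{\emptyset,\,N^*(\LC^*_{M,G(g)})}(M)$, and by Proposition \ref{prop:restricting_curvature_measures} the map $e^*$ is continuous from $\mathcal C^{-\infty}_{\emptyset,\Gamma}(M)$ to $\mathcal C^{-\infty}(U)$ for any fixed closed conical $\Gamma$ to which $e$ is transversal in the sense of \eqref{eq_wf_conditions_pull_back}. The key point is that $\Gamma$ must be chosen once and for all, uniformly over a neighbourhood of $g_0$. Among the three conditions in \eqref{eq_wf_conditions_pull_back}, the first is vacuous, the second holds automatically because $\LC^*_{M,Q}$ is a fibre bundle over $M$ for every $Q$, and the third asks only that $\Gamma$ be disjoint from the metric-independent closed conical set $N^*(\mathbb P_*(N^*U))\subset T^*\mathbb P_M$; by Proposition \ref{prop:nondegenerate_restriction} the set $N^*(\LC^*_{M,G(g_0)})$ is disjoint from it. Hence, over $U'\times\{0\}$ together with a relatively compact neighbourhood of it in $M$, one can pick a sufficiently small closed conical neighbourhood $\Gamma$ of $N^*(\LC^*_{M,G(g_0)})$ that is still disjoint from $N^*(\mathbb P_*(N^*U))$; and since $G(g)\to G(g_0)$ in $C^1$ forces $\LC^*_{M,G(g)}\to\LC^*_{M,G(g_0)}$, hence the conormal directions to converge, uniformly on compacta, one can shrink $\mathcal N$ so that $N^*(\LC^*_{M,G(g)})\subset\Gamma$ for all $g\in\mathcal N$. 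Thus $e$ is transversal to $(\emptyset,\Gamma)$, all $\Lambda_k^{M,G(g)}$ lie in $\mathcal C^{-\infty}_{\emptyset,\Gamma}(M)$, and — using the estimates in the proof of Proposition \ref{prop:continuity_of_LK} together with this uniform wave front bound — $g\mapsto\Lambda_k^{M,G(g)}$ is continuous into $\mathcal C^{-\infty}_{\emptyset,\Gamma}(M)$ with its normal topology. Applying the continuous map $e^*$ and undoing the localization yields the claim.

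The main obstacle is precisely this last step: upgrading Proposition \ref{prop:continuity_of_LK} to a statement with values in a fixed wave-front-restricted space $\mathcal C^{-\infty}_{\emptyset,\Gamma}(M)$ carrying the normal topology, uniformly as the ambient metric — and with it the light cone $\LC^*_{M,G(g)}$ — varies over $\mathcal N$. Once this uniformity is secured, the reduction to the pseudo-Riemannian case is entirely formal.
\endproof
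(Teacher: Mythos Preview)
Your approach is exactly the paper's: localize, embed via Lemma \ref{lemma:baby_nash} into $(M^{n,n},G(g))$ with $G$ depending smoothly on $g$, apply Proposition \ref{prop:continuity_of_LK} in dimension $2n$ to get the exponent $n-\frac{k}{2}+1+\epsilon$, and restrict by $e^*$. The paper's own proof is a terse three-line version of the same reduction.

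Where you go further is in worrying about the continuity of $e^*$ as the ambient metric varies, insisting on a fixed closed cone $\Gamma\supset N^*(\LC^*_{M,G(g)})$ for all $g$ near $g_0$ and on convergence in the normal topology of $\mathcal C^{-\infty}_{\emptyset,\Gamma}(M)$. This is a legitimate point that the paper glosses over with the phrase ``hence $\Lambda_k^{X,g}=e^*\Lambda_k^{M,G}$ is continuous in $g$''. Your resolution is correct: the $C^1$-convergence $G(g)\to G(g_0)$ forces the light cones and their conormals to converge uniformly on compacta, so a common $\Gamma$ disjoint from $N^*(\mathbb P_*(N^*U))$ exists; and the estimates inside the proof of Proposition \ref{prop:continuity_of_LK} (which bound $\sigma_Q^*\chi^d\cdot\rho_Q$ via H\"older norms, with the singular support of $\chi^d$ pulled back through $\sigma_Q$) do upgrade to normal-topology convergence with this uniform wave front bound. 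So there is no gap in your argument---you have simply made explicit a step the paper leaves implicit.
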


\proof
Recall the locally defined embedding $e:X \hookrightarrow (M, G)$ constructed in Lemma \ref{lemma:baby_nash}.  Clearly $G$ can be chosen to depend arbitrarily smoothly on $g$. By Proposition \ref{prop:continuity_of_LK}, $\Lambda_k^{M, G}$ is continuous in $G$, hence $\Lambda_k^{X,g}=e^*\Lambda_k^{M,G}$ is continuous in $g$.
\endproof
%

We next generalize the tube formula by Willison.
Fix $M^{p',q'}\subset \R^{p,q}$, and let $N^1(M)$ consist of those $(x,v)\in N^QM$ with $|Q(v)|=1$. Consider the map $$\exp\colon N^1(M)\times\R\to \R^{p,q},\qquad \exp(x,v,t)=x+tv.$$ Given $r>0$ and a non-empty, relatively compact Borel set $U\subset M$, the $r$-tube of $M$ over $U$ is the following region of $\R^{p,q}$
\begin{displaymath}
	T(M,U,r)=\exp(N^1(M)\cap \pi^{-1}U\times[0,r]).
\end{displaymath}

\begin{Proposition} Under the above assumptions, and for sufficiently small $r>0$, the tube $T(M,U,r)$ is bounded if and only if $p=p'$ or $q=q'$, in which case
	\begin{align}
		\vol(T(M,U,r))&=(-\mathbf i)^{q'}\sum_{\nu=0}^{\lfloor {m+1\over 2}\rfloor}(-1)^{\nu(q-q')} \omega_{n-m+2\nu} \Lambda^M_{m-2\nu+1}(M,U) r^{n-m+2\nu}.\label{eq:tube1}
	\end{align}
\end{Proposition}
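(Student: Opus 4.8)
The plan is to reduce the volume computation to an integral over the normal sphere bundle and then recognize the resulting integrand as a (globalized) Lipschitz-Killing curvature measure, exactly as in the classical Weyl tube formula, keeping careful track of the complex factors $\mathbf i^{q'}$ coming from the pseudo-Riemannian signatures. First I would clarify the boundedness dichotomy: a point $(x,v)\in N^1(M)$ with $Q(v)=+1$ (resp. $Q(v)=-1$) contributes a ray $x+tv$, $t\in[0,r]$; the set of such $v$ in a fibre $N^Q_xM$ of signature $(p-p',q-q')$ is a hyperboloid, which is compact precisely when $p-p'=0$ (only $Q(v)=-1$ occurs and the fibre is a sphere) or $q-q'=0$ (only $Q(v)=+1$ occurs). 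When both $p-p'>0$ and $q-q'>0$ the hyperboloid is non-compact and $T(M,U,r)$ is unbounded for every $r>0$. So assume, say, $q=q'$ (the case $p=p'$ is symmetric, producing the sign $(-1)^{\nu(q-q')}=1$ trivially and a real answer, matched by the $(-\mathbf i)^{q'}$ prefactor appropriately — I would handle both uniformly by writing $N^1(M)$ as the relevant sphere bundle $S(N^QM)$ of the definite fibre).

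Next I would compute $\vol(T(M,U,r))$ by pulling back the Lebesgue measure of $\R^{p,q}$ under $\exp$. Writing the normal sphere bundle coordinates as $(x,v)$ with $v$ in the unit sphere $S^{n-m-1}$ of the definite part of $N^Q_xM$, the Jacobian of $\exp(x,v,t)=x+tv$ factors, as in the Riemannian case, through the Weingarten/shape operator of $M$ in the normal direction $v$; one gets
\begin{displaymath}
\vol(T(M,U,r))=\int_U\int_{S(N^QM)|_U}\int_0^r \det\bigl(\mathrm{Id}-t\,S_v\bigr)\,t^{n-m-1}\,dt\,dv\,d\vol_{M}(x),
\end{displaymath}
where $S_v$ is the shape operator, $\det$ is over $T_xM$, and the relevant volume forms are the ones induced by $P$ along $M$ and its normal bundle (here one must be careful that it is the \emph{Euclidean} $P$-volume, not the $Q$-volume, that governs the ambient Lebesgue measure, but the two differ by a factor that is absorbed when one rewrites everything $Q$-orthonormally). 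Expanding $\det(\mathrm{Id}-tS_v)=\sum_j (-t)^j\,\mathrm{tr}\,\Lambda^j S_v$ and integrating in $t$ gives a polynomial in $r$ whose coefficient of $r^{n-m+2\nu}$ is $\tfrac{1}{n-m+2\nu}$ times $\int\int \mathrm{tr}\,\Lambda^{2\nu-(n-m)\bmod\,\ldots}$ — more precisely, only the terms with the correct parity survive the integration over the sphere $S^{n-m-1}$ against $dv$, since odd powers of $S_v$ integrate to zero and the surviving even symmetric functions of the principal curvatures assemble, via the Weyl Lemma \ref{lemma_weyl}, into the curvature polynomials $\psi_{m+1,\cdot}$ and $\phi_{k,\cdot}$ defining $\Lambda^M_{m-2\nu+1}$.

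The decisive step is matching constants and the complex factor. Here I would invoke the restriction computations already done in the paper: Propositions \ref{prop_res_boundary} and \ref{prop_res_interior} compute precisely $\int_{S(N^QM)}$ of pulled-back Lipschitz-Killing forms in terms of the $C^{i}_{k,\cdot}$, and \eqref{eq:C_to_Lambda} reassembles these into $\Lambda_k$; the sphere integral appearing in the tube computation is (up to the elementary $t$-integration and the combinatorial binomials $\omega_{n-m+2\nu}$) the same integral. Concretely, I would write $\vol(T(M,U,r))=\sum_\nu r^{n-m+2\nu}\,c_\nu\,\glob\bigl(\text{sphere-integral of Lipschitz-Killing form}\bigr)(U)$ and then quote Theorem \ref{thm:weyl} / Remark \ref{rem:forms_are_defined} to identify the sphere integral with $\Lambda^M_{m-2\nu+1}(M,U)$, the factor $\omega_{n-m+2\nu}$ being the volume of the Euclidean ball $B^{n-m+2\nu}$ that arises from $\int_0^r\int_{S^{n-m-1}}\cdots$ combined with the Weyl-lemma constant $c(n-m,2\nu)$ and $S(\cdot,\cdot)$ from \eqref{eq:sine_integral}; this is exactly the bookkeeping that produced \eqref{eq:restriction_form_boundary}. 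The factor $(-\mathbf i)^{q'}$ and the sign $(-1)^{\nu(q-q')}$ then fall out of the $\mathbf i^{q_x}$ normalization built into $\Lambda_k$ (Proposition \ref{prop:top_degree}) together with the $\chi_i^{-(q-q')/2}(-1)$ factor in the Weyl Lemma, which is $(-1)^{\lfloor (q-q')/2\rfloor}$-type and combines with the homogeneity scaling of Proposition \ref{prop_scaling} under $Q\mapsto -Q$ on the normal fibre.

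\textbf{Main obstacle.} The genuine difficulty is not the formal expansion but (i) correctly normalizing the two competing volume forms ($P$-Euclidean vs.\ $Q$-pseudo-Riemannian) on $M$, its normal bundle, and $\R^{p,q}$, so that the ambient Lebesgue volume is expressed consistently in the $Q$-orthonormal frames used to define $\phi_{k,r}$ and $\psi_{m+1,r}$; and (ii) tracking the phase: showing that the real quantity $\vol(T(M,U,r))$ equals $(-\mathbf i)^{q'}$ times a sum of the complex-valued $\Lambda^M_{m-2\nu+1}(M,U)$ requires verifying that the imaginary parts cancel, which is guaranteed abstractly because the left side is real and the $\Lambda_k$ of a pseudo-Riemannian manifold carry the phase $\mathbf i^{q_x}$, but must be checked to come out with the exact stated exponent $q'$ (not $q$) — i.e.\ the normal directions, being definite of dimension $n-m$ with $q-q'$ or $p-p'$ timelike directions, contribute no net phase beyond what is already in the intrinsic $\Lambda^M_k$. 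I expect this sign/phase reconciliation, rather than the curvature algebra, to be where the care is needed, and I would dispatch it by specializing to $U$ a small ball where $M$ is a graph, comparing directly with the classical Riemannian tube formula analytically continued, as Willison does in the case $q'=q=1$.
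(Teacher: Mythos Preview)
Your approach is correct in spirit and would work, but it takes a longer route than the paper's proof and your stated ``main obstacle'' dissolves once you see the shortcut.

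The paper's proof bypasses the shape operator entirely. It observes that, on the normal sphere bundle $N^1(M)$ with its $t$-coordinate, one has directly
\[
\exp^*(d\vol)=dt\wedge(\theta_1+t\,\omega_{1,0})\wedge\cdots\wedge(\theta_n+t\,\omega_{n,0})
=dt\wedge\sum_{k=0}^n\frac{t^k}{k!(n-k)!}\,\phi^0_{n-k,0},
\]
i.e.\ the pulled-back volume form is already a combination of the \emph{ambient} Lipschitz-Killing forms $\phi^0_{n-k,0}$ on $\R^{p,q}$. Integrating in $t$ and pushing forward along the sphere fibre gives $\int_U\pi_*\phi^0_{n-k,0}$, and \emph{this push-forward is exactly the interior-term restriction computation of Proposition~\ref{prop_res_interior}} (which is what Theorem~\ref{thm:weyl} packages). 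So one lands on $\int_U\kappa_{m-2\nu+1}$ with no further work: the Weyl Lemma, the sphere integration, and the constant bookkeeping have already been done once in Section~\ref{sec_restriction} and need not be repeated.

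Your worries in (i) and (ii) are largely artificial in the definite-normal case. When $q=q'$ (resp.\ $p=p'$) the normal fibre is Riemannian (resp.\ negative definite), so the $P$- and $Q$-unit spheres and volume forms there coincide; there is no competing normalization to reconcile. The phase $(-\mathbf i)^{q'}$ then drops out immediately from the $\mathbf i^q$ prefactor in the definition of $\kappa_k$ (Definition~\ref{def_lk_curvatures}), since $q=q'$, and the case $p=p'$ is obtained by flipping the metric via Proposition~\ref{prop_scaling}. No analytic continuation or specialization to graphs is needed.

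In short: instead of expanding $\det(\mathrm{Id}-tS_v)$ and re-running the Weyl Lemma, recognize the integrand as $\phi^0_{n-k,0}$ and quote the restriction theorem you already proved.
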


\begin{proof}
	The tube is bounded precisely when $(T_xM)^Q$ has definite signature; i.e. when $p=p'$ or $q=q'$.  
	Consider first the case $q=q'$ and put $p+q=n+1$, $p'+q=m+1$. Then
	\begin{align*}
		\exp^*(d\vol)= dt\wedge (\theta_1+ t\omega_{1,0})\wedge\cdots\wedge(\theta_n+t\omega_{n,0})=dt\wedge\sum_{k=0}^{n} \frac{ t^k}{k!(n-k)!} \phi_{n-k,0}^0.
	\end{align*}
	By Theorem \ref{thm:weyl},
	\begin{align*}
		\int_{N^1(M)\cap \pi^{-1}U\times [0,r]} \exp^*(d\vol)  &=\sum_{k=0}^{n}\frac{r^{k+1}}{(k+1)!(n-k)!}  \int_{U} \pi_*\phi_{n-k,0}^{0}\\
		&=\mathbf i^{-q} \sum_{\nu=0}^{\lfloor\frac{m+1}2\rfloor}r^{n-m+2\nu} \omega_{n-m+2\nu} \int_U \kappa_{m-2\nu+1}.
	\end{align*}
	Eq. \eqref{eq:tube1} follows for $q=q'$. The case $p=p'$ follows by flipping the sign of the metric, and using $\Lambda_k(M^{q',p'},U)=\i^k(-1)^{q'} \Lambda_k(M^{p',q'},U)$ (see Prop. \ref{prop_scaling}).
\end{proof}
 
  
\section{Euler characteristic and Chern-Gauss-Bonnet}\label{sec:gauss_bonnet}
We obtain a pseudo-Riemannian representation of the Euler characteristic, extending the  integral formula discovered by Chern in the Riemannian case \cite{chern44} to the LC-regular setting. We also supplement the 
pseudo-Riemannian Weil-Allendoerfer-Chern-Gauss-Bonnet theorem \cite{avez63,chern63} with the boundary term,  giving an expression for $\chi( A)$ in pseudo-Riemannian terms for $A \subset M$, rather than for $ A=M$ alone.

\label{sec_gauss_bonnet}

\begin{Theorem}\label{thm:Lambda_0_glob}
	It holds on any LC-regular manifold $(X,g)$ that $\mu_0^X=\chi$. In particular, if $X$ is compact, then $$\int_X \kappa_0^X=\chi(X).$$
\end{Theorem}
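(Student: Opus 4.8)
\textbf{Proof plan for Theorem \ref{thm:Lambda_0_glob}.}
The plan is to reduce the statement to the classical Chern-Gauss-Bonnet theorem via the Weyl principle and a continuity/functoriality argument. First I would establish the result for pseudo-Riemannian $(M,Q)$. The key observation is that $\mu_0^M = \glob(\Lambda_0^M)$ is a generalized valuation on $M$, and by Proposition \ref{prop:filtration_degree} it has Euler-Verdier eigenvalue $1$ and lies in filtration level $0$. The strategy is to evaluate $\mu_0^M$ on a test body, specifically on $M$ itself when $M$ is closed (so that $\mu_0^M(M) = \int_M \kappa_0^M$, using Remark \ref{rm:kappas} which tells us $\kappa_0$ is a smooth top-degree form and the boundary contribution of $\lambda_0$ over a closed manifold vanishes). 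For the Riemannian case this integral is exactly the Chern-Gauss-Bonnet integrand and equals $\chi(M)$ by Chern \cite{chern44}. To pass to the pseudo-Riemannian case, I would use the fact that $\kappa_0^M$ is, up to the factor $\i^q$, the Pfaffian-type curvature polynomial $\mathrm{LK}_0 \cdot \vol$ as in Remark \ref{rm:kappas}; this polynomial expression in the curvature tensor is formally identical to the Riemannian one, so the Chern-Avez theorem \cite{avez63, chern63} gives $\int_M \kappa_0^M = \chi(M)$ directly for closed pseudo-Riemannian $M$. Since $\mu_0^M \in \mathcal V^{-\infty}(M)$ is determined by its values on a dense family of test bodies (e.g.\ smooth compact domains, using functoriality of restriction under inclusions of such domains), and on each such domain the value can be computed by the localized Chern-Gauss-Bonnet / Allendoerfer-Weil formula, one concludes $\mu_0^M = \chi$ as generalized valuations. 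Alternatively — and more in the spirit of the paper — one applies the uniqueness part of Theorem \ref{mainthm_weyl}: $\mu_0$ is an intrinsic valuation-valued invariant in filtration $0$ with eigenvalue $1$, hence a constant multiple of $\chi$ plus higher-filtration terms, and evaluating on a point (or a small geodesic ball) pins down the constant to be $1$.

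For the general LC-regular case, the argument is purely formal given the machinery already in place. By Definition \ref{def:LC_regular_LK} and Proposition \ref{prop:LC_regular_LK}, $\mu_0^X = e^*\mu_0^M$ for any isometric embedding $e: X \hookrightarrow M^{p,q}$ into a pseudo-Riemannian manifold (such an embedding exists locally by Lemma \ref{lemma:baby_nash}, and globally since we only need the equality of generalized valuations, which is local). Now $\mu_0^M = \chi$ by the pseudo-Riemannian case, and the Euler characteristic valuation $\chi$ on $M$ restricts to the Euler characteristic valuation on $X$: indeed $e^*\chi_A(B) = \chi_A(e(B)) = \chi(A \cap e(B))$ pulls back to $\chi$ on $X$ by the defining property of restriction of valuations ($e^*\psi(A) = \psi(e(A))$ for embeddings). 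Hence $\mu_0^X = \chi$.

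For the final displayed formula, when $X$ is compact and closed, I would note that $\int_X \kappa_0^X = \Lambda_0^X(X, X)$ by Remark \ref{rm:kappas}, and $\glob(\Lambda_0^X)(X) = \mu_0^X(X) = \chi(X)$; the point is that the interior term $\kappa_0^X$ integrated over all of $X$ gives exactly $\mu_0^X(X)$ because the boundary term $\int_{\nc(X)} \lambda_0^X$ vanishes — the normal cycle of a closed manifold $X$ (viewed as a full-dimensional "polyhedron" in itself) has no contribution, as $\nc(X)$ reduces to the zero section with both orientations cancelling, or more carefully, $\chi_X = [(\mathbbm 1_X, [[N^*X]])]$ and $N^*X$ for $X$ full-dimensional is contained in the zero section. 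Thus $\chi(X) = \mu_0^X(X) = \Lambda_0^X(X,X) = \int_X \kappa_0^X$.

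\textbf{Main obstacle.} The delicate point is justifying that $\mu_0^X$, a priori only a \emph{generalized} valuation, genuinely equals the honest Euler characteristic valuation $\chi$ rather than merely agreeing with it on the interior smooth form $\kappa_0$. The cleanest route is through the uniqueness statement of Theorem \ref{mainthm_weyl} together with the eigenvalue computation of Proposition \ref{prop:filtration_degree}: these force $\mu_0 = a_0 \chi$ for a universal constant $a_0$, and then one normalizes $a_0 = 1$ by testing against a single point. The subtlety is that the uniqueness in Theorem \ref{mainthm_weyl} is stated for the \emph{whole} family; one must check that $\mu_0$ individually, being the unique element of filtration $0$ modulo filtration $1$ with the correct Euler-Verdier eigenvalue among the invariants, is pinned down. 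This is where I expect to spend the most care, though the computation of the normalizing constant via evaluation at a point (where all curvature terms vanish and $\kappa_0$ over a small ball reduces to the classical Gauss-Bonnet value $1$) is routine.
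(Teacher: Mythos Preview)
Your reduction from LC-regular to pseudo-Riemannian via restriction ($\mu_0^X = e^*\mu_0^M = e^*\chi = \chi$) is correct and matches the paper exactly. The issue is upstream, in how you establish $\mu_0^M = \chi$ for pseudo-Riemannian $M$.

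Your ``alternative'' route through the uniqueness part of Theorem~\ref{mainthm_weyl} is circular: the proof of that uniqueness (see the argument following Corollary~\ref{cor:LK_valuations}) invokes Corollary~\ref{cor:LK_valuations}, whose $k=0$ case explicitly cites Theorem~\ref{thm:Lambda_0_glob}. So you cannot use it here.

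Your primary route via Chern--Avez is not wrong, but it only gives $\int_M \kappa_0^M = \chi(M)$ for \emph{closed} $M$, and you correctly identify the gap: promoting this to $\mu_0^M = \chi$ as generalized valuations requires matching the full pair $(\kappa_0,\lambda_0)$ against $\chi$ on enough test bodies, i.e.\ a pseudo-Riemannian Allendoerfer--Weil formula with the correct boundary term. That is a substantial statement in its own right (essentially Theorem~\ref{thm:gauss_bonnet_with_boundary}), and in the paper it is a \emph{corollary} of Theorem~\ref{thm:Lambda_0_glob}, not an input to it.

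The paper avoids all of this by reducing not to an arbitrary pseudo-Riemannian $M$ but to flat $\R^{p,q}$. There $\mu_0^{\R^{p,q}}$ is a translation-invariant generalized valuation of degree $0$, and $\Val_0^{-\infty}(\R^{p,q}) = \Span\{\chi\}$ is one-dimensional. Hence $\mu_0^{\R^{p,q}} = c(p,q)\chi$ automatically, as generalized valuations, with no appeal to Chern--Avez or boundary formulas. The Weyl principle (Theorem~\ref{thm:weyl}) forces $c(p,q)$ to be independent of $(p,q)$, and one checks $c(1,0)=1$ trivially. Then for any pseudo-Riemannian (or LC-regular) $X$, embed isometrically into some $\R^{p,q}$ and restrict. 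This is both shorter and logically prior to the Gauss--Bonnet-type statements you wanted to invoke.
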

Recall those are in fact two equalities on real-valued curvature measures, globalizing to $\chi$ and $0$, respectively.
\proof

Consider first $X=\R^{p,q}$. Recall \cite{bernig_faifman16} that the space of translation invariant generalized valuations on $X$ coincides with $\Val^{-\infty}(X)=\Val^\infty(X)^*\otimes \Dens(X)$. In particular, $\Val_0^{-\infty}(X)=\Val_0^\infty(X)=\Span\{\chi\}$.

It follows that $\mu_0^X=c(p,q)\chi$ for some $c(p,q)\in\C$. It holds by Theorem \ref{thm:weyl} that for all $p,q,p',q'$, $c(p,q)=c(p',q')$. One easily verifies that $c(1,0)=1$.

Now for any LC-regular manifold $X$ there is an isometric embedding $e:X\hookrightarrow\R^{p, q}$. We find by Theorem \ref{thm:weyl} that $\mu_0^X=e^*\mu_0^{\R^{p,q}}=e^*\chi=\chi$.
\endproof

An immediate corollary is a Chern-Gauss-Bonnet theorem  for pseudo-Riemannian manifolds with generic boundary, as follows.

\begin{Theorem}\label{thm:gauss_bonnet_with_boundary}Let $M^{p,q}$ be a compact pseudo-Riemannian manifold with LC-regular boundary.  Let $\nu$ be the outer normal to $\partial M$.
Then 
\begin{displaymath}
\chi(M)=\int_M \kappa^M_0+\int_{\partial M}\nu^*\lambda_0^M.
\end{displaymath}
\end{Theorem}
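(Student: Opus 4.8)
The plan is to deduce Theorem \ref{thm:gauss_bonnet_with_boundary} from Theorem \ref{thm:Lambda_0_glob} together with the globalization formula for $\Lambda_0$ and the behavior of curvature measures under evaluation at the polyhedron $M$. The key point is to unwind the identity $\mu_0^M = \glob(\Lambda_0^M) = \chi$ when evaluated on $M$ viewed as a (differentiable, LC-transversal) polyhedron in some ambient pseudo-Riemannian manifold, or directly on $M$ itself as a manifold with boundary, and to identify the resulting two terms with $\int_M \kappa_0^M$ and $\int_{\partial M}\nu^*\lambda_0^M$.

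First I would recall that $\Lambda_0^M = [\kappa_0^M,\lambda_0^M] \in \mathcal C^{-\infty}(M,\mathbb C)$, so that for the test body $M$ itself (which is LC-transversal since $\partial M$ is LC-regular, by Proposition \ref{prop:LC_regular_is_intrinsic} applied to $\partial M \hookrightarrow M$, hence $\nc(M)$ meets $\LC_M^*$ transversally away from the zero section, the interior stratum being irrelevant), Proposition \ref{prop_wave_front_lkc} guarantees that $\Lambda_0^M(M,\bullet)$ is a well-defined generalized measure, and $\mu_0^M(\chi_M) = \Lambda_0^M(M,1) = \int_M \kappa_0^M + \int_{\nc(M)}\lambda_0^M$ by the very definition of the globalization map and of $[\phi,\omega]$. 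Here the conormal cycle $\nc(M)$ of the manifold-with-boundary $M$ decomposes into the part over the interior, which is the zero section (carrying the orientation class and contributing nothing to the $\lambda_0$-integral since $\lambda_0$ is a genuine $m$-form on $\mathbb P_+(TM)$ and the zero-section part of $\nc(M)$ projects to a set that pairs to $0$ against it — more precisely the fiber-vertical component vanishes there), and the part over $\partial M$, which is exactly the outward normal section $\nu:\partial M \to \mathbb P_+(TM)$ (the cone of conormals pointing out of $M$ collapses, for a smooth boundary, to the single outer conormal direction). Thus $\int_{\nc(M)}\lambda_0^M = \int_{\partial M}\nu^*\lambda_0^M$, and combining with $\mu_0^M(\chi_M) = \chi(M)$ — which is the content of Theorem \ref{thm:Lambda_0_glob}, noting $\chi_M$ is the generalized valuation associated to the polyhedron $M$ and $\mu_0 = \chi$ as a functor on LC-regular manifolds evaluates to the Euler characteristic of the compact piece — yields the claimed formula.

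I would organize this as follows: (i) verify that $M$, as a differentiable polyhedron in itself or in a Nash ambient space, is LC-transversal, so that all the pairings below make sense (citing Proposition \ref{prop:LC_regular_is_intrinsic}, Proposition \ref{prop_wave_front_lkc}, and Proposition \ref{prop:curvature_measures_at_polyhedra}); (ii) write out $\Lambda_0^M(M,1)$ using $\Lambda_0^M = [\kappa_0^M,\lambda_0^M]$ and the definition of curvature-measure evaluation, getting the interior integral $\int_M\kappa_0^M$ plus the boundary integral over $\nc(M)\cap\pi^{-1}(\partial M)$; (iii) identify the smooth stratum of $\nc(M)$ over the interior as contributing $0$ and the stratum over $\partial M$ as the graph of the outer normal Gauss map $\nu$, so the boundary term is $\int_{\partial M}\nu^*\lambda_0^M$; (iv) invoke $\mu_0^M = \chi$ (Theorem \ref{thm:Lambda_0_glob}) and that $\mu_0^M$ evaluated on the polyhedron $M$ is $\chi(M)$, since $\mu_0 = \glob(\Lambda_0)$ and the globalization applied to $M$ is literally the integral of $\Lambda_0(M,\cdot)$ against $1$.

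The main obstacle I expect is step (iii): making rigorous the claim that for a manifold with boundary the conormal cycle $\nc(M)$ restricts, over $\partial M$, to precisely the outer-normal section (with the correct orientation and multiplicity), and that the interior stratum does not contribute to the $\lambda_0$-pairing. In the smooth compact Riemannian case this is classical (it is exactly how Allendoerfer--Weil's proof of Chern--Gauss--Bonnet with boundary proceeds), but here one must check it at the level of generalized forms, i.e. that $\lambda_0^M$ — which is only a distribution, singular along $\LC_M^*$ — can legitimately be pulled back along $\nu$ and integrated. This is precisely where LC-regularity of $\partial M$ enters: it makes $\nu(\partial M)$ transversal to the wave-front-carrying set $\LC_M^*$, by the equivalence in Proposition \ref{prop:LC_regular_is_intrinsic} (here for the codimension-one inclusion $\partial M \hookrightarrow M$, whose $Q$-normal bundle is the outer-normal line), so that $\nu^*\lambda_0^M$ is a well-defined generalized measure on $\partial M$ by the pull-back statement implicit in Proposition \ref{prop:curvature_measures_at_polyhedra} applied with $A = M$. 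Once this transversality bookkeeping is in place, the rest is the formal manipulation of (i), (ii), (iv), which is routine given Theorem \ref{thm:Lambda_0_glob}.
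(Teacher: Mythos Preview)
Your proposal is correct and follows the same route the paper intends: the paper states Theorem \ref{thm:gauss_bonnet_with_boundary} as an ``immediate corollary'' of Theorem \ref{thm:Lambda_0_glob}, and your unpacking of $\mu_0^M(M)=\Lambda_0^M(M,1)=\int_M\kappa_0^M+\int_{\nc(M)}\lambda_0^M=\chi(M)$, together with the LC-transversality check via Proposition \ref{prop:LC_regular_is_intrinsic} and Proposition \ref{prop_wave_front_lkc}, is exactly what is meant. One small inaccuracy: for a full-dimensional manifold with boundary the conormal cycle has \emph{no} stratum over the interior (there is no ``zero-section'' part in $\mathbb P_M$), so $\nc(M)$ is simply the image of the outer normal $\nu:\partial M\to\mathbb P_M$; your conclusion in step (iii) is right but the decomposition you describe is slightly off.
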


Inspecting the definition of $\Lambda_0$, we obtain a-priori information about the Euler characteristic of subsets of a pseudo-Riemannian manifold.

\begin{Corollary}\label{cor:CGB_boundary_cases}
For $X\in\mathcal P(M^{p,q},Q)$ LC-transversal, $\chi(X)$ is determined by 
\begin{enumerate}
\item the $Q$-positive (negative) part of $N^QX$, if $p+q$ is odd and $q$ is even (odd);
\item any open subset of $N^QX$ containing the $Q$-degenerate subset, if $p q$ is odd.
\end{enumerate}
\end{Corollary}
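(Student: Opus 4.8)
The plan is to read off the claims directly from the structure of the Lipschitz-Killing curvature measure $\Lambda_0$ and the Chern-Gauss-Bonnet formula $\chi(X) = \int_X \kappa_0^{X,g} + \int_{\partial X} \lambda^{\partial X,g}$, combined with the explicit formula \eqref{eq:C_to_Lambda} expressing $\Lambda_0$ in terms of $C_{k,r}^i$. First I would recall that for an LC-transversal $A \in \mathcal P(M)$, Proposition \ref{prop_wave_front_lkc} guarantees $\Lambda_0^M(A, \bullet)$ is a well-defined generalized measure, and Theorem \ref{thm:gauss_bonnet_with_boundary} (applied to $A$, or rather the obvious generalization to differentiable polyhedra implicit in Theorem \ref{thm:CGB}) expresses $\chi(A)$ as $\glob(\Lambda_0^M)(A) = \Lambda_0^M(A,M)$ up to the normalization $\mu_0 = \glob(\Lambda_0)$ and Theorem \ref{thm:Lambda_0_glob}, which gives $\mu_0 = \chi$. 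So $\chi(A)$ is computed by integrating $\kappa_0$ over $A$ and the boundary form $\lambda_0$ over $\nc(A)$, the latter being pulled back from $\lambda_0^{\R^{p,q}} \in \mathcal M^{-\infty}(\mathbb P_+(\R^{p,q}),\C)^{\OO(p,q)}$ (or the ambient analogue) along the Gauss map on $N^QA$.

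Next, the key point is that $\kappa_0$, being a multiple of the Chern-Gauss-Bonnet integrand, vanishes identically when $\dim M = p+q$ is odd (since $m-k+1 = p+q$ must be even for $\kappa_k \neq 0$ by Definition \ref{def_lk_curvatures}); so in cases (i) and (ii), where $p \not\equiv q \bmod 2$, the interior term drops out and $\chi(A)$ is determined entirely by the boundary term $\int_{\nc(A)} \lambda_0$, i.e.\ by the restriction of $\lambda_0$ to $N^QA$. Then I would examine which piece of $\lambda_0$ survives: from Definition \ref{def_phi_kri} and the structure of $\phi_{k,r}^0, \phi_{k,r}^1$, together with the computation in the proof of the hypersurface proposition (equation \eqref{eq:lambda_positive}), the form $\phi_{0,0}^0$ is supported on (equivalently, its value is detected by integration against test densities supported near) the $Q$-positive part of the sphere bundle when $\epsilon = +1$, and $\phi_{0,0}^1$ involves $\sigma_-^s$ or the residue term concentrated near the degenerate locus. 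More precisely, since $\Lambda_0 = \i^q\frac{1}{(n+1)\omega_{n+1}}\sum_\nu \tfrac{1}{4^\nu}\binom{\nu}{\nu}(C_{2\nu,\nu}^0 + \i C_{2\nu,\nu}^1)$ and for $q$ even, $p$ odd the constant $\i^q$ is real while $\i^{q+1}$ is imaginary, so $\chi(A) \in \R$ forces the imaginary part $C_{2\nu,\nu}^1$ to drop out of the determination of $\chi$ — and $C_{2\nu,\nu}^0$ is represented by the $\sigma_+^s$-type forms, which are integrals over the $Q$-positive cone. This gives (i); case (ii) is the mirror image obtained by flipping the sign of $Q$ as in Proposition \ref{prop_scaling}, which exchanges the roles of the positive and negative cones and of $C^0, C^1$, and with $q$ odd the surviving real part now lives on the $Q$-negative part.

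For case (iii), $p$ and $q$ both odd, both $\kappa_0$ and the full $\Lambda_0$ can be nonzero, but here $m - k = p+q-1$ is odd, so $\phi_{0,0}^1$ is the residue term $\frac{\pi}{2}\Res_{s=0}\epsilon^{(m+1)/2}|\sigma|^s \phi_{0,0}$, which by the classification of homogeneous distributions (equations \eqref{eq:res_odd}, \eqref{eq:res_even}) is supported exactly on $\LC_M^* = \sigma^{-1}(0)$, i.e.\ concentrated at the $Q$-degenerate subset of $N^QA$; meanwhile $\phi_{0,0}^0 = \epsilon^{(m-1)/2}|\sigma|^s\phi_{0,0}|_{s=0}$ is, away from the light cone, the smooth Gauss-Bonnet-type form, whose integral over the degenerate locus is zero (a set of measure zero for the smooth part) but which genuinely depends on a full neighborhood. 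Hence $\chi(A)$ is determined by $\lambda_0$ restricted to any subset of $N^QA$ that contains a neighborhood of the degenerate subset: the singular (residue) part needs only the degenerate locus itself, while the smooth part, being a closed form whose integral over $\nc(A)$ is a well-defined current pairing, is captured once we know $\lambda_0$ on a neighborhood — more carefully, one argues that the pairing $\langle \nc(A)|_{\text{that subset}}, \lambda_0\rangle$ together with $\int_A \kappa_0$ still recovers $\chi(A)$ because the remaining part of $\nc(A)$ lies in the $Q$-definite region where one can homotope/argue the contribution is already accounted for. The main obstacle I anticipate is making this last localization argument precise: showing that knowing $\lambda_0$ on a neighborhood of the degenerate subset of $N^QA$ (rather than all of $N^QA$) genuinely suffices, which requires either an argument that the complementary contribution is determined by $\int_A \kappa_0$ via a relative Gauss-Bonnet / Stokes argument on the $Q$-definite region, or a direct analysis of the wave front set of $\lambda_0$ showing its singular support is contained in $\LC_M^*$ while its smooth part contributes trivially to the integral over a lower-dimensional stratum; I would handle this by invoking the fact that $\phi_{0,0}^0$ restricted to the complement of any neighborhood of $\LC_M$ is a smooth form whose cohomology class and hence whose integral against the (relative) normal cycle is controlled by the interior term and the Euler class data already present in $\int_A\kappa_0$.
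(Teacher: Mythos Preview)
Your treatment of cases (i) and (ii) is correct and matches the paper's (implicit) argument: when $p+q$ is odd, $\kappa_0=0$; then $\chi(X)=\Re\Lambda_0^M(X,M)$, and since $\i^q$ is real when $q$ is even, $\Re\Lambda_0$ picks out only the $C^0_{2\nu,\nu}$ terms, which by Definition~\ref{def_phi_kri} (with $m-2\nu$ even) are built from $\sigma_+^s\phi_{2\nu,\nu}|_{s=0}$ and hence supported on $\{\sigma\geq 0\}$. Case (ii) follows symmetrically.

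There is a genuine gap in your case (iii). You correctly observe that $m-2\nu$ is odd and that $\phi^1_{2\nu,\nu}$ is the residue term supported on $\LC_M$. But you then assume that both $\kappa_0$ and the $\phi^0$ terms still contribute to $\chi(X)$, and spend the rest of the argument trying to control them via a Stokes/cohomology localization that you yourself flag as the main obstacle. This obstacle is illusory. With $q$ odd, $\i^q$ is purely imaginary, so in the decomposition $\Lambda_0=\i^q\sum_\nu 4^{-\nu}(C^0_{2\nu,\nu}+\i C^1_{2\nu,\nu})$ of \eqref{eq:C_to_Lambda}, the real part of $\Lambda_0$ involves \emph{only} the $C^1$ terms. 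By \eqref{eq:C_top_1} one has $C^1_{m+1,(m+1)/2}=0$, so no interior term survives; and for $\nu<(m+1)/2$ the $C^1_{2\nu,\nu}$ are boundary terms represented by $\phi^1_{2\nu,\nu}$, which for $m-2\nu$ odd are residues and hence supported on $\{\sigma=0\}=\LC_M$. Thus $\chi(X)=\Re\Lambda_0^M(X,M)$ is a pairing of $[[\nc(X)]]$ with a generalized form supported on the light cone, and is therefore determined by any neighborhood of the $Q$-degenerate subset of $N^QX$ (a neighborhood, not just the set itself, because a distribution supported at $\{\sigma=0\}$ may involve normal derivatives, as the $M^{1,1}$ intersection-number example illustrates). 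No cohomological argument is needed.
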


\noindent{\bf Examples.} We conclude with two simple examples:\\
\noindent$\bullet$ For a smooth compact domain $X\subset M^{1,1}$ one has $\chi(X)=\frac14 NX\cdot \LC_M$,
where $NX\cdot \LC_M$ is the intersection number. 

\noindent$\bullet$ For an LC-regular closed surface $X\subset\R^{2,1}$,
\begin{equation}\label{eqn:gauss_Bonnet} 
\chi(X) = -\frac{1}{2\pi}\int_{X} Q(\nu_E)_-^{-\frac 3 2}K_E dA_E,
\end{equation} 
where $K_E$ is the Gaussian curvature with respect to the standard Euclidean structure, $dA_E$ the Euclidean area measure, and $\nu_E$ the Euclidean outer unit normal. 
\bibliographystyle{abbrv}
\bibliography{references}

\begin{thebibliography}{10}

\bibitem{aguirre_fernandez_lafuente}
E.~Aguirre, V.~Fern\'{a}ndez, and J.~Lafuente.
\newblock On the conformal geometry of transverse {R}iemann-{L}orentz
  manifolds.
\newblock {\em J. Geom. Phys.}, 57(7):1541--1547, 2007.

\bibitem{alesker_val_man1}
S.~Alesker.
\newblock Theory of valuations on manifolds. {I}. {L}inear spaces.
\newblock {\em Israel J. Math.}, 156:311--339, 2006.

\bibitem{alesker_val_man2}
S.~Alesker.
\newblock Theory of valuations on manifolds. {II}.
\newblock {\em Adv. Math.}, 207(1):420--454, 2006.

\bibitem{alesker_survey07}
S.~Alesker.
\newblock Theory of valuations on manifolds: a survey.
\newblock {\em Geom. Funct. Anal.}, 17(4):1321--1341, 2007.

\bibitem{alesker_val_man4}
S.~Alesker.
\newblock Theory of valuations on manifolds. {IV}. {N}ew properties of the
  multiplicative structure.
\newblock In {\em Geometric aspects of functional analysis}, volume 1910 of
  {\em Lecture Notes in Math.}, pages 1--44. Springer, Berlin, 2007.

\bibitem{alesker_intgeo}
S.~Alesker.
\newblock Valuations on manifolds and integral geometry.
\newblock {\em Geom. Funct. Anal.}, 20(5):1073--1143, 2010.

\bibitem{alesker_alexandrov_spaces}
S.~Alesker.
\newblock Some conjectures on intrinsic volumes of {R}iemannian manifolds and
  {A}lexandrov spaces.
\newblock {\em Arnold Math. J.}, 4(1):1--17, 2018.

\bibitem{alesker_bernig}
S.~Alesker and A.~Bernig.
\newblock The product on smooth and generalized valuations.
\newblock {\em Amer. J. Math.}, 134(2):507--560, 2012.

\bibitem{alesker_bernig_convolution}
S.~Alesker and A.~Bernig.
\newblock Convolution of valuations on manifolds.
\newblock {\em J. Differential Geom.}, 107(2):203--240, 2017.

\bibitem{alesker_faifman}
S.~Alesker and D.~Faifman.
\newblock Convex valuations invariant under the {L}orentz group.
\newblock {\em J. Differential Geom.}, 98(2):183--236, 2014.

\bibitem{alesker_val_man3}
S.~Alesker and J.~H.~G. Fu.
\newblock Theory of valuations on manifolds. {III}. {M}ultiplicative structure
  in the general case.
\newblock {\em Trans. Amer. Math. Soc.}, 360(4):1951--1981, 2008.

\bibitem{allendoefer_weil}
C.~B. Allendoerfer and A.~Weil.
\newblock The {G}auss-{B}onnet theorem for {R}iemannian polyhedra.
\newblock {\em Trans. Amer. Math. Soc.}, 53:101--129, 1943.

\bibitem{atiyah_bott_patodi}
M.~Atiyah, R.~Bott, and V.~K. Patodi.
\newblock On the heat equation and the index theorem.
\newblock {\em Invent. Math.}, 19:279--330, 1973.

\bibitem{avez63}
A.~Avez.
\newblock Formule de {G}auss-{B}onnet-{C}hern en m\'{e}trique de signature
  quelconque.
\newblock {\em Rev. Un. Mat. Argentina}, 21:191--197 (1963), 1963.

\bibitem{bernig_broecker07}
A.~Bernig and L.~Br\"{o}cker.
\newblock Valuations on manifolds and {R}umin cohomology.
\newblock {\em J. Differential Geom.}, 75(3):433--457, 2007.

\bibitem{bernig_faifman16}
A.~Bernig and D.~Faifman.
\newblock Generalized translation invariant valuations and the polytope
  algebra.
\newblock {\em Adv. Math.}, 290:36--72, 2016.

\bibitem{bernig_faifman_opq}
A.~Bernig and D.~Faifman.
\newblock Valuation theory of indefinite orthogonal groups.
\newblock {\em J. Funct. Anal.}, 273(6):2167--2247, 2017.

\bibitem{part2}
A.~Bernig, D.~Faifman, and G.~Solanes.
\newblock Uniqueness of curvature measures in pseudo-{R}iemannian geometry.
\newblock {\em J. Geom. Anal.}, 31(12):11819--11848, 2021.

\bibitem{bernig_fu_hig}
A.~Bernig and J.~H.~G. Fu.
\newblock Hermitian integral geometry.
\newblock {\em Ann. of Math. (2)}, 173(2):907--945, 2011.

\bibitem{bernig_fu_solanes}
A.~Bernig, J.~H.~G. Fu, and G.~Solanes.
\newblock Integral geometry of complex space forms.
\newblock {\em Geom. Funct. Anal.}, 24(2):403--492, 2014.

\bibitem{bernig_fu_solanes_proceedings}
A.~Bernig, J.~H.~G. Fu, and G.~Solanes.
\newblock Dual curvature measures in {H}ermitian integral geometry.
\newblock In {\em Analytic aspects of convexity}, volume~25 of {\em Springer
  INdAM Ser.}, pages 1--17. Springer, Cham, 2018.

\bibitem{birman_nomizu}
G.~S. Birman and K.~Nomizu.
\newblock The {G}auss-{B}onnet theorem for {$2$}-dimensional spacetimes.
\newblock {\em Michigan Math. J.}, 31(1):77--81, 1984.

\bibitem{bott_tu}
R.~Bott and L.~W. Tu.
\newblock {\em Differential forms in algebraic topology}, volume~82 of {\em
  Graduate Texts in Mathematics}.
\newblock Springer-Verlag, New York-Berlin, 1982.

\bibitem{brouder_dang_helein}
C.~Brouder, N.~V. Dang, and F.~H\'{e}lein.
\newblock Continuity of the fundamental operations on distributions having a
  specified wave front set (with a counterexample by {S}emyon {A}lesker).
\newblock {\em Studia Math.}, 232(3):201--226, 2016.

\bibitem{chern44}
S.-s. Chern.
\newblock A simple intrinsic proof of the {G}auss-{B}onnet formula for closed
  {R}iemannian manifolds.
\newblock {\em Ann. of Math. (2)}, 45:747--752, 1944.

\bibitem{chern63}
S.-s. Chern.
\newblock Pseudo-{R}iemannian geometry and the {G}auss-{B}onnet formula.
\newblock {\em An. Acad. Brasil. Ci.}, 35:17--26, 1963.

\bibitem{clarke70}
C.~J.~S. Clarke.
\newblock On the global isometric embedding of pseudo-{R}iemannian manifolds.
\newblock {\em Proc. Roy. Soc. London Ser. A}, 314:417--428, 1970.

\bibitem{dabrowski_brouder}
Y.~Dabrowski and C.~Brouder.
\newblock Functional properties of {H}\"{o}rmander's space of distributions
  having a specified wavefront set.
\newblock {\em Comm. Math. Phys.}, 332(3):1345--1380, 2014.

\bibitem{donnelly}
H.~Donnelly.
\newblock Heat equation and the volume of tubes.
\newblock {\em Invent. Math.}, 29(3):239--243, 1975.

\bibitem{faifman_crofton}
D.~Faifman.
\newblock Crofton formulas and indefinite signature.
\newblock {\em Geom. Funct. Anal.}, 27(3):489--540, 2017.

\bibitem{faifman_contact}
D.~Faifman.
\newblock Contact integral geometry and the {H}eisenberg algebra.
\newblock {\em Geom. Topol.}, 23(6):3041--3110, 2019.

\bibitem{federer59}
H.~Federer.
\newblock Curvature measures.
\newblock {\em Trans. Amer. Math. Soc.}, 93:418--491, 1959.

\bibitem{fu_alesker_product}
J.~H.~G. Fu.
\newblock Intersection theory and the {A}lesker product.
\newblock {\em Indiana Univ. Math. J.}, 65(4):1347--1371, 2016.

\bibitem{fu_wannerer}
J.~H.~G. Fu and T.~Wannerer.
\newblock Riemannian curvature measures.
\newblock {\em Geom. Funct. Anal.}, 29(2):343--381, 2019.

\bibitem{gelfand_shilov}
I.~M. Gel'fand and G.~E. Shilov.
\newblock {\em Generalized functions. {V}ol. 1}.
\newblock AMS Chelsea Publishing, Providence, RI, 2016.
\newblock Properties and operations, Translated from the 1958 Russian original
  [ MR0097715] by Eugene Saletan, Reprint of the 1964 English translation [
  MR0166596].

\bibitem{gilkey_park15}
P.~Gilkey and J.~H. Park.
\newblock Analytic continuation, the {C}hern-{G}auss-{B}onnet theorem, and the
  {E}uler-{L}agrange equations in {L}ovelock theory for indefinite signature
  metrics.
\newblock {\em J. Geom. Phys.}, 88:88--93, 2015.

\bibitem{gray_book}
A.~Gray.
\newblock {\em Tubes}, volume 221 of {\em Progress in Mathematics}.
\newblock Birkh\"{a}user Verlag, Basel, second edition, 2004.
\newblock With a preface by Vicente Miquel.

\bibitem{hartle_hawking}
J.~B. Hartle and S.~W. Hawking.
\newblock Wave function of the universe.
\newblock {\em Phys. Rev. D (3)}, 28(12):2960--2975, 1983.

\bibitem{hitchin}
N.~J. Hitchin.
\newblock The moduli space of special {L}agrangian submanifolds.
\newblock {\em Ann. Scuola Norm. Sup. Pisa Cl. Sci. (4)}, 25(3-4):503--515
  (1998), 1997.
\newblock Dedicated to Ennio De Giorgi.

\bibitem{honda_saji_teramoto}
A.~Honda, K.~Saji, and K.~Teramoto.
\newblock Mixed type surfaces with bounded {G}aussian curvature in
  three-dimensional {L}orentzian manifolds.
\newblock {\em Adv. Math.}, 365:107036, 46, 2020.

\bibitem{hoermander_pde1}
L.~H\"{o}rmander.
\newblock {\em The analysis of linear partial differential operators. {I}},
  volume 256 of {\em Grundlehren der mathematischen Wissenschaften [Fundamental
  Principles of Mathematical Sciences]}.
\newblock Springer-Verlag, Berlin, second edition, 1990.
\newblock Distribution theory and Fourier analysis.

\bibitem{kobayashi}
O.~Kobayashi.
\newblock Ricci curvature of affine connections.
\newblock {\em Tohoku Math. J. (2)}, 60(3):357--364, 2008.

\bibitem{kriele_kossowski_a}
M.~Kossowski and M.~Kriele.
\newblock Transverse, type changing, pseudo-{R}iemannian metrics and the
  extendability of geodesics.
\newblock {\em Proc. Roy. Soc. London Ser. A}, 444(1921):297--306, 1994.

\bibitem{kossowski97}
M.~Kossowski and M.~Kriele.
\newblock The volume blow-up and characteristic classes for transverse,
  type-changing, pseudo-{R}iemannian metrics.
\newblock {\em Geom. Dedicata}, 64(1):1--16, 1997.

\bibitem{kriele_kossowski_b}
M.~Kriele and M.~Kossowski.
\newblock Pseudo-{R}iemannian metrics with signature type change.
\newblock In {\em Geometry and topology of submanifolds, {VII} ({L}euven,
  1994/{B}russels, 1994)}, pages 155--158. World Sci. Publ., River Edge, NJ,
  1995.

\bibitem{melrose93}
R.~B. Melrose.
\newblock {\em The {A}tiyah-{P}atodi-{S}inger index theorem}, volume~4 of {\em
  Research Notes in Mathematics}.
\newblock A K Peters, Ltd., Wellesley, MA, 1993.

\bibitem{Pelletier}
F.~Pelletier.
\newblock Pseudo m\'{e}triques g\'{e}n\'{e}riques et th\'{e}or\`eme de
  {G}auss-{B}onnet en dimension {$2$}.
\newblock In {\em Singularities and dynamical systems ({I}r\'{a}klion, 1983)},
  volume 103 of {\em North-Holland Math. Stud.}, pages 219--238. North-Holland,
  Amsterdam, 1985.

\bibitem{rumin94}
M.~Rumin.
\newblock Formes diff\'{e}rentielles sur les vari\'{e}t\'{e}s de contact.
\newblock {\em J. Differential Geom.}, 39(2):281--330, 1994.

\bibitem{sakharov}
A.~Sakharov.
\newblock Cosmological transitions with changes in the signature of the metric.
\newblock {\em Soviet Physics Uspekhi}, 34:409, 10 2007.

\bibitem{smolyaninov_narimanov}
I.~Smolyaninov and E.~Narimanov.
\newblock Metric signature transitions in optical metamaterials.
\newblock {\em Physical review letters}, 105:067402, 08 2010.

\bibitem{solanes}
G.~Solanes.
\newblock Contact measures in isotropic spaces.
\newblock {\em Adv. Math.}, 317:645--664, 2017.

\bibitem{solanes_wannerer}
G.~Solanes and T.~Wannerer.
\newblock Integral geometry of exceptional spheres.
\newblock {\em J. Differential Geom.}, 117(1):137--191, 2021.

\bibitem{steller}
M.~Steller.
\newblock A {G}auss-{B}onnet formula for metrics with varying signature.
\newblock {\em Z. Anal. Anwend.}, 25(2):143--162, 2006.

\bibitem{sternberg12}
S.~Sternberg.
\newblock {\em Curvature in mathematics and physics}.
\newblock Dover Publications, Inc., Mineola, NY, 2012.

\bibitem{totaro}
B.~Totaro.
\newblock The curvature of a {H}essian metric.
\newblock {\em Internat. J. Math.}, 15(4):369--391, 2004.

\bibitem{wannerer_angular}
T.~Wannerer.
\newblock Classification of angular curvature measures and a proof of the
  angularity conjecture.
\newblock {\em Amer. J. Math.}
\newblock to appear.

\bibitem{weyl_tubes}
H.~Weyl.
\newblock On the {V}olume of {T}ubes.
\newblock {\em Amer. J. Math.}, 61(2):461--472, 1939.

\bibitem{white_weinfurtner_visser}
A.~White, S.~Weinfurtner, and M.~Visser.
\newblock Signature change events: a challenge for quantum gravity?
\newblock {\em Classical Quantum Gravity}, 27(4):045007, 20, 2010.

\bibitem{willison09}
S.~Willison.
\newblock Lovelock gravity and {W}eyl's tube formula.
\newblock {\em Phys. Rev. D}, 80(6):064018, 7, 2009.

\bibitem{zaehle86}
M.~Z\"{a}hle.
\newblock Integral and current representation of {F}ederer's curvature
  measures.
\newblock {\em Arch. Math. (Basel)}, 46(6):557--567, 1986.

\end{thebibliography}

 \end{document}